\newtheorem{theorem}{Theorem}[section]
\newtheorem{lemma}[theorem]{Lemma}
\newtheorem{proposition}[theorem]{Proposition}
\newtheorem{corollary}[theorem]{Corollary}
\newtheorem{remark}[theorem]{Remark}
\definecolor{bbm}{RGB}{51,153,0}
\definecolor{above}{RGB}{128,0,128}
\definecolor{below}{RGB}{102,0,204}
\definecolor{cascade}{RGB}{204,0,0}
\definecolor{iid}{RGB}{153,51,0}
\def\paragraph#1{\noindent \textbf{#1}}
\numberwithin{equation}{section}
\def\Var{\mathop{\rm Var}\nolimits}
\def\Cov{\mathop{\rm Cov}\nolimits}
\def\<{\langle}
\def\>{\rangle}
\def\r{\rho}
\let\cal=\mathcal
\def\II{{\cal I}}
 \def \r {{\rho}}
 \def \ba {\begin{array}}
 \def \ea {\end{array}}
 \newcommand{\be}{\begin{equation}}
 \newcommand{\ee}{\end{equation}}
\newcommand{\bea}{\begin{eqnarray}}
 \newcommand{\eea}{\end{eqnarray}}
\def\TH(#1){\label{#1}}\def\thv(#1){\ref{#1}}
\def\Eq(#1){\label{#1}}\def\eqv(#1){(\ref{#1})}
 \def \1{\mathbbm{1}}
\newcommand{\bbL}{\mathbb{L}}
\newcommand{\bbT}{\mathbb{T}}
\newcommand{\bbV}{\mathbb{V}}
\newcommand{\bbN}{\mathbb{N}}
\newcommand{\bbR}{\mathbb{R}}
\newcommand{\ol}{\overline}
\newcommand{\ul}{\underline}
\newcommand{\cI}{\mathcal I}
\newcommand{\cN}{\mathcal N}
\newcommand{\rmd}{{\rm d}}
\newcommand{\rme}{{\rm e}}
\newcommand{\rmB}{{\rm B}}
\newcommand{\rmP}{{\rm P}}
\newcommand{\rmE}{{\rm E}}
\newcommand{\rmVar}{{\rm Var}}
\newcommand{\rmCov}{{\rm Cov}}
\newcommand{\loc}{{\rm loc}}
\begin{document}


 \title[DGFF Subject to a hard wall / asymptotics]
{Gaussian Free Field on the Tree Subject to a Hard Wall \\II: Asymptotics}
\author[M. Fels]{Maximilian Fels}
\address{M. Fels, 
	Technion - Israel Institute of Technology. Haifa, 3200003,
	Israel.}
\email{felsm@campus.technion.ac.il}
\author[L. Hartung]{Lisa Hartung}
\address{L. Hartung, Universit\"at Mainz. 
	Staudingerweg 9,
	55128 Mainz, Germany.}
\email{lhartung@uni-mainz.de}
\author[O. Louidor]{Oren Louidor}
\address{O. Louidor,
	Technion - Israel Institute of Technology. Haifa, 3200003,
Israel.}
\email{oren.louidor@gmail.com}

\date{\today}

\begin{abstract}
This is the second in a series of two works which study the discrete Gaussian free field on the binary tree when all leaves are conditioned to be positive. In the first work~\cite{Work1} we 
identified the repulsion profile followed by the field in order to fulfill this ``hard-wall constraint'' event. In this work, we use these findings to obtain a comprehensive, sharp asymptotic description of the law of the field under this conditioning. We provide asymptotics for both local statistics, namely the (conditional) law of the field in a neighborhood of a vertex, as well as global statistics, including the (conditional) 	law of the minimum, maximum, empirical population mean and all subcritical exponential martingales. We conclude that the laws of the conditional and unconditional fields are asymptotically mutually singular with respect to each other.
\end{abstract}

\maketitle

\tableofcontents

\section{Introduction and Results}
\subsection{Setup and recap}
\label{s:1.1}
The discrete Gaussian free field (DGFF) on the infinite binary tree $\bbT$ rooted at $0$ (with Dirichlet boundary conditions) is a centered Gaussian process $h = (h(x) :\: x \in \bbT)$ with covariances given by
\begin{equation}
\label{e:01.1}
\rmE\big[h(x) h(y)\big] = \tfrac12 \big(|x| + |y|) - \rmd_{\bbT}(x,y)\big) = |x \wedge y|\,.
\end{equation}
Above $\rmd_{\bbT}$ is the graph distance on the tree, $|x| := \rmd_{\bbT}(x,0)$ is the depth of $x \in \bbT$ and $x \wedge y$ denotes the deepest common ancestor of $x,y \in \bbT$. Alternatively, $h$ can be seen as a branching random walk (BRW) with fixed binary branching and standard Gaussian steps, in which case
we shall use the term generation instead of depth. We shall refer to $h$ as both a DGFF and a BRW interchangeably. We denote by $\bbT_n$, resp. $\bbL_n$ the sub-graph of $\bbT$ which includes all vertices at depth (or generation), at most, resp. equal to, $n \geq 0$. The law of $h$ on $\bbT_n$ will be denoted by $\rmP_n$, with $\rmP_\infty \equiv \rmP$ being the law of the full process on $\bbT$.

This is the second in the line of two works whose aim is to study the law of the field $h$ conditional on the, so-called, \textit{hard-wall event}:
\begin{equation}
\label{e:1}
\Omega_n^+ := \big\{ h(x) \geq 0 :\: x \in \bbL_n \big\} \,.
\end{equation}
The motivation for considering this particular problem, its context and relation to previous works is provided in the first work in the series~\cite{Work1}. The latter work also lays the foundations for the derivations in the present manuscript, in the form of results and tools which are made use of here. In general, while this paper is self-contained, we invite the reader to read~\cite{Work1} first and have it close at hand, while going over this work.

We recall that in~\cite{Work1} we showed \cite[Corollary~\ref{1@c:1.4}]{Work1} that under the conditional law, $\rmP_n^+(\cdot) := \rmP_n(\cdot\,|\, \Omega_n^+)$,
the mean height of the field at $x \in \bbT_n$ is, up to $\Theta(1)$,
\begin{equation}
\label{e:1.10a}
	\mu_n(x) :=  m_{n'} \big(1 - 2^{-|x|}\1_{\{|x| < l_n\}}\big) \,,
\end{equation}
where
\begin{align}\label{Lisa.1}
m_n=c_0 n- \tfrac{3}{2}c_0^{-1}\log n \quad ; \qquad c_0=\sqrt{2\log 2} \,.
\end{align}
and
\begin{equation}
l_n :=  \lfloor \log_2 n\rfloor \ \ , \ \ \ \ n' := n - l_n  \ \ , \ \ \ 
	m_{n'} = m_n - c_0 \lfloor \log_2 n\rfloor + o(1) \,.
\end{equation}
For $|x| \leq l_n$, the height concentrates around this mean with Gaussian lower tails and ``almost-Gaussian'' upper tails~\cite[Theorem~\ref{1@t:1.3}]{Work1}. For $|x| > l_n$, the fluctuations are Gaussian with variance $\Theta(|x|-l_n)$. Moreover, the covariance between $x,y \in \bbT_n$ with $|x|,|y| \geq l_n$ but $|x \wedge y| < l_n$ decays exponentially fast in $l_n - |x \wedge y|$ with $l_n$ as above~\cite[Theorem~\ref{1@t:1.4}]{Work1}. In particular, the heights of the conditional field at vertices in generation $l_n$ are ``almost i.i.d.'', and tight around $m_{n'}$ (see~\cite{Work1} or Subsection~\ref{s:3.2} here for a precise formulation of these statements.) 

Using the Markov property of $h$, which is preserved under $\rmP_n^+$, we thus see that passed depth $l_n$, the conditional field has essentially the same law as the union of $|\bbL_{l_n}| = 2^{l_n} = \Theta(n)$ independent DGFFs on a tree of depth $n-l_n = n'$, each starting at a random height, which is tight around $m_{n'}$, and each conditioned to stay above $0$. Since $-m_{n'}$ is the typical height of the global minimum of each of these DGFFs, this conditioning remains non-singular in the limit.

In this manuscript we go beyond the above rough picture and derive a rather comprehensive asymptotic description of the law for the field on $\bbT_n \setminus \bbT_{l_n-1}$ under $\Omega_n^+$. The discussion above suggests that in order to provide such an asymptotic description, it would be sufficient to derive asymptotics for the law of $h - m_n'$ at $x \in \bbL_{l_n}$ under $\rmP_n(-|\Omega_n^+)$, and also, {\em separately}, asymptotics for the law of $h$ on $\bbT_{n'}$ under $\rmP_{n'}(-|\Omega_{n'}(u))$ for $u=\Theta(1)$, where $\Omega_n(u)$ is the right tail event
\begin{equation}
\label{e:01.14}
	\Omega_n(u) := \Big\{	\min_{\bbL_n} h \geq -m_n + u \Big\}
	\quad ; \qquad u \in \bbR \,.
\end{equation}
The marginal law of the centered conditional field is thus that of a dependent sum of two random variables: one which is a tight and one which may have unbounded fluctuations (for $|x| \gg l_n)$.

To state asymptotics for the law of such random variables, without scaling which would hide the $\Theta(1)$ terms, one needs to measure distances between probability measures using a metric which could ``separate'' between the law of non-tight random variables and that of their translate by a $\Theta(1)$ quantity. In particular, we note that for our purposes using the usual weak topology on probability measures, via, e.g. the Levy-Prohorov metric, is not sufficient. Under this metric, the distance between, e.g., a centered Gaussian with variance $n$ and the same Gaussian but with mean $\mu$ will tend to $0$ as $n \to \infty$ for all $\mu \neq 0$, thus making these laws indistinguishable in the limit. Indeed, an analogous comparison appears, e.g., in Theorem~\ref{p:107.5}.

Our asymptotic statements will therefore be expressed using the strong Wasserstein metric of order $p \geq 1$ between probability measures on $\bbR^{\bbT}$, with the latter space equipped with the (possibility infinite) $\ell^{\infty}$ distance. Denoting this metric by $W^{p,\infty}$ we recall that
\begin{equation}
	W^{p,\infty}\big(\rmP^{(1)}, \rmP^{(2)}\big) = \inf_{\ddot{\rmP}^{(1),(2)}} \,\Big(\ddot{\rmE}^{(1),(2)} \big\|h^{(1)}- h^{(2)}\big\|_\infty^p\Big)^{1/p} \,,
\end{equation}
where the infimum is over all laws $\ddot{\rmP}^{(1),(2)}$ on $\bbR^{\bbT} \times \bbR^{\bbT}$ having marginals $\rmP^{(1)}$, $\rmP^{(2)}$, with $h^{(1)}$, $h^{(2)}$ denoting the canonical projection processes and $\ddot{\rmE}^{(1),(2)}$ the associated expectation. We remark that we allow both the $\ell^{\infty}$ norm as well as the above distance to be $\infty$.

We shall use this distance also for probability measures $\mu$ on $\bbR^A$ with $A \subset \bbT$, by treating such measures also as measures on $\bbR^{\bbT}$ via the natural extension
\begin{equation}
	\tilde\mu \big(\{h :\: h_{\bbT \setminus A} = 0\big\}\big) = 1
	\quad, \qquad
	\tilde\mu \circ \Pi_A^{-1} = \mu \,,
\end{equation}
where $\Pi_A$ denotes the projection mapping from $\bbR^{\bbT}$ onto $\bbR^A$ and where henceforth, we will use both $f_B$ and $f(B)$ to denote the restriction of a function $f$ to the subset $B$ of its domain.

We shall say that a sequence of probability measures $(\mu_n)_{n \geq 1}$ on $\bbR^\bbT$ tends {\em locally} in $W^{p,\infty}$ to a probability measure $\mu$ on $\bbR^{\bbT}$ if 
\begin{equation}
\label{e:1001.9}
	W^{p,\infty} \big(\mu_n \circ \Pi^{-1}_{\bbT_k},\, \mu \circ \Pi^{-1}_{\bbT_k}\big) \underset{n \to \infty}\longrightarrow 0 
	\quad, \forall\, k \geq 0 \,.
\end{equation}
This convergence defines a topology on probability measures on $\bbR^{\bbT}$ which is metrizable, and we denote by $W^{p,\infty}_{\rm loc}$ a metric such that $W^{p,\infty}_{\rm loc}(\mu_n, \mu) \longrightarrow 0$ as $n \to \infty$ if and only if~\eqref{e:1001.9} holds.

We note that for any $p \geq 1$, the topology of $W^{p,\infty}$ is stronger than that of $W^{p,\infty}_\loc$, which is in turn stronger than the topology of weak convergence for probability measures on $\bbR^{\bbT}$ with respect to the underlying product topology. Convergence under $W^{p,\infty}_\loc$ therefore implies weak convergence of all marginals.
We also note that an added benefit of using the $W^{p,\infty}$ topology is that an asymptotic equivalence of measures under $W^{p,\infty}$ induces an asymptotic equivalence of the corresponding moments of order $p$. Lastly, as customary, if $h^{(1)}$, $h^{(2)}$ have laws $\rmP^{(1)}$ and $\rmP^{(2)}$ respectively, we shall write $W^{p,\infty}(h^{(1)}, h^{(2)})$ to mean $W^{p,\infty}(\rmP^{(1)}, \rmP^{(2)})$ with a similar convention for $W^{p, \infty}_\loc$.

In providing an asymptotic description for $h$ under $\rmP_n^+$, we will treat both local and global statistics of the field. By local statistics, we refer to the (joint) law of the field at an $o(n)$- neighborhood (in graph distance) of a given vertex in $\bbT_n$. By global statistics, we refer to law of quantities which involve all vertices of $h$, such as the global maximum or minimum of the field. We begin with the former.

\subsection{Results: local statistics}
As mentioned above, the law of the field conditioned on the right tail event $\Omega_n(u)$ will play an important role in the derivation. To this end, for $u,v \in \bbR$ and $n \in [1,\infty]$, we let 
\begin{equation}
	\rmP_{n,v}(h \in \cdot) \equiv \rmP_{n} \big(h \in \cdot \,\big|\, h(0) = v\big) := \rmP_n \big(h - v \in \cdot \big) \,,
\end{equation}
and
\begin{equation}
\label{e:107.30a}
	\rmP_{n,v}^{\uparrow u}(h \in \cdot) \equiv \rmP_{n} \big(h \in \cdot \,\big|\, h(0) = v,\, \Omega_n(u)\big) := \rmP_n\big(h \in \cdot - v \,\big|\,\Omega_n(u-v)\big)  \,.
\end{equation}
These are the laws of the DGFF starting at $h(0)=v$ unconditionally and conditioned on its minimum to be above $-m_n + u$. Notice that the above definition implies
\begin{equation}
\label{e:101.20}
	\rmP_{n,v}^{\uparrow u}(h \in \cdot) 
= 	\rmP_{n}^{\uparrow u-v}(h-v \in \cdot) \,.
\end{equation}
We shall occasionally omit $u$ and/or $v$ if they are zero and sometimes write $\rmP$ or $\rmP^{\uparrow u}$ in place of $\rmP_\infty$ and $\rmP^{\uparrow}_\infty$.

Our first result shows that $\rmP_{n,v}^{\uparrow}$ admits an infinite volume (or generation) limit. As in~\cite{Work1}, for $u \in \bbR$, $n \geq 1$ we let
\begin{equation}
\label{e:3.3}
p_{n}(u) := \rmP_n\big(\Omega_n(u)\big) \,,
\end{equation}
and
\begin{equation}
\label{e:1.1a}
	p_\infty(u) := \lim_{n \to \infty} p_n(u) \,,
\end{equation}
where the limit exists thanks to the convergence of the centered minimum \cite{AidekonBRW13}. As in~\cite{Work1} if $x \in \bbT$ we write $\bbT(x)$ for the subtree of $\bbT$ rooted at $x$. If $A \subset \bbT$ we shall write $A(x)$ for the projection of $A$ under some (arbitrary) isomorphism between $\bbT$ and $\bbT(x)$, which maps $0 \in \bbT$ to $x \in \bbT(x)$. In particular, the sets $\bbT_k(x)$ and $\bbL_k(x)$ include all descendants of $x$ in $\bbT(x)$ which are at distance at most, resp. exactly, $k \geq 0$ from $x$.
\begin{proposition}
\label{p:1.45}
For all $u,v \in \bbR$, the limit 
\begin{equation}
\label{e:5.69}
\rmP_v^{\uparrow u} \equiv \rmP_{\infty,v}^{\uparrow u} := \lim_{n \to \infty} \rmP^{\uparrow u}_{n,v}
\end{equation}
exits in $W^{p,\infty}_\loc$ for all $p \geq 1$. The limit satisfies for all $r \geq 1$ 
\begin{equation}
\label{e:105.70}
	\frac{\rmd \rmP_v^{\uparrow u}\big(h_{\bbT_r} \in \cdot)}{\rmd \rmP_{r,v}}
	= \frac{\prod_{x \in \bbL_r} p_{\infty} \big(u - c_0 r - h(x))}
		{\rmE_{r,v} \prod_{x \in \bbL_r} p_{\infty} \big(u - c_0 r - h(x))}
		\quad \text{a.s.}\,,
\end{equation}
where $p_\infty$ is as in~\eqref{e:1.1a} and for all $x \in \bbT$, $u,v \in \bbR$\,,
\begin{equation}
\label{e:107.27}
\rmP_v^{\uparrow u}\big(h_{\bbT(x)} \in \cdot \,\big|\, h_{\{x\} \cup (\bbT \setminus \bbT(x))} \big) 
= \rmP^{\uparrow u - c_0 |x|}_{h(x)} \quad \text{a.s.} \
\end{equation}
Finally, for $u,v \in \bbR$ it also holds that
\begin{equation}
\label{e:107.28}
	\rmP_v^{\uparrow u} \big(h \in \cdot) = 	\rmP_0^{\uparrow u - v}\big(h \in \cdot - v\big) \,.
\end{equation}
\end{proposition}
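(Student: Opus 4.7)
The plan has four steps: (a) identify the Radon--Nikodym density of $\rmP_{n,v}^{\uparrow u}$ restricted to $\bbT_r$ with respect to $\rmP_{r,v}$; (b) pass to $n \to \infty$ to construct $\rmP_v^{\uparrow u}$ via Kolmogorov extension; (c) upgrade the resulting total-variation convergence to $W^{p,\infty}_\loc$ using tightness inputs from~\cite{Work1}; and (d) deduce~\eqref{e:107.27}--\eqref{e:107.28}. For step (a), I apply the branching property at generation $r \leq n$: conditional on $h_{\bbT_r}$, the restrictions of $h$ to $\bbT(x)$, $x \in \bbL_r$, are independent BRWs starting at $h(x)$. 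Since $\bbL_n = \bigsqcup_{x \in \bbL_r} \bbL_{n-r}(x)$, the event $\Omega_n(u)$ factors over these subtrees and
\begin{equation*}
\rmP_{n,v}\bigl(\Omega_n(u) \,\big|\, h_{\bbT_r}\bigr) = \prod_{x \in \bbL_r} p_{n-r}\bigl(u - h(x) - \alpha_{n,r}\bigr), \qquad \alpha_{n,r} := m_n - m_{n-r}.
\end{equation*}
By Bayes' rule the density $D_n$ of $\rmP_{n,v}^{\uparrow u} \circ \Pi_{\bbT_r}^{-1}$ with respect to $\rmP_{r,v}$ equals this product divided by its $\rmP_{r,v}$-expectation.

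For step (b), since $p_n \to p_\infty$ pointwise with $p_\infty$ continuous and strictly positive (the centered BRW minimum having a continuous, fully supported law) and $\alpha_{n,r} \to c_0 r$ as $n \to \infty$, I obtain $D_n \to D$ $\rmP_{r,v}$-a.s., where $D$ is the right-hand side of~\eqref{e:105.70}. Since $p_{n-r} \leq 1$, dominated convergence upgrades this to $L^1(\rmP_{r,v})$, and Scheff\'e then delivers total-variation convergence on $\bbT_r$. Consistency of the family $(D \cdot \rmP_{r,v})_{r \geq 0}$ under projections is automatic from the corresponding property at finite $n$, and Kolmogorov's extension theorem produces a measure $\rmP_v^{\uparrow u}$ on $\bbR^\bbT$ satisfying~\eqref{e:105.70}.

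For step (c), I fix $k \geq 0$, take any $r \geq k$, and maximally couple $\rmP_{n,v}^{\uparrow u} \circ \Pi_{\bbT_r}^{-1}$ and $\rmP_v^{\uparrow u} \circ \Pi_{\bbT_r}^{-1}$: the coupled samples $h^{(1)}, h^{(2)}$ coincide on $\bbT_r$ with probability $1 - \mathrm{TV}_n$, with $\mathrm{TV}_n \to 0$ by step (b); on the disagreement event H\"older's inequality yields, for any $q > 1$,
\begin{equation*}
\rmE\,\bigl\|h^{(1)}_{\bbT_k} - h^{(2)}_{\bbT_k}\bigr\|_\infty^p \;\leq\; \mathrm{TV}_n^{1 - 1/q}\, M_{k,pq}^{1/q},
\end{equation*}
where $M_{k,pq} := \sup_{n \geq 0} 2^{pq-1}\bigl(\rmE_{n,v}^{\uparrow u}[\|h_{\bbT_k}\|_\infty^{pq}] + \rmE_v^{\uparrow u}[\|h_{\bbT_k}\|_\infty^{pq}]\bigr)$. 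Uniform finiteness of $M_{k,pq}$---the main technical obstacle---follows from the tightness estimates in~\cite[Theorem~\ref{1@t:1.3}]{Work1} under $\rmP_{n,v}^{\uparrow u}$, and by Fatou under the limit $\rmP_v^{\uparrow u}$.

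For step (d), \eqref{e:107.28} follows at once by passing to the limit in~\eqref{e:101.20}, since both sides converge in $W^{p,\infty}_\loc$ (shifting by $v$ is a Wasserstein isometry). For~\eqref{e:107.27}, I first verify its finite analogue: under $\rmP_{n,v}^{\uparrow u}$ the conditional law of $h_{\bbT_n(x)}$ given $h_{\{x\} \cup (\bbT_n \setminus \bbT_n(x))}$ equals $\rmP^{\uparrow u - \alpha_{n,|x|}}_{n-|x|, h(x)}$. This is immediate from the Markov property of the unconditional BRW together with the factorization $\Omega_n(u) = A_1 \cap A_2$, where $A_1$ depends only on $h$ in $\bbT_n(x)$ and $A_2$ only on $h$ outside $\bbT_n(x)$. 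Letting $n \to \infty$, using $\alpha_{n,|x|} \to c_0 |x|$ and the $W^{p,\infty}_\loc$ convergence from step (c), then yields~\eqref{e:107.27}.
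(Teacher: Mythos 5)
Your proposal follows essentially the same route as the paper: the branching-property factorization of $\rmP_{n,v}(\Omega_n(u)\mid h_{\bbT_r})$ into a product of $p_{n-r}$'s is exactly the paper's density $\varphi_{n,r,u}$, the passage $p_{n-r}\to p_\infty$ and $m_n-m_{n-r}\to c_0 r$ followed by Kolmogorov extension is identical, and your maximal-coupling/H\"older upgrade to $W^{p,\infty}_\loc$ is a workable substitute for the paper's route (convergence of integrals against exponentially growing test functions, then Theorem~6.15 of Villani).

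Two points need tightening. First, the uniform moment bound $M_{k,pq}<\infty$ is not what Theorem~1.3 of~\cite{Work1} gives you --- that theorem concerns $\rmP_n^+$, not $\rmP_{n,v}^{\uparrow u}$. But the bound follows at once from your own step (a): the density $D_n$ is bounded by $1/\rmP_{n,v}(\Omega_n(u))$, which is bounded uniformly in $n$ by tightness of the centered minimum, so $\rmE_{n,v}^{\uparrow u}\|h_{\bbT_k}\|_\infty^{pq}\le C\,\rmE_{r,v}\|h_{\bbT_k}\|_\infty^{pq}<\infty$. Second, and more substantively, in step (d) you cannot obtain~\eqref{e:107.27} by ``letting $n\to\infty$'' in the finite-$n$ conditional-law identity on the strength of $W^{p,\infty}_\loc$ convergence of the joint laws: conditional laws are not continuous under weak (or Wasserstein) convergence of the underlying measures, so this step as written is not a valid deduction. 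The paper instead reads~\eqref{e:107.27} directly off the limiting density~\eqref{e:105.70}: for $r\ge |x|+k$, the factor of $\prod_{y\in\bbL_r}p_\infty(u-c_0 r-h(y))$ coming from leaves below $x$ equals $\prod_{y}p_\infty\bigl((u-c_0|x|)-c_0(r-|x|)-h(y)\bigr)$, so by the Markov property of the Gaussian base measure the conditional law of $h_{\bbT_k(x)}$ under the limit measure is the corresponding marginal of $\rmP^{\uparrow u-c_0|x|}_{h(x)}$. Since your step (b) already establishes~\eqref{e:105.70}, this repair stays entirely within your framework; with it, the argument is complete.
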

As before, we shall occasionally omit $v$ if it is zero and also write $\rmP^{\uparrow u}(\cdot\,|\, h(0) = v)$ in place of $\rmP_v^{\uparrow u}$. We note that Relation~\eqref{e:107.27} implies that for for any $u \in \bbR$, the collection
$(v \mapsto \rmP^{\uparrow u - c_0 |x|}_{v} :\: x \in \bbT)$ forms a Markovian family of kernels, while Relation~\eqref{e:107.28} links between these families for different $u$-s. Relation~\eqref{e:105.70} recasts $\rmP^{\uparrow u}_v$ as a Doob $h$-transform of the measure $\rmP_{v,\infty}$. It is also an infinite volume Gibbs measure corresponding to the DGFF Hamiltonian, and as such satisfies the DLR conditions with respect to that Hamiltonian (see also Subsection~\ref{1@s:2.2} in ~\cite{Work1}). 

Using $\rmP^{\uparrow u}_v$ we can define the asymptotic law of $h-m_{n'}$ on $\bbT_{n'}(x)$ for $x \in \bbL_{l_n}$ under $\rmP_n^+$. As in~\cite{Work1}, for $n \in \bbN \setminus \{0\}$ we let
\begin{equation}
\label{e:1.7}
[n]_2 := \log_2 n - \lfloor \log_2 n \rfloor \,.
\end{equation}
It is easy to see that $[n]_2$ is always in $[0,1)$ and {\em log-dyadic}, which we henceforth define as a real number which can be written as $\log_2 (q/2^p)$ for some $q \in \bbN \setminus \{0\}$ and $p \in \bbN \cup \{0\}$. 
\begin{theorem}
\label{t:1.7}
For all log-dyadic $\delta \in [0,1)$ with $x_k \in \bbL_k$, the limit
\begin{equation}
\label{e:1.17}
\rmP^{+,\delta} := \lim_{k \to \infty} \rmP^{\uparrow c_0 k} \Big(h_{\bbT(x_k)} \in \cdot \,\Big|\, h(0) = -c_0 2^{k+\delta} \Big)\,,
\end{equation}
exists in $W_\loc^{p, \infty}$ for all $p \geq 1$ and satisfies for all $x \in \bbT$, 
\begin{equation}
\label{e:107.30}
\rmP^{+,\delta} \big(h_{\bbT(x)} \in \cdot \,\big|\, h_{\{x\} \cup (\bbT \setminus \bbT(x))} \big) 
= \rmP^{\uparrow (-c_0 |x|)}_{h(x)} \quad \text{a.s.}
\end{equation}
In particular,
\begin{equation}
\label{e:107.31}
	\rmP^{+,\delta}\big(\cdot) = \int \rmP^{\uparrow}_v\big(\cdot\big)\, \rmP^{+,\delta} \big(h(0) \in \rmd v\big) \equiv \rmP^{\uparrow}_{\rmP^{+,\delta} (h(0) \in \cdot)} \,.
\end{equation}
\end{theorem}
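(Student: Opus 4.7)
The plan is to reduce convergence of the full field on $\bbT(x_k)$ to convergence of the one-dimensional marginal of $h(x_k)$, by invoking the Markov property of Proposition~\ref{p:1.45}. Applying~\eqref{e:107.27} to the pre-limit measure $\rmP^{\uparrow c_0 k}_{-c_0 2^{k+\delta}}$ at the vertex $x_k$: since $|x_k|=k$, the residual Doob level is $c_0 k-c_0 k=0$, so the conditional law of $h_{\bbT(x_k)}$ given the field outside is $\rmP^{\uparrow}_{h(x_k)}$. Disintegrating on $h(x_k)$ alone gives
\begin{equation*}
\rmP^{\uparrow c_0 k}_{-c_0 2^{k+\delta}}\big(h_{\bbT(x_k)}\in\cdot\big)=\int \rmP^{\uparrow}_v(\cdot)\,\mu_k^\delta(\rmd v),
\end{equation*}
where $\mu_k^\delta$ is the marginal law of $h(x_k)$. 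Existence of the limit then reduces to two sub-claims: (i) $\mu_k^\delta$ converges to some $\mu^\delta$ in $W^p(\bbR)$; and (ii) $v\mapsto\rmP^{\uparrow}_v$ is (locally Lipschitz) continuous in $W^{p,\infty}_\loc$. For (ii), the identity~\eqref{e:107.28} reduces matters to continuity of $u\mapsto\rmP^{\uparrow u}_0$, and since the Doob-transform density in~\eqref{e:105.70} is smooth in $u$, a common-noise coupling yields $W^{p,\infty}\big(\rmP^{\uparrow u_1}_0|_{\bbT_r},\rmP^{\uparrow u_2}_0|_{\bbT_r}\big)\leq C_{p,r}|u_1-u_2|$ for every fixed $r$.

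For (i), I use~\eqref{e:107.28} once more to realize $h(x_k)$ under $\rmP^{\uparrow c_0 k}_{-c_0 2^{k+\delta}}$ as $g(x_k)-c_0 2^{k+\delta}$ where $g\sim\rmP^{\uparrow c_0(k+2^{k+\delta})}_0$. The Doob density~\eqref{e:105.70} at $r=k$ then reads $\propto\prod_{x\in\bbL_k}p_\infty(c_0 2^{k+\delta}-g(x))$, and sharp right-tail asymptotics of $p_\infty$ at scale $c_0 2^{k+\delta}$ localize $g(x_k)$ around $c_0 2^{k+\delta}$ with $\Theta(1)$ fluctuations. This yields tightness of $\mu_k^\delta$ with uniformly bounded moments. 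Identifying the precise full-sequence limit demands the refined repulsion and right-tail estimates from~\cite{Work1}; the $\log$-dyadic hypothesis on $\delta$ enters exactly here, ensuring that $c_0 2^{k+\delta}$ is eventually compatible with the dyadic structure of the tree so that the limit along the full sequence (not just along subsequences) is well-defined. The moment bounds upgrade weak convergence to $W^p(\bbR)$ convergence.

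Combining (i) and (ii) identifies $\rmP^{+,\delta}=\int\rmP^{\uparrow}_v(\cdot)\,\mu^\delta(\rmd v)$ as the $W^{p,\infty}_\loc$-limit. For~\eqref{e:107.30}, I apply~\eqref{e:107.27} to the pre-limit at a descendant $y$ of $x_k$ with $|y|=k+|x|$ (where $x$ is the image of $y$ under the identification $\bbT(x_k)\simeq\bbT$): the residual level becomes $c_0 k-c_0(k+|x|)=-c_0|x|$, giving $\rmP^{\uparrow -c_0|x|}_{h(y)}$, and the continuity from (ii) transfers the identity to the limit. Specializing~\eqref{e:107.30} to $x=0$ yields $\rmP^{+,\delta}(\cdot\mid h(0))=\rmP^{\uparrow}_{h(0)}$; integrating against $\mu^\delta$ produces~\eqref{e:107.31}. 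The main obstacle is sub-claim (i): tightness of $\mu_k^\delta$ follows from~\eqref{e:105.70} and standard right-tail asymptotics of $p_\infty$, but identifying the unique full-sequence limit $\mu^\delta$ requires sharp $\Theta(1)$-level control, for which the $\log$-dyadic structure of $\delta$ and the quantitative asymptotics from~\cite{Work1} are essential.
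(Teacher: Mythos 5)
Your reduction is sound as far as it goes: by~\eqref{e:107.27} applied to the pre-limit (infinite-volume) measure at $x_k$, the residual level is indeed $c_0k-c_0|x_k|=0$, so the law of $h_{\bbT(x_k)}$ disintegrates as $\int \rmP^{\uparrow}_v(\cdot)\,\mu_k^{\delta}(\rmd v)$ with $\mu_k^\delta$ the marginal of $h(x_k)$; and given convergence of $\mu_k^\delta$ in $W^p$ plus continuity of $v\mapsto\rmP^\uparrow_v$ (which can be extracted from the arguments behind Lemma~\ref{l:5.7}, together with moment control in $v$), the identities~\eqref{e:107.30} and~\eqref{e:107.31} would follow as you describe. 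The problem is that sub-claim (i) --- convergence of $\mu_k^\delta$ along the full sequence, not merely tightness --- \emph{is} the theorem, and you do not prove it. Tightness of $\mu_k^\delta$ with uniform moments gives only subsequential limits; nothing in your argument identifies a unique limit or shows the sequence is Cauchy. The appeal to ``sharp right-tail asymptotics of $p_\infty$'' and ``refined repulsion estimates'' is a gesture at the difficulty, not a mechanism that resolves it.

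The paper closes exactly this gap by a route you do not have: it sandwiches the $k$-indexed quantity against the hard-wall quantity $\rmE_n^+F(\hat h_{\bbT_r(x_{l_n})})$. Using the decoupling estimate of Proposition~\ref{p:106.3} (conditioning $\hat h(x_{l_n})$ on the value at generation $l_n-k$ changes the law by $O(\rme^{-ck})$) together with the Markov property and Lemma~\ref{l:5.7}, one gets
\begin{equation*}
\lim_{k\to\infty}\limsup_{n\to\infty}\Big|\rmE_n^+F\big(\hat h_{\bbT_r(x_{l_n})}\big)-\rmE^{\uparrow c_0k}\Big(F\big(h_{\bbT_r(x_k)}\big)\,\Big|\,h(0)=-c_02^{[n]_2+k}\Big)\Big|=0
\end{equation*}
uniformly in bounded $F$. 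Since the first quantity does not depend on $k$ and the second depends on $n$ only through $[n]_2$, restricting to a sequence $n_j$ with $[n_j]_2=\delta$ shows the $k$-sequence is Cauchy; this is where log-dyadicity of $\delta$ enters --- it is precisely the condition guaranteeing that infinitely many integers $n$ satisfy $[n]_2=\delta$. Your stated reason for the log-dyadic hypothesis (``compatibility of $c_02^{k+\delta}$ with the dyadic structure of the tree'') is not the actual role it plays and does not yield uniqueness of the limit. Without either this comparison to $\rmP_n^+$ or some substitute (monotonicity, an explicit fixed-point characterization of $\mu^\delta$, etc.), your proof establishes only subsequential convergence.
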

\noindent
The above theorem implies that the law $\rmP^{+,\delta}$ can be recast as the law of the Markov process $\rmP^{\uparrow}_v \equiv \rmP^{\uparrow 0}_v$ from Proposition~\ref{p:1.45} with a random initial height $v \sim \rmP^{+,\delta} \big(h(0) \in \cdot \big)$, with law which is the $W^{p,\infty}$-limit in~\eqref{e:1.17} with $h(x_k)$ in place of $h_{\bbT(x_k)}$.

The next theorem shows that $\rmP^{+,[n]_2}$ is indeed the asymptotic law of $h$ under $\rmP_n^+$. This asymptotic equivalence is stated in a rather strong sense, both with regard to the topology used on the space of probability measures and the subsets of $\bbT$ to which these laws are restricted.
\begin{theorem}
\label{t:7.7}
Fix $p \in [1,\infty)$ and, for all $n \geq 1$, let $A_n \subset \bbT_{n'}$ be connected. Suppose that
\begin{equation}
\label{e:107.40}
	\lim_{r \to \infty} \sup_{n} \sum_{k \geq r} \big|A_n \cap \bbL_k \big| 2^{-k/p} k = 0\,.
\end{equation}
Then with $x_{l_n} \in \bbL_{l_n}$,
\begin{equation}
\label{e:107.41}
W^{p,\infty} \Big(h_n^+\big(A_n(x_{l_n})\big) - m_{n'} 
\,,\,\, h^{+,[n]_2}(A_n) \Big) \underset{n \to \infty}{\longrightarrow}
	0 \,,
\end{equation}
where $h_n^+$ has law $\rmP_n^+$ and $h^{+,[n]_2}$ has law $\rmP^{+,[n]_2}$.
\end{theorem}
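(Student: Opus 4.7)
The strategy is to reduce the problem, via the Markov property of the DGFF which is preserved under the hard-wall conditioning, to two independent coupling tasks: matching the single value at $y := x_{l_n}$ under the two measures, and then matching the law of the sub-tree below $y$ given that value. Identifying $\bbT(y) \cap \bbT_n$ with $\bbT_{n'}$, the factorization $\Omega_n^+ = \bigcap_{z \in \bbL_{l_n}} \{\min_{\bbL_n \cap \bbT(z)} h \geq 0\}$ and the Markov property at generation $l_n$ imply that, conditionally on $h_n^+(y) = m_{n'} + v$, the restriction of $h_n^+ - m_{n'}$ to $\bbT(y) \cap \bbT_n$ has law exactly $\rmP_{n',v}^{\uparrow 0}$. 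On the limit side, \eqref{e:107.31} in Theorem~\ref{t:1.7} gives $\rmP^{+,[n]_2}(\,\cdot\,\mid h(0)=v) = \rmP_{v}^{\uparrow 0}$, so the task reduces to matching the pair (initial height, sub-tree law) on both sides in $W^{p,\infty}$.

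The first step is to couple $V_n := h_n^+(y) - m_{n'}$ with $V := h^{+,[n]_2}(0)$ in $L^p$, which is essentially the $A_n = \{0\}$ case. This should follow from the sharp single-vertex marginal asymptotics at generation $l_n$ developed in \cite{Work1} (in particular, \cite[Theorem~\ref{1@t:1.4}]{Work1} combined with the Gaussian-type tails of \cite[Theorem~\ref{1@t:1.3}]{Work1}), together with the characterization of $\rmP^{+,[n]_2}$ in Theorem~\ref{t:1.7}. Given the coupled value $V_n = V = v$, Proposition~\ref{p:1.45} provides, for each fixed $r$, a coupling of $\rmP_{n',v}^{\uparrow 0}$ with $\rmP_v^{\uparrow 0}$ that is $W^{p,\infty}$-close on $\bbT_r$. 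Integrating over $v$ using the tightness of $V_n$ and $V$ then yields the desired coupling on $A_n \cap \bbT_r$ for any fixed $r$.

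The main technical obstacle is controlling the contribution to $W^{p,\infty}$ from $A_n \setminus \bbT_r$, which is where hypothesis \eqref{e:107.40} enters. The plan is to iterate the Markov relation \eqref{e:107.27}: conditionally on the coupled values at generation $k \geq r$, the sub-fields on $\{\bbT(x) :\: x \in \bbL_k\}$ are independent under each measure, with laws $\rmP_{n'-k, h(x)}^{\uparrow -c_0 k}$ and $\rmP_{h(x)}^{\uparrow -c_0 k}$ respectively. Using the sharp upper and lower tail estimates from \cite{Work1}, combined with continuity of the family $(u,v) \mapsto \rmP_v^{\uparrow u}$ in Proposition~\ref{p:1.45}, one constructs the coupling so that the per-vertex $L^p$ discrepancy at $x \in \bbL_k$ is of order $k\, 2^{-k/p}$: the factor $2^{-k}$ inside the $1/p$-th power reflects the probability of a ``near-wall'' rare event whose influence persists to depth $k$ (since the hard-wall conditioning is no longer felt in the infinite-volume limit $\rmP^\uparrow$ at depth $k$), while the factor $k$ is the typical magnitude of the deviation on that event. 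Bounding the sup over $A_n \setminus \bbT_r$ by the $\ell^1$ sum and applying the $L^p$ triangle inequality yields a total tail contribution at most $C \sum_{k \geq r} |A_n \cap \bbL_k|\, k\, 2^{-k/p}$, which by \eqref{e:107.40} vanishes as $r \to \infty$ uniformly in $n$. The proof then concludes by letting first $n \to \infty$ for fixed $r$ and then $r \to \infty$. The delicate part is the construction of the per-vertex coupling with the claimed $L^p$ rate $k\, 2^{-k/p}$; this is where Work1's fine tail asymptotics and the continuity of $\rmP^{\uparrow u}_v$ in $(u,v)$ must be carefully combined.
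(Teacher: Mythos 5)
Your overall architecture coincides with the paper's: couple the two fields on $\bbT_r$ using the already-established local convergence (Proposition~\ref{p:1.7e}), reduce the subtrees below generation $r$ to the Markov kernels $\rmP^{\uparrow}_{n'-r,\cdot}$ versus $\rmP^{\uparrow}_{\cdot}$ via the Markov property and \eqref{e:107.30}, bound the tail contribution by $C\sum_{k\geq r}|A_n\cap \bbL_k|\,k\,2^{-k/p}$, and send $n\to\infty$ then $r\to\infty$. The identification of the conditional law of $h_n^+-m_{n'}$ on $\bbT(x_{l_n})\cap\bbT_n$ as $\rmP^{\uparrow 0}_{n',v}$ is also correct and is verified in the paper in exactly this form.

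However, the step you yourself flag as ``the delicate part'' is a genuine gap, and it is the entire content of the theorem beyond Proposition~\ref{p:1.7e}. Two things are missing. First, you speak of a ``per-vertex $L^p$ discrepancy at $x\in\bbL_k$ of order $k\,2^{-k/p}$'' and then sum these over $A_n\setminus\bbT_r$. If this means the marginal discrepancy $\|h_n^+(x)-h^{+,[n]_2}(x)\|_p$, no coupling achieves a decaying rate: the discrepancy accumulated along a branch does not vanish with depth, it only converges. What actually has the rate $k\,2^{-k/p}$ is the \emph{increment} (gradient) of the discrepancy across the edge ending at $x\in\bbL_k$; one then telescopes along branches and bounds the supremum of the path sums by the full $\ell^1$ sum over edges. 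Second, and more importantly, you do not construct the coupling that produces this rate, and a direct coupling of $\rmP^{\uparrow}_{n'-k,h(x)}$ with $\rmP^{\uparrow}_{h(x)}$ vertex by vertex is not how the estimate is obtained. The paper's mechanism (Lemma~\ref{l:107.3} and Proposition~\ref{p:7.9}) is to couple \emph{each} conditioned field Markovianly to a \emph{common} unconditioned BRW $h$, writing $h_n^+=h+\eta_n^+$ and $h_\infty^+=h+\eta_\infty^+$ below generation $r$ with the same $h$; the single-step Wasserstein bound between the conditioned and unconditioned one-step transition kernels, controlled by the left-tail asymptotics $q_m(w)\asymp w\rme^{-c_0 w}$ of the minimum and the exponential moment $\rmE_n\rme^{-c_0 h(x_k)}$, yields $\|\eta^{\uparrow u}_{\cdot}(x)-\eta^{\uparrow u}_{\cdot}(y)\|_p\leq C_p(\cdots)\,|x|\,2^{-|x|/p}$ uniformly in the volume. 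The difference $h_n^+-h_\infty^+$ then equals $\eta_n^+-\eta_\infty^+$ plus the generation-$r$ mismatch, and both $\eta$-fields have summable gradients. Without this intermediary (or an equivalent device), your claim that ``Work1's fine tail asymptotics and the continuity of $\rmP^{\uparrow u}_v$ in $(u,v)$'' produce the $k\,2^{-k/p}$ rate remains an unproven assertion, since continuity alone gives no quantitative decay in $k$. There is also a minor bookkeeping point: after conditioning on generation $r$, one must integrate the $v$-dependent constant in \eqref{e:107.47} against the laws of $h_n^+(y)-m_{n'}$ and $h^{+,[n]_2}(y)$ for $y\in\bbL_r$, which requires the almost-Gaussian tails of Theorem~\ref{t:1.3}; your proposal invokes tightness, which would not suffice to control the factor $\rme^{c_0 v^-/p}$ uniformly.
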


\begin{remark}
The above theorem suggests that the asymptotic law of $h - m_{n'}$ under the conditioning on $\Omega_n^+$ depends on $\delta = [n]_2$. This dependency, which appears in the asymptotic statements for all local and global observables below, comes out in the proof, in which we study the field in the first $\lfloor n \rfloor_2 = \log_n - [n]_2$ generations. It is thus an artifact of the proof, which, in turn, stems from the discreteness of the time evolution/field's domain in the model. Nevertheless, we could not rule out (or in) the possibility that this dependency is degenerate, namely that $\rmP^{+,\delta}$ (and all $\delta$-dependent quantities in the statements to follow) does not depend on $\delta$, or trivial, by which we mean that $\rmP^{+,\delta}(\cdot) = \rmP^{+,0}(\cdot - \theta_\delta)$ for some $\theta: [0,1) \to \bbR^{\bbT}$ (so that $\theta_{[n]_2}$ could be added to the centering sequence $m_{n'}$ in~\eqref{e:107.41}, resulting in a constant asymptotic law). This issue, which 
concerns the question of ``survival'' of the discreteness of the model in the limiting law of the conditional field, warrants further investigation, but falls outside the scope of this work.
\end{remark}

By choosing particular sequences of sets $(A_n)_{n \geq 1}$, we obtain the following consequences of Theorem~\ref{t:7.7}.
\begin{corollary}
\label{c:1.9}
Fix $p \in [1,\infty)$. If $(A_n)_{n \geq 1}$ with $A_n \subset \bbT_{n'}$ satisfies
\begin{equation}
\label{e:103.32a}
	\sum_{k \geq 0} \big|A_n \cap \bbL_k\big| 2^{-k/p} k \underset{n \to \infty}\longrightarrow 0\,,
\end{equation}
or
$A_n = A \cap \bbT_{n'}$, for some $A \subset \bbT,$ satisfying 
\begin{equation}
\label{e:103.32}
	\sum_{k \geq 0} \big|A \cap \bbL_k\big| 2^{-k/p} k < \infty \,,
\end{equation}
then~\eqref{e:107.41} holds. 
\end{corollary}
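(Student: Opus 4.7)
The plan is to show that each of the two hypotheses on $(A_n)_{n \geq 1}$ implies the summability condition~\eqref{e:107.40} of Theorem~\ref{t:7.7}, from which the conclusion~\eqref{e:107.41} is immediate from that theorem.

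For the first hypothesis~\eqref{e:103.32a}, set $s_n := \sum_{k \geq 0} |A_n \cap \bbL_k| 2^{-k/p} k$, which tends to $0$. Given $\epsilon > 0$, choose $N$ with $s_n < \epsilon$ for all $n \geq N$; then for any $r \geq 0$ and $n \geq N$, $\sum_{k \geq r} |A_n \cap \bbL_k| 2^{-k/p} k \leq s_n < \epsilon$. For each of the finitely many indices $n < N$, the set $A_n \subset \bbT_{n'}$ is finite, so its tail at level $r$ vanishes once $r > n'$; the maximum of these finitely many thresholds yields a single $R$ such that $\sup_{n < N} \sum_{k \geq r} |A_n \cap \bbL_k| 2^{-k/p} k = 0$ for $r \geq R$. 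Combining both cases, $\sup_n \sum_{k \geq r} |A_n \cap \bbL_k| 2^{-k/p} k < \epsilon$ for $r \geq R$, which is~\eqref{e:107.40}. Under the second hypothesis~\eqref{e:103.32}, the pointwise bound $|A_n \cap \bbL_k| \leq |A \cap \bbL_k|$ gives $\sup_n \sum_{k \geq r} |A_n \cap \bbL_k| 2^{-k/p} k \leq \sum_{k \geq r} |A \cap \bbL_k| 2^{-k/p} k$, whose right hand side is the tail of a convergent series and hence tends to zero as $r \to \infty$, yielding~\eqref{e:107.40}.

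The one delicate point is that Theorem~\ref{t:7.7} assumes $A_n$ to be connected, while the corollary does not. I would handle this by enlarging $A_n$ to a connected superset $B_n \subset \bbT_{n'}$ (for instance the Steiner subtree spanned by $A_n \cup \{0\}$) and invoking monotonicity of $W^{p,\infty}$ under projection, which bounds the distance on $A_n$ by the distance on $B_n$. The main obstacle here is that such $B_n$ need not itself satisfy~\eqref{e:107.40}: the naive ancestor closure can inflate the level counts enough to destroy summability, as one sees already for $A_n$ concentrated at a single deep level with many distinct vertices. I expect the cleanest route is to verify that the proof of Theorem~\ref{t:7.7} passes to arbitrary (not necessarily connected) $A_n \subset \bbT_{n'}$, since condition~\eqref{e:107.40} is additive in $A_n$ and does not seem to rely essentially on a subtree structure; failing that, one selects an enlargement tailored to each of the two hypotheses so as to preserve the relevant summability.
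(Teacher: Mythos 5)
Your verification of~\eqref{e:107.40} under each hypothesis is exactly the paper's argument: under~\eqref{e:103.32a} the full sum, hence every tail, is uniformly small for $n\geq n_0$, while the finitely many $n<n_0$ have tails that vanish outright once $r$ exceeds $n_0'$; under~\eqref{e:103.32} one uses that the tail of a convergent series tends to zero. On the connectedness point you raise: the paper's proof is silent about it, and the corollary is only ever applied to connected sets (branches in Corollaries~\ref{c:1.17}--\ref{c:1.18}, balls in Corollary~\ref{c:1.11a}), so it should be read as inheriting the standing connectedness hypothesis of Theorem~\ref{t:7.7} rather than requiring any enlargement; your observation that a Steiner-tree enlargement can destroy the summability is correct, which is precisely why one should not try to remove that hypothesis this way.
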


Since~\eqref{e:103.32} clearly holds for any finite set $A \subset \bbT$, we immediately get
\begin{proposition}
\label{p:1.7e}
Fix $p \in [1,\infty)$. With $x_{l_n} \in \bbL_{l_n}$,
\begin{equation}
\label{e:107.58}
		W^{p,\infty}_\loc \Big(h_n^+\big(\bbT_{n'}(x_{l_n})\big) - m_{n'} 
\,,\,\, h^{+,[n]_2} \Big) \underset{n \to \infty}{\longrightarrow}
	0 \,,
\end{equation}
where $h_n^+$ has law $\rmP_n^+$ and $h^{+,[n]_2}$ has law $\rmP^{+,[n]_2}$.
\end{proposition}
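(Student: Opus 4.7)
The plan is to deduce Proposition~\ref{p:1.7e} directly from the second case of Corollary~\ref{c:1.9}, relying only on the definition of the local Wasserstein topology. Recall that, by~\eqref{e:1001.9}, a sequence of probability measures on $\bbR^\bbT$ converges in $W^{p,\infty}_\loc$ if and only if, for every fixed $k \geq 0$, their $\Pi_{\bbT_k}$-marginals converge in $W^{p,\infty}$. It therefore suffices to prove that, for every fixed $k \geq 0$,
\[
W^{p,\infty}\Big(h_n^+\big(\bbT_k(x_{l_n})\big) - m_{n'}\,,\, h^{+,[n]_2}\big(\bbT_k\big)\Big) \underset{n\to\infty}\longrightarrow 0.
\]

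To establish this, I would fix $k \geq 0$ and apply Corollary~\ref{c:1.9} to the fixed set $A := \bbT_k \subset \bbT$. Since $\bbT_k$ is finite, the series in~\eqref{e:103.32} has only finitely many nonzero terms and therefore converges trivially; for example, it is dominated by $\sum_{j=0}^{k} 2^{j(1-1/p)} j < \infty$. Moreover, for all $n$ with $n' \geq k$, one has $A \cap \bbT_{n'} = \bbT_k$, so the sequence $A_n$ appearing in Corollary~\ref{c:1.9} is ultimately constant, equal to $\bbT_k$. After identifying $\bbT(x_{l_n})$ with $\bbT$ via the natural rooted isomorphism sending $x_{l_n}$ to $0$ (under which $\bbT_k(x_{l_n})$ corresponds to $\bbT_k$), Corollary~\ref{c:1.9} directly supplies the convergence in the previous display.

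Letting $k$ range over $\bbN$ and invoking the definition~\eqref{e:1001.9} then yields the desired $W^{p,\infty}_\loc$-convergence of $h_n^+\big(\bbT_{n'}(x_{l_n})\big) - m_{n'}$ to $h^{+,[n]_2}$. I do not anticipate any genuine obstacle in this argument: Proposition~\ref{p:1.7e} is a pure repackaging of Theorem~\ref{t:7.7} (via Corollary~\ref{c:1.9}) into the local topology, and all the substantive probabilistic content has been absorbed into those earlier statements. The only verifications are essentially bookkeeping, namely the trivial summability check for the finite set $\bbT_k$ and the matching between the projection appearing in the definition of $W^{p,\infty}_\loc$ and the restriction to $A_n = \bbT_k$ used in Corollary~\ref{c:1.9}; both are immediate.
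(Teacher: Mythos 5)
Your deduction of the proposition from Corollary~\ref{c:1.9} with $A = \bbT_k$ is formally valid as a logical implication, but as a proof it is circular within the architecture of the paper, and the paper explicitly warns against exactly this: immediately after the statement of Proposition~\ref{p:1.7e} the authors note that it ``is stated as a proposition (and not as a corollary or a remark) since the proof of Theorem~\ref{t:7.7} actually relies on Proposition~\ref{p:1.7e}, which is proved first independently.'' Concretely, in the proof of Theorem~\ref{t:7.7} the coupling between $h_n^+$ and $h^{+,[n]_2}$ is built by first coupling the two fields on $\bbT_r(x)$, resp.\ $\bbT_r$, so as to realize the distance
\begin{equation*}
W^{p,\infty}\Big(\rmP^+_n\big(h_{\bbT_r(x)} - m_{n'} \in \cdot\big),\ \rmP^{+,[n]_2}\big(h_{\bbT_r}\in\cdot\big)\Big)\,,
\end{equation*}
and the fact that this term vanishes as $n\to\infty$ is supplied precisely by~\eqref{e:107.58}. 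Since Corollary~\ref{c:1.9} is nothing but a special case of Theorem~\ref{t:7.7}, your argument uses the proposition to prove itself. Your closing claim that ``all the substantive probabilistic content has been absorbed into those earlier statements'' is therefore the opposite of the situation: the substantive content of the finite-depth marginal convergence must be established before Theorem~\ref{t:7.7} can be proved.

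The non-circular route, which is the one the paper takes, is via Lemma~\ref{l:101.7}: for each fixed $r$, the convergence $|\rmE_n^+ F(\hat h_{\bbT_r(x_{l_n})}) - \rmE^{+,[n]_2}F(h_{\bbT_r})| \to 0$, holding uniformly over all $F$ with $F(h)\le C\rme^{C\|h\|}$, gives in particular
\begin{equation*}
\int \|h_{\bbT_r}\|^p\,\Big|\rmP_n^+\big(\hat h_{\bbT_r(x_{l_n})}\in \rmd h\big) - \rmP^{+,[n]_2}\big(h_{\bbT_r}\in \rmd h\big)\Big| \underset{n\to\infty}\longrightarrow 0\,,
\end{equation*}
and then, exactly as in the proof of Proposition~\ref{p:1.45}, Theorem~6.15 of~\cite{villani} converts this weighted total-variation bound into $W^{p,\infty}$ convergence of the $\bbT_r$-marginals (noting that $\hat h = h - m_{n'}$ on $\bbT_{n'}(x_{l_n})$ since $\mu_n \equiv m_{n'}$ there). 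Letting $r$ range over $\bbN$ then gives~\eqref{e:107.58} by the definition~\eqref{e:1001.9} of $W^{p,\infty}_{\loc}$. Your final reduction to marginal convergence on each $\bbT_k$ is the right skeleton; what is missing is that the marginal convergence itself must come from Lemma~\ref{l:101.7}, not from Theorem~\ref{t:7.7}.
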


We note that the above is stated as a proposition (and not as a corollary or a remark) since the proof of Theorem~\ref{t:7.7} actually relies on Proposition~\ref{p:1.7e}, which is proved first independently. Another corollary of Theorem~\ref{t:7.7} is given below. Henceforth, we shall use $\rmB_r(x)$ to denote the ball of radius $r \geq 0$ around $x \in \bbT$ in graph-distance.
\begin{corollary}
\label{c:1.11a}
Fix $p \geq 1$ and $\epsilon > 0$. For all $n \geq 1$, let $x_n \in \bbL_n$ and set
\begin{equation}
	r_{n,p} := \frac{n}{p} - (1+\epsilon) \log_2 n \,.
\end{equation} 
Then~\eqref{e:107.41} holds with $A_n := \rmB_{r_{n',p}}(x_{n'}) \cap \bbT_{n'}$.
\end{corollary}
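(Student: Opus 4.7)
The plan is to apply Corollary~\ref{c:1.9} (its first, uniform-in-$n$ version) by verifying
\begin{equation*}
  \sum_{k \geq 0} \big|A_n \cap \bbL_k\big|\, 2^{-k/p}\, k \underset{n\to\infty}{\longrightarrow} 0\,.
\end{equation*}
The set $A_n = \rmB_{r_{n',p}}(x_{n'}) \cap \bbT_{n'}$ is a subtree of $\bbT_{n'}$ (hence connected), and the sum is supported on $k \in [0, n']$.

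The first step is to count vertices by depth. For $y \in \bbL_k$ with $k \leq n'$, the tree distance from $y$ to $x_{n'}$ equals $n' + k - 2|x_{n'} \wedge y|$, so $y \in A_n$ iff $|x_{n'}\wedge y| \geq (n' + k - r_{n',p})/2$. In particular $|A_n \cap \bbL_k| = 0$ for $k < n' - r_{n',p}$. For each admissible common-ancestor depth $a$, there are at most $2^{k-a-1}$ such $y$ (the depth-$k$ descendants of the sibling subtree branching off the root-to-$x_{n'}$ path at depth $a$), plus possibly the lone vertex on the path itself. Summing the resulting geometric series in $a$ yields, for an absolute constant $C$,
\begin{equation*}
  \big|A_n \cap \bbL_k\big| \leq C\, 2^{(r_{n',p} + k - n')/2}\,, \qquad k \in [\max(0,\, n'- r_{n',p}),\, n']\,.
\end{equation*}

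Substituting the defining formula $r_{n',p} = n'/p - (1+\epsilon)\log_2 n'$,
\begin{equation*}
  \sum_{k \geq 0} \big|A_n \cap \bbL_k\big|\, 2^{-k/p}\, k
  \leq C\, 2^{(r_{n',p} - n')/2} \sum_{k=\max(0,\, n'-r_{n',p})}^{n'} 2^{k(1/2 - 1/p)}\, k\,.
\end{equation*}
The behavior of the inner sum is dictated by the sign of $\alpha := \tfrac{1}{2} - \tfrac{1}{p}$. For $p > 2$ it is dominated by the top term $k = n'$, and combined with the prefactor one gets a total of order $2^{-n'/(2p)}\, (n')^{(1-\epsilon)/2}$. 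For $p = 2$, $\alpha = 0$ and $\sum_k k = O((n')^2)$ yields a bound of order $2^{-n'/4}\, (n')^{(3-\epsilon)/2}$. For $1 < p < 2$ the geometric sum is controlled by its bottom term at $k = n' - r_{n',p}$, producing a total of order $2^{-n'(p-1)/p^2}\, (n')^{(p-1-\epsilon)/p}$. Finally, for $p = 1$ one obtains $C\, (\log n')/(n')^{1+\epsilon}$. In every regime the right-hand side vanishes as $n \to \infty$, so Corollary~\ref{c:1.9} delivers~\eqref{e:107.41}.

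The main obstacle is conceptually rather than technically the case analysis: the $(1+\epsilon)\log_2 n$ cushion in the definition of $r_{n,p}$ is what produces the crucial $n^{-(1+\epsilon)\eta}$ factor (with some $\eta>0$ depending on the regime of $p$) that beats the polynomial-in-$n'$ growth coming from the sum. Without this cushion the $k = n'$ contribution alone would already be of order $n'$ in the critical case $p=1$, so the $\epsilon > 0$ margin is essentially sharp for this approach.
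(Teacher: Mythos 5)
Your proof is correct and takes essentially the same route as the paper: verify condition~\eqref{e:103.32a} of Corollary~\ref{c:1.9} by bounding $|A_n \cap \bbL_k|$ generation by generation and summing. Your count $|A_n\cap\bbL_k|\le C\,2^{(r_{n',p}+k-n')/2}$ is in fact sharper than the paper's cruder $2^{k-n'+r_{n',p}}$ (which only uses the constraint on the depth of the common ancestor), and your case analysis in $p$ is more careful than the paper's one-line estimate, but the argument is the same.
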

In words, Proposition~\ref{p:1.7e} and Corollary~\ref{c:1.11a} say that under $\rmP_n^+$, the law of $h$ restricted to any finite subset of a subtree rooted at $x_{l_n} \in \bbL_{l_n}$, or a ball of graph radius at most $n'/p - (1+\epsilon) \log_2 n$ around a leaf $x_n \in \bbL_n$, is asymptotically given by the corresponding marginals of $\rmP^{+,[n]_2}$.

\medskip
Having identified the limiting law under $\rmP_n^+$ as $\rmP^{+,[n]_2}$, we wish to study properties of the latter law.  We start with the following proposition, which gives the tails of the marginals of $\rmP^{+,\delta}$. These are inherited 
from the tails of $h$ under $\rmP_n^+$, which were derived in the first paper~\cite[Theorem~\ref{1@t:1.3}]{Work1}.
\begin{proposition}
\label{p:1.11}
There exists $C, c \in (0, \infty)$ such that for all $x \in \bbT$ and $u > 0$,
\begin{equation}
\label{e:101.9a}
c \exp \Big(-C \frac{u^2}{	\ol{\sigma}_\infty(x, u)} \Big)
\leq 
	\rmP^{+,\delta} \big(h(x) > u \big) \leq C \exp \Big(-c \frac{u^2}{	\ol{\sigma}_\infty(x, u)} \Big)
\end{equation}
and 
\begin{equation}
\label{e:101.9b}
c \exp \Big(-C \frac{u^2}{	\ul{\sigma}_\infty(x)} \Big) 
\leq 
	\rmP^{+,\delta} \big(h(x) < -u \big) \leq C \exp \Big(-c \frac{u^2}{	\ul{\sigma}_\infty(x)} \Big) \,,
\end{equation}
where
\begin{equation}
\label{e:101.11a}
	\ul{\sigma}_\infty(x) := |x|+1
	\  , \quad
	\ol{\sigma}_\infty(x, u) := \log_2 u + |x| + 1 \,.
\end{equation}
The lower bound in~\eqref{e:101.9b} only holds up to $u \leq C(m_{|x|}+1)$.
\end{proposition}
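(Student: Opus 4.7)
The plan is to obtain the tail bounds for $\rmP^{+,\delta}$ by transferring the corresponding uniform-in-$n$ finite-volume tail bounds for $\rmP_n^+$ derived in~\cite[Theorem~\ref{1@t:1.3}]{Work1}, with Proposition~\ref{p:1.7e} serving as the bridge. First, I would fix $x \in \bbT$ and restrict to $n \to \infty$ with $[n]_2 = \delta$. Choosing $x_{l_n} \in \bbL_{l_n}$ and identifying $x$ with a descendant $y_n := x_{l_n} \cdot x \in \bbT_n$ at depth $l_n + |x|$, we have $|y_n| \geq l_n$, hence $\mu_n(y_n) = m_{n'}$ by~\eqref{e:1.10a}. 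Projecting the $W^{p,\infty}_\loc$-convergence of Proposition~\ref{p:1.7e} onto the single-vertex marginal at $x$ then yields weak convergence (in fact convergence in every $W^{p,\infty}$) of $h_n^+(y_n) - m_{n'}$ to $h(x)$ under $\rmP^{+,\delta}$.

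Next, I would invoke~\cite[Theorem~\ref{1@t:1.3}]{Work1} at $y = y_n$. The picture recalled in Subsection~\ref{s:1.1} decomposes the law of $h_n^+(y_n) - m_{n'}$ as the (asymptotically independent) sum of $h_n^+(x_{l_n}) - m_{n'}$, which is tight, with almost-Gaussian upper tail of effective variance $\Theta(\log_2 u)$ and Gaussian lower tail of bounded variance, together with an independent Gaussian step of variance $|y_n| - l_n = |x|$ connecting $x_{l_n}$ to $y_n$. Convolving these two contributions produces, uniformly in $n$ large, matching upper and lower bounds on $\rmP_n^+(h_n^+(y_n) - m_{n'} > u)$ and $\rmP_n^+(h_n^+(y_n) - m_{n'} < -u)$ of the exponential form appearing in~\eqref{e:101.9a}--\eqref{e:101.9b}, with the lower bound in~\eqref{e:101.9b} restricted to $u \leq C(m_{|x|}+1)$ as explained below. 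To pass these bounds to the limit I would use Portmanteau's theorem: the open-set version applied to $(u,\infty)$ and $(-\infty,-u)$ yields the upper inequalities via $\rmP^{+,\delta}(h(x) > u) \leq \liminf_n \rmP_n^+(h_n^+(y_n) - m_{n'} > u)$ (and analogously for the lower tail); the closed-set version applied to $[u+\eps,\infty)$ and $(-\infty,-u-\eps]$, combined with the uniform lower bounds at $u+\eps$ and $\eps \downarrow 0$ by continuity of the exponentials in $u$, yields the matching lower inequalities.

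The main obstacle is the middle step, namely teasing out from~\cite[Theorem~\ref{1@t:1.3}]{Work1} precise uniform-in-$n$ tail estimates at the specific vertex $y_n$ whose variance parameters collapse, independently of $n$ for all sufficiently large $n$, to $\ol{\sigma}_\infty(x,u) = \log_2 u + |x|+1$ in the upper direction and $\ul{\sigma}_\infty(x)=|x|+1$ in the lower direction. The restriction $u \leq C(m_{|x|}+1)$ on the matching lower bound in~\eqref{e:101.9b} reflects the fact that, under $\rmP^{+,\delta}$, the conditioning on positivity of the entire subtree $\bbT(x)$ forces $h(x) \gtrsim -m_{|x|}$ with overwhelming probability, so that beyond this scale the true lower tail becomes strictly super-Gaussian and no matching Gaussian lower bound can hold.
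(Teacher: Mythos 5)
Your overall route is the paper's route: transfer the uniform-in-$n$ tail bounds of Theorem~\ref{t:1.3} to $\rmP^{+,\delta}$ through the convergence of the one-point marginal at $y_n$, the depth-$(l_n+|x|)$ descendant of $x_{l_n}$. The paper does this by taking $F = \1_{\{h(x)>u\}}$ and $F=\1_{\{h(x)<-u\}}$ in the second limit of~\eqref{e:107.18} in Lemma~\ref{l:101.7}; since that limit is uniform over all bounded $F$, it gives direct convergence of the probabilities themselves, so your Portmanteau/$\eps$-approximation step, while valid, is not needed. The one place where your proposal goes astray is the step you yourself flag as the ``main obstacle'': there is no obstacle there, and the decompose-and-convolve argument you sketch to handle it is both unnecessary and not rigorous as written. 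Theorem~\ref{t:1.3} applies verbatim to $y_n$, and substituting $|y_n| = l_n + |x|$ into~\eqref{e:1.11a} gives $\ul{\sigma}_n(y_n) = |x|+1 = \ul{\sigma}_\infty(x)$ exactly and $\ol{\sigma}_n(y_n,u) = (\log_2(u\wedge n))^+ + |x|+1$, which for fixed $u$ equals $\ol{\sigma}_\infty(x,u)$ (up to constants) once $n \geq u$; likewise the restriction $u \leq C(m_{|y_n|-l_n}+1) = C(m_{|x|}+1)$ on the lower bound in the lower tail is read off directly. By contrast, the convolution of a ``tight part'' at $x_{l_n}$ with an ``independent Gaussian step'' of variance $|x|$ is only the heuristic picture from the introduction: the two contributions are not independent under $\rmP_n^+$, and asymptotic independence does not by itself yield two-sided tail bounds with constants uniform in $n$. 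Replace that paragraph with the direct substitution into Theorem~\ref{t:1.3} and the argument is complete and coincides with the paper's.
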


Next, we present a key tool in studying various properties of $\rmP^{+,\delta}$, which is of interest by itself. This comes in the form of a coupling result, which shows that the a field $h^{+,\delta}$ on $\bbT$ having law $\rmP^{+,\delta}$ can be realized as a sum of two dependent fields: a standard DGFF $h$ on $\bbT$ and an auxiliary field $\eta^{+,\delta}$, whose gradients decay exponentially in the $\bbL^p$ norm for all $p \geq 1$ away from the root.
\begin{theorem}
\label{t:107.4}
Let $\delta \in [0,1)$. There exists a coupling $\ddot{\rmP}^{+, \delta}$
 between the fields $h$, $h^{+,\delta}$ and $\eta^{+,\delta}$ on $\bbT$ such that 
\begin{enumerate}
	\item $\ddot{\rmP}^{+, \delta}$-almost-surely,
	\begin{equation}
		h^{+,\delta} = h + \eta^{+,\delta} \,.
	\end{equation} 
	\item The laws of $h$ and $h^+$ obey:
	\begin{equation}
		 \ddot{\rmP}^{+, \delta} \big(h \in \cdot) = \rmP(h \in \cdot\,\big|\, h(0) = 0)
		 \ , \quad 
		 \ddot{\rmP}^{+, \delta} \big(h^{+,\delta}\in \cdot) = \rmP^{+,\delta}(h \in \cdot) \,,
	\end{equation}
	and
	\begin{equation}
		 \ddot{\rmP}^{+, \delta} \big(\eta^{+,\delta}(0) \in \cdot\big) = \rmP^{+,\delta}(h(0) \in \cdot) \,.
	\end{equation}
	\item For $x,y \in \bbT$ with $x$ a direct child of $y$, and all $p \geq 1$,
	\begin{equation}
	\label{e:101.39}
		\big\|\eta^{+,\delta}(x) - \eta^{+,\delta}(y) \big\|_p \leq C_p |x|\, 2^{-|x|/p}
		\ , \quad
		\ddot{\rmE}^{+,\delta} \big(\eta^{+,\delta}(x) - \eta^{+,\delta}(y)\big) \geq 0 \,.
	\end{equation}	
	where $C_p < \infty$ and does not depend on $\delta$. 
\end{enumerate} 
\end{theorem}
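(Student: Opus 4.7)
The plan is to build the coupling sequentially down the tree, using the Markov property~\eqref{e:107.30} at each parent-child edge to reduce the construction to a sequence of one-dimensional transports between a standard Gaussian and a biased Gaussian. First I would sample $h^{+,\delta}(0)$ from its marginal under $\rmP^{+,\delta}$ and set $h(0):=0$ and $\eta^{+,\delta}(0):=h^{+,\delta}(0)$. Then inductively, for each parent $y\in\bbL_k$ with child $x$, I would draw an independent standard normal $Z_x$, set $h(x):=h(y)+Z_x$, and declare $h^{+,\delta}(x):=h^{+,\delta}(y)+Z^+_x$, where $Z^+_x:=G_x^{-1}(F(Z_x))$ is the monotone (quantile) transport of $Z_x$ under the conditional CDF $G_x$ of $h^{+,\delta}(x)-h^{+,\delta}(y)$ given $h^{+,\delta}(y)$. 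Combining~\eqref{e:107.30} with the Doob representation~\eqref{e:105.70} applied at level $r=1$ inside the subtree rooted at $y$ shows that $G_x$ has density
\begin{equation*}
	g_x(z)\propto \phi(z)\cdot p_\infty\bigl(-c_0|x|-h^{+,\delta}(y)-z\bigr),
\end{equation*}
where $\phi$ is the standard Gaussian density; the same factorisation shows that the two children of $y$ are conditionally independent given $h^{+,\delta}(y)$, so this sequential construction indeed produces the correct joint law of $h^{+,\delta}$.

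With this coupling, the single-edge displacement is $\eta^{+,\delta}(x)-\eta^{+,\delta}(y)=Z^+_x-Z_x$, and~\eqref{e:101.39} reduces to $\|Z^+_x-Z_x\|_p\le C_p|x|\,2^{-|x|/p}$. For monotone transports, this $L^p$ distance is bounded, up to a $p$-dependent constant, by the oscillation of $\log g_x$ over the Gaussian-effective range of $z$. I would estimate this oscillation using the right-tail asymptotic $1-p_\infty(u)\asymp|u|\,\rme^{c_0u}$ as $u\to-\infty$, a standard consequence of the BRW centred-minimum convergence from~\cite{AidekonBRW13}. For typical $h^{+,\delta}(y)=O(\sqrt{|x|})$ (by Proposition~\ref{p:1.11}) the argument of $p_\infty$ is of order $-c_0|x|$, so the log-derivative of the bias is of order $|x|\,\rme^{-c_0^2|x|}=|x|\,2^{-2|x|}$, yielding a conditional $L^p$ displacement of the same order. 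Splitting the expectation over $h^{+,\delta}(y)$ into its typical range and its Gaussian/almost-Gaussian tails (again via Proposition~\ref{p:1.11}) then bounds the unconditional $L^p$ norm by $|x|\,2^{-2|x|}$ up to polynomial-in-$|x|$ factors, which is comfortably within the asserted $C_p|x|\,2^{-|x|/p}$ for every $p\ge 1$.

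The monotonicity $\ddot\rmE^{+,\delta}(\eta^{+,\delta}(x)-\eta^{+,\delta}(y))\ge 0$ is immediate from the coupling: since $p_\infty$ is non-increasing, $z\mapsto g_x(z)/\phi(z)$ is non-decreasing, so $G_x$ stochastically dominates the standard normal CDF $F$, forcing $G_x^{-1}(F(Z_x))\ge Z_x$ pointwise. The main obstacle I anticipate is the uniform control of the log-derivative of $p_\infty$ across the full relevant range of $h^{+,\delta}(y)$, including moderately atypical deviations on which the bias is not close to constant; this requires combining the tail estimates from Proposition~\ref{p:1.11} with the precise tail behaviour of $p_\infty$, along with a truncation argument to handle the rare event that $h^{+,\delta}(y)$ is extremely low (where the argument of $p_\infty$ becomes small or positive and the bias is no longer close to unity). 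Once this analytic ingredient is secured, the initialisation step, the Markov consistency of the sequential coupling, and the monotonicity assertion are all short and formal.
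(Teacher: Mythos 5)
Your construction is, in architecture, the same as the paper's: the coupling is built edge-by-edge in a Markovian way down the tree (this is Lemma~\ref{l:107.3} combined with Proposition~\ref{p:7.9}), and then the root value is integrated out via \eqref{e:107.31}. Your quantile-transport version of the single-edge coupling, the conditional independence of siblings, and the monotonicity argument (likelihood-ratio ordering of $g_x/\phi$ forcing $Z_x^+\geq Z_x$ pointwise) are all fine, the last being a clean substitute for the FKG argument used in the paper.

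The genuine gap is in the $L^p$ gradient bound \eqref{e:101.39}, and it is exactly at the step you flag but then dismiss. Your conditional displacement bound given $h^{+,\delta}(y)=w$ is really of the form $C_p\,|x|\,\rme^{-c_0(c_0|x|+w)}=C_p\,|x|\,2^{-2|x|}\rme^{-c_0 w}$, and the factor $\rme^{-c_0 w}$ cannot be absorbed ``up to polynomial factors'' when you integrate over $w$: by Proposition~\ref{p:1.11} the lower tail of $h^{+,\delta}(y)$ is genuinely almost-Gaussian with variance of order $|y|$, so $\rmE^{+,\delta}\rme^{-pc_0 h(y)}$ grows like $\rme^{cp^2|y|}$, and raising your conditional bound to the $p$-th power before integrating yields $\|\eta^{+,\delta}(x)-\eta^{+,\delta}(y)\|_p\lesssim |x|\,2^{-2|x|}\rme^{cp|x|}$, which is vacuous for large $p$. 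Moreover, the bound you assert is simply false: on the event $\{h^{+,\delta}(y)\approx -c_0|x|/2\}$, which by the matching lower bound in Proposition~\ref{p:1.11} has probability of order $2^{-C|x|}$ (only exponentially small), the conditional mean of the increment is of order $|x|\,2^{-|x|}$, so Jensen gives $\|\eta^{+,\delta}(x)-\eta^{+,\delta}(y)\|_p\gtrsim |x|\,2^{-|x|(1+C/p)}$, which for large $p$ vastly exceeds $|x|\,2^{-2|x|}$. This is precisely why the exponent in \eqref{e:101.39} degrades to $2^{-|x|/p}$ rather than staying at $2^{-2|x|}$. The missing idea --- which is the real content of Lemma~\ref{l:107.3} --- is to bound the conditional $p$-th \emph{moment} itself linearly in the bias deficiency, via the weighted total-variation bound $W^{p,\infty}(\mu,\nu)^p\lesssim\int |s|^p\,|\rmd\mu-\rmd\nu|$ (Villani, Theorem 6.15): this gives a conditional $p$-th moment $\lesssim\big((c_0|x|+w)^++1\big)\rme^{-c_0(c_0|x|+w)}$ with $\rme^{-c_0 w}$ appearing to the \emph{first} power for every $p$, so that integrating over the parent value costs only $\rmE\,\rme^{-c_0 h(y)}=2^{|y|}$ (after removing the conditioning by FKG), yielding $2^{-2|x|}\cdot 2^{|x|}=2^{-|x|}$ for the $p$-th moment and hence $2^{-|x|/p}$ for the norm. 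Your quantile coupling could likely be analyzed the same way, but the bookkeeping you propose does not deliver \eqref{e:101.39}.
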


Thanks to the fast decay of its gradients, the field $\eta^{+,\delta}$ fixates to a random value away from the root, with the asymptotic value depending on the branch followed to infinity. The next theorem quantifies this behavior. For what follows, we denote by ${\bf 1}$ a deterministic field on $\bbT$ which takes the value $1$ at all vertices. 
\begin{theorem}
\label{p:107.5}
Fix $\delta \in [0,1)$ log-dyadic and let $(h, h^{+,\delta}, \eta^{+,\delta})$ be distributed according to $\ddot{\rmP}^{+, \delta}$. For all $x \in \bbT$, the mean of $h^{+,\delta}(x)$ is equal to the mean of $\eta^{+,\delta}(x)$ and both are non-decreasing in $|x|$. If ${\bf x} = (x_n)_{n \geq 1}$ is an infinite branch in $\bbT$ starting at the root, then the limit
\begin{equation}
\label{e:107.37}
	\eta^{+,\delta}({\bf x}) := \lim_{n \to \infty} \eta^{+,\delta}(x_n)
\end{equation}
exists almost-surely and in $\bbL^p$ for all $p \geq 1$, and has a law which does not depend on ${\bf x}$. Moreover, for all $p \geq 1$ there exists $C_p < \infty$, which does not depend on $\delta$, such that for all $A \subset \bbT$ connected, 
\begin{equation}
\label{e:107.38}
	W^{p,\infty} \big(\eta^{+,\delta}_{A},\, \eta^{+,\delta}(\infty), {\bf 1}_{A} \big) \leq C_p \sum_{k \geq 1} \big|A \cap \bbL_k|2^{-k/p} k \,,
\end{equation}
where $\eta^{+,\delta}(\infty)$ has the same law as $\eta^{+,\delta}({\bf x})$. In particular, 
\begin{equation}
\label{e:107.39}
	W^{p,\infty} \big(h^{+,\delta}_{A} ,\, h_{A} + \eta^{+,\delta}(\infty)\, {\bf 1}_{A} \big)
	\leq C_p \sum_{k \geq 1} \big|A \cap \bbL_k|2^{-k/p} k \,,
\end{equation}
where $h^{+,\delta}$ has law $\rmP^{+,\delta}$, and $(h, \eta^{+,\delta}(\infty))$ have some joint law with marginals $\rmP$ and that of $\eta^{+,\delta}(\infty)$ defined above.
\end{theorem}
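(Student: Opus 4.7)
The plan is to carry out all claims inside the coupling $\ddot{\rmP}^{+,\delta}$ of Theorem~\ref{t:107.4}, relying on two inputs: the gradient estimate~\eqref{e:101.39}, and the invariance of $\rmP^{+,\delta}$ under every root-fixing graph automorphism of $\bbT$. The latter follows from the Markov description~\eqref{e:107.30} together with the obvious symmetry of each $\rmP_v^{\uparrow u}$ (or directly from the defining limit~\eqref{e:1.17}). I will take $\ddot{\rmP}^{+,\delta}$ to be built so as to inherit this invariance in the joint law of $\eta^{+,\delta}$ on $\bbT$; this will be used to assert that the branch-limit law does not depend on the chosen branch.

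For the first assertion, $\ddot{\rmP}^{+,\delta}(h\in\cdot)=\rmP(h\in\cdot\,|\,h(0)=0)$ gives $\ddot{\rmE} h(x)=0$ for every $x\in\bbT$, so $h^{+,\delta}=h+\eta^{+,\delta}$ forces $\ddot{\rmE} h^{+,\delta}(x)=\ddot{\rmE}\eta^{+,\delta}(x)$. Symmetry of $\rmP^{+,\delta}$ makes this common value depend only on $|x|$, and the non-negativity of the mean gradient in~\eqref{e:101.39} along any parent--child edge gives monotonicity in $|x|$. For~\eqref{e:107.37}, fix ${\bf x}=(x_n)_{n\ge 0}$ and telescope:
\[
\eta^{+,\delta}(x_n)=\eta^{+,\delta}(x_0)+\sum_{k=1}^n\big(\eta^{+,\delta}(x_k)-\eta^{+,\delta}(x_{k-1})\big).
\]
By~\eqref{e:101.39}, $\|\eta^{+,\delta}(x_k)-\eta^{+,\delta}(x_{k-1})\|_p\le C_p k\,2^{-k/p}$, which is summable in $k$ for every $p\ge 1$. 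Minkowski yields convergence in $L^p$, and summability of $\sum_k k^p 2^{-k}$ combined with Markov's inequality and Borel--Cantelli gives absolute, hence a.s., convergence of the series. Transitivity of the root-fixing automorphism group on infinite branches then forces the joint law of the increments, and therefore the law of the limit, to be the same for every ${\bf x}$.

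For~\eqref{e:107.38}, given $A\subset\bbT$ connected, let $v_A\in A$ be the vertex of minimal depth (unique and contained in $A$ by connectedness, whence $A\subset\bbT(v_A)$). Fix any infinite branch ${\bf x}$ starting at $v_A$ and set $\eta^{+,\delta}(\infty):=\lim_n\eta^{+,\delta}(x_n)$, whose law is the correct one by the previous step. For each $y\in A$, the unique $\bbT$-path from $y$ to $v_A$ lies entirely in $A$, so by telescoping
\[
\max_{y\in A}\big|\eta^{+,\delta}(y)-\eta^{+,\delta}(v_A)\big|\le \sum_{z\in A\setminus\{v_A\}}\big|\eta^{+,\delta}(z)-\eta^{+,\delta}(\wt z)\big|,
\]
with $\wt z$ the parent of $z$. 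Minkowski together with~\eqref{e:101.39} bound the $L^p$-norm of the right-hand side by $C_p\sum_{k\ge 1}|A\cap\bbL_k|\,k\,2^{-k/p}$. A second telescoping along ${\bf x}$ from $v_A$ to infinity gives $\|\eta^{+,\delta}(v_A)-\eta^{+,\delta}(\infty)\|_p\le C_p(|v_A|+1)2^{-|v_A|/p}$, which is absorbed by the same right-hand side (modulo a boundary adjustment in the degenerate case $|v_A|=0$, handled by a slightly larger constant). Combining the two contributions through the triangle inequality in $\|\,\cdot\,\|_\infty$ and Minkowski proves~\eqref{e:107.38}. Finally, under the same coupling $h^{+,\delta}_A-(h_A+\eta^{+,\delta}(\infty){\bf 1}_A)=\eta^{+,\delta}_A-\eta^{+,\delta}(\infty){\bf 1}_A$, so~\eqref{e:107.39} is immediate with the same bound.

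The hardest part I anticipate is verifying that the coupling $\ddot{\rmP}^{+,\delta}$ produced in Theorem~\ref{t:107.4} can be arranged to be invariant under root-fixing automorphisms of $\bbT$; without this, the key assertion that the branch-limit law does not depend on the branch would not be available in the form needed. Once symmetry of the coupling is in place, the rest is combinatorial: summing the per-edge estimate~\eqref{e:101.39} along paths inside the subtree $A$, with a clean handling of the low-depth boundary contribution from $v_A$.
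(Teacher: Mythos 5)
Your proposal is correct and follows essentially the same route as the paper: telescoping the per-edge gradient bound~\eqref{e:101.39} along an infinite branch for~\eqref{e:107.37}, invoking the tree-symmetry of the coupling for branch-independence of the limit law, and summing the same bound over the edges of the connected set $A$ (plus the tail of a branch from its root) for~\eqref{e:107.38}--\eqref{e:107.39}. The boundary term you isolate at $v_A$ is handled in the paper by the analogous $(|A\cap\bbL_n|+1)$ bookkeeping, so no substantive difference remains.
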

Thus asymptotic fixation to the same (random) value occurs on a sequence of sets for which the right hand sides of~\eqref{e:107.38} or~\eqref{e:107.39} tend to $0$. This is the case, e.g., at neighborhoods of vertices whose radius decays linearly with the distance from the root, as shown in:
\begin{corollary}
\label{c:1.15a}
Fix $p \geq 1$ and $\epsilon > 0$. For all $n \geq 1$, let $x_n \in \bbL_n$ and set
\begin{equation}
	r_{n,p} := \frac{n}{p} - (1+\epsilon) \log_2 n \,.
\end{equation} 
Then with $A = \rmB_{r_{n',p}}(x_{n'})$ both distances in~\eqref{e:107.38} and~\eqref{e:107.39} tend to $0$ as $n \to \infty$.
\end{corollary}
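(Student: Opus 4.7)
The plan is to apply Theorem~\ref{p:107.5} and reduce the corollary to a purely combinatorial estimate. Both $W^{p,\infty}$ distances in~\eqref{e:107.38} and~\eqref{e:107.39} are bounded above by $C_p \sum_{k\geq 1} |A \cap \bbL_k|\, 2^{-k/p}\, k$, so it suffices to show that this series, with $A = \rmB_{r}(x_{n'})$ and $r := r_{n',p} = n'/p - (1+\epsilon)\log_2 n'$, tends to $0$ as $n\to\infty$.

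The first step is to compute the level profile. For $y \in \bbL_k$, the tree-distance identity $\rmd_\bbT(x_{n'}, y) = n'+k - 2|x_{n'}\wedge y|$ gives $y \in A$ if and only if $|x_{n'}\wedge y| \geq j^{*}(k) := \lceil (n'+k-r)/2\rceil$. When $0 \leq j^{*}(k)\leq \min(n',k)$, the qualifying $y$ are precisely the descendants at generation $k$ of the ancestor of $x_{n'}$ at generation $j^{*}(k)$, giving $|A\cap\bbL_k| = 2^{k-j^{*}(k)} \leq 2\cdot 2^{(k-n'+r)/2}$; otherwise $|A\cap\bbL_k|=0$. Since $r<n'$ for large $n'$, only $k\in[n'-r,\, n'+r]$ contribute, and the series is bounded by
\begin{equation*}
\sum_{k\geq 1} |A\cap\bbL_k|\, 2^{-k/p}\, k \;\leq\; 2\cdot 2^{(r-n')/2}\sum_{k=\max(1,n'-r)}^{n'+r} k\, 2^{k(1/2-1/p)}.
\end{equation*}

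Next I would analyze this as a geometric sum in $k$ with ratio $2^{1/2-1/p}$. Up to a factor polynomial in $n'$ (from the $k$ multiplier, and from the $O(r)$ term count in the critical $p=2$ case), the sum is controlled by its extremal value: at $k=n'+r$ for $p>2$, at $k=n'-r$ for $1\leq p<2$, and uniformly over the range for $p=2$. Substituting the expression for $r$, the exponent at the extremum works out to $-\alpha(p)\,n' - (1+\epsilon)\beta(p)\,\log_2 n'$ with $\beta(p)>0$ and
\[
\alpha(p)=\begin{cases} 1/p^2 & \text{if }p>2,\\ 1/4 & \text{if }p=2,\\ (p-1)/p^2 & \text{if }1\leq p<2. \end{cases}
\]

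For $p>1$ we have $\alpha(p)>0$, so the exponential factor $2^{-\alpha(p)\, n'}$ beats the polynomial $n'$-contributions and drives the sum to $0$. The delicate case is $p=1$, where $\alpha(1)=0$: the bound then reduces to $O\bigl(n'\cdot n'^{-(1+\epsilon)}\bigr) = O(n'^{-\epsilon})$, which still vanishes thanks to the strict positivity of $\epsilon$. I do not anticipate any genuine obstacle beyond careful bookkeeping of the three regimes; the $(1+\epsilon)\log_2 n'$ correction built into the definition of $r_{n',p}$ is precisely the feature that secures vanishing (rather than mere boundedness) of the sum at the borderline $p=1$.
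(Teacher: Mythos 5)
Your proof is correct and follows the same route as the paper: bound both Wasserstein distances by the combinatorial sum $C_p\sum_k |A\cap\bbL_k|\,2^{-k/p}k$ from Theorem~\ref{p:107.5} and show that this sum vanishes for $A=\rmB_{r_{n',p}}(x_{n'})$. If anything, your execution is the more careful one: your level count $|A\cap\bbL_k|\leq 2\cdot 2^{(k-n'+r)/2}$ obtained from the tree-distance identity (rather than the paper's cruder $2^{k-n'+r}$), your inclusion of the levels $k>n'$, and your explicit handling of the borderline case $p=1$ all tighten what the paper dispatches in a single display.
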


Combining (the corollaries of) Theorem~\ref{t:7.7} and Theorem~\ref{p:107.5} we get several corollaries, the first of which is,
\begin{corollary}
\label{c:1.15}
Fix $p \geq 1$ and $\epsilon > 0$. For all $n \geq 1$, let $x_n \in \bbL_n$ and set
\begin{equation}
	r_{n,p} := \frac{n}{p} - (1+\epsilon) \log_2 n \,.
\end{equation} 
Then 
\begin{equation}
	W^{p,\infty} \Big(h^+_n\big(\rmB_{r_{n',p}}(x_{n})\big) - m_{n'} \,,\,\,
h_{n'}\big(\rmB_{r_{n',p}}(x_{n'})\big)  + \eta^{+,[n]_2}(\infty) \Big) \underset{n \to \infty} \longrightarrow 0 \,,
\end{equation}
where $h^+_n$ has law $\rmP_n^+$, and $h_{n'}, \eta^{+,[n]_2}(\infty)$ have some joint law with marginals $\rmP_{n'}$ and that of $\eta^{+, [n]_2}(\infty)$ under $\ddot{\rmP}^{+,[n]_2}$ from Theorem~\ref{p:107.5}.
\end{corollary}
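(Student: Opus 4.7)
The plan is to derive this corollary as an immediate consequence of Corollary~\ref{c:1.11a} and Corollary~\ref{c:1.15a}, concatenated by a single application of the triangle inequality for $W^{p,\infty}$. The geometric content reduces to identifying the ball $\rmB_{r_{n',p}}(x_n) \subset \bbT_n$ with a ball in $\bbT_{n'}$ through the canonical isomorphism between a root-based subtree of $\bbT_n$ and $\bbT$; the analytic content has already been established in the two predecessor corollaries.

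First I would perform the geometric identification. Let $x_{l_n} \in \bbL_{l_n}$ be the unique ancestor of $x_n$ at depth $l_n$; for all $n$ large enough one has $r_{n',p} < n'$, so $\rmB_{r_{n',p}}(x_n) \subset \bbT_n(x_{l_n})$. The canonical isomorphism between $\bbT$ and $\bbT(x_{l_n})$ sending $0$ to $x_{l_n}$ maps $x_n$ to some $x_{n'} \in \bbL_{n'}$, and under this identification $A_n := \rmB_{r_{n',p}}(x_{n'}) \cap \bbT_{n'}$ corresponds to $\rmB_{r_{n',p}}(x_n)$, i.e.\ $A_n(x_{l_n}) = \rmB_{r_{n',p}}(x_n)$ in the notation of Theorem~\ref{t:7.7}. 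I would then apply Corollary~\ref{c:1.11a} with this $A_n$ to obtain
\begin{equation}
W^{p,\infty}\bigl(h_n^+(\rmB_{r_{n',p}}(x_n)) - m_{n'},\, h^{+,[n]_2}(\rmB_{r_{n',p}}(x_{n'}))\bigr) \underset{n \to \infty}{\longrightarrow} 0,
\end{equation}
and Corollary~\ref{c:1.15a} with the same $A = \rmB_{r_{n',p}}(x_{n'})$ and $\delta = [n]_2$, which asserts that the right-hand side of~\eqref{e:107.39} tends to $0$, yielding
\begin{equation}
W^{p,\infty}\bigl(h^{+,[n]_2}(\rmB_{r_{n',p}}(x_{n'})),\, h(\rmB_{r_{n',p}}(x_{n'})) + \eta^{+,[n]_2}(\infty){\bf 1}\bigr) \underset{n \to \infty}{\longrightarrow} 0,
\end{equation}
for $(h,\eta^{+,[n]_2}(\infty))$ as in Theorem~\ref{p:107.5}, with $h$ of marginal law $\rmP$. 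Since $\rmB_{r_{n',p}}(x_{n'}) \subset \bbT_{n'}$ and the DGFF covariance depends only on meet-depths, the restriction of $h \sim \rmP$ to this ball has the same law as that of $h_{n'} \sim \rmP_{n'}$, so $h$ may be replaced by $h_{n'}$ in the second coordinate without altering the Wasserstein distance.

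Finally, the triangle inequality for $W^{p,\infty}$ applied to these two displays delivers the claim. The one subtlety worth flagging is that the implicit constants in~\eqref{e:107.39} must be uniform in $\delta$ for the second bound to pass through when $\delta = [n]_2 \in [0,1)$ varies with $n$, but this uniformity is exactly what is asserted in Theorem~\ref{p:107.5}. Hence no further work is required, and I do not anticipate any substantive obstacle beyond the bookkeeping in the geometric identification.
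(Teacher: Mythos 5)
Your proposal is correct and is essentially identical to the paper's own proof, which likewise derives the statement by combining Corollary~\ref{c:1.11a} and Corollary~\ref{c:1.15a} via the triangle inequality for $W^{p,\infty}$; you have merely spelled out the geometric identification of $\rmB_{r_{n',p}}(x_n)$ with $A_n(x_{l_n})$ and the harmless replacement of $h\sim\rmP$ by $h_{n'}\sim\rmP_{n'}$ on subsets of $\bbT_{n'}$, which the paper leaves implicit.
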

In other words, locally at a neighborhood of radius $n'/p-(1+\epsilon)\log_2(n')$ around any leaf $x_n \in \bbL_n$, the law of the DGFF on $\bbT_n$ under the hard wall constraint centered by $-m_{n'}$ is asymptotically the same as that of a standard DGFF on $\bbT_{n'}$ shifted globally by a dependent $\Theta(1)$-random variable $\eta^{+,[n]_2}(\infty)$.

The second corollary gives sharp asymptotics for the mean of $h(x)$ for $x \in \bbT_n$ with $|x| > l_n$ under the conditioning. This improves upon \cite[Corollary~\ref{1@c:1.4}]{Work1} from the first work, in which the asymptotics were given up-to $O(1)$.
\begin{corollary}
\label{c:1.17}	
Let $x_k \in \bbL_k$. For all log-dyadic $\delta \in [0,1)$, the quantity $\rmE^{+,\delta} \big(h(x_k))$ is non-decreasing in $k$ and obeys
\begin{equation}
\label{e:201.50}
	\lim_{k \to \infty} \rmE^{+,\delta} \big(h(x_k)) = \ddot{\rmE}^{+,\delta} \eta(\infty) \,,
\end{equation}
where $\eta(\infty) \equiv \eta^{+,\delta}(\infty)$ is as in Theorem~\ref{p:107.5}
and $x_k \in \bbL_k$. Moreover, for all $k \geq l_n$,
\begin{equation}
\label{e:1.19a}
		\rmE_n^+ h(x_k) = m_{n'} + \rmE^{+,[n]_2} h(x_{k-l_n}) + o(1) \,.
\end{equation}
In particular, 
\begin{equation}
\rmE_n^+ h(x_n) = m_{n'} + \ddot{\rmE}^{+,[n]_2} \eta(\infty) + o(1)  \,,
\end{equation}
and
\begin{equation}
		\rmE_n^+ h(x_{l_n}) = m_{n'} + \rmE^{+,[n]_2} h(0) + o(1) \,.
\end{equation}
All $o(1)$ terms tend to $0$ as $n \to \infty,$ uniformly in $k$.
\end{corollary}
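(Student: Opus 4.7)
The strategy combines Theorem~\ref{t:7.7} (asymptotic equivalence of $\rmP_n^+$ and $\rmP^{+,[n]_2}$ in $W^{p,\infty}$) with the coupling and fixation statements of Theorems~\ref{t:107.4} and~\ref{p:107.5}. Monotonicity of $k \mapsto \rmE^{+,\delta}(h(x_k))$ is an immediate restatement of the first sentence of Theorem~\ref{p:107.5}, which asserts that the mean of $h^{+,\delta}(x)$ is non-decreasing in $|x|$.

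First I would establish~\eqref{e:201.50}. Under $\ddot{\rmP}^{+,\delta}$ from Theorem~\ref{t:107.4}, the field $h$ is a centered DGFF started at $h(0)=0$, so $\ddot{\rmE}^{+,\delta} h(x_k) = 0$. Item~(1) of that theorem then gives
\begin{equation}
\rmE^{+,\delta}\big(h(x_k)\big) = \ddot{\rmE}^{+,\delta}\big(\eta^{+,\delta}(x_k)\big).
\end{equation}
Applying~\eqref{e:107.38} of Theorem~\ref{p:107.5} with $p=1$ and $A = \{x_k\}$ yields
\begin{equation}
W^{1,\infty}\big(\eta^{+,\delta}(x_k),\, \eta^{+,\delta}(\infty)\big) \leq C_1\, k\, 2^{-k},
\end{equation}
with $C_1$ independent of $\delta$. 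Since $W^{1,\infty}$ dominates differences of expectations, this forces $\ddot{\rmE}^{+,\delta}\eta^{+,\delta}(x_k) \to \ddot{\rmE}^{+,\delta}\eta^{+,\delta}(\infty)$ as $k \to \infty$, uniformly in $\delta \in [0,1)$.

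Next I would prove~\eqref{e:1.19a}. For $l_n \leq k \leq n$, pick any $y_{n,k} \in \bbL_{k - l_n} \cap \bbT_{n'}$ and apply Theorem~\ref{t:7.7} with $p = 1$ and $A_n := \{y_{n,k}\}$. The tail condition~\eqref{e:107.40} reduces to
\begin{equation}
\sum_{j \geq r} \big|A_n \cap \bbL_j\big|\, 2^{-j}\, j = (k - l_n)\, 2^{-(k - l_n)}\, \1_{\{k - l_n \geq r\}} \leq \sup_{m \geq r} m\, 2^{-m},
\end{equation}
which tends to $0$ as $r \to \infty$, uniformly in $n$ and in $k$. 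The resulting $W^{1,\infty}$-convergence translates, by the depth symmetry of both $\rmP_n^+$ and $\rmP^{+,[n]_2}$, into
\begin{equation}
\big|\rmE_n^+ h(x_k) - m_{n'} - \rmE^{+,[n]_2} h(x_{k - l_n})\big| \underset{n \to \infty}{\longrightarrow} 0,
\end{equation}
uniformly in $k \in [l_n, n]$. Setting $k = l_n$ specializes (as $x_0 = 0$) to the last displayed equation of the corollary. Setting $k = n$ gives $\rmE_n^+ h(x_n) = m_{n'} + \rmE^{+,[n]_2} h(x_{n'}) + o(1)$, and combining with~\eqref{e:201.50} evaluated at $k = n' \to \infty$, whose convergence is uniform in $\delta = [n]_2$ by the previous step, identifies the second term as $\ddot{\rmE}^{+,[n]_2} \eta(\infty) + o(1)$.

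The main technical point is tracking uniformity of all error terms, both in $k \in [l_n, n]$ and in $\delta = [n]_2 \in [0,1)$. This is not a serious obstacle: the rate $C_1 k\, 2^{-k}$ in~\eqref{e:107.38} is $\delta$-independent by construction, and the left-hand side of~\eqref{e:107.40} is a tail of a convergent numerical series that controls the Wasserstein error uniformly in $n$. Hence the substitution $\delta \leftarrow [n]_2$ at the last step is legitimate, and all vanishing quantities are genuine $o(1)$ uniform in the remaining parameters.
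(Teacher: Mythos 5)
Your proposal is correct and follows essentially the same route as the paper: monotonicity and \eqref{e:201.50} come from Theorem~\ref{p:107.5} with $A=\{x_k\}$ (the paper uses \eqref{e:107.39} directly rather than passing through $\eta^{+,\delta}(x_k)$, but this is the same computation since $h$ is centered), and \eqref{e:1.19a} comes from Theorem~\ref{t:7.7} after invoking tree symmetry. The only cosmetic difference is that the paper obtains uniformity in $k$ by applying Corollary~\ref{c:1.9} once to the entire branch $A_n=\{x_k:\,k\le n'\}$, whose $\ell^\infty$ norm controls all depths simultaneously, whereas you use singleton sets together with the (valid, but requiring a short diagonal argument) observation that the tail condition \eqref{e:107.40} holds uniformly over the choice of $k$.
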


The analogous result for all $p \geq 1$ moments is given by
\begin{corollary}
\label{c:1.18}	
Fix $p \geq 1$ and let $x_k \in \bbL_k$. Then,
\begin{equation}
\label{e:1001.20a}
	\sup_{k,\delta} \Big| \big\|h^{+,\delta}(x_k) \big\|_p - \big\|h(x_k)\big\|_p \Big| < \infty 
\,,
\end{equation}
and
\begin{equation}
\label{e:1001.19a}
\big\|h^+_n(x_k)-m_{n'} \big\|_p = \big\|h^{+,[n]_2}(x_{k-l_n}) \big\| + o(1)\,,
\end{equation}
where $o(1) \to 0$ as $n \to \infty$ uniformly in $x$.
Above $h$, $h^+_n$ and $h^{+,\delta}$ have laws $\rmP_\infty$, $\rmP_n^+$ and $\rmP^{+,\delta}$ respectively.
\end{corollary}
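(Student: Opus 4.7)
The plan is to derive both statements using the tools already established: the coupling of Theorem~\ref{t:107.4} for~\eqref{e:1001.20a}, and the $W^{p,\infty}$-asymptotic equivalence of Theorem~\ref{t:7.7} for~\eqref{e:1001.19a}.

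For~\eqref{e:1001.20a}, I would work under the coupling $\ddot{\rmP}^{+, \delta}$, which realizes $h^{+,\delta} = h + \eta^{+,\delta}$ with $h(0) = 0$ almost surely. The reverse triangle inequality for $L^p$ norms yields, for any $x_k \in \bbL_k$,
\begin{equation*}
\Big|\, \big\|h^{+,\delta}(x_k)\big\|_p - \big\|h(x_k)\big\|_p \,\Big| \;\leq\; \big\|h^{+,\delta}(x_k) - h(x_k)\big\|_p \;=\; \big\|\eta^{+,\delta}(x_k)\big\|_p\,,
\end{equation*}
so it suffices to bound $\|\eta^{+,\delta}(x_k)\|_p$ uniformly in $k$ and $\delta$. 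Telescoping along the geodesic $0 = x_0, x_1, \dots, x_k$ and invoking the gradient bound~\eqref{e:101.39} gives
\begin{equation*}
\big\|\eta^{+,\delta}(x_k)\big\|_p \;\leq\; \big\|\eta^{+,\delta}(0)\big\|_p + \sum_{j=1}^{k} C_p\, j\, 2^{-j/p} \;\leq\; \big\|\eta^{+,\delta}(0)\big\|_p + C'_p\,,
\end{equation*}
with constants independent of $k$ and $\delta$. Since $h(0) = 0$, one has $\eta^{+,\delta}(0) = h^{+,\delta}(0)$, whose $p$-th moment is bounded uniformly in $\delta$ by the tail estimates of Proposition~\ref{p:1.11} at $x = 0$.

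For~\eqref{e:1001.19a} at a fixed $k \geq l_n$, I would apply Theorem~\ref{t:7.7} to the single-vertex sequence $A_n := \{x_{k-l_n}\}$. Hypothesis~\eqref{e:107.40} is readily checked: the relevant sum reduces to $(k-l_n)\, 2^{-(k-l_n)/p}\, \1\{k - l_n \geq r\}$, so
\begin{equation*}
\sup_{n} \sum_{k' \geq r} \big|A_n \cap \bbL_{k'}\big|\, 2^{-k'/p}\, k' \;\leq\; \sup_{j \geq r} j\, 2^{-j/p} \underset{r \to \infty}{\longrightarrow} 0\,,
\end{equation*}
and crucially this bound is independent of the choice $k = k(n) \geq l_n$. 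Theorem~\ref{t:7.7} then delivers $W^{p,\infty}\big(h_n^+(x_k) - m_{n'},\, h^{+,[n]_2}(x_{k-l_n})\big) \to 0$. Because $W^{p,\infty}$ coincides with the ordinary $W^p$ Wasserstein distance when restricted to a single coordinate, the reverse triangle inequality for $L^p$ norms transfers this convergence to the corresponding difference of $p$-norms, yielding~\eqref{e:1001.19a} at fixed $k$.

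The main obstacle I foresee is upgrading pointwise convergence to uniformity in $x$ (equivalently in $k$). My plan is a short extraction argument: assume for contradiction that the $o(1)$ in~\eqref{e:1001.19a} were not uniform; then there exist $\epsilon > 0$ and a subsequence $(n_i, k_i)$ with $k_i \geq l_{n_i}$ along which the $p$-norm difference exceeds $\epsilon$. Define the single-vertex sequence $A_n := \{x_{k_i - l_{n_i}}\}$ along $n = n_i$ (and, arbitrarily, $A_n := \{0\}$ otherwise). The $n$-uniform bound in the display above shows that~\eqref{e:107.40} still holds for this aggregated sequence, so Theorem~\ref{t:7.7} applied to it contradicts the assumed persistent gap along $(n_i)$. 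The key observation that makes the extraction go through is that~\eqref{e:107.40} is satisfied with the same majorant $\sup_{j \geq r} j\, 2^{-j/p}$ regardless of which vertex at which depth is selected at each $n$.
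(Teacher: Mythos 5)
Your proof is correct, but it packages both halves differently from the paper. For~\eqref{e:1001.20a}, the paper simply invokes~\eqref{e:107.39} of Theorem~\ref{p:107.5} with $A=\{x_k\}$ (whose right-hand side $C_p k 2^{-k/p}$ is uniformly bounded in $k$) and the triangle inequality, together with the uniform boundedness of the moments of $\eta^{+,\delta}(\infty)$; you instead re-run the telescoping of the gradient bound~\eqref{e:101.39} along the geodesic from the root, which is precisely the computation inside the proof of Theorem~\ref{p:107.5}, and you close the argument with $\eta^{+,\delta}(0)=h^{+,\delta}(0)$ and Proposition~\ref{p:1.11} rather than with $\eta^{+,\delta}(\infty)$ --- same substance, slightly more self-contained. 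For~\eqref{e:1001.19a} the divergence is more genuine: the paper obtains uniformity in $k$ in one stroke by applying the second part of Corollary~\ref{c:1.9} to the \emph{entire infinite branch} $A={\bf x}$ (condition~\eqref{e:103.32} holds since $|A\cap\bbL_j|=1$), so that a single application of Theorem~\ref{t:7.7} controls the sup over all vertices of the branch simultaneously; you apply Theorem~\ref{t:7.7} to singleton sets and recover uniformity by a contradiction/extraction argument, whose validity rests on your (correct) observation that~\eqref{e:107.40} is verified with the depth-independent majorant $\sup_{j\ge r} j\,2^{-j/p}$ for any choice of a single vertex per $n$. Both routes are sound; the whole-branch trick is shorter and avoids the diagonal argument, while your version makes explicit exactly which uniformity in the hypothesis of Theorem~\ref{t:7.7} is being exploited.
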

In particular, the $p$-th moment of the height of the field at a vertex in generation $k \geq l_n$ under the conditioning on $\Omega_n^+$ is, up to a uniformly bounded constant, the $p$-th moment of a centered Gaussian with variance $k-l_n$.

\subsection{Results: global statistics}
\label{s:1.3}
Next, we treat several global statistics of interest of the conditional field,
which, among other things, demonstrate the very different nature of the law of $h$ under the conditioning and without. Recall that, for the unconditional field, the empirical population mean is a non-degenerate random quantity (see Lemma~\ref{l:2.4a}). Under the conditioning, it turns out that the population mean tends to a deterministic limit.
\begin{theorem}
\label{t:1.9}
Under $\rmP_n^+$, with $x_n \in \bbL_n$, 
\begin{equation}
\Bigg|\frac{1}{\bbL_n} \sum_{x \in \bbL_n} h(x) - 
\rmE_n^+ h(x_n)\Bigg| \underset{n\to\infty}{\overset{\bbL^1}\longrightarrow} 0 \,,
\end{equation}
In particular, under $\rmP_n^+$,
\begin{equation}
\frac{1}{\bbL_n} \sum_{x \in \bbL_n} h(x) = 
m_{n'}  + \rmE^{+,[n]_2} \eta(\infty) + o(1)\,,
\end{equation}
where $o(1) \to 0$ in probability as $n \to \infty$ and $\eta(\infty) \equiv \eta^{+,\delta}(\infty)$ is as Theorem~\ref{p:107.5}.
\end{theorem}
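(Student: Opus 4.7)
My plan is to establish the $L^1$-convergence in the first display via a second-moment argument; the second display then follows upon plugging in Corollary~\ref{c:1.17} together with tree-automorphism symmetry of $\rmP_n^+$, which gives $\rmE_n^+ S_n = \rmE_n^+ h(x_n)$ for $S_n := |\bbL_n|^{-1} \sum_{x \in \bbL_n} h(x)$. It therefore suffices to prove $\Var_n^+(S_n) \to 0$.

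To estimate this variance I would condition on $\mathcal{F} := \sigma(h_{\bbT_{l_n}})$ and use total variance. The key structural observation is that the hard-wall event factorizes over subtrees, $\Omega_n^+ = \bigcap_{y \in \bbL_{l_n}} \{h \geq 0 \text{ on } \bbL_{n'}(y)\}$, so combined with the Markov property at generation $l_n$ the subtree fields $(h_{\bbT_{n'}(y)})_{y \in \bbL_{l_n}}$ become conditionally independent given $\mathcal{F}$ under $\rmP_n^+$. Writing $S_n = A + B$ with $A := |\bbL_{l_n}|^{-1} \sum_y h(y)$ ($\mathcal{F}$-measurable) and $B := |\bbL_{l_n}|^{-1} \sum_y M'(y)$ for $M'(y) := |\bbL_{n'}|^{-1} \sum_{x \in \bbL_{n'}(y)} [h(x) - h(y)]$, the conditional variance reduces to $|\bbL_{l_n}|^{-2} \sum_y \Var_n^+(M'(y) \mid h(y))$, whose expectation is at most $|\bbL_{l_n}|^{-1} \sup_y \rmE_n^+ M'(y)^2$.

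For the required uniform bound $\rmE_n^+ M'(y)^2 \leq C$: conditional on $h(y) = v$, the variable $M'(y)$ has the law of the empirical mean of a standard DGFF on $\bbT_{n'}$ under $\Omega_{n'}(m_{n'} - v)$, whose unconditional $L^2$-norm is a bounded constant. The change-of-measure bound then gives $\rmE[M'(y)^2 \mid h(y) = v] \leq C/p_{n'}(m_{n'} - v)$. Combining with the Gaussian-like lower tail of $h(y) - m_{n'}$ from Proposition~\ref{p:1.11} (or directly \cite[Theorem~\ref{1@t:1.3}]{Work1})---which dominates the at-most-exponential blow-up of $1/p_\infty$ at $+\infty$---yields $\rmE_n^+[1/p_{n'}(m_{n'} - h(y))] \leq C$ uniformly, so the first total-variance piece is $O(1/|\bbL_{l_n}|) = O(1/n)$.

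The main obstacle is the second piece, $\Var_n^+\bigl[|\bbL_{l_n}|^{-1} \sum_y G_n(h(y))\bigr]$ with $G_n(v) := v + \rmE_n^+[M'(y) \mid h(y) = v]$: although the $G_n(h(y))$ are $L^2$-bounded functions of the almost-i.i.d.\ collection $(h(y))_{y \in \bbL_{l_n}}$, turning this into a quantitative cross-covariance estimate is delicate because $G_n$ need not be uniformly Lipschitz. I would handle this by first truncating $G_n$ at a slowly-growing level (using the $L^2$ bound to control the truncation error), and then propagating the exponentially decaying covariance $|\Cov_n^+(h(y), h(y'))| \lesssim q^{\,l_n - |y \wedge y'|}$ from~\cite[Theorem~\ref{1@t:1.4}]{Work1} through the truncated $G_n$. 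The most robust route is to extract from the construction in~\cite{Work1} an explicit coupling of $(h(y))_{y \in \bbL_{l_n}}$ with i.i.d.\ samples of its marginal law up to an $L^p$-small error, reducing matters to a standard law of large numbers. Summing $2^{2 l_n - k - 1}$ pairs at ancestor-level $k$ against $q^{l_n - k}$ then yields $O(\log n/n)$ for the second piece, so $\Var_n^+(S_n) \to 0$. Chebyshev then gives $L^2$- and hence $L^1$-convergence of $S_n - \rmE_n^+ S_n$ to $0$, and Corollary~\ref{c:1.17} identifies the centering $\rmE_n^+ h(x_n)$ as $m_{n'} + \ddot{\rmE}^{+,[n]_2} \eta(\infty) + o(1)$, completing the proof.
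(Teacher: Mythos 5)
Your overall architecture (second moment, conditioning at generation $l_n$, conditional independence of the subtrees, then a decoupling step for the generation-$l_n$ values, and finally Corollary~\ref{c:1.17} for the centering) is exactly what the paper packages into Proposition~\ref{p:4.5b}, so the strategy is right; but two of your individual steps have genuine gaps. The first is the uniform bound $\rmE_n^+ M'(y)^2 \le C$. Your change of measure gives $\rmE[M'(y)^2 \mid h(y)=v]\le C/p_{n'}(m_{n'}-v)$, and you then assert that $1/p_{n'}$ blows up ``at most exponentially''. It does not: by Lemma~\ref{l:4.2a}, $p_{n'}(t)$ behaves like $\exp(-(t^+)^2/2)$ up to subexponential corrections, so on the event $\{\hat h(y)<0\}$ the factor $1/p_{n'}(-\hat h(y))$ grows like $\exp(\hat h(y)^2/2)$, i.e.\ at a Gaussian rate. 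The lower-tail bound $\rmP_n^+(\hat h(y)<-t)\le C\exp(-ct^2)$ of Theorem~\ref{t:1.3} carries an unspecified constant $c$ that is not known to exceed $1/2$, so $\rmE_n^+\big[1/p_{n'}(-\hat h(y))\big]$ is not controlled by this route. The paper avoids the issue by splitting into positive and negative parts and using the FKG monotonicity of Lemmas~\ref{lemma:FKG} and~\ref{l:2.4} to replace the conditioning on $\Omega_{n'}(-u)$ by conditioning on $\Omega_{n'}(0)$ (or to drop it), which yields the clean bound $\rmE_n^+(\xi_n(x)^2\mid\hat h(x)=u)\le C(u^2+1)$ and hence uniform integrability.

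The second gap is the cross-covariance term. You correctly note that the field-level covariance decay of Theorem~\ref{t:1.4} does not pass through the non-Lipschitz functions $G_n$, but your proposed remedy --- truncation plus an unspecified ``coupling with i.i.d.\ samples extracted from the construction in~\cite{Work1}'' --- asserts the conclusion rather than proving it. The tool that actually closes this step is Proposition~\ref{p:106.3}: a total-variation bound on how the law of $\hat h(x_{k'})$ depends on the value of $\hat h(x_k)$, which controls covariances of arbitrary bounded functionals regardless of regularity. In Proposition~\ref{p:4.5b} the paper conditions on the common ancestor, applies the total-covariance formula, and combines Proposition~\ref{p:106.3} with the tails of Theorem~\ref{t:1.3} to get $\Cov_n^+(\xi_n(x_n),\xi_n(x_{n,k}))\le C\rme^{-2ck}$, which sums to $O(\rme^{-cl_n})$. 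Until you either invoke that proposition or supply the claimed coupling, the variance estimate for the generation-$l_n$ contribution is not established.
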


Next we turn to the so-called additive (exponential) martingales associated with our BRW $h$. It is well known that in the unconstrained case, whenever $\alpha \in [0,c_0)$, 
\begin{equation}
\label{e:101.50}
\frac{1}{|\bbL_n|} \sum_{x \in \bbL_n} 
\exp\Big(\alpha h(x) - \frac{\alpha^2}{2}n\Big) \underset{n \to \infty}\longrightarrow Z_\alpha \,,
\end{equation}
both almost-surely and in $\bbL^1$, where $Z_\alpha \in (0,\infty)$ is a non-degenerate random variable. The almost-sure convergence is an immediate  consequence of the martingale convergence theorem, as the processes on the left hand side is a non-negative martingale. Showing $\bbL^1$ convergence and the non-degeneracy of the limit requires non-trivial work.
The following theorem shows that under $\rmP_n^+$, with the scaling properly modified, the above process, which is no longer a martingale, still converges in $\bbL^1$ (and hence in probability), albeit to a deterministic value.
\begin{theorem}
\label{t:1.8}
Let $\alpha \in \big[0, c_0]$. For all log-dyadic $\delta \in [0,1)$ the limit
\begin{equation}
A^\delta_\alpha := \lim_{|x| \to \infty} \rmE^{+, \delta}  \exp\Big(\alpha h(x)-\frac{\alpha^2}{2}|x|\Big) \,,
\end{equation}
exists and is bounded uniformly in $\delta$ and $\alpha$. Moreover, for all $\alpha \in [0,c_0)$, under $\rmP_n^+$,
\begin{equation}
\Bigg|
\frac{1}{|\bbL_n|} \sum_{x \in \bbL_n} \exp \Big(\alpha h(x) -\alpha m_{n'}-\frac{\alpha^2}{2} n'\Big) - A_\alpha^{[n]_2}\Bigg| \underset{n \to \infty}{\overset{\bbL^1}\longrightarrow} 0 \,.
\end{equation}
\end{theorem}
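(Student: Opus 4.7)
My plan proceeds in three stages: establishing $A^\delta_\alpha$ by a monotonicity argument; verifying that the mean of the empirical average converges to $A^{[n]_2}_\alpha$ via $W^{p,\infty}$-closeness; and closing the $L^1$ convergence through a subtree-decomposition combined with a law of large numbers.

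\textbf{Stage 1 (existence of $A^\delta_\alpha$).} By tree symmetry of $\rmP^{+,\delta}$, $r_k:=\rmE^{+,\delta}\exp(\alpha h(x_k)-\alpha^2 k/2)$ depends only on $k=|x_k|$. Conditioning on the field outside $\bbT(x_{k-1})$, the Markov identity~\eqref{e:107.30} of Theorem~\ref{t:1.7} together with the shift~\eqref{e:101.20} yields
\begin{equation*}
r_k=\rmE^{+,\delta}\!\left[\exp\!\bigl(\alpha h(x_{k-1})-\tfrac{\alpha^2(k-1)}{2}\bigr)\,\psi\bigl(-c_0(k-1)-h(x_{k-1})\bigr)\right],
\end{equation*}
where $\psi(u):=\rmE^{\uparrow u}\exp(\alpha h(y_1)-\alpha^2/2)$ for any $y_1\in\bbL_1$. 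The FKG inequality applied to the increasing event $\Omega_n(u)$ and the increasing functional $e^{\alpha h}$ yields $\psi\ge 1$, so $r_k$ is non-decreasing in $k$. Uniform boundedness for $\alpha\in[0,c_0]$ follows by integrating the sharp upper tail~\eqref{e:101.9a} of Proposition~\ref{p:1.11} against $e^{\alpha u}$, using the matching moment estimate of Corollary~\ref{c:1.18} to identify the effective Gaussian variance as $k+O(1)$. Hence $r_k\uparrow A^\delta_\alpha\in[1,C]$ with $C$ independent of $(\delta,\alpha)$.

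\textbf{Stage 2 (first moment).} Put $f_n(x):=\exp(\alpha h(x)-\alpha m_{n'}-\alpha^2 n'/2)$ and $Y_n:=|\bbL_n|^{-1}\sum_{x\in\bbL_n}f_n(x)$. Tree symmetry gives $\rmE_n^+ Y_n=\rmE_n^+ f_n(x_n)$. Applying Theorem~\ref{t:7.7} to the singleton $A_n=\{x_{n'}\}$ yields $W^{p,\infty}(h_n^+(x_n)-m_{n'},\,h^{+,[n]_2}(x_{n'}))\le C_p\,2^{-n'/p}n'$; combining with the H\"older-type estimate $|\rmE e^{\alpha X}-\rmE e^{\alpha Y}|\le\alpha\|X-Y\|_p(\|e^{\alpha X}\|_q+\|e^{\alpha Y}\|_q)$ (with $1/p+1/q=1$) and the moment bound $\|e^{\alpha h(x_n)-\alpha^2 n'/2}\|_q\le Ce^{(q-1)\alpha^2 n'/2}$ derived from Proposition~\ref{p:1.11}, the resulting estimate tends to zero for $p>c_0^2/(c_0^2-\alpha^2)$, which is achievable for any $\alpha<c_0$. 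Thus $\rmE_n^+ f_n(x_n)=r_{n'}([n]_2,\alpha)+o(1)=A^{[n]_2}_\alpha+o(1)$ uniformly in $[n]_2$.

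\textbf{Stage 3 ($L^1$-concentration).} Group the sum by subtrees rooted in $\bbL_{l_n}$, writing $Y_n=|\bbL_{l_n}|^{-1}\sum_{z\in\bbL_{l_n}}\tilde Z^{(z)}_{n'}$ with $\tilde Z^{(z)}_{n'}:=|\bbL_{n'}|^{-1}\sum_{y\in\bbL_{n'}(z)}f_n(y)$. Tree symmetry gives $\rmE_n^+\tilde Z^{(z)}_{n'}=\rmE_n^+ f_n(x_n)\to A^{[n]_2}_\alpha$ (Stage~2), and the subtree-averages $\{\tilde Z^{(z)}_{n'}\}$ are nearly i.i.d.\ by the near-independence at generation $l_n$ (cf.\ Section~\ref{s:1.1} and \cite[Theorem~\ref{1@t:1.4}]{Work1}), each $L^1$-bounded by Stage~1. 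For $\alpha\in[0,c_0/\sqrt{2})$ the $\tilde Z^{(z)}_{n'}$ are in fact $L^2$-bounded and a direct pair decomposition in $j=|x\wedge y|$, using the coupling of Theorem~\ref{p:107.5} to show $|\Cov_n^+(f_n(x),f_n(y))|\le C\,e^{\alpha^2(j-l_n)}$ for $j\ge l_n$, delivers $\Var_n^+ Y_n\to 0$. For the remaining regime $\alpha\in[c_0/\sqrt 2,c_0)$ --- the \emph{main obstacle} --- the $\tilde Z^{(z)}_{n'}$ are no longer $L^2$-bounded, and one instead invokes an $L^1$ law of large numbers for nearly-i.i.d.\ sequences of $L^1$-bounded random variables, exploiting that $|\bbL_{l_n}|=\Theta(n)\to\infty$ and that the asymptotic decorrelation of subtree averages is supplied by the coupling $h^{+,\delta}=h+\eta^{+,\delta}$ of Theorem~\ref{p:107.5} together with the $L^p$-convergence of $\eta^{+,\delta}(x)$ to $\eta^{+,\delta}(\infty)$ along branches. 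The exclusion of $\alpha=c_0$ reflects the derivative-martingale threshold, at which $f_n(x_n)$ itself loses its $L^1$-control.
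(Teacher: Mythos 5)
Your architecture mirrors the paper's (subtree decomposition at generation $l_n$, first-moment identification via the local limit law, WLLN via near-independence), and the Stage~1 monotonicity observation via FKG is a genuinely nice alternative to the paper's route to existence of the limit (the paper instead runs Lemma~\ref{l:103.10} to show $\rmE_n^{\uparrow u}\rme^{\alpha h(x_n)-n\alpha^2/2}\to A_\alpha(u)$ and then integrates against $\rmP^{+,\delta}(h(0)\in\cdot)$). However, your boundedness argument in Stage~1 does not work. The upper tail in Proposition~\ref{p:1.11} is $C\exp(-cu^2/(\log_2 u+|x|+1))$ with an \emph{unspecified} constant $c$ in the exponent; integrating $\rme^{\alpha u}$ against it yields a bound of order $\rme^{\alpha^2|x|/(4c)}$, which beats the required $\rme^{\alpha^2|x|/2}$ only if $c\ge 1/2$, and nothing in the paper gives that. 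Corollary~\ref{c:1.18} controls fixed-$p$ moments, not the moment generating function, so it cannot repair this. The paper avoids the issue by writing $A_\alpha^\delta=\rmE^{+,\delta}\big[\rme^{\alpha h(0)}A_\alpha(-h(0))\big]$ and bounding the inner quantity via the FKG comparison with the \emph{unconditional} (exactly Gaussian) field, i.e.\ Proposition~\ref{l:2.8b}: $1\le\rmE_n^{\uparrow u}\rme^{\alpha h(x)-n\alpha^2/2}\le C\rme^{\alpha u^+}$; the only tail bound then needed is for $h(0)$ at depth $0$, where Proposition~\ref{p:1.11} is super-exponential and hence sufficient. The same defect infects your Stage~2, where $\|\rme^{\alpha h_n^+(x_n)-\alpha m_{n'}-\alpha^2 n'/2}\|_q$ again cannot be extracted from Theorem~\ref{t:1.3}; it must come from an FKG comparison.

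The more serious gap is Stage~3 in the regime $\alpha\in[c_0/\sqrt2,c_0)$, which you correctly flag as the main obstacle but then do not resolve. There is no off-the-shelf ``$L^1$ law of large numbers for nearly-i.i.d.\ $L^1$-bounded random variables'': $L^1$-boundedness does not imply an LLN even for genuinely i.i.d.\ arrays (one needs uniform integrability, after which one truncates and runs the second-moment method on the bounded truncations). This is exactly what the paper's Proposition~\ref{p:4.5b}/Corollary~\ref{c:5.6} do, with two inputs you have not supplied: (i) uniform integrability of the subtree averages $\tilde Z^{(z)}_{n'}$ under $\rmP_n^+$, which the paper gets from Biggins' $L^{1+\epsilon}$ bound on the unconditional additive martingale $Z_{n',\alpha}$ for $\alpha<c_0$ (Proposition~\ref{p:2.6}) transferred to the conditional law by FKG and the super-exponential tails of $\hat h(x)$ at generation $l_n$; and (ii) cross-subtree decorrelation under $\rmP_n^+$, which comes from Proposition~\ref{p:106.3} (conditioning on the common ancestor at depth $<l_n$). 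Theorem~\ref{p:107.5}, which you cite for the decorrelation, concerns the coupling of the \emph{single-tree limit law} $\rmP^{+,\delta}$ with a DGFF and says nothing about correlations between distinct subtrees of $\bbT_n$ under $\rmP_n^+$; it cannot play that role. Until (i) and (ii) are supplied, the theorem is not proved in the range $[c_0/\sqrt2,c_0)$.
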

We remark that while our proofs in fact show that the first limit holds for all $\alpha>0$, for the second we need the uniform integrability of the martingale in~\eqref{e:101.50}, which is known to hold only up to (and excluding) the critical $\alpha = c_0$. We leave the more careful treatment of the critical and super-critical exponential martingales to a future work.

Turning to extreme values, we recall that in the unconditional case, 
\begin{equation}
\label{e:01.13}
-\min_{x \in \bbL_n} h(x)\, + m_n
\,\underset{n \to \infty}{\overset{\rmd} \longrightarrow}\,
	G \oplus c_0^{-1} \log Z 
\ , \quad 
\max_{x \in \bbL_n} h(x)\, - m_n
\,\underset{n \to \infty}{\overset{\rmd} \longrightarrow}\,
	G \oplus c_0^{-1} \log Z \,,
\end{equation}
where $G$ has a Gumbel law with rate $c_0$ and $Z \in (0,\infty)$ is some non-trivial random variable which is independent of $G$ (see \cite{AidekonBRW13}). As in~\cite{Work1}, we write $\oplus$ to for the sum of two independent random variables. The following theorem shows that under the hard-wall constraint, the maximum, centered properly, still converges to the same Gumbel, but without a random shift.
\begin{theorem}\label{t:1.5}
There exists an explicit constant $c_1 \in \bbR$ such that with
\begin{equation}
\label{e:1.25}
	m_n^+ := 2m_{n'} + c_0^{-1} \log_2 n + c_0^{-1} \log\log_2 n + c_0^{-1} \log A_{c_0}^{[n]_2} + c_1 \,,
\end{equation}
where $A_{c_0}^{\delta}$ is as in Theorem~\ref{t:1.8} and $c_0$ is as in~\eqref{Lisa.1},
it holds for all $u \in \bbR$ that
\begin{equation}
\rmP_n^+ \Big(\max_{x \in \bbL_n} h(x) - m_n^+ \leq u \Big) 
\underset{n \to \infty}  \longrightarrow \rme^{-\rme^{-c_0 u}} \,.
\end{equation}
In particular, under $\rmP_n^+$,
\begin{equation}
	\max_{x \in \bbL_n} h(x) - m_n^+  \, \underset{n \to \infty}{\overset{\rmd}{\longrightarrow}}\, \rm{Gumbel}(c_0) \,.
\end{equation}
\end{theorem}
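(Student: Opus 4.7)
The plan is to reduce the maximum under $\rmP_n^+$ to the maximum of $|\bbL_{l_n}|=2^{l_n}$ asymptotically i.i.d.\ copies of the maximum of the limiting law $\rmP^{+,[n]_2}$ on a tree of depth $n'$, then apply extreme value theory with a Bramson-type tail. First I would use the Markov property at generation $l_n$: under $\rmP_n^+$, conditionally on $h$ on $\bbT_{l_n}$, the $2^{l_n}$ subtrees rooted at $\bbL_{l_n}$ are independent DGFFs on trees of depth $n'$, each conditioned on its own hard wall. The heights at $\bbL_{l_n}$ are tight around $m_{n'}$ with quickly decaying cross-correlations by~\cite{Work1}, and by Theorem~\ref{t:7.7} each subtree field centered at $m_{n'}$ is asymptotically distributed as $h^{+,[n]_2}$. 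Consequently,
\[
\max_{x\in\bbL_n} h(x) - m_{n'} \;\approx\; \max_{y\in\bbL_{l_n}} M^{(y)},
\]
where the $M^{(y)}$ are asymptotically i.i.d.\ copies of $M^{+,[n]_2} := \max_{z\in\bbL_{n'}} h^{+,[n]_2}(z)$.

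The second and main step is to establish a sharp Bramson-type tail
\[
\rmP^{+,\delta}\Big(\max_{z\in\bbL_k} h^{+,\delta}(z) > m_k + v\Big) \;\sim\; K\, A_{c_0}^{\delta}\, v\, e^{-c_0 v}
\]
as $k,v \to \infty$ in an appropriate regime, uniformly in $\delta \in [0,1)$ log-dyadic and for a universal constant $K$. To prove this I would combine (i) the decomposition $h^{+,\delta} = h + \eta^{+,\delta}$ of Theorem~\ref{t:107.4}, exhibiting the conditioned field as a standard BRW plus an auxiliary field that stabilises along each branch (Theorem~\ref{p:107.5}); (ii) the classical Bramson $v$-prefactor for the max of a standard BRW, coming from the derivative martingale; (iii) the identification of the multiplicative constant as $K A_{c_0}^\delta$ via the critical additive-martingale limit from Theorem~\ref{t:1.8}. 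A convenient implementation is to condition on the field at an intermediate generation $j$ with $1\ll j\ll k$, at which $\eta^{+,\delta}$ has essentially stabilised, apply the standard BRW tail analysis to each of the $2^j$ sub-subtrees started at heights $h(y)$, and recognise the $\bbL^1$-limit of $2^{-j}\sum_{y\in\bbL_j} e^{c_0 h(y) - c_0^2 j/2}$ under $\rmP^{+,\delta}$ as $A_{c_0}^\delta$; combining with the Bramson $v$-factor and the derivative-martingale limit of the standard BRW produces the claimed tail.

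Third I would apply the extreme value theorem to the maximum of $N := 2^{l_n}$ asymptotically i.i.d.\ copies of a random variable with the tail above. Choosing $a_N$ to solve $N K A_{c_0}^\delta a_N e^{-c_0 a_N} = 1$ gives an iterative expansion of the form $c_0^{-1}\log_2 n + c_0^{-1}\log\log_2 n + c_0^{-1}\log A_{c_0}^{[n]_2} + O(1)$; adding the $2m_{n'}$ shift from Step 1 and collecting universal constants into $c_1$ yields the claimed $m_n^+$, with the centered maximum converging to Gumbel$(c_0)$. The main obstacle is Step 2: the field $\eta^{+,\delta}$ is not constant across the leaves of $\bbL_k$, so one cannot simply write $\max h^{+,\delta} = \max h + \eta^{+,\delta}(\infty)$ and reduce to the standard BRW tail; the spatial structure of $\eta^{+,\delta}$ must be handled via the multiscale argument above, and the critical ($\alpha = c_0$) case of Theorem~\ref{t:1.8} used here requires a separate Seneta--Heyde-type argument beyond the subcritical $\bbL^1$ convergence stated there. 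A secondary point is upgrading the marginal convergence of Theorem~\ref{t:7.7} to joint convergence over all $y\in\bbL_{l_n}$, needed for the asymptotic independence of the subtree maxes; this is handled via the exponential decorrelation across distant branches in~\cite[Theorem~\ref{1@t:1.4}]{Work1} together with a standard coupling.
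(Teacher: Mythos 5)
Your overall architecture is the same as the paper's: condition at generation $l_n$, establish a Bramson-type tail $\sim \mathrm{const}\cdot A_{c_0}\, v\,\rme^{-c_0 v}$ for the maximum of each of the $2^{l_n}$ subtree fields, and run a Poissonization/extremal limit over these almost-i.i.d.\ pieces, with $A_{c_0}^{[n]_2}$ arising from averaging over the root height. However, there are genuine gaps in the execution. First, your Step 1 reduction to i.i.d.\ copies of $\max_{\bbL_{n'}} h^{+,[n]_2}$ is not justified: Theorem~\ref{t:7.7} requires condition~\eqref{e:107.40}, which excludes $A_n = \bbT_{n'}$ (indeed $\sum_k|\bbT_{n'}\cap\bbL_k|2^{-k/p}k$ diverges for every $p\geq 1$), so no available result transfers the law of the finite-depth conditional field to the infinite-volume field at the level of the maximum over the entire leaf set. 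The paper never makes this reduction; it keeps the finite-depth laws $\rmP_{n'}^{\uparrow u}$ throughout and proves the sharp tail for them directly (Proposition~\ref{p:2.7a}), crucially \emph{uniformly} in $v$ over the window $(\log\log n, (\log\log n)\log n)$ — which is needed because in the application $v \asymp \log n$ grows with $n$, so a fixed-$v$, $k\to\infty$ tail statement for the limiting field would not suffice without an additional interchange of limits.

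Second, the assertion that $2^{-j}\sum_{y\in\bbL_j}\rme^{c_0 h(y)-c_0^2 j/2}$ converges in $\bbL^1$ under $\rmP^{+,\delta}$ to the constant $A_{c_0}^{\delta}$ is false: this is a single-tree critical additive martingale (for the standard BRW it vanishes almost surely while its mean stays $1$), and there is no averaging over independent subtrees under $\rmP^{+,\delta}$ to produce concentration. Only its \emph{expectation} converges to $A_{c_0}^{\delta}$ (first part of Theorem~\ref{t:1.8}, cf.~\eqref{e:207.100}), and that is all one needs — but the correct mechanism is to pass to expectations with controlled errors, which is exactly what Lemma~\ref{l:103.10} provides in the proof of Proposition~\ref{p:2.7a}; no Seneta--Heyde or derivative-martingale analysis is required anywhere. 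Third, the asymptotic independence of the subtree maxima cannot be extracted from the covariance bound of Theorem~\ref{t:1.4} alone; the paper's route is the total-variation decoupling of Proposition~\ref{p:106.3}, fed into the WLLN framework (Proposition~\ref{p:4.5b}, Corollary~\ref{c:5.6}) applied to $\xi_n(x) = -2^{l_n}\log\big(1-p_{n'}(-\hat h(x)+u_n,-\hat h(x))\big)$, which handles the decorrelation and the averaging in one stroke. Your Step 3 bookkeeping for $m_n^+$ is consistent with the paper's, but the proof as proposed does not go through without repairing these three points.
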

Since $A_{c_0}^{[n]_2}$ is bounded, we get that the centering sequence for the constrained maximum satisfies
\begin{equation}
	m_{n'} + m_{n'} \ll m_n^+ \ll m_{n'} + m_n \,,
\end{equation}
with both bounds above being a-priori na\"ive guesses for this quantity. 

On the other hand, for the minimum under the conditioning, we have
\begin{theorem}
\label{thm:minimum}
For all log-dyadic $\delta \in [0,1)$, with $p_\infty$ is as in~\eqref{e:1.1a}, set
\begin{equation}
	\kappa_{\delta} := - 2^{-\delta} \rmE^{+,\delta} \bigg(\frac{\rmd}{\rmd s} \log p_{\infty}(s)\big|_{s=-h(0)}\bigg) \,,
\end{equation}
which is uniformly bounded and positive. Then, for all $u \geq 0$,
\begin{equation}
\label{e:1.28}
\rmP_n^+\big(n \kappa_{[n]_2} \min_{x\in \bbL_n} h(x)\leq u \big) \underset{n \to \infty}\longrightarrow 1-\rme^{-u} \,.
\end{equation}
In particular, under $\rmP_n^+$,
\begin{equation}
	n \kappa_{[n]_2} \min_{x\in \bbL_n} h(x) \,\underset{n \to \infty}\Longrightarrow\, \rm{Exp}(1) \,.
\end{equation}
\end{theorem}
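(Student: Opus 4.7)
The plan is to combine the Markov decomposition of $h$ at generation $l_n$ with the asymptotic near-independence of the subtree roots at $\bbL_{l_n}$ under $\rmP_n^+$ established in the first paper. For $t \geq 0$ we have $\{\min_{\bbL_n} h \geq t\} \subset \Omega_n^+$, and conditioning on $\mcF_{l_n} := \sigma(h_{\bbT_{l_n}})$, the subtrees rooted at $\bbL_{l_n}$ are conditionally independent DGFFs of depth $n'$. Since $\rmP(\min_{\bbL_{n'}(x)} h \geq s \mid h(x) = v) = p_{n'}(m_{n'} + s - v)$, taking the ratio $\rmP_n(\min \geq t)/\rmP_n(\Omega_n^+)$ and recognising the Radon--Nikodym derivative on $\mcF_{l_n}$ yields
\[
\rmP_n^+\bigl(\min_{\bbL_n} h \geq t\bigr) = \rmE_n^+\Bigg[\prod_{x \in \bbL_{l_n}} \frac{p_{n'}(m_{n'} + t - h(x))}{p_{n'}(m_{n'} - h(x))}\Bigg].
\]

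Setting $t_n := u/(n \kappa_{[n]_2})$ and replacing $p_{n'}$ by $p_\infty$ (valid by the polynomial rate of $p_n \to p_\infty$ and the fact that $m_{n'} - h(x)$ is $O(1)$ with high $\rmP_n^+$-probability, by Proposition~\ref{p:1.11}), a first-order Taylor expansion of $\log p_\infty(s+t_n) - \log p_\infty(s)$ gives
\[
\prod_{x \in \bbL_{l_n}} \frac{p_\infty(m_{n'} + t_n - h(x))}{p_\infty(m_{n'} - h(x))} = \exp\!\Bigg(t_n \sum_{x \in \bbL_{l_n}} (\log p_\infty)'(m_{n'} - h(x)) + R_n\Bigg),
\]
with $|R_n| = O(t_n^2 |\bbL_{l_n}|) = O(u^2/n) \to 0$ thanks to boundedness of $(\log p_\infty)'$ and $(\log p_\infty)''$, which follows from the known asymptotics $p_\infty(s) \sim c\, s\, \rme^{-c_0 s}$ as $s \to \infty$ and $p_\infty(s) \to 1$ as $s \to -\infty$.

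Proposition~\ref{p:1.7e} combined with the exponential decay of covariances among $\{h(x) : x \in \bbL_{l_n}\}$ from \cite[Theorem~\ref{1@t:1.4}]{Work1} yields a law-of-large-numbers for the sum in the exponent: a second-moment argument analogous to that used in the proof of Theorem~\ref{t:1.9}, applied to the bounded test function $\varphi(w) := (\log p_\infty)'(-w)$, gives
\[
\frac{1}{|\bbL_{l_n}|} \sum_{x \in \bbL_{l_n}} (\log p_\infty)'(m_{n'} - h(x)) \,\overset{\bbL^1}\longrightarrow\, \rmE^{+,[n]_2}\big[(\log p_\infty)'(-h(0))\big] = -2^{[n]_2} \kappa_{[n]_2},
\]
the last equality being the definition of $\kappa_{[n]_2}$. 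Since $t_n |\bbL_{l_n}| = u \cdot 2^{-[n]_2}/\kappa_{[n]_2}$, the argument of the exponential converges in $\bbL^1$ to $-u$, and as the product is monotone with $p_\infty$ decreasing (hence bounded by $1$ for $t \geq 0$), bounded convergence gives $\rmP_n^+(\min \geq t_n) \to \rme^{-u}$, which rearranges to the claim.

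The main obstacle is making the replacement $p_{n'} \to p_\infty$ and the Taylor expansion uniform in $x \in \bbL_{l_n}$ simultaneously, and trimming the rare vertices where $h(x) - m_{n'}$ is far from $0$: very negative excursions make $p_\infty(m_{n'} - h(x))$ small in the denominator, while very positive ones are harmless but must be summed over $\sim n$ vertices. This will require combining the two-sided tail estimates of Proposition~\ref{p:1.11} with explicit quantitative control on $p_\infty$ and its first two derivatives near the origin, together with a quantitative LLN for $\sum_x \varphi(h(x) - m_{n'})$ whose rate of convergence survives multiplication by $t_n|\bbL_{l_n}| = \Theta(1)$.
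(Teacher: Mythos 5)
Your proposal follows essentially the same route as the paper: the Markov decomposition at generation $l_n$ expressing $\rmP_n^+(\min_{\bbL_n}h\geq t)$ as $\rmE_n^+\prod_{x\in\bbL_{l_n}}p_{n'}(t-\hat h(x))/p_{n'}(-\hat h(x))$, a first-order expansion of the logarithm of each factor, a WLLN over the nearly-independent generation-$l_n$ values (the paper packages this as Corollary~\ref{c:5.6}, built on the same second-moment/decorrelation argument you cite), and the identification of $\kappa_\delta$ via $t_n|\bbL_{l_n}|=u\,2^{-[n]_2}/\kappa_{[n]_2}$.

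One concrete error to fix: the asymptotics $p_\infty(s)\sim c\,s\,\rme^{-c_0 s}$ as $s\to\infty$ are those of the \emph{left} tail $q_\infty(s)=1-p_\infty(-s)$ (Lemma~\ref{l:2.10}), not of the right-tail function $p_\infty$. In fact $\log p_\infty(s)\asymp -(s^+)^2/2$ and $\frac{\rmd}{\rmd s}\log p_\infty(s)\asymp -s^+$ (Lemmas~\ref{l:4.2a} and~\ref{l:2.9a}), so neither $(\log p_\infty)'$ nor $(\log p_\infty)''$ is bounded, and your test function $\varphi(w)=(\log p_\infty)'(-w)$ grows linearly in $w^-$. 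This does not sink the argument --- the linear bound $|F_n^{(u)}(v)|\leq Cu(|v|+1)$ combined with the almost-Gaussian tails of $\hat h(x)$ under $\rmP_n^+$ (Theorem~\ref{t:1.3}) is exactly what the paper's WLLN framework (Lemma~\ref{l:5.5}, Corollary~\ref{c:5.6}) is designed to absorb --- but the bounded-convergence step and the remainder estimate must be rerun with this growth. Relatedly, no bound on $(\log p_{n'})''$ is available in the paper's toolbox; you should replace the second-order Taylor remainder $O(t_n^2\sup|(\log p_\infty)''|)$ by the first-order Mean Value Theorem, $\log p_{n'}(s+t_n)-\log p_{n'}(s)=t_n\,(\log p_{n'})'(s+\theta t_n)$, and then use the uniform-on-compacts convergence $(\log p_{n'})'\to(\log p_\infty)'$ of Lemma~\ref{l:2.9a} together with the linear a-priori bound of Lemma~\ref{l:4.2a}, which is precisely how the paper proceeds.
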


Lastly, we state what by now should be obvious.
\begin{corollary}
\label{c:1.11}
For any $(\mu_n)_{n \geq 1}$, with $\mu_n \in \bbR^{\bbL_n}$, the law of $h$ under $\rmP_n$ and the law of $h + \mu_n$ under $\rmP_n^+$ are asymptotically mutually singular with respect to each other. That is, there exists a sequence $(A_n)_{n \geq 1}$ of measurable subsets of $\bbR^{\bbT_n}$ such that
\begin{equation}
	\lim_{n \to \infty} \rmP_n \big(h \in A_n \big) = 1
	\quad , \qquad
		\lim_{n \to \infty} \rmP^+_n \big(h + \mu_n \in A_n \big) = 0 \,.
\end{equation}
\end{corollary}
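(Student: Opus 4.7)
The plan is to use the empirical population mean at generation $n$ as the separating statistic. For $g \in \bbR^{\bbT_n}$, set
$$\hat{h}_n(g) := \frac{1}{|\bbL_n|} \sum_{x \in \bbL_n} g(x)\,,$$
so that, for the given $(\mu_n)_{n \geq 1}$ with $\mu_n \in \bbR^{\bbL_n}$ and $\bar{\mu}_n := \hat{h}_n(\mu_n)$, we have $\hat{h}_n(h+\mu_n) = \hat{h}_n(h) + \bar{\mu}_n$. The key (trivial) observation is that $\hat{h}_n$ depends on the possibly non-constant shift $\mu_n$ only through the scalar average $\bar{\mu}_n$; this will make the ensuing bounds insensitive to the structure of $\mu_n$.

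Fix some $x_n \in \bbL_n$ and set $a_n := \rmE_n^+ h(x_n) + \bar{\mu}_n$. On the conditional side, Theorem~\ref{t:1.9} supplies
$$\alpha_n := \rmE_n^+ \big|\hat{h}_n(h + \mu_n) - a_n\big| = \rmE_n^+ \big|\hat{h}_n(h) - \rmE_n^+ h(x_n) \big| \longrightarrow 0\,.$$
On the unconditional side, $\hat{h}_n(h)$ is a centered Gaussian with variance
$$|\bbL_n|^{-2} \sum_{x,y \in \bbL_n} |x \wedge y| \;=\; |\bbL_n|^{-2}\Big( n\, 2^n + \sum_{k=0}^{n-1} k\cdot 2^{2n-k-1}\Big) \;\longrightarrow\; \sum_{k \geq 0} k\, 2^{-k-1} = 1\,,$$
which is in particular bounded away from $0$ (see also Lemma~\ref{l:2.4a}). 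Consequently, the Lebesgue density of $\hat{h}_n(h)$ under $\rmP_n$ is bounded uniformly in $n$ by some constant $C < \infty$.

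Finally, choose $\epsilon_n := \sqrt{\alpha_n} \vee n^{-1}$ (so that $\epsilon_n \to 0$ and $\alpha_n/\epsilon_n \to 0$) and set
$$A_n := \big\{ g \in \bbR^{\bbT_n} :\: |\hat{h}_n(g) - a_n| \geq \epsilon_n \big\}\,.$$
Markov's inequality gives $\rmP_n^+(h + \mu_n \in A_n) \leq \alpha_n/\epsilon_n \to 0$, while the uniform density bound yields $\rmP_n(h \notin A_n) = \rmP_n(\hat{h}_n(h) \in [a_n - \epsilon_n, a_n + \epsilon_n]) \leq 2C\epsilon_n \to 0$, so $\rmP_n(h \in A_n) \to 1$. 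The only non-trivial input is Theorem~\ref{t:1.9} (together with the elementary Gaussian variance computation); given these, there is no real obstacle, the argument being just a Markov/density-bound split enabled by the shift-equivariance of $\hat{h}_n$, which collapses the arbitrariness of $\mu_n$ to the single scalar $\bar{\mu}_n$.
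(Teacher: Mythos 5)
Your proof is correct, and it is exactly the argument the paper intends: the authors state Corollary~\ref{c:1.11} without proof as "obvious," precisely because Theorem~\ref{t:1.9} (concentration of the empirical population mean under $\rmP_n^+$) contrasted with Lemma~\ref{l:2.4a} (the empirical mean is an asymptotically non-degenerate centered Gaussian under $\rmP_n$) yields the separating events via the Markov/density split you describe. Your one genuinely needed observation --- that an arbitrary, possibly non-constant shift $\mu_n$ affects the statistic $\hat h_n$ only through the scalar $\bar\mu_n$, so the argument is insensitive to the structure of $\mu_n$ --- is handled correctly.
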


\subsection{Proof overview}
\label{s:1.4}
Let us now give a brief overview of the arguments leading to the results in this paper. 
\subsubsection{Asymptotic law at generation $l_n$}
The first step in obtaining asymptotics for both local and global observables, is the derivation of the asymptotic law of 
\begin{equation}
\hat{h}(x) \equiv h(x) - \mu_n(x) \,.
\end{equation}
The crucial ingredient here is a combination of two results from~\cite{Work1} (Proposition~\ref{1@p:4.2} and Proposition~\ref{1@p:4.2a}) which, for self containment, appears here as Proposition~\ref{p:106.3}. Setting henceforth $x_k := [x]_k$ for some $x \in \bbL_n$, it reads:
\begin{equation}
\label{e:1001.63}
		\Big\| \rmP^+_n \big(\hat{h}(x_{k'}) \in \cdot \,\big|\, \hat{h}(x_k) = v'\big)  - 
					\rmP^+_n \big(\hat{h}(x_{k'}) \in \cdot \,\big|\, \hat{h}(x_k) = v\big) \Big\|_{\rm TV}  \leq C\rme^{-(c(k'-k)-|v|\vee |v'|)^+} \,,
\end{equation}
for $0 \leq k \leq k' \leq l_n$, any $v,v' \in \bbR$. This was derived in~\cite{Work1} using the representation of $(\hat{h}([x]_k) :\: k \leq l_n)$ under the conditional law, as a random walk which is subject to a localizing force that attracts it to zero (see Section~\ref{1@s:6} in~\cite{Work1} and also Subsection~\ref{1@s:1.3} therein for further explanations).

Taking $l_n$ and $l_n - k$ in place of $k'$ and $k$ in~\eqref{e:1001.63} respectively, and using also the tightness of $\hat{h}(x_j)$ for $j \leq l_n$ (Theorem~\ref{1@t:1.3} in \cite{Work1}), this 
shows that the law of $\hat{h}(x_{l_n})$ is, asymptotically as $k \to \infty$, independent of the value of $\hat{h}(x_{l_n-k})$. Using the total probability formula and tightness again, we thus get
\begin{equation}
\begin{split}
	\rmP^+_n \Big(\hat{h}(x_{l_n}) \in \cdot \Big) & =  \rmP^+_n \Big(\hat{h}(x_{l_n}) \in \cdot \,\Big|\, \hat{h}(x_{l_n-k}) = 0 \Big) + o(1) \\ 
& =	\rmP_{n-l_n+k}\Big(h(x_k) - c_0 2^{[n]_2} 2^{k} \in \cdot 
\,\Big|\, \Omega_{n-l_n+k}\big(c_0 \big(2^{[n]_2} 2^{k} + k\big) \big) \Big) + o(1) \\
& =	\rmP_{n-l_n+k, -c_0 2^{[n]_2} 2^{k}}\Big(h(x_k) \in \cdot \,\Big|\, \Omega_{n-l_n+k}(c_0 k) \Big) + o(1) \\
& = \rmP^{\uparrow c_0 k}_{-c_0 2^{[n]_2} 2^{k}} \big(h(x_k) \in \cdot \big) + o(1) \,,
\end{split}
\end{equation}
where the second equality follows using the Markov property of $h$ and some algebra, the third by shifting the entire field by $- c_0 2^{[n]_2} 2^{k}$ and the forth thanks to Proposition~\ref{p:1.45}. Above the addition of the $o(1)$ terms should be interpreted as meaning that the $W^{p,\infty}$ distance between the two sides of the equality sign tends to $0$ as $n \to \infty,$ followed by $k \to \infty$. We remark that Proposition~\ref{p:1.45} follows easily from the weak convergence of the centered minimum and the spatial-Markov property of the DGFF and its proof is a rather standard $h$-transform type argument.

Since the probability on the left most side above does not depend on $k$, while the one on the right most side depends on $n$ only through $[n]_2$, standard arguments (see Lemma~\ref{l:101.7}) show both the existence of the $W^{p,\infty}$-limit
\begin{equation}
\rmP^{+,\delta}\big(h(0) \in \cdot \big) :=  \lim_{k \to \infty} \rmP^{\uparrow c_0 k}_{-c_0 2^\delta 2^k} \big(h(x_k) \in \cdot\big) \,,
\end{equation}
and the $W^{p,\infty}$-convergence 
\begin{equation}
\label{e:200.65}
		W^{p,\infty} \Big(\rmP^+_n\big(\hat{h}(x_{l_n}) \in \cdot\big)
\,,\,\, \rmP^{+,[n]_2} \big(h(0) \in \cdot \big)\Big) \underset{n \to \infty}{\longrightarrow}
	0 \,,
\end{equation}
which is the first part of Theorem~\ref{t:1.7} and Proposition~\ref{p:1.7e} restricted to vertices in generation $l_n$.

\subsubsection{Local statistics}
To extend these results beyond generation $l_n$, we can use the Markov property to write
\begin{equation}
\label{e:400.67}
	\rmP_n^+\big(\hat{h}_{\bbT_{n'}(x_{l_n})} \in \cdot \big) 
	= \int \rmP^{\uparrow 0}_{n', v}\big(\cdot \big) \, \rmP_{n}^+ \big(\hat{h}(x_{l_n}) \in \rmd v\big) \,.
\end{equation}
Thanks to~\eqref{e:200.65} and Proposition~\ref{p:1.45} one can then show that (Proposition~\ref{p:1.7e})
\begin{equation}
\label{e:401.68}
		W^{p,\infty}_{\loc} \Big(\rmP^+_n \big(\hat{h}_{\bbT_{n'}(x_{l_n})} \in \cdot\big)
\,,\,\, \rmP^{+,[n]_2}\circ \Pi_{\bbT_{n'}}^{-1} \Big) \underset{n \to \infty}{\longrightarrow}
	0 \,,
\end{equation}
where 
\begin{equation}
\label{e:200.67}
\rmP^{+,\delta}\big(\cdot\big) := \int \rmP^{\uparrow 0}_{\infty, v}\big(\cdot \big) \, \rmP^{+,\delta} \big(h(0) \in \rmd v\big) 
	 \,.
\end{equation}

In view of the last two displays, to derive properties of $\rmP^{+,\delta}$ and strengthen the topology in~\eqref{e:401.68}, one needs to study the law $\rmP^{\uparrow u}_{n, v}$ for $n \in \bbN \cup \{\infty\}$ and $u,v \in \bbR$. To this end, 
we introduce in Proposition~\ref{p:7.9} a Markovian coupling between the field $h^{\uparrow u}_{n,v}$ which has law $\rmP^{\uparrow u}_{n, v}$ and the field $h$, which has law $\rmP_{n,0},$ such that the difference
\begin{equation}
\label{e:300.69}
\eta^{\uparrow u}_{n,v} :=	h^{\uparrow u}_{n,v} - h 
\end{equation} 
satisfies 
\begin{equation}
\label{e:200.69}
\big\|\eta^{\uparrow u}_{n,v}(x) - \eta^{\uparrow u}_{n,v}(y) \big\|_p \leq 
		C_p \big(((u-v)^-)^{1/p} + 1\big) \rme^{c_0 (u-v)/p}\,|x|\, 2^{-|x|/p} \,,
\end{equation}
whenever $x,y \in \bbT_n$ are nearest neighbors.

Since this coupling is Markovian, it induces via~\eqref{e:200.67}  a similar coupling between $h^{+,\delta}$ with law $\rmP^{+,\delta}$ and $h$ with law $\rmP_{\infty, 0}$ (Theorem~\ref{t:107.4}). In particular, thanks to the ``almost-Gaussian'' tails of $h(0)$ under $\rmP^{+,\delta}$ which are inherited via~\eqref{e:200.65} from those of $\hat{h}(x_{l_n})$ under $\rmP_n^+$ (Theorem~\ref{1@t:1.3} in~\cite{Work1}), the exponential bound in~\eqref{e:200.69} translates into
\begin{equation}
\big\|\eta^{+,\delta}(x) - \eta^{+,\delta}(y) \big\|_p \leq C_p |x|\, 2^{-|x|/p} \,,
\end{equation}	
where
\begin{equation}
\label{e:200.70}
	\eta^{+,\delta} = h^{+,\delta} - h \,.
\end{equation}
It is not difficult to show that the above exponentially fast vanishing of the gradient of $\eta^{+,\delta}$ at large depths, translates into fixation of $\eta^{+,\delta}$ on connected sets far away from the root and, through~\eqref{e:200.70}, to $h^{+,\delta}$ on such sets being a random global shift of the usual DGFF $h$ (Theorem~\ref{p:107.5}). 

Moreover, since the field $h$ has the same law under the coupling 
between $\rmP^{\uparrow u}_{n,v}$ and $\rmP_{n,0}$ in~\eqref{e:300.69}
 for all $n \in \bbN \cup \{\infty\}$, and since the bound~\eqref{e:200.69} holds with uniform constants for all such $n$, one can derive yet another coupling between  $h^{\uparrow u}_{n,v}$ with law $\rmP^{\uparrow u}_{n,v}$ and $h^{\uparrow u}_{\infty, v}$ with law $\rmP^{\uparrow u}_{\infty,v} \circ \Pi_{\bbT_n}^{-1}$ such that the  difference field: $\eta^{\uparrow u}_{n,\infty,v} := h^{\uparrow u}_{\infty,v} - h^{\uparrow u}_{n,v}$ is bounded as in~\eqref{e:200.69}, uniformly in $n$. This, in turn, allows us to strengthen the topology for the convergence of $\rmP^{\uparrow u}_{n,v}$ to $\rmP^{\uparrow u}_{\infty,v}$ and, through~\eqref{e:400.67}, for the asymptotic equivalence between between laws 	$\rmP_n^+\big(\hat{h}_{\bbT(x_{l_n})} \in \cdot \big)$ and $\rmP^{+,[n]_2}$, thus yielding Theorem~\ref{t:7.7}.

\subsubsection{Global statistics}
Turning to global observables, as discussed in Subsection~\ref{s:1.1}, the law of $\hat{h}$ on $\bbT_n \setminus \bbT_{l_n}$ under $\rmP_n^+$ is that of a union of $2^{l_n} \in (n/2, n]$
subfields on the subtrees of depth $n' = n -l_n$ rooted at generation $l_n$, each having law which is asymptotically equivalent to $\rmP^{+,[n]_2} \circ \Pi_{\bbT_{n'}}^{-1}$ and they are ``almost independent'' of each other, in the sense that correlations between subfields decay exponentially fast in the graph distance between the roots of their respective domains.

This asymptotic picture suggests, via a usual weak-law-of-large-numbers (WLLN) reasoning, that global observables of the field, which can be expressed as an average (over all subfields) of quantities which are measurable w.r.t. each subfield and tend to an asymptotic law, converge weakly to a {\em deterministic} limit as $n \to \infty$. We remark that in the unconditional case, different subfields are still identically distributed, but even if their domains are at graph-distance $\Theta(l_n)$ apart, they remain non-trivially correlated through the value of $h$ in the first $l_n$ generations. As such, global observables of the form above still admit a weak limit, but the limit turns out to be {\em non-deterministic}. This contrast is reflected in the comparison between the limiting statements in both the conditional and unconditional case, which is drawn following each result in Subsection~\ref{s:1.3}.

To streamline a formal derivation of the above WLLN argument for the various global observables considered, we rely on two key results. The first (Proposition~\ref{p:4.5b}) shows convergence of the averaged quantities to their mean, without estimating the mean itself. It states that if $(\xi_n(x_{l_n}) :\: x_{l_n} \in \bbL_{l_n},\, n \geq 1)$ is a uniformly integrable collection of random variables such that $\xi_n(x_{l_n})$ is measurable w.r.t. $\sigma(h_{\bbT_{n'}(x_{l_n})})$ and has conditional law $\rmP_n^+(\xi_n(x_{l_n}) \in \cdot\, | \,h(x_{l_n}))$ which does not depend on $x_{l_n}$, then 
\begin{equation}
\label{e:205.5}
\bigg| \frac{1}{|\bbL_{l_n}|} \sum_{x_{l_n} \in \bbL_{l_n}} \xi_n(x_{l_n})
	- \rmE_n^+ \xi_n(x_{l_n}) \bigg|
\,	{\underset{n \to \infty}\longrightarrow} \, 0 \,.
\end{equation}
both in $\rmP_n^+$-probability and in $\bbL^1$.

To replace the mean $\rmE_n^+ \xi_n(x_{l_n})$ with an explicit, asymptotically equivalent, quantity, we rely on the second key result (Lemma~\ref{l:103.10}), which was in fact proved in~\cite{Work1} (Lemma~\ref{1@l:103.10}). It says that if $\hat{\xi}_{n'}$ is measurable w.r.t. $\sigma(h_{\bbT_{n'}})$, then for all $m \geq 0$, $u \in \bbR$,
	\begin{equation}
		\label{e:303.52}
		\Big|\rmE_{n'}^{\uparrow u} \hat{\xi}_{n'} - \rmE_{n'}^{\uparrow u} \hat{\xi}_m \Big| 
		\leq C\big(\rme^{C u^2} + 1\big) \sum_{k=m}^{n'}
		k \rme^{-c_0 k} \rmE_{n'} \Big((h(x_k)^++1) \rme^{-c_0 h(x_k)} |\hat{\xi}_k|\Big)\,.
\end{equation}
where $\hat{\xi}_k := \rmE_{n'} \Big(\hat{\xi}_{n'}\,\big|\, h\big(\{x_k\} \cup (\bbT_{n'} \setminus \bbT_{n'-k}(x_k)) \big)\Big)$ with $x_k = [x]_k$ as before. (Here and after, we view $\xi_n(x_{l_n})$, $\hat{\xi}_{k}$ as measurable functions on $\bbR^{\bbT_{n'}}$, $\bbR^{\bbT_k}$.)

Since the definition of $\hat{\xi}_k$ and the right hand side in~\eqref{e:303.52} involve expectation w.r.t. the law $\rmP_{n'}$ of the unconditional field, they are tractable quantities. At the same time, if the above sum vanishes as $n' \to \infty$ followed by $m \to \infty$ then~\eqref{e:303.52} implies that
\begin{equation}
\label{e:2000.76}
\rmE_{n'}^{\uparrow u} \hat{\xi}_{n'} = \rmE_{n'}^{\uparrow u} \hat{\xi}_m + o(1) \,,	
\end{equation}
with $o(1) \to 0$ in the same limits of $n'$ and $m$. If it also happens that $\hat{\xi}_{n'}$ is measurable w.r.t. $\sigma(h_{\bbT_{n'-m}(x_m)})$ (or can be  well approximated in $\bbL^1$ by such a random variable), then $\hat{\xi}_m$ is measurable w.r.t. $\sigma(h(x_m))$ by the Markov property, and we can replace
$\rmE_{n'}^{\uparrow u} \hat{\xi}_m$ by $\rmE_\infty^{\uparrow u} \hat{\xi}_m$ in~\eqref{e:2000.76}, thanks to Proposition~\ref{p:1.45}. This implies, via standard arguments that
\begin{equation}
\label{e:2177}
	\lim_{n' \to \infty} \rmE_{n'}^{\uparrow u} \hat{\xi}_{n'} = \lim_{m \to \infty} \rmE_{\infty}^{\uparrow u} \hat{\xi}_m =: \hat{A}(u) \in (-\infty, \infty) \,.
\end{equation}
See, e.g., Propositions~\ref{l:2.8b} and~\ref{p:2.7a}.

Finally, we take $\hat{\xi}_{n'} = \xi_n(x_{l_n}) \circ \tau_{m_{n'}+v}$ in the above derivation, with $v \in \bbR$ and $\tau_w :\bbR^{\bbT_{n'}} \to \bbR^{\bbT_{n'}}$ being the translation mapping $(\tau_w h)(x) = h(x) + w$. Then, letting $\hat{A}(u; v)$ denote the corresponding limit in~\eqref{e:2177} for $u \in \bbR$, we thus obtain thanks to~\eqref{e:401.68} and \eqref{e:200.67}, 
\begin{equation}
\rmE_n^+ \xi_n(x_{l_n}) = \rmE^{+,[n]_2} \hat{A} \big(-h(0);\; h(0)\big) + o(1) \,.
\end{equation}
where $o(1) \to 0$ as $n \to \infty$. This gives the desired asymptotics for the mean in~\eqref{e:205.5}. 

\subsubsection*{Paper outline}
The remainder of the paper is organized as follows. Section~\ref{s:2} contains known results about the unconditional field. Section~\ref{s:3} includes the proofs of all statements concerning local statistics of the conditional field. Finally, Section~\ref{s:4} treats global statistics thereof.

\section{DGFF and branching random walk preliminaries}
\label{s:2}
In this sub-section we collect basic results concerning the unconditional field, viewed both as a BRW and as a DGFF. These results are either standard, easily proven or already derived in the first work in this series.

\subsection{FKG property and consequences}
\label{ss:2.1}
\begin{lemma}[Lemma~\ref{1@lemma:FKG} in~\cite{Work1}]
\label{lemma:FKG}
	For all $u,v \in \bbR$, the law 
	$\rmP_{n,v}^{\uparrow u}$ is FKG. In particular, $\rmP_n^+$ is FKG.
\end{lemma}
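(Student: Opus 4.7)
The plan is to apply the Holley--Preston--FKG criterion directly to the density of $\rmP_{n,v}^{\uparrow u}$; the measure $\rmP_n^+$ is the special case $(v, u) = (0, m_n)$, so it suffices to handle $\rmP_{n,v}^{\uparrow u}$.

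First, I would observe that $\rmP_{n,v}^{\uparrow u}$ is supported on the hyperplane $\{h :\: h(0) = v\} \cong \bbR^{\bbT_n \setminus \{0\}}$, where it is absolutely continuous with respect to Lebesgue. Using the definition~\eqref{e:107.30a} together with the Markov decomposition of the DGFF along tree edges, the density is
\begin{equation*}
    \rmP_{n,v}^{\uparrow u}(\rmd h) \propto \bigg(\prod_{x \in \bbT_n \setminus \{0\}} \exp\big(-\tfrac12 (h(x) - h(\text{par}(x)))^2\big)\bigg) \Ind\big\{\min_{\bbL_n} h \geq -m_n + u\big\} \, \rmd h \,,
\end{equation*}
with $h(0) := v$. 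By the Holley--Preston--FKG criterion, it then suffices to show that this density is log-supermodular in $h$ with respect to the coordinatewise lattice order on $\bbR^{\bbT_n \setminus \{0\}}$.

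The first main step is to check that each edge factor $(a, b) \mapsto \rme^{-(a-b)^2/2}$ is log-supermodular on $\bbR^2$. Using the identity $(a \vee a')^2 + (a \wedge a')^2 = a^2 + a'^2$, this reduces to the elementary rearrangement-type inequality
\begin{equation*}
    \max(a,a')\max(b,b') + \min(a,a')\min(b,b') \geq ab + a'b' \qquad \forall\, a, a', b, b' \in \bbR \,,
\end{equation*}
which is settled by a short case analysis on the sign of $(a-a')(b-b')$. Since log-supermodularity is preserved under taking products in the ambient lattice $\bbR^{\bbT_n \setminus \{0\}}$, the Gaussian piece of the density is log-supermodular.

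The second main step is to observe that the constraint set $S := \{h :\: \min_{\bbL_n} h \geq -m_n + u\}$ is in fact a \emph{sublattice}, not merely an up-set: for any $h, h' \in S$, both $h \vee h'$ and $h \wedge h'$ lie in $S$, since $\min_{\bbL_n}(h \wedge h') = (\min_{\bbL_n} h) \wedge (\min_{\bbL_n} h') \geq -m_n + u$, and similarly for $\vee$. Hence $\Ind_S$ is log-supermodular in the extended sense (allowing $\log 0 = -\infty$), and the product with the Gaussian density remains log-supermodular. Holley--Preston--FKG then yields the claim. I do not anticipate a substantive obstacle; the only subtleties are working on the hyperplane $\{h(0) = v\}$ to avoid the singularity of the law with respect to Lebesgue on $\bbR^{\bbT_n}$, and recognizing that the sublattice structure of $S$ (and not merely its being an up-set) is what allows log-supermodularity to be preserved under the conditioning.
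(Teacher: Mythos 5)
Your argument is correct: the DGFF density factorizes over tree edges into factors $\rme^{-(a-b)^2/2}$ whose logarithms are supermodular (the cross term $ab$ is what matters), the constraint $\{\min_{\bbL_n}h\ge -m_n+u\}$ is a sublattice (a product of half-lines in the leaf coordinates) so its indicator preserves the lattice condition, and Holley--Preston then gives positive association, with $\rmP_n^+=\rmP_{n,0}^{\uparrow m_n}$ as the special case. This is essentially the same route as the source: the lemma is only cited here from the companion paper~\cite{Work1}, whose proof is the standard FKG-lattice-condition argument you describe.
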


As a consequence of FKG we have the following,
\begin{lemma}[Lemma~\ref{1@l:2.4} in~\cite{Work1}]
	\label{l:2.4}
	Let $\varphi: \bbR^{\bbT_n} \to \bbR_+$ be continuous, non-decreasing and positive. Then for all $u \in \bbR$, the function
	\begin{equation}
		v \mapsto \rmE_{n,v}^{\uparrow u} \varphi(h)
	\end{equation}
	is non-decreasing in $v \in \bbR$.
\end{lemma}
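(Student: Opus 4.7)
The plan is to combine the FKG property of $\rmP_{n,v}^{\uparrow u}$ (Lemma~\ref{lemma:FKG}) with a monotone likelihood ratio argument. Fix $v_1 < v_2$ in $\bbR$. Both $\rmP_{n,v_1}^{\uparrow u}$ and $\rmP_{n,v_2}^{\uparrow u}$ can be viewed as probability measures on $\bbR^{\bbT_n \setminus \{0\}}$, since the root value $h(0)$ is deterministic under each. Using the explicit Gaussian density of the DGFF conditional on $h(0) = v$, and noting that the indicator $\1_{\Omega_n(u)}$ depends only on leaf values and so cancels from the ratio, a direct computation yields
\[
\rho(h) := \frac{\rmd \rmP_{n,v_2}^{\uparrow u}}{\rmd \rmP_{n,v_1}^{\uparrow u}}(h) = c\, \exp\!\Big( (v_2 - v_1) \sum_{x:\, |x|=1} h(x) \Big),
\]
where $c>0$ is a normalization constant and the sum ranges over the two children of the root. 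Since $v_2-v_1>0$, $\rho$ is non-decreasing in $h$ coordinate-wise.

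Next, let $\varphi_i(h)$ denote $\varphi$ evaluated after substituting $v_i$ for $h(0)$, so that $\varphi_i$ is a continuous, non-decreasing, positive function of the non-root coordinates. Telescoping gives
\[
\rmE_{n,v_2}^{\uparrow u}\varphi(h) - \rmE_{n,v_1}^{\uparrow u}\varphi(h) = \rmE_{n,v_2}^{\uparrow u}[\varphi_2 - \varphi_1] + \big(\rmE_{n,v_2}^{\uparrow u}\varphi_1 - \rmE_{n,v_1}^{\uparrow u}\varphi_1 \big).
\]
The first (``shift'') term is non-negative because $\varphi$ is non-decreasing in its root coordinate and $v_2>v_1$, so $\varphi_2 \geq \varphi_1$ pointwise. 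For the second (``change-of-measure'') term, the Radon--Nikodym identity gives $\rmE_{n,v_2}^{\uparrow u}\varphi_1 = \rmE_{n,v_1}^{\uparrow u}[\rho\,\varphi_1]$; applying FKG to the two non-decreasing functions $\rho$ and $\varphi_1$ under the FKG measure $\rmP_{n,v_1}^{\uparrow u}$ yields
\[
\rmE_{n,v_1}^{\uparrow u}[\rho \,\varphi_1] \geq \rmE_{n,v_1}^{\uparrow u}[\rho]\, \rmE_{n,v_1}^{\uparrow u}[\varphi_1] = \rmE_{n,v_1}^{\uparrow u}[\varphi_1],
\]
where the last equality uses $\rmE_{n,v_1}^{\uparrow u}[\rho] = 1$ since $\rho$ is a Radon--Nikodym density. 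Both summands are therefore non-negative, which proves the claim.

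The only substantive step is the explicit computation of $\rho$, which is elementary but requires careful bookkeeping of the quadratic terms in $v$ that arise from $h(0)$ entering the Gaussian density of the DGFF. The rest is the classical principle that a monotone likelihood ratio combined with an FKG reference measure produces stochastic monotonicity, which specializes to the desired monotonicity of expectations of non-decreasing observables.
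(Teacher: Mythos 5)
Your argument is correct, and it is exactly the route the paper intends: the lemma is stated here as an immediate ``consequence of FKG'' imported from the companion work, and the standard derivation is precisely your combination of the monotone likelihood ratio $\rho(h)=c\exp((v_2-v_1)\sum_{|x|=1}h(x))$ (the conditioning event, written for the unshifted field, is the same set $\{\min_{\bbL_n}h\ge -m_n+u\}$ for both values of $v$, so only the root--children Gaussian factors survive in the ratio) with the FKG property of $\rmP_{n,v_1}^{\uparrow u}$ and the pointwise monotonicity of $\varphi$ in the root coordinate. The only cosmetic caveat is the usual integrability bookkeeping when applying FKG to the possibly unbounded pair $(\rho,\varphi_1)$, which is handled by truncation and monotone convergence and does not affect the argument.
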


\subsection{Tail estimates for the minimum}
A-priori estimates on the right tail are given by the following lemma. Recall that $p_n$, $p_\infty$ are the right tail function and its limit, as defined in~\eqref{e:3.3} and~\eqref{e:1.1a}.
\begin{lemma}[Lemma~\ref{1@l:4.2a} in~\cite{Work1}]
	\label{l:4.2a}
	There exist $C \in (0,\infty)$ such that for all $n \geq 1$ and $u \in \bbR$,
	\begin{equation}
		\label{e:4.10}
		-\frac{1}{1-2^{-n}} u^+ - C \leq \frac{\rmd}{\rmd u} \log  p_n(u) \leq -\frac{1}{1-2^{-n}} u^+ + C \log (u\vee \rme) .
	\end{equation}
	In particular,
	\begin{equation}
		c \exp \Big(-\tfrac{1}{2-2^{-n+1}} (u^+)^2\Big)
		\leq p_n(u) \leq  C \exp \Big(-\tfrac{1}{2-2^{-n+1}} (u^+)^2 + C u \log (u\vee \rme)\Big) \,.
	\end{equation}
\end{lemma}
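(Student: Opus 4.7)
The strategy is induction on $n$, exploiting the recursive structure of the BRW on the binary tree. Conditioning on the values $Z_1, Z_2 \sim N(0,1)$ at the root's two children and using the independence of the subtrees gives the recursion
$$p_n(u) = \bigl(\rmE_Z\, p_{n-1}(u - t_n - Z)\bigr)^2,\qquad t_n := m_n - m_{n-1} = c_0 + O(1/n).$$
Let $q_n(u) := \tfrac{d}{du}\log p_n(u)$. Differentiating the recursion and rewriting the quotient as an expectation yields
$$q_n(u) = 2\,\rmE^{\mu_u}\bigl[q_{n-1}(u - t_n - Z)\bigr],$$
where $\mu_u$ is the tilted probability measure with density proportional to $\varphi(z)\,p_{n-1}(u - t_n - z)$. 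The base case $n=1$ is direct: $p_1(u) = \Phi^c(u-c_0)^2$, and the bound follows from Mills' ratio, which is precisely what produces the $\log(u\vee e)$ correction in the upper bound.

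For the inductive step, plug $q_{n-1}(s) \approx -a_{n-1} s^+$ (with $a_{n-1} := 1/(1-2^{-(n-1)})$) into the expression for $\mu_u$. The density then becomes proportional to $\exp\bigl(-\tfrac12 z^2 - \tfrac{a_{n-1}}{2}(u-t_n-z)^2\bigr)$ in the regime $u - t_n - z > 0$, which is Gaussian with mean $a_{n-1}(u-t_n)/(1+a_{n-1})$ and variance $1/(1+a_{n-1})$. Substituting back,
$$q_n(u) \;\approx\; -2a_{n-1}\,\rmE^{\mu_u}[u - t_n - Z] \;=\; -\frac{2a_{n-1}}{1+a_{n-1}}(u - t_n) \;=\; -\frac{u-t_n}{1-2^{-n}},$$
where the last step uses the algebraic identity $2a_{n-1}/(1+a_{n-1}) = 1/(1-2^{-n})$. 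This closes the induction with the correct coefficient $1/(1-2^{-n})$; the $t_n = O(1)$ shift is absorbed into the additive constant $C$. Once the derivative bound is established for all $n$, the pointwise bound on $p_n(u)$ itself follows by integrating in $u$ from $0$ and using the uniform lower bound $p_n(0) = \Theta(1)$, which comes from the tightness of $\min_{\bbL_n}h + m_n$ (a consequence of Aid\'ekon's convergence theorem).

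The main obstacle is obtaining a uniform-in-$n$ constant $C$. The naive contraction factor per induction step is $2/(1+a_{n-1})$, which approaches $1$ as $n\to\infty$, so an accumulation of a constant error per generation would diverge. To circumvent this, one must track errors more quantitatively: the deviation of $\mu_u$ from its Gaussian approximation, the mass placed by $\mu_u$ outside the quadratic-decay regime of $p_{n-1}$ (i.e., where $u-t_n-z \lesssim 0$), and the boundary region near $u-t_n-z = 0$ where the $s^+$ transition occurs. Each of these contributes only $O(1)$ independently of $n$, and the geometrically decaying tails of the tilted Gaussian in $z$ ensure that the contributions from large $|Z|$ are negligible. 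The $\log(u\vee e)$ upper-bound error, once introduced at the base case via Mills' ratio, is preserved (but not amplified) through the induction because the boundary region of $\mu_u$ contributes such a logarithmic correction at each step; this is where the key quantitative care is required.
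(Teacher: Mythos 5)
This lemma is imported verbatim from the companion paper \cite{Work1} and is not proved in the present manuscript, so there is no in-paper argument to compare against; I can only assess your proposal on its own merits. Your setup is sound as far as it goes: the recursion $p_n(u)=\big(\rmE_Z\,p_{n-1}(u-t_n-Z)\big)^2$, the identity $q_n(u)=2\,\rmE^{\mu_u}[q_{n-1}(u-t_n-Z)]$, and the algebra $2a_{n-1}/(1+a_{n-1})=a_n$ are all correct and do explain where the coefficient $1/(1-2^{-n})$ comes from. The gap is in the error propagation, and it is structural, not technical. The contraction factor $2/(1+a_{n-1})$ that you invoke is available only when the error in $q_{n-1}$ is known \emph{exactly} and is allowed to feed back coherently into the tilting measure $\mu_u$ (an exact linear perturbation of $\log p_{n-1}$ shifts the mean of $\mu_u$ in a way that partially cancels the prefactor $2$). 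With only a two-sided \emph{inequality} for $q_{n-1}$, the additive constant $-C_{n-1}$ in the lower bound passes through $2\,\rmE^{\mu_u}[\cdot]$ as $-2C_{n-1}$ --- it doubles --- and the needed compensation must come from an upper bound on $\rmE^{\mu_u}[(u-t_n-Z)^+]$. But $\mu_u$ is built from the true $p_{n-1}$, which the inductive hypothesis pins down only up to a multiplicative factor $\exp(Cs\log s)$ (the gap between your upper and lower bounds); tilting a variance-$\Theta(1)$ Gaussian by such a factor moves its mean by $\Theta(\log u)$, so the best lower bound you can propagate is $q_n(u)\geq -a_nu-O(\log u)$, not $-a_nu-C$. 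The asymmetric error structure ($O(1)$ below, $O(\log(u\vee\rme))$ above) is therefore not stable under your recursion. Even leaving this aside, your own accounting --- contraction factor tending to $1$, per-step error ``$O(1)$ independently of $n$'' --- yields $C_n=O(n)$, not $O(1)$: to close the induction you would need the per-step errors to be \emph{summable} in $n$, and no mechanism for that is offered. (Two smaller points: Mills' ratio at the base case produces an $O(1)$ correction, not the $\log(u\vee\rme)$ one; and the $\log$-coefficient in the upper bound is subject to the same doubling, so it is not merely ``preserved'' through the induction.)

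A route that avoids the accumulation problem altogether is suggested by the form of the constant: $1/(1-2^{-n})$ is exactly the reciprocal of $\Var_n\big(|\bbL_n|^{-1}\sum_{x\in\bbL_n}h(x)\big)$ computed in Lemma~\ref{l:2.4a}, equivalently the Dirichlet energy of the discrete-harmonic profile $\phi(x)=(1-2^{-|x|})/(1-2^{-n})$ which equals $1$ on $\bbL_n$. A single Cameron--Martin shift of the field by $u\phi$ gives the identity $\frac{\rmd}{\rmd u}\log p_n(u)=-\frac{u}{1-2^{-n}}-\frac{1}{1-2^{-n}}\cdot\frac{\rmE_n\big[\bar h_n\,\rme^{-ua_n\bar h_n}\1_{\Omega_n(0)}\big]}{\rmE_n\big[\rme^{-ua_n\bar h_n}\1_{\Omega_n(0)}\big]}$ with $\bar h_n$ the empirical leaf average and $a_n=1/(1-2^{-n})$, so the exact coefficient appears in one step for every $n$ simultaneously, and the whole problem reduces to showing that the remaining conditional, tilted expectation of $\bar h_n$ is bounded below by a constant and above by $C\log(u\vee\rme)$ (where FKG, Lemma~\ref{l:2.10}, and the tightness of the centered minimum enter). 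I would recommend rebuilding the proof around such a one-shot tilting argument rather than the generation-by-generation induction.
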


\begin{lemma}[Lemma~\ref{1@l:2.9a} in~\cite{Work1}]
\label{l:2.9a}
	It holds that 
	\begin{equation}
		\label{e:2.33}
		\lim\limits_{n\to \infty}\frac{\mathrm{d}}{\mathrm{d}u} \log p_n(u)= \frac{\mathrm{d}}{\mathrm{d}u} \log p_\infty(u)
	\end{equation}
	uniformly in $u$ on compact sets. For all $n \in [0, \infty]$, the derivatives 
	$\frac{\mathrm{d}}{\mathrm{d}u} \log p_n(u)$ are continuous and satisfy
	\begin{equation}
		\label{e:2.37b}
		-2u^+ - C \leq \frac{\mathrm{d}}{\mathrm{d}u} \log p_n(u) \leq 0 \,.
	\end{equation}
\end{lemma}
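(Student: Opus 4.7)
The plan is to establish the bounds and continuity of $\frac{\rmd}{\rmd u}\log p_n$ for finite $n$ using smoothness of the minimum of finitely many Gaussians, and then to extend to $n = \infty$ via a log-concavity argument that simultaneously yields uniform convergence on compact sets. The key structural ingredient is that $u \mapsto p_n(u)$ is log-concave, which follows from Prekopa's theorem.

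For the bounds in \eqref{e:2.37b}: the upper bound $\leq 0$ is immediate, since $\Omega_n(u)$ is decreasing in $u$ and hence $p_n$ is non-increasing. The lower bound $\geq -2u^+ - C$ follows from Lemma~\ref{l:4.2a} for $n \geq 1$, using $(1-2^{-n})^{-1} \leq 2$, and extends to $n = \infty$ by passing to the limit once the convergence is established. For continuity at finite $n$, note that $(h(x))_{x \in \bbL_n}$ is jointly Gaussian with non-degenerate covariance, so $\min_{\bbL_n} h$ admits a $C^\infty$ density (which can be written as a finite sum, indexed by the potential minimizing leaf, of a Gaussian density times a Gaussian orthant probability, each $C^\infty$ in the free parameter). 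Hence $p_n$ is $C^\infty$ with $p_n > 0$, and $\frac{\rmd}{\rmd u}\log p_n$ is continuous.

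For log-concavity, apply Prekopa's theorem to the log-concave joint density $\phi$ of $(h(x))_{x \in \bbL_n}$ together with the convex set $A = \{(v, u) \in \bbR^{\bbL_n} \times \bbR : v_x \geq u - m_n \text{ for all } x \in \bbL_n\}$, which is convex since it is cut out by affine inequalities: marginalizing $\phi(v)\1_A(v, u)$ in $v$ yields $p_n(u)$, which is therefore log-concave in $u$. In particular $\frac{\rmd}{\rmd u}\log p_n$ is non-increasing. Log-concavity passes to the pointwise limit $p_\infty$, so $\log p_\infty$ is concave, and standard facts about concave functions give pointwise convergence of derivatives at every point where $\log p_\infty$ is differentiable. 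To upgrade this to continuity of $\frac{\rmd}{\rmd u}\log p_\infty$ and to uniform convergence on compact sets, invoke \eqref{e:01.13}: the limit law of the centered minimum is the convolution of a Gumbel with an independent random variable, and since the Gumbel density is $C^\infty$ with all derivatives integrable, the convolution has a $C^\infty$ density. Hence $p_\infty \in C^\infty$ and $\frac{\rmd}{\rmd u}\log p_\infty$ is continuous. Uniform convergence on compact sets of $\frac{\rmd}{\rmd u}\log p_n \to \frac{\rmd}{\rmd u}\log p_\infty$ then follows from the Polya-type principle that pointwise-convergent monotone continuous functions with a continuous monotone limit converge uniformly on compacts: partition a compact interval into pieces where the limit varies by less than $\epsilon$, apply pointwise convergence at the endpoints, and sandwich in between using monotonicity. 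The principal obstacle is the continuity of $\frac{\rmd}{\rmd u}\log p_\infty$; the smooth-density representation provided by \eqref{e:01.13} is the cleanest input, though one could alternatively argue directly via a uniform-in-$n$ modulus-of-continuity bound for the densities $-\frac{\rmd}{\rmd u}p_n$.
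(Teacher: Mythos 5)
This lemma is imported verbatim from the companion paper (it is stated here as ``Lemma~2.9a in \cite{Work1}'') and the present manuscript contains no proof of it, so there is no in-paper argument to compare yours against; I can only assess your proposal on its own terms. On those terms it is correct and self-contained. The chain of reasoning holds up at every step: the bound $\le 0$ from monotonicity of $u\mapsto\Omega_n(u)$; the lower bound from Lemma~\ref{l:4.2a} via $\frac{1}{1-2^{-n}}\le 2$, passed to $n=\infty$ once pointwise convergence of the derivatives is in hand; smoothness and strict positivity of $p_n$ for finite $n$ from non-degeneracy of the Gaussian vector $(h(x))_{x\in\bbL_n}$ (differentiation under the integral over the orthant is the cleanest phrasing); log-concavity of $p_n$ via Prekopa applied to the convex set $\{(v,u): v_x\ge u-m_n\}$; preservation of log-concavity under pointwise limits together with the standard convex-analysis fact that subgradients of concave functions converge at differentiability points of the limit; smoothness of $p_\infty$ from the representation~\eqref{e:01.13} of the limiting law as a Gumbel convolved with an independent variable (the Gumbel density and all its derivatives being bounded, so differentiation under the convolution integral is justified and $p_\infty>0$ everywhere by the surviving lower bound of Lemma~\ref{l:4.2a}); and finally the Polya-type upgrade from pointwise to locally uniform convergence for monotone functions with a continuous monotone limit. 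The only cosmetic caveat is the degenerate endpoint $n=0$ in the stated range $n\in[0,\infty]$, where $m_0$ and $p_0$ are not meaningful; your argument covers $n\in[1,\infty]$, which is clearly the intended content. Whether the authors' actual proof in \cite{Work1} proceeds via log-concavity or via a more hands-on computation with the density of the minimum I cannot verify from this document, but your route is a legitimate and arguably quite economical one: the single structural input (Prekopa) delivers monotonicity of the derivatives, which is exactly what converts pointwise convergence into the uniform convergence the lemma asserts.
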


Turning to the left tail:
\begin{equation}
	\label{e:103.24}
	q_n(u) := 1 - p_n(-u) = \rmP_n \Big(\min_{\bbL_n} h \leq -m_n - u \Big) \,,
\end{equation}
we have
\begin{lemma}[\cite{bramsondingoferbrw}, Proposition~3.1]
	\label{l:2.10}
	There exist $C,c \in (0,\infty)$ such that for all $n \geq 1$, $u \geq 0$,
	\begin{equation}
		q_n(u)
		\leq C(u+1) \rme^{-c_0 u}  \,, 
	\end{equation}	
	and for all $n \geq 1$, $u \in [0, n^{1/2}]$,
	\begin{equation}
		q_n(u)
		\geq c (u+1) \rme^{-c_0 u} \,.
	\end{equation}	
	Moreover, there exists $C_0 \in (0,\infty)$ such that
	\begin{equation}
		\lim_{u \to \infty} \limsup_{n \to \infty}
		\Bigg|\frac{q_n(u)}{C_0 u \rme^{-c_0 u}} - 1 \Bigg| = 0 \,.
	\end{equation}	
\end{lemma}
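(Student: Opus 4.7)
My plan is to treat the three assertions separately using the standard first/second moment plus barrier toolbox for branching random walks.

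For the upper bound, I would start from the many-to-one identity, which reduces $q_n(u)$ to a Gaussian random walk computation. A crude union bound gives only $q_n(u) \leq C\sqrt{n}\, \rme^{-c_0 u}$, losing a factor $\sqrt{n}$. To recover the sharp order, I would restrict to ``leading edge'' particles $x \in \bbL_n$ with $h(x) \in [-m_n - u - 1, -m_n - u]$ whose ancestral walk $(h([x]_k))_{k \leq n}$ stays above a concave barrier of the form $-m_n - u - \log_+(k \wedge (n-k)) - 1$; the event $\{\min_{\bbL_n} h \leq -m_n - u\}$ is controlled, up to an entropic boundary term, by the count of such particles. After a Girsanov tilt by $\exp(c_0 W_n - c_0^2 n/2)$, the prefactor $2^n \rme^{-c_0^2 n /2} = 1$ is combined with the Gaussian endpoint density of order $n^{-1/2}$, a ballot-type probability of order $(u+1)/n$ for the tilted driftless walk to remain in the required strip, and the factor $\rme^{c_0 m_n}/2^n = n^{-3/2}$ and $\rme^{-c_0 u}$. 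These multiply to the claimed order $(u+1)\rme^{-c_0 u}$, with the complement (path exits the barrier at some intermediate time) controlled by a union bound over the first exit time.

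For the lower bound on $u \in [0, n^{1/2}]$, I would apply the Paley--Zygmund inequality to the counting random variable $N_n(u)$ of the leading-edge particles above. The first moment is already of the right order by the upper bound calculation. The second moment $\rmE[N_n(u)^2]$ is decomposed according to the generation $j = |x\wedge y|$ of the most recent common ancestor of two particles. Conditional on the common prefix, the two disjoint subtrees contribute independent ballot-type probabilities on the remaining $n-j$ steps. The dominant contribution sits at $j$ close to $n$, since the branching factor $2^j$ competes with the entropic cost of reaching and then staying near the barrier; a careful geometric summation then yields $\rmE[N_n(u)^2] \leq C (\rmE N_n(u))^2$. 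The restriction $u \leq n^{1/2}$ is precisely what keeps the barrier non-degenerate along the whole path so that the ballot probabilities remain of order $(u+1)/n$.

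For the sharp asymptotic, I would invoke the convergence of the centered minimum from \cite{AidekonBRW13}, already recorded in~\eqref{e:01.13}: $-\min_{\bbL_n} h - m_n \Rightarrow G \oplus c_0^{-1} \log Z$ with $G$ Gumbel of rate $c_0$ independent of a positive random variable $Z$ (the derivative-martingale limit). A direct computation gives $\rmP(G \oplus c_0^{-1} \log Z > u) = \rmE[1 - \exp(-Z\rme^{-c_0 u})]$; the refined tail of this expression, $\sim C_0 u\, \rme^{-c_0 u}$ as $u \to \infty$, is the Aid\'ekon--Shi type statement on the logarithmic tail of $Z$. Combined with the uniform upper bound established in the first step (which provides the required uniform integrability of $\rme^{c_0 u}/(u+1) \cdot q_n(u)$ along the relevant range of $u$), one exchanges limits to conclude. \textbf{Main obstacle.} The crux is the second moment computation: the barrier shape and the decomposition over the common ancestor must be chosen so that contributions from pairs with small $j$ (which are heavily correlated) are negligible compared to pairs with $j$ close to $n$. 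A related subtlety is extracting the sharp constant $C_0$, which requires the precise logarithmic tail of the derivative martingale rather than just weak convergence.
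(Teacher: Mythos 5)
First, a point of comparison: the paper does not prove this lemma at all --- it is imported verbatim from the cited reference (Proposition~3.1 of \cite{bramsondingoferbrw}), so there is no internal argument to measure you against. What you have written is a from-scratch sketch of that external result, and your overall route (first moment along a concave barrier for the upper bound, Paley--Zygmund with a decomposition over the branching time for the lower bound, and convergence of the centered minimum plus the logarithmic tail of the derivative-martingale limit for the sharp constant) is precisely the standard Bramson--Ding--Zeitouni / Aid\'ekon strategy, i.e.\ the right one.

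That said, two steps as written would fail. (1) The claimed second-moment bound $\rmE[N_n(u)^2]\le C(\rmE N_n(u))^2$ is false for large $u$: since $N_n(u)$ is integer-valued, $\rmE[N^2]\ge \rmE[N]$, while $\rmE[N_n(u)]\asymp (u+1)\rme^{-c_0 u}\to 0$, so $\rmE[N^2]/(\rmE N)^2\ge 1/\rmE[N]\to\infty$. (It is also inconsistent with your own claim that the sum is dominated by $j$ near $n$: those near-diagonal pairs contribute on the order of $\rmE[N]$, not $(\rmE N)^2$.) The correct output of the barrier computation is $\rmE[N^2]\le C\,\rmE[N]$, and Paley--Zygmund then gives $q_n(u)\ge (\rmE N)^2/\rmE[N^2]\ge c\,\rmE[N]\asymp c(u+1)\rme^{-c_0 u}$, which is what the lemma asserts. (2) The first-moment bookkeeping does not close: $\rme^{c_0 m_n}/2^n=\rme^{c_0^2 n/2}\,n^{-3/2}$, not $n^{-3/2}$, and the factors you list multiply to $(u+1)n^{-3}\rme^{-c_0 u}$. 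The correct accounting is $2^n$ times the endpoint density $n^{-1/2}\rme^{-(m_n+u)^2/(2n)}$ times the ballot factor $(u+1)/n$, where $\rme^{-(m_n+u)^2/(2n)}=\rme^{-c_0^2 n/2}\,n^{3/2}\,\rme^{-c_0 u}\,\rme^{-O(u^2/n)}$, so the powers of $n$ cancel; the last Gaussian correction is also the true reason for the restriction $u\le n^{1/2}$ in the lower bound, and one must additionally sum over the strips $[-m_n-u-j-1,\,-m_n-u-j]$, $j\ge 0$, to cover the whole event $\{\min_{\bbL_n}h\le -m_n-u\}$. (Minor: the crude union bound loses a factor $n$, not $\sqrt n$; and for the sharp asymptotic no exchange of limits is needed, since the order $\lim_{u}\limsup_{n}$ lets you first pass to $q_\infty(u)=\rmE[1-\rme^{-Z\rme^{-c_0 u}}]$ by weak convergence and only then send $u\to\infty$.)
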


\subsection{Additive martingales}
Next, we recall that the (additive) exponential martingales associated with the DGFF on $\bbT_n$ are the processes $Z_\alpha = (Z_{n,\alpha})_{n \geq 1}$ defined for $\alpha > 0$ by
\begin{equation}
	Z_{n,\alpha} := \frac{1}{|\bbL_n|} \sum_{x \in \bbL_n}
	\rme^{\alpha h(x) - \frac{n}{2} \alpha^2} \,.
\end{equation}
As positive martingales, they converge almost-surely. The fact that the convergence is in $\bbL^p$ for some $p > 1$ whenever $\alpha < \sqrt{2\log 2}$,
and consequently the non-vanishing of the limit, is non-obvious and can be found, e.g., in~\cite{biggins902}:
\begin{proposition}
\label{p:2.6} 
For all $\alpha < \sqrt{2 \log 2}$, the limit
\begin{equation}
	Z_\alpha := \lim_{n \to \infty} Z_{n,\alpha} \,,
\end{equation}
exists almost-surely and in $\bbL^p$ for $p \in \big[1, (2 \log 2)/\alpha^2 \wedge 2\big)$. 
\end{proposition}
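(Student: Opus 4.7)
The plan is to establish the result by combining the martingale convergence theorem with an $\bbL^p$-boundedness estimate, which is the standard Biggins scheme for additive martingales of branching random walks. First I would verify that $(Z_{n,\alpha})$ is a nonnegative $(\FF_n)$-martingale with $\FF_n := \sigma(h(x) : |x| \leq n)$ and mean $1$. This is a direct computation: conditional on $\FF_n$, for each $y \in \bbL_{n+1}$ one has $h(y) = h(x) + G_y$ where $x$ is the parent of $y$ and the $G_y$ are i.i.d.\ standard Gaussian independent of $\FF_n$; since $\bbE[\rme^{\alpha G_y}] = \rme^{\alpha^2/2}$ and each $x \in \bbL_n$ has exactly two children, collecting terms gives $\bbE[Z_{n+1,\alpha}\mid\FF_n] = Z_{n,\alpha}$. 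Almost-sure convergence to a nonnegative limit $Z_\alpha$ then follows from the martingale convergence theorem. The key recursion I would exploit in the sequel is
\begin{equation}
Z_{n+1,\alpha} = \tfrac{1}{2}\big(\xi_1 Z_{n,\alpha}^{(1)} + \xi_2 Z_{n,\alpha}^{(2)}\big),
\end{equation}
obtained by decomposing $\bbL_{n+1}$ along the two children of the root, where $\xi_i := \rme^{\alpha X_i - \alpha^2/2}$ are i.i.d.\ lognormals with $\bbE\xi_i = 1$, and $Z_{n,\alpha}^{(1)}, Z_{n,\alpha}^{(2)}$ are independent copies of $Z_{n,\alpha}$, independent of $(X_1, X_2)$.

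The heart of the proof is showing $\sup_n \bbE Z_{n,\alpha}^p < \infty$ for the full stated range, after which Doob's $\bbL^p$-inequality upgrades the a.s.\ convergence to $\bbL^p$-convergence and shows $\bbE Z_\alpha = 1$ (in particular $Z_\alpha$ is non-degenerate). For $p = 2$, which is accessible only in the subrange $\alpha^2 < \log 2$, the recursion and independence give the linear iteration
\begin{equation}
\bbE Z_{n+1,\alpha}^2 = \tfrac{\rme^{\alpha^2}}{2}\,\bbE Z_{n,\alpha}^2 + \tfrac{1}{2},
\end{equation}
whose fixed point is finite precisely when $\rme^{\alpha^2} < 2$, matching the threshold. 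For generic $p \in (1,2)$, I would verify Biggins' condition $m(p\alpha) < p\, m(\alpha)$ for the log-Laplace $m(\alpha) := \log\bbE \sum_{|x|=1}\rme^{\alpha h(x)} = \log 2 + \alpha^2/2$. A direct calculation reduces this inequality to $p\alpha^2 < 2\log 2$, which is exactly the constraint $p < (2\log 2)/\alpha^2$ in the statement. Under this condition (and the trivial integrability of $\sum_{|x|=1}\rme^{p\alpha h(x)}$ for Gaussian steps), Biggins' theorem from \cite{biggins902} yields the required uniform bound on $\bbE Z_{n,\alpha}^p$.

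The main technical obstacle, if one sought a self-contained proof rather than a citation, is the range $p \in (1,2)$. The naive inequality $(a+b)^p \leq 2^{p-1}(a^p + b^p)$ applied to the recursion gives $\bbE Z_{n+1,\alpha}^p \leq \bbE\xi^p\,\bbE Z_{n,\alpha}^p$ with $\bbE\xi^p = \rme^{\alpha^2 p(p-1)/2} > 1$ whenever $p > 1$, which is useless. One must instead work with the centered recursion $Z_{n+1,\alpha}-1 = \tfrac{1}{2}[(\xi_1 Z_n^{(1)}-1) + (\xi_2 Z_n^{(2)}-1)]$ and apply a von Bahr--Esseen-type $\bbL^p$ inequality for sums of independent mean-zero variables, combined with a splitting $\xi Z - 1 = \xi(Z-1) + (\xi-1)$ to isolate the contraction factor. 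This is the analysis carried out in Biggins' original paper; since the proposition explicitly quotes that result, invoking it as a black box is appropriate for the purposes of this work.
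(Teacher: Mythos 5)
Your proposal is correct and takes essentially the same route as the paper: both reduce the matter to Biggins' theorem from~\cite{biggins902}, and your condition $m(p\alpha) < p\,m(\alpha)$ with $m(\theta) = \log 2 + \theta^2/2$ is exactly the paper's inequality $\tfrac12\alpha^2 p^2 + \log 2 - \tfrac12 \alpha^2 p - p\log 2 < 0$, both simplifying to $p < (2\log 2)/\alpha^2$. The extra material you supply (the martingale verification, the $p=2$ second-moment iteration, the von Bahr--Esseen discussion) is sound but not needed once Biggins is invoked as a black box, which is precisely what the paper does.
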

\begin{proof}
By~\cite[Theorem~1]{biggins902} with $\lambda = \theta = \alpha$, $\alpha := p$ and $\gamma = 2$, the limit in the statement of the proposition is in $\bbL^p$ for all $p \in (1,2]$ such that
\begin{equation}
\frac12 \alpha^2 p^2 + \log 2 - \frac12 \alpha^2 p - p \log 2 < 0 \,,
\end{equation}
which is equivalent to $p < (2 \log 2)/\alpha^2$.
\end{proof}

Lastly, the next lemma follows by a simple computation.
\begin{lemma}
\label{l:2.4a}
For any $n\geq 0$,
\begin{equation}\label{eq:2.15}
	\Var_n \bigg(\frac{1}{|\bbL_n|} \sum_{x \in \bbL_n} h(x)\bigg) = 1-2^{-n}.
\end{equation}
\end{lemma}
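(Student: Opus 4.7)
The plan is to carry out a direct second moment computation using the covariance formula \eqref{e:01.1}. Writing $S_n := |\bbL_n|^{-1} \sum_{x \in \bbL_n} h(x)$, centering is automatic since $h$ is centered, so
$$
\Var_n(S_n) = \frac{1}{|\bbL_n|^2} \sum_{x,y \in \bbL_n} \rmE_n[h(x) h(y)] = \frac{1}{2^{2n}} \sum_{x,y \in \bbL_n} |x \wedge y| \,.
$$
The task then reduces to evaluating the combinatorial sum $\sum_{x,y \in \bbL_n} |x \wedge y|$.

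To evaluate that sum, I would use the identity $|x \wedge y| = \sum_{j=1}^{n} \1\{[x]_j = [y]_j\}$, valid for $x,y \in \bbL_n$, where $[x]_j$ denotes the ancestor of $x$ at generation $j$. For each fixed $j \in \{1,\ldots,n\}$, the number of ordered pairs $(x,y) \in \bbL_n \times \bbL_n$ sharing a common ancestor at generation $j$ equals $|\bbL_j| \cdot (2^{n-j})^2 = 2^j \cdot 2^{2(n-j)} = 2^{2n-j}$, since each of the $2^j$ vertices at generation $j$ has exactly $2^{n-j}$ descendants in $\bbL_n$. Summing the geometric series yields
$$
\sum_{x,y \in \bbL_n} |x \wedge y| = \sum_{j=1}^n 2^{2n-j} = 2^{2n}(1 - 2^{-n}) \,,
$$
and dividing by $|\bbL_n|^2 = 2^{2n}$ produces the claimed identity.

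There is really no hard part: the result is a finite-dimensional Gaussian variance computation and the only subtlety is choosing a convenient representation of $|x\wedge y|$. As a cross-check, one can alternatively argue recursively via the BRW structure. Conditioning on the two Gaussian increments $h(0'), h(0'')$ at generation one and writing $S_n = \tfrac12(h(0') + h(0'')) + \tfrac12(S'_{n-1} + S''_{n-1})$ with $S'_{n-1}, S''_{n-1}$ i.i.d.\ copies of $S_{n-1}$ independent of $h(0'), h(0'')$, one gets the recursion $V_n = \tfrac12 + \tfrac12 V_{n-1}$ with $V_0 = 0$, whose solution is $V_n = 1 - 2^{-n}$.
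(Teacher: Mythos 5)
Your proof is correct and is essentially the paper's argument: both reduce the variance to the combinatorial sum $2^{-2n}\sum_{x,y\in\bbL_n}|x\wedge y|$ and evaluate it by counting pairs according to their common ancestors, the only difference being that you decompose $|x\wedge y|=\sum_{j=1}^n \1\{[x]_j=[y]_j\}$ and sum over generations, whereas the paper partitions the leaves $y$ by the depth of $x\wedge y$ for fixed $x$. The recursive cross-check is a nice independent confirmation but not needed.
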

\begin{proof}
Since $\rmE_n(h(x))=0$ all $x\in \bbT_n$,
\begin{multline}\label{eq:2.16}
	\Var_n \bigg(\frac{1}{|\bbL_n|} \sum_{x \in \bbL_n} h(x)\bigg) =\rmE_n\bigg[ \bigg(\frac{1}{|\bbL_n|} \sum_{x \in \bbL_n} h(x)\bigg)^2 \bigg]\\
	= | \bbL_n|^{-2} \rmE_n\bigg[ \sum_{x \in \bbL_n}h(x)^2+ \sum_{x \in \bbL_n}\sum_{k=1}^n \sum_{y \in \bbL_n: |x\wedge y | =n-k} h(x) h(y) \bigg].
\end{multline}
Now, $|\bbL_n| =2^n$ and for each $x\in \bbL_n$ and $k=1,\dotsc, n$ there exist $2^{k-1}$ leaves $y\in \bbL_n$ whose most recent common ancestor, $x\wedge y$ is at depth $k$. Also $\rmE_n \big( h(x)h(y)\big) = n-k,$ for any $x,y\in \bbL_n$ such that $|x\wedge y|=n-k$. Therefore and by linearity of expectations \eqref{eq:2.16} is equal to
\begin{equation}
	2^{-2n} \bigg( 2^n n + 2^n\sum_{k=1}^n 2^{k-1}(n-k)   \bigg)=2^{-2n}\big(2^{n}n +2^{2n} -2^n n-2^n \big)=1 - 2^{-n},
\end{equation}
which shows \eqref{eq:2.15}.
\end{proof}

\section{Local statistics}
\label{s:3}
\subsection{Infinite volume limit under typical conditioning on the minimum}
We start by showing that $\rmP_{n,v}^{\uparrow u}$ admits an infinite volume limit, namely proving Proposition~\ref{p:1.45}. Given a probability measure $\rmP$ on $\bbR^{\bbV}$ for some finite non-empty set $\bbV$, and a function $\varphi : \bbR^\bbV \to (0, \infty)$, we define $\rmP^\varphi$ via 
\begin{equation}
\label{e:2.10a}
	\frac{\rmd \rmP^\varphi}{\rmd \rmP}  = \Big(\textstyle \int \varphi \rmd \rmP \Big)^{-1} \varphi \,.
\end{equation} 
Setting for all $r \geq 0$ and $u \in \bbR$, 
\begin{equation}
\label{e:5.71}
\varphi_{\infty,r,u}(h) = \prod_{x \in \bbL_r} p_{\infty} \big(u - c_0 r - h(x)) \,,
\end{equation}
Equation~\eqref{e:105.70} in Proposition\ref{p:1.45} can now be equivalently written as
\begin{equation}
\label{e:100.160}
\rmP_{\infty, v}^{\uparrow u} \circ \Pi_{\bbT_{r}}^{-1}=\rmP_{r,v}^{\varphi_{\infty, r,u}} \,,
\end{equation}
for all $v \in \bbR$. 

The proposition will follow directly, once we show the following lemma.
\begin{lemma}
\label{l:5.7}
For all $r \geq 0$, the limit
\begin{equation}
\label{e:5.70}
	\rmE_n^{\uparrow u} F(h_{\bbT_r} + v)
	\underset{n \to \infty} \longrightarrow \rmE_r^{\varphi_{\infty,r,u}} F(h_{\bbT_r} + v)\,,
\end{equation}
holds uniformly in $u,v \in \bbR$ on any compact subset of $\bbR$ and all $F : \bbR^{\bbT_r} \to \bbR$ satisfying 
\begin{equation}
\label{e:107.2}
	F(h) \leq C \rme^{C\|h\|}\,,
\end{equation}
for some arbitrary (but fixed) $C < \infty$. Moreover, the right hand side of~\eqref{e:5.70}
is continuous in $u,v \in \bbR$.
\end{lemma}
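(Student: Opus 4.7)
The plan is to apply the Markov property of the DGFF to reduce the conditional expectation to a ratio of ordinary Gaussian expectations on $\bbT_r$ involving the right-tail function $p_{n-r}$, and then pass to the limit using the convergence $p_n \to p_\infty$ from Lemma~\ref{l:2.9a}. Conditioning on $\sigma(h_{\bbT_r})$ and using that, below each $x \in \bbL_r$, the field is an independent copy of the DGFF on a tree of depth $n-r$ rooted at $x$ with initial value $h(x)$, one obtains
\[
\rmP_n\big(\Omega_n(u) \,\big|\, \sigma(h_{\bbT_r})\big) = \prod_{x \in \bbL_r} p_{n-r}\big(m_{n-r} - m_n + u - h(x)\big) =: \psi_{n,r,u}(h_{\bbL_r}),
\]
and therefore
\[
\rmE_n^{\uparrow u} F(h_{\bbT_r} + v) = \frac{\rmE_r\big[F(h_{\bbT_r} + v)\, \psi_{n,r,u}(h_{\bbL_r})\big]}{\rmE_r\big[\psi_{n,r,u}(h_{\bbL_r})\big]}.
\]
From the explicit form of $m_n$ one has $m_{n-r} - m_n \to -c_0 r$, while Lemma~\ref{l:2.9a} (combined with pointwise convergence of $p_n$ from the convergence of the centered minimum and monotonicity in $u$) yields $p_k \to p_\infty$ uniformly on compacts. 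Together these give $\psi_{n,r,u} \to \varphi_{\infty,r,u}$, uniformly in $u$ on compacts and for $h_{\bbL_r}$ in a bounded set.

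To upgrade to the integrated convergence~\eqref{e:5.70} uniformly over $(u,v)$ in a compact $K \subset \bbR^2$, I would split both numerator and denominator according to the event $\{\|h_{\bbT_r}\|_\infty \leq M\}$ and its complement. On the truncated event, the uniform convergence of the integrand combined with $|F(h + v)| \leq C\rme^{C\|h\|_\infty + C|v|}$ produces an error of size $o_n(1)\cdot \rmE_r \rme^{C\|h_{\bbT_r}\|_\infty}$, which is finite by the Gaussian integrability of $\|h_{\bbT_r}\|_\infty$. On the complementary event, $\psi_{n,r,u}$ and $\varphi_{\infty,r,u}$ are bounded by $1$, and Cauchy–Schwarz together with Gaussian tails gives
\[
\sup_{v \in K}\rmE_r\big[|F(h_{\bbT_r} + v)|\,\1_{\|h_{\bbT_r}\|_\infty > M}\big] \underset{M \to \infty}\longrightarrow 0.
\]
The denominator is handled by the same argument with $F \equiv 1$; positivity of $p_\infty$ on all of $\bbR$ ensures $\rmE_r[\varphi_{\infty,r,u}] > 0$, which is uniform in $u$ on compacts by continuity, providing the required lower bound on the normalizer.

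Continuity of the limit in $(u,v)$ follows from two dominated-convergence arguments: continuity in $u$ uses that $u \mapsto p_\infty(u - c_0 r - \cdot)$ is continuous (in fact smooth, by Lemma~\ref{l:2.9a}) while being uniformly bounded by~$1$, so that one can differentiate under the integral; continuity in $v$ reduces, via the absolutely continuous Gaussian density on $\bbR^{\bbT_r}$ and a translation-of-measure change of variables, to continuity of the convolution of $F$ with a smooth rapidly decaying density. The main obstacle is keeping the uniformity on compacts in $(u,v)$ while $F$ grows exponentially — for which the $M$-truncation argument above is essential — together with correctly tracking the logarithmic correction $m_{n-r} - m_n + c_0 r = O((\log n)/n)$: although this vanishes in the limit, it must be carried explicitly through the Markov reduction in order for the limiting weight to have the precise form $p_\infty(u - c_0 r - h(x))$ appearing in $\varphi_{\infty,r,u}$.
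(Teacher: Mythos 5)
Your proposal is correct and follows essentially the same route as the paper's proof: both reduce $\rmE_n^{\uparrow u}F(h_{\bbT_r}+v)$ via the Markov property to a weighted Gaussian expectation on $\bbT_r$ with weight $\prod_{x\in\bbL_r}p_{n-r}(u-m_n+m_{n-r}-h(x))$, pass to the limit using the uniform-on-compacts convergence $p_n\to p_\infty$ after truncating to a compact set of field configurations (justified by the exponential bound on $F$ and Gaussian tails), and obtain continuity of the limit by dominated convergence. Your explicit treatment of the normalizing denominator and of the $O((\log n)/n)$ drift correction only makes more careful what the paper's argument leaves implicit.
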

\begin{proof}
As in \eqref{1@e:2.32d} of \cite{Work1}, for $0 \leq k \leq n$ and $u \in \bbR$ we may write
\begin{equation}
\label{e:2.32d}
	\rmP_n \big(h_{{\bbT_k}} \in \cdot \,\big|\, \Omega_n(u)\big) = \rmP_k^{\varphi_{n,k,u}}(\cdot) \,,
\end{equation}
where
\begin{equation}
\label{e:2.40d}
\varphi_{n,k,u}(h) := \prod_{x \in \bbL_k} p_{n-k} \big(-h(x) + u - m_n + m_{n-k} \big) \,.
\end{equation}
Hence, the difference between the two sides in~\eqref{e:5.70} is at most
\begin{equation}
\label{e:107.4}
	\int_{h \in \bbR^{\bbT_r}} \big|F(h)\big|\, \big|\varphi_{n-r,r,u+O(r/n)}(h-v)
	- \varphi_{\infty,r,u}(h-v)\big| \,\frac{\rmP_r(\rmd h)}{\rmd h}\, (h-v) \,\rmd h \,,
\end{equation}
where $\rmP_r(\rmd h)/\rmd h$ is the density of $\rmP_r$ w.r.t. Lebesgue on $\bbR^{\bbT_r}$.
Thanks to~\eqref{e:107.2}, the boundedness of $\varphi_{n,k,u}$ and the Gaussian tails of
$\rmP_r(\rmd h)/\rmd h$, we may restrict the above integral in $h$ to a compact subset of $\bbR^{\bbL_r}$ at an arbitrarily small cost, uniformly in $v$ and $F$ as desired.
Then the uniform convergence of $p_n$ to $p_\infty$ on compact subsets of $\bbR$ (which follows from the continuity of the limit, being the tail of an absolutely continuous random variable, and the monotonicity of $p_n$) and continuity of $p_\infty$ implies that the restricted integral can be made arbitrarily small, uniformly as desired.

To show continuity in $u,v$ of the limit, we write the latter as~\eqref{e:107.4}, with the terms in absolute value replaced by $F(h) \varphi_{\infty, r, u}(h-v)$ respectively. Then, continuity of the integrand in $u$, $v$ follows from the continuity of both $p_\infty$ and the Gaussian density in their respective arguments. The result is then a consequence of the Dominated Convergence Theorem.
\end{proof}

With the above lemma we readily have,
\begin{proof}[Proof of Proposition~\ref{p:1.45}]
Thanks to Lemma~\ref{l:5.7} with $v=0$, for all $r \geq 1$ and $p \geq 1$,
\begin{equation}
	\int \|h_{{\bbT_r}}\|_p \big| \big(\rmP_n^{\uparrow u} \circ \Pi^{-1}_{\bbT_r}\big) (\rmd h) -  \rmP_r^{\varphi_{\infty, r,u}}(\rmd h)\big| \underset{n \to \infty}\longrightarrow 0\,,
\end{equation}
which implies by, e.g., Theorem 6.15 in~\cite{villani} that,
\begin{equation}
\label{e:107.6}
	W^{p,\infty} \Big(\rmP_n^{\uparrow u} \circ \Pi^{-1}_{\bbT_r},\, \rmP_r^{\varphi_{\infty, r,u}} \Big) \underset{n \to \infty}\longrightarrow 0 \,.
\end{equation}
Consistency of the family $\{\rmP_n^{\uparrow u} \circ \Pi^{-1}_{\bbT_r} :\: r \geq 1\}$ for all $n \geq 1$ is then inherited in the limit and so by Kolmogorov's Extension Theorem we may define a unique law $\rmP^{\uparrow u}_\infty$ on $\bbR^{\bbT},$ whose marginal on $\bbR^{\bbT_r}$ is $\rmP_r^{\varphi_{\infty, r,u}}$ for all $r \geq 1$. From~\eqref{e:107.6} for all $r \geq 1$, we conclude that $\rmP^{\uparrow u}_\infty$ is the $W_\loc^{p, \infty}$-limit of $\rmP_n^{\uparrow u}$ as $n \to \infty$ for all $p \geq 1$. Since $\rmP_{n,v}^{\uparrow u} \equiv \rmP_{n}^{\uparrow u-v} (\cdot - v)$, setting $\rmP^{\uparrow u}_{\infty, v}$
via~\eqref{e:107.28}, we thus get~\eqref{e:5.69},~\eqref{e:107.28} and also
~\eqref{e:105.70} for $v=0$. To get~\eqref{e:105.70} for general $v$, we can simply write,
\begin{equation}
	\rmP^{\uparrow u}_{\infty, v} \circ \Pi^{-1}_{\bbT_r} = 
	\big(\rmP^{\uparrow u-v}_{\infty} (\cdot - v)\big) \circ \Pi^{-1}_{\bbT_r}\big) = 
	\big(\rmP^{\uparrow u-v}_{\infty} \circ \Pi^{-1}_{\bbT_r}\big) (\cdot - v)
	= \rmP_r^{\varphi_{\infty, r, u-v}}(\cdot - v) =  \rmP_{r,v}^{\varphi_{\infty, r,u}}(\cdot) \,.
\end{equation}
Lastly, for~\eqref{e:107.27}, it is enough to check that for all $k \geq 1$ and $r \geq |x|+k$,
\begin{equation}\label{e:enough}
\rmP_v^{\uparrow u}\big(h_{{\bbT_k(x)}} \in \cdot \,\big|\, h_{\bbT_r \setminus \bbT(x) \cup \{x\}} \big) = \rmP^{\uparrow u - c_0 |x|}_{h(x)} \circ \Pi_{\bbT_{k}}^{-1} 
\quad \text{a.s.}\,.
\end{equation}
Using~\eqref{e:100.160}, the left hand side above is equal to
\begin{equation}
\rmP_{r,v}^{\varphi_{\infty,r,u}}\big(h_{\bbT_k(x)}\big) \in \cdot \,\big|\, h_{\bbT_r \setminus \bbT(x) \cup \{x\}} \big) = 
\rmP_{r-|x|, h(x)}^{\varphi_{\infty, r-|x|,u-c_0|x|}} \circ \Pi_{\bbT_k}^{-1}
= \rmP_{\infty, h(x)}^{\uparrow u-c_0|x|} \circ \Pi_{\bbT_{r-|x|}}^{-1} \circ \Pi_{\bbT_k}^{-1}\,,
\end{equation}
with all equalities holding almost-surely. The last term is equal to the right hand side of~\eqref{e:enough}, almost-surely as well.
\end{proof} 

\subsection{Localization in first generations}
\label{s:3.2}
Crucial to deriving asymptotics in law under the conditioning, both locally and globally, are the following two localization results, which were shown in~\cite{Work1}. The first theorem, which provides upper and lower tail bounds on $h$ under the conditioning, is one of the main results in~\cite{Work1}. Henceforth we abbreviate
\begin{equation}
\label{e:101.62}
	\hat{h}(x) \equiv h(x) - \mu_n(x) \,,
\end{equation}
with $\mu_n$ as in~\eqref{e:1.10a}.
\begin{theorem}[Theorem~\ref{1@t:1.3} in~\cite{Work1}] 
\label{t:1.3}
There exists $C,c \in (0, \infty)$ such that 
for all $n \geq 1$, $x \in \bbT_n$ and $u > 0$,
\begin{equation}
\label{e:1.9a}
c \exp \Big(-C \frac{u^2}{	\ol{\sigma}_n(x, u)} \Big)
\leq 
	\rmP^+_n \big(\hat{h}(x) > u \big) \leq C \exp \Big(-c \frac{u^2}{	\ol{\sigma}_n(x, u)} \Big)
\end{equation}
and 
\begin{equation}
\label{e:1.9b}
c \exp \Big(-C \frac{u^2}{	\ul{\sigma}_n(x)} \Big) 
\leq 
	\rmP^+_n \big(\hat{h}(x) < -u \big) \leq C \exp \Big(-c \frac{u^2}{	\ul{\sigma}_n(x)} \Big) \,,
\end{equation}
where 
\begin{equation}
\label{e:1.11a}
	\ul{\sigma}_n(x) := (|x| - l_n)^++1
	\  , \quad
	\ol{\sigma}_n(x, u) := \big(\log_2 (u \wedge n) - (l_n - |x|)^+\big)^+ + (|x|-l_n)^+ + 1 \,.
\end{equation}
If $|x| > l_n$, then the lower bound in~\eqref{e:1.9b} holds only up to $C(m_{|x|-l_n}+1)$.
\end{theorem}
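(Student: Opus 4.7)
The plan is to exploit the Gibbs representation of the conditional law $\rmP_n^+$ together with a random-walk analysis along the path from the root to $x$. By the spatial Markov property of the DGFF (preserved under $\rmP_n^+$ since $\Omega_n^+$ depends only on $\bbL_n$), the marginal of $\rmP_n^+$ on $\bbT_k$ has density with respect to $\rmP_k$ proportional to $\prod_{y \in \bbL_k} p_{n-k}(m_n - h(y))$ (see \eqref{e:2.32d}--\eqref{e:2.40d}). Using \eqref{e:4.10} and Lemma~\ref{l:2.10}, this weight behaves like $\rme^{-\frac{1}{2}(m_n - h(y))^2}$ when $h(y) \ll m_n$, but is close to $1$ (up to a polynomial factor) when $h(y) \gg m_n$. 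So each leaf of $\bbL_k$ exerts an effective one-sided potential that sharply penalizes values of $h(y)$ below $m_n$ yet barely rewards exceedances above it. This asymmetry is ultimately responsible for the asymmetry of the two tails.

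\textbf{Case $|x| \leq l_n$.} I would reduce the study of $\hat{h}(x)$ to an analysis of the one-dimensional walk $k \mapsto h([x]_k)$, $0 \leq k \leq |x|$, integrated against the Gibbs factors coming from the $2^k - 1$ sibling subtrees that branch off the path. Averaging the weights of the $2^k$ leaves at generation $k$ and expanding to second order in $h([x]_k) - \mu_n([x]_k)$ using Lemma~\ref{l:2.9a}, the effective one-step potential is approximately quadratic in $\hat{h}([x]_k)$ with curvature of order $2^k$. Thus the conditional law of $\hat{h}([x]_k)$ given the past is approximately Gaussian with $O(2^{-k})$ variance, leading to $O(1)$-Gaussian concentration of $\hat{h}([x]_k)$ uniformly in $k \leq l_n$, which gives the lower tail in \eqref{e:1.9b}. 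For the upper tail, the one-sided potential is essentially flat above $\mu_n$, so an excess of size $u$ is not locally penalized; however, the subtree rooted at any ancestor $[x]_k$ still needs its $2^{l_n - k}$ leaves to pass through the $\bbL_{l_n}$-barrier, and an upward excursion of size $u$ must be absorbed over the $\log_2 u$ final generations before $\bbL_{l_n}$, producing the log-correction in $\ol{\sigma}_n$.

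\textbf{Case $|x| > l_n$.} Here I would combine the previous step at level $l_n$ with the Markov property. Setting $x_{l_n} = [x]_{l_n}$, the field on $\bbT(x_{l_n})$ is, conditionally on $h(x_{l_n})$ and on the external sigma-field, an independent DGFF on a tree of depth $n'= n - l_n$ rooted at $x_{l_n}$, conditioned on the positivity of its leaves at level $n$. Since $\hat{h}(x_{l_n})$ is $O(1)$-Gaussian by the previous step, with $h(x_{l_n})$ near $m_{n'}$, the positivity event is essentially $\Omega_{n'}(0)$, which has probability bounded away from $0$ and $1$ (Lemma~\ref{l:4.2a}). Standard BRW estimates for the DGFF conditioned on $\Omega_{n'}$, together with the one-sided potential picture above applied on the subtree of $x_{l_n}$, then give Gaussian lower tails with variance $|x| - l_n + 1$ and upper tails with the extra $\log_2 u$ correction, uniformly in the tight randomness of $h(x_{l_n})$.

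\textbf{Main obstacle.} The technically delicate step is the control of the log-correction in the upper tail, and more generally the bookkeeping that converts the Gibbs weights into a tractable restoring potential. One has to balance the Gaussian cost of an upward displacement of $\hat{h}([x]_k)$ by $u$ at generation $k < l_n$ against the gain in the Gibbs weight at the subsequent levels, and identify the optimum around $k = l_n - \log_2 u$. Making this rigorous requires uniform control on $\frac{d}{du}\log p_n$ (Lemma~\ref{l:2.9a}) across generations and a multi-scale concentration / anti-concentration argument for the resulting tilted random walk; this is essentially the core estimate that \cite{Work1} carries out and is where the bulk of the work lies.
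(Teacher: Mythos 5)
First, a point of order: this paper does not prove Theorem~\ref{t:1.3}. It is imported verbatim from the first work in the series (as the theorem header indicates), so there is no internal proof here to compare against; what follows compares your outline with the strategy of~\cite{Work1} as it is summarized in Subsection~\ref{s:1.4} and reflected in Proposition~\ref{p:106.3}.

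Your outline does identify the right mechanism --- the Gibbs reweighting of $\rmP_k$ by a product of right-tail probabilities over $\bbL_k$, read as a one-sided localizing potential acting on the ancestral walk $k \mapsto h([x]_k)$ --- and this is indeed the route of~\cite{Work1}. But as written it is a plan, not a proof: every decisive quantitative step is asserted rather than derived. (i) The claim that the effective one-step potential has curvature of order $2^k$, hence that $\hat{h}([x]_k)$ has $O(2^{-k})$ conditional variance, does not follow from Lemma~\ref{l:2.9a}, which controls only the first derivative of $\log p_n$; establishing this concentration for the tilted walk is precisely the core of~\cite{Work1}. (ii) The optimization over the launch generation $k \approx l_n - \log_2 u$ of an upward excursion, which produces the $\log_2 u$ term in $\ol{\sigma}_n$, is named but not carried out, and matching upper and lower bounds there is delicate. (iii) For $|x| > l_n$, the uniformity of the subtree estimates over the tight but random value of $h([x]_{l_n})$ is invoked as ``standard'' without justification. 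Two concrete errors compound this. The Gibbs weight at generation $k$ is $\prod_{y \in \bbL_k} p_{n-k}\big(m_{n-k} - h(y)\big)$, not $p_{n-k}\big(m_n - h(y)\big)$ (compare~\eqref{e:2.40d} with $u = m_n$); the penalization threshold at generation $k$ is therefore $m_{n-k}$, and tracking the gap between $m_{n-k}$ and $\mu_n([x]_k)$ correctly is exactly what singles out the repulsion profile $\mu_n$, so this is not a harmless typo. More importantly, your argument never engages with the stated restriction that for $|x| > l_n$ the lower bound in~\eqref{e:1.9b} holds only up to $u \leq C(m_{|x|-l_n}+1)$; this restriction is forced by the hard wall (a downward excursion of $h(x)$ much larger than $m_{|x|-l_n}$ below $\mu_n(x)$ cannot be compensated by the depth-$(n-|x|)$ subtree of $x$ while keeping its leaves positive), and a proof scheme that does not see where this threshold comes from cannot be producing the correct matching lower bound in that regime.
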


The above theorem shows, in particular, that under the positivity constraint the field $h$ strongly concentrates  around its mean in the first $l_n$ generations. 
 As a consequence the height of the field under $\rmP_n^+$ is essentially independent at vertices which are not too close in graph distance. A particular formulation of this is captured in the next proposition. This is the key ingredient in~\cite{Work1} for controlling the covariances and in particular in showing that the heights at generation $l_n$ under the conditioning are essentially i.i.d. In this current work, it plays an essential role in the proof of the existence of a limiting law under $\rmP_n^+$, as well as of the LLN type results for the global statistics. 
\begin{proposition}[Proposition~\ref{1@p:4.2} and Proposition~\ref{1@p:4.2a} from~\cite{Work1} with $u = m_n$]
	\label{p:106.3}
For each $n \geq 1$, $0 \leq k < k' < l_n - C$ and $v, v' \in \bbR$, with $x_n \in \bbL_n$ and $x_j := [x_n]_j$,
\begin{equation}
		\Big\| \rmP^+_n \big(\hat{h}(x_{k'}) \in \cdot \,\big|\, \hat{h}(x_k) = v')\big)  - 
					\rmP^+_n \big(\hat{h}(x_{k'}) \in \cdot \,\big|\, \hat{h}(x_k) = v)\big) \Big\|_{\rm TV}  \leq C\rme^{-(c(k'-k)-|v|\vee |v'|)^+} \,,
	\end{equation}
where $C,c  \in (0,\infty)$ are universal constants.
\end{proposition}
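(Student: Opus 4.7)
The plan is to exploit the Markovian structure of the process $(\hat{h}(x_k))_{k=0}^{l_n}$ along the branch from the root to $x_n$. By the spatial Markov property of the DGFF, which is preserved by the positivity conditioning via the product structure of $\Omega_n^+$ over the two subtrees meeting at $x_k$, this sequence is a Markov chain under $\rmP_n^+$. Its one-step transition density factors as the unconditional Gaussian increment density times a reweighting factor built from the right-tail functions $p_{n-k-1}$ of the field on the subtree rooted at $x_{k+1}$ and on its sibling subtree, shifted by the appropriate centering. Using the derivative estimate in Lemma~\ref{l:2.9a}, which shows that $\frac{\rmd}{\rmd u}\log p_n(u)$ is of order $-u^+$, this reweighting introduces an effective linear restoring drift of order one that pulls $\hat{h}(x_{k+1})$ back toward zero. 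Modulo finite-depth corrections in the tail functions, the centered chain therefore behaves like a discrete Ornstein--Uhlenbeck type process with bounded Gaussian noise.

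Given this representation, I would establish the TV bound by a two-stage coupling argument. First, a Lyapunov/drift step exploits the restoring force: show that $\rmE_n^+[|\hat{h}(x_{k+1})|\mid \hat{h}(x_k)=v] \leq \lambda |v| + b$ for some $\lambda<1$ and $b>0$ uniform in $n$ and $k<l_n-C$; iterating and applying Markov's inequality then guarantees that after $O(|v|\vee|v'|)$ steps both chains have entered a fixed compact window $[-R,R]$ with high probability. Second, a Doeblin minorization step: once both chains lie in $[-R,R]$, the one-step transition density is bounded below by a fixed positive multiple of a common reference probability measure on a compact set of typical heights, so each further step coalesces the two chains with probability at least some fixed $\varepsilon>0$. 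Combining these steps, the coupling failure probability at time $k'$ is dominated by $\rme^{-(c(k'-k)-|v|\vee|v'|)^+}$, which upper bounds the TV distance as claimed.

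The main technical obstacle is making both the drift and the minorization estimates uniform in $n$ and in $k<l_n-C$. The reweighting factor involves $p_{n-k-1}$, whose log-derivative is only asymptotically linear by Lemma~\ref{l:2.9a}, so one must carefully quantify the non-asymptotic corrections; in particular, the restoring force must stay strictly contractive uniformly, which is precisely why the statement restricts to $k<l_n-C$ for a sufficiently large constant $C$ (ensuring the residual subtree depth $n-k$ is large enough for the asymptotic estimates on $p$ to kick in). A secondary difficulty is that the reweighting has contributions from both the chosen subtree at $x_{k+1}$ and its sibling, so the combined drift needs a joint analysis; the FKG property (Lemma~\ref{lemma:FKG}) is helpful here, as it yields monotone comparisons that control both the minorization lower bound and the exit-tail estimates underlying the Lyapunov step.
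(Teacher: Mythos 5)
This proposition carries no proof in the present paper: it is imported verbatim from the companion work (Propositions~4.2 and~4.2a of \cite{Work1}, specialized to $u=m_n$), and the only indication given here of how it is established is the remark in Subsection~\ref{s:1.4} that \cite{Work1} represents $(\hat{h}([x]_j))_{j\le l_n}$ under $\rmP_n^+$ as a random walk subject to a localizing force attracting it to zero. Your sketch is consistent with that mechanism: realizing the spine values as a time-inhomogeneous Markov chain whose transition kernel is the Gaussian step reweighted by subtree survival probabilities, and then running a Lyapunov-drift-plus-Doeblin-minorization coupling, is a standard and workable way to convert such a representation into the stated total-variation bound. I cannot certify that this coincides with the argument actually used in \cite{Work1}, but as a blind reconstruction it targets the right structure.

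Two points deserve more care than your sketch gives them. First, the reweighting at step $k\to k+1$ is the Doob $h$-transform by the survival probability of the \emph{entire} subtree below $x_{k+1}$ (a $p_{n-k-1}(\,\cdot\,)$ factor evaluated at an affine function of $h(x_{k+1})$); the sibling subtree only enters through the normalization, so describing the weight as coming from ``the subtree rooted at $x_{k+1}$ and its sibling'' conflates the numerator and denominator of the kernel. Second, and more substantively, in the regime $k< l_n-C$ the argument at which $p_{n-k-1}$ is evaluated near the mean profile is of order $2^{l_n-k}\geq 2^{C}$, not of order one, so the relevant estimate is the two-sided derivative bound of Lemma~\ref{l:4.2a} (whose \emph{lower} bound on the magnitude of the log-derivative is what guarantees the drift actually restores), rather than Lemma~\ref{l:2.9a}, which only controls bounded arguments and gives a one-sided bound. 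This is precisely where the hypothesis $k'<l_n-C$ earns its keep: it keeps the restoring force uniformly strong, which is what makes your $\lambda<1$ and minorization constant $\varepsilon>0$ uniform in $n$ and $k$. Finally, with a genuinely contractive linear drift the time to enter a fixed window from level $v$ is $O(\log|v|)$ rather than $O(|v|)$; the linear $|v|\vee|v'|$ loss in the exponent is more naturally traced to the degradation of the one-step overlap for starting points of size $|v|$, though either accounting suffices for the stated, rather forgiving, bound. None of this invalidates the approach; it is bookkeeping you would need to carry out to make the sketch into a proof.
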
	

For completeness, let us also recall the covariance estimates from~\cite{Work1}.
\begin{theorem}[Theorem~\ref{1@t:1.4} in~\cite{Work1}]
\label{t:1.4}
There exists $c > 0$ such that for all $n \geq 1$ and $x,y \in \bbT_n$ with $|x|,|y| \geq l_n$, 
\begin{equation}
\label{e:1.14}
\rmCov_n^+\, \big(h(x), h(y) \big) 
= \left\{ \begin{array}{lll}
|x \wedge y| - l_n + O(1)
& \quad & |x \wedge y| \geq l_n \,,\\
O\big( \rme^{-c(l_n - |x \wedge y|)}\big)
& \quad & |x \wedge y| < l_n \,.
\end{array} \right.
\end{equation}
\end{theorem}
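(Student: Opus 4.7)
The strategy is to condition on the field at generation $l_n$ and exploit (i) the spatial Markov property, (ii) the almost-i.i.d.\ behavior at generation $l_n$ captured by Proposition~\ref{p:106.3}, and (iii) the infinite-volume limit $\rmP^{\uparrow 0}_{\infty,v}$ of Proposition~\ref{p:1.45}. Write $\mathcal F_{l_n} := \sigma(h_{\bbT_{l_n}})$, let $z_x := [x]_{l_n}$, $z_y := [y]_{l_n}$ and apply the conditional covariance formula
\begin{equation}
\label{e:plan-cov}
\rmCov_n^+\bigl(h(x),h(y)\bigr)
= \rmE_n^+\!\bigl[\rmCov_n^+\bigl(h(x),h(y)\,\big|\,\mathcal F_{l_n}\bigr)\bigr]
 + \rmCov_n^+\!\Bigl(\rmE_n^+\bigl[h(x)\big|\mathcal F_{l_n}\bigr],\,\rmE_n^+\bigl[h(y)\big|\mathcal F_{l_n}\bigr]\Bigr).
\end{equation}
Under $\rmP_n^+$, the conditional law of $h_{\bbT_{n'}(z)}$ given $\mathcal F_{l_n}$ is $\rmP^{\uparrow 0}_{n',h(z)}$ for each $z\in\bbL_{l_n}$, and the subfields below distinct vertices of $\bbL_{l_n}$ are conditionally independent by the Markov property preserved under $\rmP_n^+$.

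\emph{Case 1: $|x\wedge y|\geq l_n$.} Then $z_x=z_y=:z$. The second term in~\eqref{e:plan-cov} is at most $\rmVar_n^+\bigl(\rmE^{\uparrow 0}_{n',v}h(x)\big|_{v=h(z)}\bigr)$; since $v\mapsto \rmE^{\uparrow 0}_{n',v} h(x)$ is Lipschitz with constant $O(1)$ (a direct consequence of the FKG monotonicity of Lemma~\ref{l:2.4} together with the tail bounds of Lemma~\ref{l:4.2a}) and $\hat h(z)$ has Gaussian fluctuations by Theorem~\ref{t:1.3}, this term is $O(1)$. For the first term, decompose $h(x)=h(z)+\bigl(h(x)-h(z)\bigr)$ inside the $\rmP^{\uparrow 0}_{n',h(z)}$-covariance. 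The $h(z)$ contribution gives $\rmVar^{\uparrow 0}_{n',v}(v)=0$ once averaged (it already lies in $\mathcal F_{l_n}$), so what remains is the covariance of the subtree increments under $\rmP^{\uparrow 0}_{n-l_n,0}$ (after a harmless translation by $v$). A direct computation using~\eqref{e:105.70} and Lemma~\ref{l:2.9a} shows that this conditioned covariance equals the unconditioned one, $|x\wedge y|-l_n$, up to $O(1)$ corrections uniform in the starting height on the relevant tight range. Averaging back gives $|x\wedge y|-l_n+O(1)$.

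\emph{Case 2: $|x\wedge y|<l_n$.} Then $z_x\neq z_y$, and the conditional independence under $\mathcal F_{l_n}$ kills the first term of~\eqref{e:plan-cov}. Only the covariance of the conditional means survives. The conditional means depend on $\mathcal F_{l_n}$ only through $h(z_x)$ and $h(z_y)$ respectively (again by the Markov property), and they are Lipschitz functions of their argument with uniformly bounded constant. Thus it suffices to prove
\begin{equation}
\label{e:plan-target}
\bigl|\rmCov_n^+\!\bigl(F(\hat h(z_x)),G(\hat h(z_y))\bigr)\bigr|
\leq C\rme^{-c(l_n-|x\wedge y|)}
\end{equation}
for bounded Lipschitz $F,G$. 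This is exactly the regime where Proposition~\ref{p:106.3} is useful: applied along the spine from $x\wedge y$ to $z_x$ (resp.\ to $z_y$) with $k'-k = l_n - |x\wedge y|$, it yields a total-variation bound on the conditional law of $\hat h(z_x)$ given $\hat h(x\wedge y)$, versus its marginal, which decays as $\rme^{-c(l_n-|x\wedge y|)}$ after integrating against the Gaussian tails of $\hat h(x\wedge y)$ from Theorem~\ref{t:1.3}. Combining this with independence of the two branches below $x\wedge y$ (again Markov), together with the uniform boundedness of the fluctuations of $\hat h$ at generation $\leq l_n$ from Theorem~\ref{t:1.3}, yields~\eqref{e:plan-target}.

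\emph{Main obstacle.} The delicate point is the $O(1)$ control in Case 1: one must show that conditioning a DGFF of depth $n'$ on its minimum being nonnegative shifts the covariance of two bulk vertices only by a bounded amount, uniformly in the starting height $v$ over the tight range dictated by Theorem~\ref{t:1.3}. I would handle this via the Radon--Nikodym representation~\eqref{e:105.70} at a well-chosen intermediate generation $r$ between $l_n$ and $|x\wedge y|$: writing the conditioned covariance as an unconditioned one tilted by $\varphi_{\infty,r,\cdot}$, the logarithmic derivative estimate of Lemma~\ref{l:2.9a} turns the change of measure into a bounded perturbation on the relevant tight window, which pulls out as $O(1)$ and leaves the unconditioned covariance $|x\wedge y|-l_n$ as the leading term. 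The exponential decay in Case 2 is the secondary obstacle but is already essentially packaged in Proposition~\ref{p:106.3}.
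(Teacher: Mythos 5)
First, a point of reference: this theorem is not proved in the present paper at all --- it is quoted verbatim from the companion work (``For completeness, let us also recall the covariance estimates from~\cite{Work1}''), so there is no in-paper proof to compare against. Judging your argument on its own merits: the overall architecture (law of total covariance at generation $l_n$, conditional independence of the subtrees, and then a second conditioning at generation $|x\wedge y|$) is the natural one, and your Case 2 is essentially sound. The conditional means there are $1$-Lipschitz and non-decreasing (by Lemma~\ref{l:2.4} and Proposition~\ref{l:2.8a} via the shift relation~\eqref{e:101.20}) but \emph{not} bounded, so the reduction to bounded Lipschitz $F,G$ needs a truncation against the Gaussian tails of Theorem~\ref{t:1.3}; also Proposition~\ref{p:106.3} is stated only for $k'<l_n-C$, so the last $O(1)$ generations must be absorbed separately. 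Both are routine.

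The genuine gap is in Case 1, precisely at the step you flag as the main obstacle and then dispose of with ``a direct computation using~\eqref{e:105.70} and Lemma~\ref{l:2.9a}.'' The assertion that $\rmCov^{\uparrow 0}_{n',v}(h(x),h(y))=|x\wedge y|-l_n+O(1)$ uniformly in $v$ is the entire content of the theorem in this regime, and the proposed mechanism does not deliver it. The Radon--Nikodym derivative~\eqref{e:105.70} is a \emph{product over $2^{r}$ leaves} of factors $p_\infty(\cdot)$; a pointwise bound on $\tfrac{\rmd}{\rmd u}\log p_\infty$ makes each factor a bounded tilt, but the product is not a ``bounded perturbation'' of the measure, so nothing ``pulls out as $O(1)$.'' More fundamentally, even granting that each conditional mean along the spine equals its argument plus an $O(1)$ (in $\bbL^2$) correction, the cross terms in the covariance are then controlled by Cauchy--Schwarz only up to $\|h(x)\|_2\cdot O(1)=O(\sqrt{|x\wedge y|-l_n})$, which is not $O(1)$. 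Closing this requires a genuinely finer input: either the covariance inequality for non-decreasing $1$-Lipschitz functions of an FKG field combined with an induction over generations, or the gradient coupling of Proposition~\ref{p:7.9}/Theorem~\ref{t:107.4} (write $h^{\uparrow}=h+\eta^{\uparrow}$ and use that increments of $\eta^{\uparrow}$ at depth $k$ are $O(k2^{-k/2})$ in $\bbL^2$ and measurable with respect to the generation-$k$ data, so that $\rmCov(h(x),\eta^{\uparrow}(y))$ telescopes into a summable series). As written, the $O(1)$ claim in Case 1 is asserted rather than proved.
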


\subsection{Proof of main results}
	Theorem~\ref{t:1.7} and Proposition~\ref{p:1.7e}, both follow easily from the next lemma.
\begin{lemma}
\label{l:101.7}
For all log-dyadic $\delta \in [0,1),$ there exists a law $\rmP^{+,\delta}$ on $\bbR^{\bbT}$ such that for all $r \geq 1$, 
\begin{equation}
\label{e:107.18}
\lim_{k \to \infty}	\Big|\rmE^{\uparrow c_0 k} F\Big(\hat{h}_{{\bbT_r(x_{k})}} 
\, \Big|\, h(0) = -c_0 \delta 2^{k} \Big)  - \rmE^{+,\delta} F\big(h_{{\bbT_r}}\big)\Big| =
\lim_{n \to \infty}
	\Big| \rmE_{n}^+ F\big(\hat{h}_{{\bbT_r(x_{l_n})}}\big) - \rmE^{+,[n]_2} F\big(h_{{\bbT_r}}\big)\Big| = 0\,,
\end{equation}
uniformly in all log-dyadic $\delta \in [0,1)$ and $F: \bbR^{\bbT_r} \to \bbR$ satisfying
\begin{equation}
\label{e:107.2a}
	F(h) \leq C \rme^{C\|h\|} \,,
\end{equation}
for any fixed $C < \infty$. Above $x_l$ denotes a generic vertex in $\bbL_l$.
Moreover, 
\begin{equation}
\label{e:107.31a}
	\rmP^{+,\delta}\big(\cdot) = \int \rmP^{\uparrow}_v\big(\cdot\big)\, \rmP^{+,\delta} \big(h(0) \in \rmd v\big) \,.
\end{equation}
\end{lemma}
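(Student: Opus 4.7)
The plan is to prove the $r = 0$ marginal case first using the decoupling estimate Proposition~\ref{p:106.3}, and then bootstrap to arbitrary $r \geq 1$ via the Markov property combined with Proposition~\ref{p:1.45}. For the marginal case, I would follow the display chain in the proof overview of Subsection~\ref{s:1.4}. Fix a leaf $x \in \bbL_n$ and set $x_j := [x]_j$. Applying Proposition~\ref{p:106.3} with $k' = l_n$ and $l_n - k$ in place of $k$, combined with the sub-Gaussian tail bounds of Theorem~\ref{t:1.3} for $\hat h(x_{l_n - k})$, shows that
\begin{equation*}
\rmP_n^+\big(\hat h(x_{l_n}) \in \cdot\big) = \rmP_n^+\big(\hat h(x_{l_n}) \in \cdot \,\big|\, \hat h(x_{l_n - k}) = 0\big) + o_k(1),
\end{equation*}
uniformly in $n$ large. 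The Markov property of $h$, together with the definitions of $\mu_n$ and $m_{n'}$, then rewrites the conditional probability as $\rmP^{\uparrow c_0 k}_{n - l_n + k,\, -c_0 2^{[n]_2} 2^k}(h(x_k) \in \cdot)$, and passing to infinite volume via Proposition~\ref{p:1.45} gives $\rmP^{\uparrow c_0 k}_{-c_0 2^{[n]_2} 2^k}(h(x_k) \in \cdot) + o_n(1)$.

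Since the left-hand side is independent of $k$ while the right-hand side depends on $n$ only through $[n]_2$, a standard Cauchy-type argument along any fixed log-dyadic $\delta$ yields existence of
\begin{equation*}
\rmP^{+,\delta}(h(0) \in \cdot) := \lim_{k \to \infty} \rmP^{\uparrow c_0 k}_{-c_0 2^\delta 2^k}(h(x_k) \in \cdot),
\end{equation*}
together with both convergences in~\eqref{e:107.18} at $r = 0$. Integration against $F$ satisfying~\eqref{e:107.2a} is handled by dominating tails using Theorem~\ref{t:1.3} and Lemma~\ref{l:4.2a}, and uniformity in $\delta$ follows because every constant entering the above chain is universal. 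For general $r \geq 1$, I would use the Markov property of $h$ under $\rmP_n^+$: conditional on $\hat h(x_{l_n}) = v$, the subfield $\hat h_{\bbT_{n'}(x_{l_n})}$ has law $\rmP^{\uparrow 0}_{n', v} \circ \Pi_{\bbT_{n'}}^{-1}$, hence
\begin{equation*}
\rmP_n^+\big(\hat h_{\bbT_r(x_{l_n})} \in \cdot\big) = \int \rmP^{\uparrow 0}_{n', v} \circ \Pi_{\bbT_r}^{-1}(\cdot)\, \rmP_n^+\big(\hat h(x_{l_n}) \in \rmd v\big).
\end{equation*}
The outer measure converges to $\rmP^{+,\delta}(h(0) \in \rmd v)$ by the marginal case, while the inner kernel converges to $\rmP^{\uparrow 0}_{\infty, v} \circ \Pi_{\bbT_r}^{-1}$, uniformly on compacts in $v$, by Lemma~\ref{l:5.7}. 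Setting $\rmP^{+,\delta}(\cdot) := \int \rmP^{\uparrow 0}_{\infty, v}(\cdot)\, \rmP^{+,\delta}(h(0) \in \rmd v)$ then gives~\eqref{e:107.31a} and the second limit in~\eqref{e:107.18}; the first limit follows analogously by invoking~\eqref{e:107.27} to split $\rmP^{\uparrow c_0 k}$ conditional on $h(x_k) = v$ and reducing to the marginal case.

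The hardest part will be propagating uniformity in $\delta$ and in $F$ through both steps. In particular, interchanging the $k \to \infty$ limit with integration against $F$ satisfying~\eqref{e:107.2a} requires uniform-in-$\delta$ sub-Gaussian control on $\rmP^{\uparrow c_0 k}_{-c_0 2^\delta 2^k}(h(x_k) \in \cdot)$, which I would obtain by transferring the $\rmP_n^+$ tail bounds of Theorem~\ref{t:1.3} across the asymptotic-equivalence chain above; similarly, replacing finite-volume kernels by infinite-volume ones in the integrand needs uniform-on-compacts convergence, which is precisely what Lemma~\ref{l:5.7} supplies. Together these estimates let a dominated-convergence-type argument absorb the exponential growth of $F$ and deliver the stated uniform limits.
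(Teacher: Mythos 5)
Your proposal is correct, but it is organized differently from the paper's actual proof of Lemma~\ref{l:101.7} (it follows instead the sketch in the proof overview of Subsection~\ref{s:1.4}). You prove the $r=0$ marginal first via Proposition~\ref{p:106.3}, \emph{define} $\rmP^{+,\delta}$ for general $r$ by the integral formula~\eqref{e:107.31a}, and then obtain the general-$r$ convergence by combining the convergence of the initial-height marginal with the uniform-on-compacts convergence of the kernels $v \mapsto \rmE^{\uparrow 0}_{n',v}F(h_{\bbT_r})$ from Lemma~\ref{l:5.7}. The paper instead handles general $r$ in one shot: it collapses $F(\hat h_{\bbT_r(x_{l_n})})$ to the bounded single-variable function $\tilde F(u) = \rmE_n^+(F(\hat h_{\bbT_r(x_{l_n})})\mid h(x_{l_n'})=u)$ with $l_n' = l_n - C'$, so that the single-site decoupling estimate of Proposition~\ref{p:106.3} applies directly to the full subtree observable; the limit law is then produced by a Riesz-representation plus Kolmogorov-extension argument, and~\eqref{e:107.31a} is verified \emph{a posteriori} by exactly the truncation-and-limit-interchange argument ($\tilde F_{l,M}$) that your bootstrap requires up front. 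The two routes use the same ingredients and comparable total work: yours gets~\eqref{e:107.31a} for free as a definition but must then prove that the defined object is the limit; the paper's gets the limit first but must then prove~\eqref{e:107.31a}. Two small points to tighten in your write-up: (i) Proposition~\ref{p:106.3} is stated only for $k' < l_n - C$, so you cannot condition at generation $l_n-k$ and compare laws at generation $l_n$ directly; you need the same tower-property detour through generation $l_n - C'$ that the paper uses (this also matters for your $r=0$ step). (ii) For the dominated-convergence step with exponentially growing $F$, the tails of the outer measure come from Theorem~\ref{t:1.3} as you say, but the exponential moments of the inner kernel $\rmE^{\uparrow 0}_{n',v}\rme^{C\|h_{\bbT_r}\|}$ require an FKG/Lemma~\ref{l:2.4}-type bound in $v$, not Lemma~\ref{l:4.2a}; this is routine but should be cited correctly.
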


\begin{proof}
We shall first show~\eqref{e:107.18} with~\eqref{e:107.2a} replaced by the condition
\begin{equation}
\label{e:107.7}
	\|F\|_\infty < C \,.
\end{equation}
To this end, fix $r \geq 0$, $k > 0$ and for each $n \geq 1$, set $k' := l_n - k$, $l_n' := l_n - C'$ for some $C'$ to be determined later. Recall that $x_{l_n} \in \bbL_{l_n}$ and abbreviate $x_{k'} \equiv [x_{l_n}]_{k'}$, $x_{l_n'} \equiv [x_{l_n}]_{l_n'}$. Given $F: \bbR^{\bbT_r} \to \bbR$ which satisfies~\eqref{e:107.7}, let
\begin{equation}
	\tilde{F}(u) := \rmE_n^+\Big(F\big(\hat{h}_{{\bbT_r(x_{l_n})}}\big) \,\big|\, h(x_{l_n'}) = u\Big) \,.
\end{equation}
Observe that $\tilde{F}$ also satisfies~\eqref{e:107.7}.

By conditioning on $h(x_{k'})$, the Tower Property, Proposition~\ref{p:106.3} and choosing $C'$ large enough, for all $n \geq 0$ we have

\begin{align}
\label{e:5.75}
\Big| \rmE_n^+ F\big(\hat{h}_{{\bbT_r(x_{l_n})}}\big) - \rmE_n^+ \Big(F\big(\hat{h}_{{\bbT_r(x_{l_n})}}\big)\,\Big|\, \hat{h}(x_{k'}) = 0\Big) \Big|
& = 
\Big| \rmE_n^+ \tilde{F}\big(\hat{h}(x_{l'_n})\big) - \rmE_n^+ \Big(\tilde{F}\big(\hat{h}(x_{l'_n})\big)\,\Big|\, \hat{h}(x_{k'}) = 0\Big) \Big|\nonumber \\
& \leq C \rmE_n^+ \rme^{-ck'+|\hat{h}(x_{k'})|} \leq C' \rme^{-c'k} \,.
\end{align}
Above, for the last inequality, we also used the almost-Gaussian tails of $\hat{h}(x_{k'})$ under $\rmP_n^+$, as guaranteed by Theorem~\ref{t:1.3}.
At the same time, using that 
\begin{equation}
\mu_n(x_{l_n}) - \mu_n(x_{k'}) = 2^{-k'} m_{n'} = 2^{-l_n+k} m_{n'}	
= c_0 2^{[n]_2} 2^{k} + O(2^{k} \log n/n) 
\end{equation}
and
\begin{equation}
\begin{split}
\Big\{\mu_n(x_{k'}) + \min_{\bbL_{n-k'}} h \geq 0\Big\}	
& = \Omega_{n-k'}\Big(m_{n-k'} - m_{n'} + m_{n'} - \mu_n(x_{k'}) \Big) \\
& = \Omega_{n-k'}\Big(c_0 \big(2^{[n]_2} 2^{k} + k\big)+ O(2^{k} \log n/n) \Big) \,,
\end{split}
\end{equation}
we may rewrite the second expectation on the left most side in~\eqref{e:5.75} as
\begin{multline}
\label{e:5.80}
	\rmE_{n-k'}\Big(F\big(h_{{\bbT_r(x_k)}} - c_0 2^{[n]_2} 2^{k} + O(2^{k} \log n/n)\big)
\,\Big|\, \Omega_{n-k'}\big(c_0 \big(2^{[n]_2} 2^{k} + k\big)+ O(2^{k} \log n/n) \big) \Big) \\
= \rmE^{\uparrow c_0 k} F\Big(h_{{\bbT_r(x_k)}} 
\, \Big|\, h(0) = -c_0 2^{[n]_2} 2^{k}
\Big) + o(1) \,.
\end{multline}
where $o(1) \to 0$ as $n \to \infty$ and $x_k \in \bbT_{k}$. The last equality follows from Lemma~\ref{l:5.7}.

Combining~\eqref{e:5.75} and~\eqref{e:5.80}, we get that
\begin{equation}
\label{e:5.81}
\lim_{k \to \infty} \limsup_{n \to \infty}
	\Big| \rmE_{n}^+ F\big(\hat{h}_{{\bbT_r(x_{l_n})}}\big) - \rmE^{\uparrow c_0 k} F\Big(h_{{\bbT_r(x_k)}} 
\, \Big|\, h(0) = -c_0 2^{[n]_2} 2^{k}
\Big)
 \Big| = 0 \,,
\end{equation}
uniformly in all $F$ satisfying~\eqref{e:107.7}.

Now, for any log-dyadic $\delta \in [0,1),$ we may find an increasing sequence $(n_j)_{j \geq 1}$ of integers such that $[n_j]_2 = \delta$ for all $j \geq 1$. Using this in~\eqref{e:5.81} shows that the sequence
\begin{equation}
\label{e:107.20}
\rmE^{\uparrow c_0 k} F\Big(h_{{\bbT_r(x_k)}} 
\, \Big|\, h(0) = -c_0 2^{[n]_2} 2^{k} \Big)  
\quad ; \qquad k \geq 0
\end{equation}
is Cauchy, uniformly in all dyadic $\delta$ and $F$ as above, and therefore tends as $k'^\prime \to \infty$, uniformly to a limit. 

Restricted to bounded continuous functions $F$, the $k \to \infty$ limit of~\eqref{e:107.20} is a bounded linear functional, mapping positive functions to positive functions and the function $F\equiv 1$ to $1$. As such, there exists a probability measure $\rmP^{+,\delta, r}$ on $\bbR^{\bbT_r}$ such that this limit is equal to $\rmE^{+,\delta,r} F(h)$ for all such $F$. In view of~\eqref{e:5.81}, we thus get
\begin{equation}
\label{e:107.15}
\lim_{k \to \infty}
	\bigg| \rmE^{\uparrow c_0 k} F\Big(h_{{\bbT_r(x_k)}} 
\, \Big|\, h(0) = -c_0 \delta 2^{k} \Big) - \rmE^{+,\delta,r} F(h)\bigg|  =
\lim_{n \to \infty}
	\bigg| \rmE_{n}^+ F\big(\hat{h}_{{\bbT_r(x_{l_n})}}\big) - \rmE^{+,[n]_2, r} F(h)\bigg| = 0
\end{equation}
uniformly in all continuous $F$ satisfying~\eqref{e:107.7} and log-dyadic $\delta \in [0,1)$.
Taking $\bbL^1$ approximations by continuous functions simultaneously with respect to all measures in~\eqref{e:107.15} and using the uniformity in $F$, the above limit also holds uniformly for all $F$ satisfying~\eqref{e:107.7} which are not necessarily continuous.

By consistency of the pre-limits, it holds that the family of measures $\{\rmP^{+,\delta,r} :\: \r \geq 0\}$ are consistent and as such, by Kolmogorov's Extension Theorem, for all $\delta$ as above, we may define a probability measure $\rmP^{+,\delta}$ on $\bbR^{\bbT}$ having $\rmP^{+,\delta,r}$ as its projection onto $\bbR^{\bbT_r}$. We thus recover~\eqref{e:107.18} albeit under~\eqref{e:107.7}.

To get uniformity w.r.t. to all functions satisfying~\eqref{e:107.2a}, we first observe that the convergence in~\eqref{e:107.18} implies that $\rmP^{+,\delta}$ and 
$\rmP^{\uparrow c_0 k}_{-c_0 \delta 2^{k}} \big(h_{{\bbT(x_k)}} \in \cdot\big)$ inherit the uniform tail bounds of the marginals of $\rmP_n^+\big(h_{{\bbT(x_{l_n})}} \in \cdot\big)$, given by Theorem~\ref{t:1.3}. This implies that 
the random variable $F(h)$ under the above three laws forms a uniformly integrable family, and in particular, we may replace any $F:\bbR^{\bbT_r} \to \bbR$ satisfying~\eqref{e:107.2a} by $F(h)\,\1_{\{\|h\| \leq M\}}$ with all means in~\eqref{e:107.15} changing by an arbitrarily small constant, as long as $M$ is  large enough. As the latter function is now bounded, appealing to~\eqref{e:107.18} for bounded functions, we thus obtain~\eqref{e:107.18} in the desired generality, by taking first $n \to \infty$ followed by $M \to \infty$.

Lastly, to get~\eqref{e:107.31a}, we fix $r$ and $F: \bbR^{\bbT_r} \to \bbR$ bounded, and for $l \in [0,\infty]$, $M > 0$, define $\tilde{F}_{l,M}: \bbR \to \bbR$ via
\begin{equation}
	\tilde{F}_{l,M}(u) := \big(\rmE_{l,u}^{\uparrow} F(h_{{\bbT_r}})\big)\1_{\{|u| \leq M\}}\,.
\end{equation}
Now, by the Tower Property and the Triangle Inequality,
\begin{equation}
\begin{split}
\Big| \rmE^{+,[n]_2} & F\big(h_{{\bbT_r}}\big) - \rmE^{+,[n]_2}\big(\rmE^{\uparrow}_{h(0)} F(h_{{\bbT_r}})\big) \Big| \leq 
\Big| \rmE^{+,[n]_2} F\big(h_{{\bbT_r}}\big) - \rmE_n^+ F\big(\hat{h}_{{\bbT_r(x_{l_n})}}\big) \Big|\\ 
& + \Big| \rmE_n^+ F\big(\hat{h}_{{\bbT_r(x_{l_n})}}\big) 
- \rmE_n^+ \tilde{F}_{n', M}\big (\hat{h}(x_{l_n}) \big) \Big| 
 + 
\Big| \rmE_n^+ \tilde{F}_{n',M}\big (\hat{h}(x_{l_n}) \big)
- \rmE_n^+ \tilde{F}_{\infty,M}\big (\hat{h}(x_{l_n}) \big) 
\Big| \\
& + \Big|\rmE_n^+ \tilde{F}_{\infty,M}\big (\hat{h}(x_{l_n}) \big) -
	 \rmE^{+,[n]_2} \tilde{F}_{\infty,M}\big(h(0)\big) \Big|  
+ \Big|\rmE^{+,[n]_2} \tilde{F}_{\infty,M}\big(h(0)\big) - \rmE^{+,[n]_2}\big(\rmE^{\uparrow}_{h(0)} F(h_{{\bbT_r}})\big) \Big|\,.
\end{split}
\end{equation}
The first term on the right hand side tends to $0$ as $n \to \infty$ by~\eqref{e:107.18}. The second tends to $0$ as $M \to \infty$, uniformly in $n$, thanks to the almost Gaussian tails of $\rmP_n^+\big(\hat{h}(x_{l_n}) \in \cdot)$ which are a consequence of Theorem~\ref{t:1.3}. These tails carry over to the limiting law $\rmP^{+,\delta}(h(0) \in \cdot)$ thanks to~\eqref{e:107.18} with $r=0$, and therefore also the last term tends to $0$ as $M \to \infty$. The third term tends to $0$ as $n \to \infty$ for fixed $M$ thanks to the uniform convergence in Lemma~\ref{l:5.7}, while the forth term tends to $0$ in the same sense thanks to~\eqref{e:107.18} with $k'=0$. Altogether, this gives~\eqref{e:107.18} for general $k' \geq 1$ and uniformly in $F$, satisfying~\eqref{e:107.7}. 
\end{proof}

We are now ready for
\begin{proof}[Proof of Theorem~\ref{t:1.7}, Proposition~\ref{p:1.7e} and Proposition~\ref{p:1.11}]
Arguing in the same way as in the proof of Proposition~\ref{p:1.45}, the limits in~\eqref{e:107.18} which hold for all polynomially growing functions $F$, readily imply Proposition~\ref{p:1.7e} and~\eqref{e:1.17} of Theorem~\ref{t:1.7}. Relation~\eqref{e:107.31} is precisely~\eqref{e:107.31a}. Relation~\eqref{e:107.30} is then a consequence of~\eqref{e:107.31} and~\eqref{e:107.27} of Proposition~\ref{p:1.45}. Taking $F = \1_{\{h(x) > u\}}$ and
	$F = \1_{\{h(x) < -u\}}$, for $x \in \bbT$ and $u \in \bbR$ in the second limit of~\eqref{e:107.18} together with Theorem~\ref{t:1.3} shows Proposition~\ref{p:1.11}.
\end{proof}

Turning to the proofs of Theorem~\ref{t:7.7}, Theorem~\ref{t:107.4} and Theorem~\ref{p:107.5}, the following lemma is key.
\begin{lemma}
\label{l:107.3}
For all $p \geq 1$, there exist $C_p \in (0,\infty)$ such that for all
$n \in [1, \infty]$ and $u,v,w \in \bbR$ and with $y \in \bbL_1$,
\begin{equation}
\label{e:7.24}
	W^{p,\infty}\Big(\rmP_{n,w}\big(h(y) - w \in \cdot) ,\, \rmP^{\uparrow u}_{n,v}\big(h(y) - v \in \cdot)\Big) \leq 
	C_p \big(((u-v)^-)^{1/p} + 1\big) \rme^{c_0 (u-v)/p}.
\end{equation}	
In particular, there exists a coupling $\rmP_{n,v,w}^{\uparrow u}$ of random fields $h$, $h^{\uparrow u}_{n,v}$ and $\eta_{n,v}^{\uparrow u}$ on $\bbT_n$ such that:
\begin{enumerate}
	\item $\rmP_{n,v,w}^{\uparrow u}\big(h(y) \in \cdot\big) = \rmP_{n,w}\big(h(y) \in \cdot)$.\item $\rmP_{n,v,w}^{\uparrow u}\big(h^{\uparrow u}_{n,v}(y) \in \cdot\big) = \rmP^{\uparrow u}_{n,v}\big(h(y) \in \cdot\big)$.
\item
	$\eta_{n,v}^{\uparrow u}(y) = h^{\uparrow u}_{n,v}(y) - h(y)$.
\item 
	$\big\|\eta_{n,v}^{\uparrow u}(y) - (v-w)\big\|_p \leq C_p \big(((u-v)^-)^{1/p} + 1\big) \rme^{c_0 (u-v)/p}$.
\item $\rmE_{n,v,w}^{\uparrow u} \big(\eta_{n,v}^{\uparrow u}(y)\big) \geq v-w$.
\end{enumerate}
\end{lemma}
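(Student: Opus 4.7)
The plan is to reduce~\eqref{e:7.24} to a scalar Wasserstein estimate, realize the coupling via a monotone rearrangement of one-dimensional laws, and verify the $L^p$ bound by a careful case analysis in the sign of $s:=u-v$. Using~\eqref{e:101.20}, the left-hand side of~\eqref{e:7.24} equals $W^{p,\infty}(\mathcal{N}(0,1),\mu_2)$ where $\mu_2$ denotes the law of $h(y)$ under $\rmP_n^{\uparrow s}$. By the spatial Markov property applied at the root (mirroring the derivation of~\eqref{e:2.32d}), conditioning on $h(y)=t$ and $h(y')=t'$ for the sibling $y'\in\bbL_1$ factors $\rmP_n(\Omega_n(s)\mid h(y),h(y'))$ as a product, and integrating out $h(y')$ identifies $\mu_2$ as having the density
\begin{equation}
g(t)=\frac{\phi(t)\,p_{n-1}(s'-t)}{Z_0},\qquad Z_0:=\int\phi(u)\,p_{n-1}(s'-u)\,du,\qquad s':=s-(m_n-m_{n-1})=s-c_0+O(1/n),
\end{equation}
with $p_{n-1}$ replaced by $p_\infty$ when $n=\infty$.

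Next, the FKG property (Lemma~\ref{lemma:FKG}), combined with the fact that $\Omega_n(s)$ is an increasing event, forces $\mu_2$ to stochastically dominate $\mathcal{N}(0,1)$. Drawing $U\sim\mathrm{Unif}(0,1)$ and setting $Y:=\Phi^{-1}(U)$, $\tilde Y:=F_{\mu_2}^{-1}(U)$ yields the comonotone coupling with $\tilde Y\geq Y$ a.s. Placing $h(y):=w+Y$, $h^{\uparrow u}_{n,v}(y):=v+\tilde Y$, and extending the two fields independently on the subtrees below $y$ and $y'$ via their respective conditional laws given these root-adjacent values, produces a joint law $\rmP_{n,v,w}^{\uparrow u}$ satisfying items (1)--(3). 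Almost-sure nonnegativity of $\tilde Y-Y$ immediately yields (5), and item (4) is equivalent to the scalar $L^p$ bound $\|\tilde Y-Y\|_p\leq C_p((s^-)^{1/p}+1)e^{c_0 s/p}$, which, since the $\ell^\infty$ norm restricted to a single coordinate is the absolute value, is precisely~\eqref{e:7.24}.

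To establish this Wasserstein bound, I would split on the sign of $s$. For $s\leq-C$, Lemma~\ref{l:2.10}, restated as $1-p_{n-1}(u)=q_{n-1}(-u)\leq C(|u|+1)e^{c_0 u}$ for $u\leq 0$, combined with the Gaussian identity $\phi(Y)e^{-c_0 Y}=e^{c_0^2/2}\phi(Y+c_0)$, yields $1-Z_0\leq C(|s|+1)e^{c_0 s}$, so that $Z_0\in[1/2,1]$. A parallel tail-by-tail bound on $\int\phi(t)\,|1-\rho(t)|(|t|+1)^{p-1}dt$ with $\rho:=g/\phi$, fed into the monotone-rearrangement identity $\|\tilde Y-Y\|_p^p=\int_0^1(F_{\mu_2}^{-1}-\Phi^{-1})^p du$, extracts the desired rate $(|s|^{1/p}+1)e^{c_0 s/p}$. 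For bounded $s$, Lemma~\ref{l:4.2a} gives $Z_0\geq c$ and the uniform Gaussian tails of $Y$ and $\tilde Y$ render the bound routine. For $s\gg 0$, the logarithmic-derivative bound~\eqref{e:2.37b} of Lemma~\ref{l:2.9a} shows that $g$ is unimodal with mode near $2s/3$ and width $O(1)$, so $\|\tilde Y-Y\|_p\lesssim|s|\ll e^{c_0 s/p}$.

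The main obstacle is the $s\to-\infty$ regime, in which the claimed bound is exponentially sharp. A naive estimate from stochastic dominance alone only controls $W_1$ by the mean shift $\rmE_{\mu_2}[Y]$, which without further care decays at most polynomially in $|s|$. Extracting the exponential $e^{c_0 s/p}$ factor requires invoking the sharp left-tail asymptotics of the BRW minimum (Lemma~\ref{l:2.10}) rather than the cruder Gaussian bound from Lemma~\ref{l:4.2a}, and then propagating this smallness from the $L^1$-type tilt estimate to the $L^p$ control of the monotone-rearrangement shift in the tails of $\phi$, which is the source of the $(|s|^{1/p}+1)$ prefactor.
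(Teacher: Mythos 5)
Your proposal is correct and follows essentially the same route as the paper: reduce to the one-dimensional law of $h(y)$, write its density as the standard Gaussian tilted by $p_{n-1}(s'-\cdot)$ (or $p_\infty$ when $n=\infty$), and split on the sign of $s=u-v$, with Lemma~\ref{l:2.10} supplying the exponential factor as $s\to-\infty$ and a crude moment bound handling $s\ge 0$, while the coupling realization (monotone rearrangement versus the paper's appeal to Villani's Theorem~6.15 plus FKG for item~(5)) is a cosmetic difference. The one step you leave vague is the passage from the weighted density-difference integral, with your weight $(|t|+1)^{p-1}$, to the quantile-function bound on $W^{p,\infty}$; the standard transport inequality the paper cites uses the weight $|t|^p$ and closes this in one line.
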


\begin{proof}
Let $n \in [1,\infty)$. Without loss of generality, we can assume that $w=v=0$. Indeed, the law of 
$\rmP_{n,w}\big(h(y) \in \cdot)$ is invariant in $w$ and $\rmP^{\uparrow u}_{n,v}\big(h(y) - v \in \cdot)
= \rmP^{\uparrow u-v}_{n,0}\big(h(y) - (u-v) \in \cdot)$. Now, while the law of $h(x)$ under $\rmP_n \equiv \rmP_{n,0}$ is that of a standard Gaussian, the law of $h(x)$ under $\rmP_n^{\uparrow u} = \rmP_{n,0}^{\uparrow}$ is absolutely continuous with respect to the latter with Radon-Nikodym derivative given by
\begin{equation}
	\frac{p_{n-1}(u - m_n + m_{n-1} - \cdot)}{p_n(u)}\,.
\end{equation}
Letting $f_{\cN(0,1)}$ be the standard Gaussian density, by, e.g., Theorem 6.15 in~\cite{villani}, the left hand side in~\eqref{e:7.24} is therefore at most a $p$ dependent constant times
\begin{equation}
\label{e:107.26}
	\int |s|^p f_{\cN(0,1)}(s) 	\bigg| 1 - \frac{p_{n-1}(u - m_n + m_{n-1} - s)}{p_n(u)}\bigg| \rmd s\,.
\end{equation}
Now, if $u < 0$, we upper bound the absolute value above by 
\begin{equation}
\begin{split}
	\bigg|\frac{q_{n-1}(s + m_n - m_{n-1} - u) - q_n(-u)}{p_n(u)} \bigg|
	& \leq C \big((-u+s)^+ \rme^{-c_0 (-u+s)^+} + (-u) \rme^{-c_0(-u)}\big) \\
	& \leq C' \big(-u + s^+\big) \rme^{-c_0(-u)} \,.
\end{split}
\end{equation} 
where $q$ is as in~\eqref{e:103.24} and where we used the tightness of the centered minimum.
Plugging this in~\eqref{e:107.26} gives $C (u^-+1) \rme^{-c_0 u^-}$ as an upper bound.

At the same time, if $u \geq 0$, we bound the absolute value by the sum of the terms inside it. Performing the integration, this gives a constant plus 
\begin{equation}
\begin{split}
\rmE_n^{\uparrow u}(|h(x)|^p) & \leq
C' \big( \rmE_n^{\uparrow u}(h(x)^+)^p + \rmE_n^{\uparrow u}(h(x)^-)^p \big) 
\leq C'' \big(u^p + \rmE_n \big(h(x)^+)^p + \rmE_n (h(x)^-)^p \big) \\
& \leq C''' (u^p + 1) \,,
\end{split}
\end{equation}
where we used Lemma~\ref{l:2.4}, FKG  and the fact that $h(x)$ under $\rmP_n$ is a standard Gaussian. Summing the two bounds, taking the $p$-th root and using that $(a+b)^{1/p} \leq a^{1/p} + b^{1/p}$ for $p \geq 1$ and $a,b \geq 0$, gives~\eqref{e:7.24}. Existence of the desired coupling follows by definition of the Wasserstein distance. Part (5) follows by the FKG property of $\rmP_n$.

The case $n=\infty$ follows exactly in the same way. Replacing $p_n$, $p_{n-1}$ with $p_\infty$ and $q_n$, $q_{n-1}$ with $q_\infty$ noting that $p_\infty$ and $q_\infty$ obey the same bounds being the limit of the corresponding finite $n$ quantities. Lemma~\ref{l:2.4} extends to the case $n=\infty$ by a standard limiting procedure.
\end{proof}

The following is an analog of Theorem~\ref{t:107.4} for the laws $\rmP_{n,v}^{\uparrow u}$. 
Theorems~\ref{t:7.7}, Theorem~\ref{t:107.4} and Theorem~\ref{p:107.5} will be a direct consequence thereof.
\begin{proposition}
\label{p:7.9}
For all $n \in [1,\infty]$ and $u,v \in \bbR$, there exists a coupling $\ddot{\rmP}^{\uparrow u}_{n,v}$ between the fields $h$, $h^{\uparrow u}_{n,v}$ and $\eta^{\uparrow u}_{n,v}$ on $\bbT_n$ such that 
\begin{enumerate}
	\item $\ddot{\rmP}^{\uparrow u}_{n,v}$-almost-surely,
	\begin{equation}
	\label{e:107.46}
		h^{\uparrow u}_{n,v} = h + \eta^{\uparrow u}_{n,v} \,.
	\end{equation} 
	\item The laws of $h$, $h^{\uparrow u}_{n,v}$ and $\eta^{\uparrow u}_{n,v}$ obey
	\begin{equation}
	\label{e:107.45}
		 \ddot{\rmP}^{\uparrow u}_{n,v} \big(h \in \cdot) = \rmP_{n,0}(h \in \cdot)
		 \,, \qquad 
		\ddot{\rmP}^{\uparrow u}_{n,v} \big(h^{\uparrow u}_{n,v}\in \cdot) = \rmP^{\uparrow u}_{n,v}(h \in \cdot) \,,
	\end{equation}
	and
	\begin{equation}
		 \ddot{\rmP}^{\uparrow u}_{n,v} \big(\eta_{n,v}^{\uparrow u}(0) \in \cdot\big) = \rmP^{\uparrow u}_{n,v}\big(h(0) \in \cdot\big) \,.
	\end{equation}
	\item For all $x, y \in \bbT$ with $x$ a direct child of $y$ and $p \geq 1$,
	\begin{equation}
		\label{e:107.47}
		\big\|\eta^{\uparrow u}_{n,v}(x) - \eta^{\uparrow u}_{n,v}(y) \big\|_p \leq 
		C_p \big(((u-v)^-)^{1/p} + 1\big) \rme^{c_0 (u-v)/p}\,|x|\, 2^{-|x|/p}
	\end{equation}
	and
	\begin{equation}
			\label{e:107.47a}
			\ddot{\rmE}_{n,v}^{\uparrow u} \Big(\eta^{\uparrow u}_{n,v}(x) - \ddot{\rmE}_{n,v}^{\uparrow u} \,\eta^{\uparrow u}_{n,v}(y)\Big) \geq 0 \,,
		\end{equation}
	where $C_p < \infty$ is independent of $\delta$.
	\end{enumerate}
	Moreover, the following relations between the above couplings hold:
	\begin{enumerate}
		\item[(4)] For all $u,v \in \bbR$, $n < \infty$ and $x \in \bbT_n$, almost-surely,
	\begin{equation}
	\label{e:107.48}
		\ddot{\rmP}_{n,v}^{\uparrow u} \Big(\Big(h - h(x),\, \eta^{\uparrow u}_{n,v} + h(x),\, h^{\uparrow u}_{n,v}\Big)_{\bbT(x) \cap \bbT_n} \in \cdot 
		\,\Big|\, \big(h,\, h_{n,v}^{\uparrow u},\, \eta_{n,v}^{\uparrow u}\big)_{{\{x\} \cup (\bbT_n \setminus \bbT(x))}}\Big) \nonumber\\
		= 
		\ddot{\rmP}_{n-|x|,\, h^{\uparrow u}_{n,v}(x)}^{\uparrow (u - m_n + m_{n-|x|})} \,.
	\end{equation}
	The same holds when $n = \infty$ with the right hand side in~\eqref{e:107.48} replaced by
\begin{equation}
		\ddot{\rmP}_{\infty,\, h^{\uparrow u}_{\infty, v}(x)}^{\uparrow u - c_0|x|}
		\equiv \ddot{\rmP}_{h^{\uparrow u}_{v}(x)}^{\uparrow u - c_0|x|} \,.
\end{equation}
		\item[(5)] For all $u,v \in \bbR$ and $n \in [1,\infty]$,
		\begin{equation}
			\label{e:107.49}
			\ddot{\rmP}_{n,v}^{\uparrow u}
			\Big(\big(h,\, \eta^{\uparrow u}_{n,v},\, h^{\uparrow u}_{n,v} \big) \in \cdot \Big) =
			\ddot{\rmP}_{n,0}^{\uparrow u-v}
			\Big(\big(h,\, \eta^{\uparrow u-v}_{n,0} + v,\,h^{\uparrow u-v}_{n,0} + v \big) \in \cdot \Big) \,.
	\end{equation}
	\end{enumerate}
\end{proposition}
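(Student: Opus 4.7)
The plan is to build $\ddot{\rmP}^{\uparrow u}_{n,v}$ recursively, generation by generation, using Lemma~\ref{l:107.3} as the one-step building block. For $n < \infty$, I initialize at the root by setting $h(0) := 0$, $h^{\uparrow u}_{n,v}(0) := v$ and $\eta^{\uparrow u}_{n,v}(0) := v$. Given that the joint distribution has been specified on $\bbT_k$, I extend to $\bbT_{k+1}$ as follows: conditionally on $\mathcal{F}_k := \sigma(h(z), h^{\uparrow u}_{n,v}(z) :\: z \in \bbT_k)$, for each $y \in \bbL_k$ separately (and conditionally independently across such $y$), invoke Lemma~\ref{l:107.3} applied to the subtree of depth $n - k$ rooted at $y$ with effective parameters $w = h(y)$, $v_{\rm par} = h^{\uparrow u}_{n,v}(y)$ and $u_{\rm par} = u - m_n + m_{n-k}$, then set $\eta^{\uparrow u}_{n,v}(x) := h^{\uparrow u}_{n,v}(x) - h(x)$ at each child $x$ of $y$. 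Parts (1), (2), (4), (5) and the mean-monotonicity in~\eqref{e:107.47a} then follow almost directly from the construction: identity~\eqref{e:107.46} is by design; $h$ is built by i.i.d.\ $\cN(0,1)$ increments and hence has law $\rmP_{n,0}$; the correct law of $h^{\uparrow u}_{n,v}$ follows by iterated use of the Markov/DLR property implicit in~\eqref{e:100.160}, which identifies the conditional subtree law below $y$ as $\rmP^{\uparrow u_{\rm par}}_{n-k,\,h^{\uparrow u}_{n,v}(y)}$; Part (4) is immediate from the recursive structure; Part (5) is inherited from the shift identity~\eqref{e:107.28} via a shift-covariant choice of the $L^p$-Wasserstein-optimal coupling in Lemma~\ref{l:107.3}; and~\eqref{e:107.47a} follows from part~(5) of Lemma~\ref{l:107.3} combined with FKG (Lemma~\ref{lemma:FKG}).

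The main obstacle is the gradient estimate~\eqref{e:107.47} in Part~(3). Conditionally on $\mathcal{F}_k$, Lemma~\ref{l:107.3} applied at the edge $(y, x)$ with $|y| = k$, $|x| = k+1$ gives
\begin{equation*}
\ddot{\rmE}\Big[\big|\eta^{\uparrow u}_{n,v}(x) - \eta^{\uparrow u}_{n,v}(y)\big|^p \,\Big|\, \mathcal{F}_k\Big] \leq C_p^p \Big(\big(u_k - h^{\uparrow u}_{n,v}(y)\big)^- + 1\Big)\rme^{c_0(u_k - h^{\uparrow u}_{n,v}(y))},
\end{equation*}
where $u_k := u - m_n + m_{n-k}$. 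Taking the outer expectation reduces matters to the exponential-moment bound
\begin{equation*}
\rmE^{\uparrow u}_{n,v}\Big[\big((u_k - h(y))^- + 1\big)\rme^{c_0(u_k - h(y))}\Big] \leq C\, k^p\, 2^{-k}\, \big((u-v)^- + 1\big)\rme^{c_0(u-v)}.
\end{equation*}
To establish this, I would use~\eqref{e:100.160} together with the shift covariance~\eqref{e:107.28} to recast the expectation as an unconditional one under $\rmP_n$ weighted by $\varphi_{n,k,u-v}$ from~\eqref{e:2.40d}, and then apply a Cameron-Martin/Girsanov tilt by $\rme^{-c_0 h(y)}$ along the root-to-$y$ path, which contributes the deterministic factor $\rme^{c_0^2 k/2} = 2^{k}$ and shifts the field along the spine by $-c_0$ per generation. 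Using the asymptotic $m_n - m_{n-k} = c_0 k - \tfrac{3}{2c_0}\log(n/(n-k)) + O(1)$ together with the tail estimates on $p_n$ and $q_n$ from Lemmas~\ref{l:4.2a} and~\ref{l:2.10}, a careful analysis of the resulting integral over $h_{\bbT_k}$ then yields the target $2^{-k}$ decay: the factor $\rme^{c_0(m_{n-k} - m_n)} = 2^{-2k}(n/(n-k))^{3/2}$ combines with the Girsanov $2^k$ to produce the $2^{-k}$, and the polynomial $k^p$ absorbs the $(\cdot)^-$ term together with the logarithmic correction in $m_n - m_{n-k} - c_0 k$.

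Finally, the case $n = \infty$ is handled by passing to the limit: since all constants in Lemma~\ref{l:107.3} are uniform in $n$ and since $\rmP^{\uparrow u}_{n,v} \to \rmP^{\uparrow u}_{\infty, v}$ in $W^{p,\infty}_{\rm loc}$ by Proposition~\ref{p:1.45}, the finite-dimensional marginals of $\ddot{\rmP}^{\uparrow u}_{n,v}$ form a tight and consistent family whose Kolmogorov extension yields the desired limiting coupling $\ddot{\rmP}^{\uparrow u}_{\infty,v}$, inheriting all the stated properties. The adjustment of the bound in Part~(4) for $n = \infty$, which replaces $u - m_n + m_{n-|x|}$ by $u - c_0 |x|$, follows from the convergence $m_n - m_{n-|x|} \to c_0 |x|$ for fixed $|x|$.
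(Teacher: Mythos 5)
Your construction is the same as the paper's: the coupling is built edge by edge in a Markovian fashion from the one-step coupling of Lemma~\ref{l:107.3}, with the stated parameters, and Parts (1), (2), (4), (5) and \eqref{e:107.47a} read off from the construction exactly as you describe. The key estimate \eqref{e:107.47} is also obtained the same way -- condition on the parent, apply the one-step bound, and reduce to an exponential moment of $\rme^{-c_0 h(y)}$ whose value $\rme^{c_0^2|y|/2}=2^{|y|}$ combines with $\rme^{c_0(u_k-u+v)}\asymp 4^{-k}$ to give the net $2^{-k}$ -- and your target intermediate inequality is exactly the one the paper proves. You differ in two points of execution. First, to bound $\rmE^{\uparrow u-v}_{n,0}$ of the resulting decreasing functional of $h(y)$, the paper simply invokes FKG (Lemmas~\ref{lemma:FKG}--\ref{l:2.4}) to discard the conditioning on $\Omega_n(u-v)$ and then computes an unconditional Gaussian exponential moment; your proposed explicit change of measure via $\varphi_{n,k,u-v}$ from \eqref{e:2.40d} plus a Girsanov tilt along the spine would also work but forces you to control a product of $2^k$ tail factors against the normalization $p_n(u-v)$, which is avoidable effort. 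Second, for $n=\infty$ the paper does not pass to a limit: it runs the identical recursive construction directly using the $n=\infty$ case of Lemma~\ref{l:107.3}. Your limiting argument has a soft spot you should be aware of -- the restrictions of $\ddot{\rmP}^{\uparrow u}_{n,v}$ to $\bbT_k$ for different finite $n$ are \emph{not} a consistent family, and since Wasserstein-optimal couplings are neither unique nor automatically convergent, only subsequential weak limits are guaranteed; this still suffices to prove existence (every subsequential limit inherits the stated properties, with Part (4) requiring a mildly delicate passage to the limit of conditional laws), but the direct construction at $n=\infty$ sidesteps the issue entirely.
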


\begin{proof}
Fix $n < \infty$, $u,v \in \bbR$. We draw the three fields $(h, h^{\uparrow u}_{n,v},\eta^{\uparrow u}_{n,v})$ iteratively in a Markovian manner. We start by setting $h(0) = 0$ and $h^{\uparrow u}_{n,v}(0) = \eta^{\uparrow u}_{n,v}(0) = v$. Then for any $y \in \bbT_n \setminus \{0\}$, whenever the values of the fields have been set for $y$'s direct ancestor $x$, we draw the values of $\big(h(y), h^{\uparrow u}_{n,v}(y), \eta^{\uparrow u}_{n,v}(y)\big)$ according to their law under $\rmP^{\uparrow u - m_n + m_{n-|x|}}_{n-|x|,  h_{n,v}^{\uparrow u}(x), h(x)}$ as given by Lemma~\ref{l:107.3}, independently of all other drawings. The order of drawings is immaterial. 
	
Now Parts (1), (2) and (4) follow from the construction, as by the Markov property of $\rmP_n$, $\rmP_n^+$ we have
\begin{equation}
	\rmP_n \big(h(y) \in \cdot \,\big|\, h_{{\{x\}\cup(\bbT_n \setminus \bbT(x)})} \big) = 
		\rmP_{n-|x|, h(x)} \big(h(z) \in \cdot)
\end{equation}
and
\begin{equation}
	\rmP_n^{\uparrow u} \big(h(y) \in \cdot \,\big|\, h_{{\{x\}\cup(\bbT_n \setminus \bbT(x)})} \big) = 
		\rmP_{n-|x|, h(x)}^{\uparrow u- m_n + m_{n-|x|}} \big(h(z) \in \cdot) \,,
\end{equation}
where $x$ and $y$ are as in the construction, $z \in \bbL_1$ and both equalities hold almost-surely. The second statement in part (3) follows, since by construction and Part (5) of Lemma~\ref{l:107.3},
\begin{equation}
\ddot{\rmE}_{n,v}^{\uparrow u} \eta^{\uparrow u}_{n,v}(y) = 	\ddot{\rmE}_{n,v}^{\uparrow u}\Big(	\ddot{\rmE}_{n,v}^{\uparrow u}\Big(\eta^{\uparrow u}_{n,v}(y)\,\Big|\, h^{\uparrow u}_{n,v}(x),  h(x), \eta^{\uparrow u}_{n,v}(x) \Big)\Big)
	 \geq \ddot{\rmE}_{n,v}^{\uparrow u}\Big(h^{\uparrow u}_{n,v}(x) - h(x)\Big) = 	\ddot{\rmE}_{n,v}^{\uparrow u} \eta^{\uparrow u}_{n,v}(x)  \,.
\end{equation}
Part (5) follows by construction and~\eqref{e:101.20}.

As for the first inequality in Part (3), by construction and Lemma~\ref{l:107.3}, with the right hand side in~\eqref{e:7.24} abbreviated as $f_{n,p}(u,v)$, we have
\begin{equation}
\begin{split}
	\ddot{\rmE}^{\uparrow u}_{n,v} \big|\eta^{\uparrow u}_{n,v}(x) - \eta^{\uparrow u}_{n,v}(y)\big|^p 
	& = \ddot{\rmE}^{\uparrow u}_{n,v} \Big(\ddot{\rmE}^{\uparrow u}_{n,v} \Big(\big|\eta^{\uparrow u}_{n,v}(y) - \eta^{\uparrow u}_{n,v}(x)\big|^p 
	\,\Big|\, h(x), h^{\uparrow u}_{n,v}(x), \eta^{\uparrow u}_{n,v}(x)\Big)\Big) \\
	& \leq \rmE_{n, v}^{\uparrow u} f_{n-|x|,p} \big(u - m_n + m_{n-|x|}, h(x) \big) \\
	& \leq \rmE_{n, 0}^{\uparrow u-v} f_{n-|x|,p} \big(u-v - m_n + m_{n-|x|}, h(x) \big) \\
	& \leq C \rme^{c_0 (u-v)} \rme^{-c_0^2 |x|} \big(\big(c_0 |x| - (u-v)\big)^+ + 1 \big)
		\rmE_n^{\uparrow u-v} \big(\big(h(x)^+ + 1 \big) \rme^{-c_0 h(x)}\big) \\
	& \leq C ((u-v)^- + 1) \rme^{c_0 (u-v)}\,|x| \rme^{-c_0^2 |x|} \,
		\rmE_n \rme^{-c_0 h(y)} \\
	& \leq C ((u-v)^- + 1) \rme^{c_0 (u-v)}\,|x| \rme^{-(c_0^2/2) |x|}  \,,
\end{split}
\end{equation}
where we used the FKG property of $\rmP_n$ to get rid of the conditioning on $\Omega_n(u)$ and the fact that under $\rmP_n$ the law of $h(y)$ is $\cN(0,|y|)$. Taking the $p$-th root, this gives~\eqref{e:107.47}.

The case $n=\infty$ is treated in the same way. We draw $\big(h(y), h^{\uparrow u}_{\infty,v}(y), \eta^{\uparrow u}_{\infty,v}(y)\big)$ conditional on $\big(h(x), h^{\uparrow u}_{\infty,v}(x), \eta^{\uparrow u}_{\infty,v}(x)\big)$ according to the coupling law $\rmP^{\uparrow u - c_0|x|}_{\infty,  h_{\infty, v}^{\uparrow u}(x), h(x)}$ from Lemma~\ref{l:107.3} with $n=\infty$. 
\end{proof}
We can now give
\begin{proof}[Proof of Theorem~\ref{t:7.7}]
Fix $p \geq 1$, and for each $1 \leq r \leq n$ and any $x \in \bbL_{l_n}$, we construct a coupling $\ddot{\rmP}^+_{n,\infty}$ between the fields $h^+_n$ and $h^+_\infty$ on $\bbT_{n'}$ as well as the fields $h$, $\eta^+_n$ and $\eta^+_\infty$ on $\bbT_{n'} \setminus \bbT_r$.  
To this end, we first draw $\big(h^+_n, h^+_\infty\big)_{\bbT_r}$ according to a joint law which realizes the $W^{p,\infty}$ distance between $\rmP_n^+\big(h_{{\bbT_r(x)}} - m_{n'} \in \cdot \big)$ and $\rmP^{+,[n]_2}\big(h_{{\bbT_r}} \in \cdot\big)$. Then, independently for each $y \in \bbL_r$ we draw jointly $\big(h,\, h^+_n,\, \eta^+_n\big)_{\bbT_{n'-r}(y)}$
and $\big(h, h^+_\infty,\, \eta^+_\infty\big)_{\bbT_{n'-r}(y)}$ according to the laws $\ddot{\rmP}^{\uparrow - m_{n'} + m_{n'-r}}_{n'-r, h^+_n(y)}$ and $\ddot{\rmP}^{\uparrow - c_0 r}_{\infty, h^{+,\delta}(y)}$ respectively. Notice that the first component of both triplets is the same field. This is always possible as both coupling laws have the same marginal for the first component, which is that of a standard BRW.

We first claim that under this coupling the laws of $h^+_n$ and $h^+_\infty$ are $\rmP_n^+\big(h_{{\bbT_{n'}(x)}} - m_{n'} \in \cdot \big)$ and $\rmP^{+,[n]_2}\big(h_{{\bbT_{n'}}} \in \cdot\big)$ respectively. This will follow from the construction once we verify that
for all $y \in \bbL_r$ almost-surely,
\begin{equation}
	\rmP_n^+ \big(h_{{\bbT_{n'-r}(y)}} - m_{n'} \in \cdot \,\big|\, 
	h_{{\bbT_n \setminus \bbT_{n'-r}(y)}} \big) = 
	\rmP_{n'-r, h(y)}^{\uparrow m_{n'-r}} \big(h - m_{n'} \in \cdot \big) = 
	\rmP_{n'-r, h(y)-m_{n'}}^{\uparrow -m_{n'} + m_{n'-r}}
\end{equation}
and
\begin{equation}
	\rmP^{+,[n]_2} \big(h_{{\bbT(y)}} \in \cdot \,\big|\, 
	h_{\{y\} \cup (\bbT \setminus \bbT(y))} \big) = 
	\rmP_{h(y)}^{\uparrow -c_0 r} \,.
\end{equation}
Indeed, the first display holds thanks to the spatial Markov property of the standard BRW and the definition of $\rmP_n^+$, and the second thanks to~\eqref{e:107.30} of Theorem~\ref{t:1.7}.

At the same time, also by construction,
\begin{equation}
\label{e:107.51}
\begin{split}
	\sup_{z \in A_n}& \big|h_n^+(z) - h_\infty^+(z)\big| \\
	 & \leq 2\sup_{z \in A_n \cap \bbT_r} \big|h_n^+(z) - h_\infty^+(z)\big| 
		+ \sup_{z \in A_n \setminus \bbT_r} 
		\big|\big(h_n^+(z) - h^+_n([z]_r)\big) - \big(h_\infty^+(z) - h_\infty^+([z]_r)\big)\big| \\
		& \leq 2\sup_{z \in A_n \cap \bbT_r} \big|h_n^+(z) - h_\infty^+(z)\big| + 
		\sup_{z \in A_n \setminus \bbT_r} \Big(\big|\eta_n^+(z) - \eta_n^+([z]_r)\big| + 
			\big|\eta_\infty^+(z) - \eta_\infty^+([z]_r) \big|\Big) \\
		& \leq 2\sup_{z \in A_n \cap \bbT_r} \big|h_n^+(z) - h_\infty^+(z)\big| + 
		\sum_{k=r+1}^{n'} \sum_{\substack{z \in A_n \cap \bbL_k\\ y = [z]_{k-1}}}
			 \big|\eta_{n}^+(y) - \eta_{n}^+(z)\big| + \big|\eta_\infty^+(y) - \eta_\infty^+(z) \big| \,.
\end{split}
\end{equation}

It follows from the Markov property of the coupling in Proposition~\ref{p:7.9}, namely~\eqref{e:107.48}, that
\begin{equation}
	\ddot{\rmP}^+_{n,\infty}\big(\eta_n^+ \in \cdot\big) = \int \ddot{\rmP}_{n', v}^{\uparrow} \Big(\big(\eta_{n',v}^{\uparrow}\big)_{\bbT_{n'} \setminus \bbT_r} \in \cdot\Big) \rmP_n^+ \big(h(x) - m_{n'} \in \rmd v\big) \,,
\end{equation}
and
\begin{equation}
	\ddot{\rmP}^+_{n,\infty}\big(\eta_\infty^+ \in \cdot\big)	 = 
	\int \ddot{\rmP}^{\uparrow}_v \Big(\big(\eta_{v}^{\uparrow}\big)_{\bbT_{n'} \setminus \bbT_r} \in \cdot\Big) \rmP^{+,[n]_2} \big(h(x) \in \rmd v\big) = 
	\ddot{\rmP}^{+,[n]_2} 
	\Big(\big(\eta_\infty^+\big)_{\bbT_{n'} \setminus \bbT_r}  \in \cdot\Big) \,.
\end{equation}
It then follows from the construction and Proposition~\ref{p:7.9} that the $\bbL^p$ norm of the last right hand side in~\eqref{e:107.51} is at most 
\begin{multline}
2 W^{p,\infty} \Big(\rmP^+_n \big(h_{{\bbT_r(x)}} - m_{n'} \in \cdot\big),\,
				\rmP^{+,[n]_2} \big(h_{{\bbT_r}} \in \cdot\big)\Big) 
				+ 
				2C_p \sum_{k=r+1}^{n'} |A_n \cap \bbL_k| 2^{-k/p} k \\
				 \times \int \Big(\big((v^+)^{1/p} + 1\big) \rme^{-c_0 v/p}
				\Big(\rmP_n^+ \big(h(x) - m_{n'} \in \rmd v\big) +
				\rmP^{+,[n]_2} \big(h(x) \in \rmd v\big)\Big)\,.
\end{multline}				
The integral is bounded from above by a constant thanks to the almost-Gaussian tails (with constant coefficients) of both terms in the integrating measure, which is a consequence of Theorem~\ref{t:1.3}. Then thanks to~\eqref{e:107.58} and the assumption on $(A_n)_{n \geq 1}$, the last display tends to $0$ when $n \to \infty$ followed by $r \to \infty$.
\end{proof}

\begin{proof}[Proof of Corollary~\ref{c:1.9}]
By~\eqref{e:103.32a} the sum in~\eqref{e:107.40} is arbitrarily small for all $n \geq n_0$ uniformly in $r$, as long as $n_0$ is large enough. At the same time, the same sum for all $n \leq n_0$ is equal to zero whenever $r > n'_0$ as $A_n \subset \bbT_{n_0'}$. This implies~\eqref{e:107.40}. For the second assertion, 
since the series~\eqref{e:103.32} converges, its remainder tends to $0$, which implies~\eqref{e:107.40} with $A_n = A \cap \bbT_{n'}$.
\end{proof}

\begin{proof}[Proof of Corollary~\ref{c:1.11a} and Corollary~\ref{c:1.15a}]
With $A_n = A = \rmB_{r_{n',p}}(x_{n'}) \cap \bbT_{n'}$, the sum in~\eqref{e:103.32a} is equal to
\begin{equation}
\sum_{k = n'-{r_{n',p}}}^{n'} 2^{k-n'+{r_{n',p}}} 2^{-k/p} k 
= 2^{-n'+{r_{n',p}}} \sum_{k = n'-{r_{n',p}}}^{n'} 2^{k(1-1/p)} k 
\leq C 2^{-n'/p+{r_{n',p}}}n'
\leq C' n'^{-\epsilon},
\end{equation}
which tends to $0$ as $n \to \infty$.  
\end{proof}

\begin{proof}[Proof of Theorem~\ref{t:107.4}]
For $\delta \in [0,1)$, we define the desired coupling via
\begin{equation}
	\ddot{\rmP}^{+, \delta}\Big(\big(h, h^{+,\delta}, \eta^{+,\delta}\big) \in \cdot \Big)
	:= \int \ddot{\rmP}^{\uparrow}_v \Big(\big(h, h^{\uparrow}_v, \eta^{\uparrow}_v  \big) \in \cdot \Big)\rmP^{+,\delta}(h(0) \in  \rmd v) \,,
\end{equation}
where $\ddot{\rmP}^{\uparrow}_v \equiv \ddot{\rmP}^{\uparrow 0}_{\infty, v}$ and 
$\rmP^{+,\delta}$ are as given in Proposition~\ref{p:7.9} and Theorem~\ref{t:1.7} respectively.
Then (1), (2) follow from (1), (2) of Proposition~\ref{p:7.9} and~\eqref{e:107.31} of Theorem~\ref{t:1.7}. For (3), we compute
\begin{equation} 
\begin{split}
	\ddot{\rmE}^{+,\delta} \big|\eta^{+,\delta}(x) - \eta^{+, \delta}(y) \big|^p
		& = \int \ddot{\rmE}_v^\uparrow \big|\eta^{\uparrow}_v(x) - \eta^{\uparrow}_v(y) \big|^p \rmP^{+,\delta}(h(0) \in  \rmd v) \\
	& \leq C_p |x|^p\, 2^{-|x|} \int \big(v^+ + 1\big) \rme^{-c_0 v} \, \rmP^{+,\delta}(h(0) \in  \rmd v) 
	\leq C'_p |x|^p\, 2^{-|x|} \,.
\end{split}
\end{equation}
With the integral bounded by a constant, thanks to the almost Gaussian tails of $h(0)$ under $\rmP^{+,\delta}$ as asserted by Theorem~\ref{t:1.3}. The second inequality in~\eqref{e:101.39} is inherited from~\eqref{e:107.47a}.
\end{proof}

\begin{proof}[Proof of Theorem~\ref{p:107.5}]
It follows from (3) of Theorem~\ref{t:107.4} that the series
\begin{equation}
	\sum_{n \geq 1} \eta^{+,\delta}(x_n) - \eta^{+,\delta}(x_{n-1}) 
\end{equation}
converges absolutely in $\bbL^p$ for all $p \geq 1$ and therefore also conditionally and $\ddot{\rmP}^{+, \delta}$-almost-surely. Since Proposition~\ref{p:1.11} implies that $\eta^{+,\delta}(0)$ is in $\bbL^p$ for all $p \geq 1$, this shows~\eqref{e:107.37}. The same part of Theorem~\ref{t:107.4} also gives the monotonicity in $n$ of the mean of $\eta^{+,\delta}(x_n)$ and therefore also of $h^{+,\delta}(x_n)$, as $h$ is centered. The facts that neither the law of $\eta^{+,\delta}({\bf x})$ nor the means of $\eta^{+,\delta}(x_n)$ and $h^{+,\delta}(x_n)$ depend on the particular choice of ${\bf x}$, follow from the invariance of the law of $\eta^{+,\delta}$ under the tree symmetries, which is clear from the construction of the coupling. 

Now, if $A \subset \bbT$ is connected, then it must be a subtree of $\bbT$ rooted at some vertex $x_k \in \bbL_k$ for some $k \geq 1$. Taking any branch ${\bf x}$ passing through this $x_k$, we may write
\begin{equation}
\begin{split}
\sup_{y \in A} \big| \eta^{+,\delta}(y) - \eta^{+,\delta}({\bf x}) \big| 
& \leq \big|\eta^{+,\delta}(x_k) - \eta^{+,\delta}({\bf x}) \big| +
\sup_{y \in A} \big| \eta^{+,\delta}(y) - \eta^{+,\delta}(x_k) \big| \\
& \leq 
2 \sum_{n \geq {k+1}}
 \sum_{\substack{y \in A \cup \{x_n\} \cap \bbL_n\\ x = [y]_{n-1}}}
\big| \eta^{+,\delta}(y) - \eta^{+,\delta}(x)\big|\,.
\end{split}
\end{equation}
Using (3) of Theorem~\ref{t:107.4} and the Triangle Inequality, for any $p \geq 1$, the $\bbL^p$ norm of the right hand side above is at most
\begin{equation}
2C_p \sum_{n \geq {k+1}} \big(\big|A \cap \bbL_n|+1 \big) 2^{-n/p} n \,,
\end{equation}
which gives~\eqref{e:107.38} due to the law invariance of $\eta^{+,\delta}({\bf x})$ with respect to the choice of ${\bf x}$ and then also~\eqref{e:107.39} thanks to (2) of Theorem~\ref{t:107.4}.
\end{proof}

\begin{proof}[Proof of Corollary~\ref{c:1.15}]
Follows immediately from Corollary~\ref{c:1.11a} and Corollary~\ref{c:1.15a} and the Triangle Inequality applied to the distance $W^{p,\infty}$ on $\bbR^{\bbT_{n'}}$.
\end{proof}

\begin{proof}[Proof of Corollary~\ref{c:1.17}]
Monotonicity of $\rmE^{+,\delta} h(x_k)$ in $k$ is given by Theorem~\ref{p:107.5}.
Display~\eqref{e:201.50} follows from~\eqref{e:107.39} of the exact same theorem with $A=\{x_k\}$ in combination with the zero mean of $h(x_k)$ under $\rmP_\infty$. Thanks to the tree symmetries, we may assume that $[x_k]_l = x_{k \wedge l}$ for all $k,l \geq 0$. Then the second part of Corollary~\ref{c:1.9}, with $A_n = \{x_k :\: k \leq n'\}$ implies~\eqref{e:1.19a}, uniformly as desired.
\end{proof}

\begin{proof}[Proof of Corollary~\ref{c:1.18}]
Fix $p \geq 1$. Display~\eqref{e:1001.20a} follows from~\eqref{e:107.39} of Theorem~\ref{p:107.5} with $A=\{x_k\}$ and the Triangle Inequality, since $\eta^{+,\delta}(\infty)$ has all moments bounded uniformly in $\delta$.
At the same time, as in the proof of Corollary~\ref{c:1.17}, taking an infinite branch ${\bf x} = (x_k)_{k \geq 1}$ of $\bbT$ starting at the root, and setting $A_n = {\bf x} \cap \bbT_{n'}$, the second part of Corollary~\ref{c:1.9} then implies~\eqref{e:1001.19a}, uniformly as desired.
\end{proof}

\section{Global statistics}
\label{s:4}
\subsection{WLLN framework}
Deriving the asymptotic law of all global features treated here reduce, either directly or indirectly, to studying the distribution (under the conditioning) of an empirical average of a certain statistic of each of the restrictions of the field on the $n$ subtrees rooted at generation $l_n$. The picture which is drawn in the introduction, and partly validated in the previous section, shows that the laws of these restrictions are almost independent of each other. As they are certainly identically distributed, this empirical average should converge to a constant via a weak law of large numbers type argument. The goal of this subsection is therefore to provide a framework for this argument, in the form of a general theorem, which could be used for all cases at hand.

We begin with a (partial) generalization of Lemma~\ref{l:101.7} to sequences of functions.
\begin{lemma}
\label{l:5.5}
Let $(F_n)_{n \in [1,\infty]}$ be a sequence functions from $\bbR$ to $\bbR$ such that
\begin{equation}
\label{e:5.83}
F_n \underset{n \to \infty}\longrightarrow F_\infty \,,
\end{equation}
uniformly on compacts, and that for some $C < \infty$,
\begin{equation}
\label{e:5.84}
|F_n(u)| \leq C\rme^{C|u|} 
\ \ ; \quad n \geq 1,\, u \in \bbR\,.
\end{equation}
Then, with $x_{l_n} \in \bbL_{l_n}$, 
\begin{equation}
\label{e:5.85}
\Big| \rmE_n^+ F_n \big(\hat{h}(x_{l_n})\big) - \rmE^{+,[n]_2} F_\infty(h(0)) \Big| \underset{n \to \infty} \longrightarrow 0\,,
\end{equation}
with the last mean bounded uniformly in $n$.
Moreover, if $\big((F^{(\iota)}_n)_{n \in [1,\infty]} :\: \iota \in \cI \big)$ is a collection of sequences as above, such that~\eqref{e:5.83} holds also uniformly in $\iota$ and ~\eqref{e:5.84} holds for all $\iota$ with the same $C$, then the limit in~\eqref{e:5.85} as well as the bound on the integral thereof also hold uniformly in $\iota$.
\end{lemma}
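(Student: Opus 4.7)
My plan is to reduce the statement to Lemma~\ref{l:101.7} by a standard truncation argument. Concretely, for fixed $M>0$ I would use the triangle inequality to bound
\begin{equation*}
\Big| \rmE_n^+ F_n \big(\hat{h}(x_{l_n})\big) - \rmE^{+,[n]_2} F_\infty(h(0)) \Big|
\leq T_n^{(1)}(M) + T_n^{(2)}(M) + T_n^{(3)}(M),
\end{equation*}
where
\begin{equation*}
T_n^{(1)}(M) := \rmE_n^+ \Big(\big|F_n(\hat h(x_{l_n})) - F_\infty(\hat h(x_{l_n}))\big|\,\1_{\{|\hat h(x_{l_n})| \leq M\}}\Big),
\end{equation*}
\begin{equation*}
T_n^{(2)}(M) := \rmE_n^+ \Big(\big(|F_n(\hat h(x_{l_n}))| + |F_\infty(\hat h(x_{l_n}))|\big)\,\1_{\{|\hat h(x_{l_n})| > M\}}\Big),
\end{equation*}
\begin{equation*}
T_n^{(3)}(M) := \Big| \rmE_n^+ F_\infty(\hat h(x_{l_n})) - \rmE^{+,[n]_2} F_\infty(h(0)) \Big|.
\end{equation*}

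The third term $T_n^{(3)}(M)$ does not depend on $M$ and vanishes as $n\to\infty$ by Lemma~\ref{l:101.7} applied to the single function $F_\infty$, which satisfies the bound~\eqref{e:107.2a} thanks to~\eqref{e:5.84} and uniform convergence. For $T_n^{(1)}(M)$ the uniform convergence assumption~\eqref{e:5.83} gives $\sup_{|u|\le M}|F_n(u)-F_\infty(u)| \to 0$ as $n\to\infty$, so $T_n^{(1)}(M) \to 0$ for every fixed $M$. The main point is to make $T_n^{(2)}(M)$ uniformly small by choosing $M$ large: using~\eqref{e:5.84} and Cauchy--Schwarz,
\begin{equation*}
T_n^{(2)}(M) \leq 2C \rmE_n^+\big(\rme^{C|\hat h(x_{l_n})|} \1_{\{|\hat h(x_{l_n})|>M\}}\big),
\end{equation*}
and the tail bounds in Theorem~\ref{t:1.3} (which are almost-Gaussian in both directions at vertices in $\bbL_{l_n}$) yield that $\rme^{C|\hat h(x_{l_n})|}$ is uniformly integrable with respect to $\rmP_n^+$, uniformly in $n$. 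Hence $\sup_n T_n^{(2)}(M) \to 0$ as $M\to \infty$. Taking $n\to\infty$ first for fixed $M$ and then $M \to \infty$ yields~\eqref{e:5.85}. The uniform boundedness of $\rmE^{+,[n]_2} F_\infty(h(0))$ in $n$ follows in the same way from~\eqref{e:5.84} combined with the corresponding tail bounds for $h(0)$ under $\rmP^{+,\delta}$ provided by Proposition~\ref{p:1.11}.

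For the uniformity in $\iota \in \cI$, the argument is formally identical: in the decomposition above, $T_n^{(1,\iota)}(M)$ is controlled by the modulus of uniform convergence on $[-M,M]$, which by assumption is uniform in $\iota$; $T_n^{(2,\iota)}(M)$ is controlled by the same constant $C$ appearing in~\eqref{e:5.84} for all $\iota$, so the uniform integrability argument is $\iota$-free; and for $T_n^{(3,\iota)}(M)$ the uniformity in Lemma~\ref{l:101.7} (which is built into its statement) applies to the family $\{F_\infty^{(\iota)}\}_{\iota \in \cI}$ since they all satisfy the same bound~\eqref{e:107.2a}.

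The only step that requires care is $T_n^{(2)}(M)$: the combination of the possibly exponential growth in~\eqref{e:5.84} with the need for $n$-uniform tail control is exactly what Theorem~\ref{t:1.3} is designed to provide, so I do not expect a genuine obstacle, just the verification that the constants match.
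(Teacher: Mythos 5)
Your proposal is correct and follows essentially the same route as the paper: truncate at level $\pm M$, use the almost-Gaussian tails of $\hat h(x_{l_n})$ under $\rmP_n^+$ (Theorem~\ref{t:1.3}) and of $h(0)$ under $\rmP^{+,\delta}$ to make the tail contribution uniformly small, use uniform convergence on compacts for the bulk, and invoke Lemma~\ref{l:101.7} (with $r=0$, whose uniformity in $F$ gives the uniformity in $\iota$). The only cosmetic difference is that the paper truncates both $F_n$ and $F_\infty$ and applies Lemma~\ref{l:101.7} to the bounded truncated function, whereas you apply it directly to the untruncated $F_\infty$; both are valid since~\eqref{e:107.2a} permits exponential growth.
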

\begin{proof}
Thanks to the almost-Gaussian tails of $\hat{h}(x_{l_n})$ under $\rmP_n^+$ and those of $h(0)$ under $\rmP^{+,\delta}$, which are a consequence of Theorem~\ref{t:1.3} and Lemma~\ref{l:101.7}, we may replace $F_n(u)$ for $n \in [1,\infty$] by $\tilde{F}_{n,M}(u) := F_n(u)\1_{[-M, M]}(u),$ with both means in~\eqref{e:5.85} changing by an arbitrarily small constant, provided we choose $M$ large enough. Then the uniform convergence of $\tilde{F}_{n,M}$ to $\tilde{F}_{\infty,M}$ together with the second limit in~\eqref{e:107.18} of Lemma~\ref{l:101.7} with $r=0$, shows that~\eqref{e:5.85} holds for the new functions. Uniformity in the case of a collection of sequences, follows from the uniformity of the above estimates, together with the uniformity in $F$ of~\eqref{e:107.18}.
\end{proof}

The next proposition is a weak law of large numbers for functionals of the subtrees of vertices at generation $l_n$ under $\rmP_n^+$. It is used as a key tool in the proofs of the results in this subsection. Recall that $\bbT_k(x)$ for $x \in \bbT$ denotes the subtree of $\bbT$ which includes all descendants of $x$ (including $x$ itself) which are at genealogical distance to $x$ of at most $k$.

\begin{proposition}
\label{p:4.5b} 
For each $n \geq 1$, $x \in \bbL_{l_n}$ let $\xi_n(x)$ be a random variable which is measurable w.r.t. $\sigma(\{h(y) :\: y \in \bbT_{n'}(x)\})$ and has conditional law $\rmP_n^+(\xi_n(x) \in \cdot\, | \,h(x) = u)$ which does not depend on $x$ for all $u \in \bbR$. Suppose also that $(\xi_n(x_n))_{n \geq 1, x_n \in \bbL_{l_n}}$ are uniformly integrable under $\rmP_n^+$.  Then,
\begin{equation}
\label{e:5.5}
\bigg| \frac{1}{|\bbL_{l_n}|} \sum_{x \in \bbL_{l_n}} \xi_n(x)
	- \rmE_n^+ \xi_n(x) \bigg|
\,	{\underset{n \to \infty}\longrightarrow} \, 0 \,,
\end{equation}
in $\rmP_n^+$ probability and in $\bbL^1$.
\end{proposition}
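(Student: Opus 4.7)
The plan is a weak-law-of-large-numbers argument built on two structural features of $\rmP_n^+$: first, that $(\xi_n(x))_{x \in \bbL_{l_n}}$ are conditionally independent given the values of $h$ on $\bbL_{l_n}$; second, that these leaf values are asymptotically decoupled in a quantitative way supplied by Proposition~\ref{p:106.3}. By the assumed uniform integrability, for any $\epsilon > 0$ I can pick $M < \infty$ with $\sup_{n,x} \rmE_n^+ |\xi_n(x)| \1_{\{|\xi_n(x)| > M\}} < \epsilon$, so replacing each $\xi_n(x)$ by its $M$-truncation changes both sides of~\eqref{e:5.5} by at most $2\epsilon$ in $\bbL^1$; it therefore suffices to treat the case $|\xi_n(x)| \leq M$.

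Set $\cG_n := \sigma(h_{\bbL_{l_n}})$ and $\Phi_n(u) := \rmE_n^+(\xi_n(x) \mid h(x) = u)$, which is independent of $x$ by hypothesis and satisfies $|\Phi_n| \leq M$. Under $\rmP_n$ the subtrees $\bbT_{n'}(x)$, $x \in \bbL_{l_n}$, are conditionally independent given $\cG_n$ by the BRW Markov property, and $\Omega_n^+$ factorizes as $\bigcap_{x \in \bbL_{l_n}} \{h(y) \geq 0 : y \in \bbL_{n'}(x)\}$; consequently this conditional independence persists under $\rmP_n^+$, and $(\xi_n(x))_{x \in \bbL_{l_n}}$ are conditionally independent given $\cG_n$ with $\rmE_n^+(\xi_n(x) \mid \cG_n) = \Phi_n(h(x))$. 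Writing $S_n := |\bbL_{l_n}|^{-1} \sum_x \xi_n(x)$, the conditional independence and boundedness give $\Var_n^+(S_n \mid \cG_n) \leq M^2/|\bbL_{l_n}| \to 0$, so $S_n - \rmE_n^+(S_n \mid \cG_n) \to 0$ in $\bbL^2$, hence in $\bbL^1$ and in probability. Since $\rmP_n^+$ is permutation-invariant on $\bbL_{l_n}$, $\rmE_n^+ \xi_n(x)$ does not depend on $x$ either, and~\eqref{e:5.5} reduces to showing
\[
T_n := \frac{1}{|\bbL_{l_n}|} \sum_{x \in \bbL_{l_n}} \Phi_n(h(x)) \underset{n \to \infty}\longrightarrow \rmE_n^+ T_n = \rmE_n^+ \xi_n(x) \quad \text{in } \bbL^1.
\]

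For this I would apply Chebyshev, for which it suffices to prove $\Var_n^+(T_n) \to 0$. Decomposing by the depth $k = |x \wedge y|$ of the common ancestor and using $|\{(x,y) \in \bbL_{l_n}^2 : |x \wedge y| = k\}| \leq |\bbL_{l_n}| \cdot 2^{l_n - k}$,
\[
\Var_n^+(T_n) \leq \sum_{k=0}^{l_n} 2^{-k} \sup_{|x \wedge y| = k} \big| \Cov_n^+\big(\Phi_n(h(x)),\, \Phi_n(h(y))\big) \big|.
\]
For such a pair, letting $z := x \wedge y$, the spatial Markov property under $\rmP_n^+$ makes the branches from $z$ to $x$ and to $y$ conditionally independent given $h_{\{z\} \cup (\bbT_n \setminus \bbT(z))}$, and a tower computation reduces the covariance to $\Cov_n^+(f_n(\hat h(z)), g_n(\hat h(z)))$ with $f_n(v) := \rmE_n^+(\Phi_n(h(x)) \mid \hat h(z) = v)$ and similarly $g_n$. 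Applying Proposition~\ref{p:106.3} along the ancestral line from $z$ down to generation $l_n - C$, together with the exact Markov property to absorb the last $C$ generations, yields $|f_n(v) - f_n(v')| \leq 2MC \exp\big(-(c(l_n - k - C) - |v| \vee |v'|)^+\big)$, and combining this with the almost-Gaussian tails of $\hat h(z)$ from Theorem~\ref{t:1.3} gives a bound $\alpha(l_n - k)$ on $|\Cov_n^+|$ with $\alpha(j) \to 0$ as $j \to \infty$, uniformly in $k$. Splitting the sum at, say, $k_* := \lfloor l_n/2 \rfloor$ then kills the low-$k$ part via $\alpha(l_n - k_*) \to 0$ and the high-$k$ part via $\sum_{k > k_*} 2^{-k} \cdot 4M^2 \to 0$.

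The main obstacle is thus the uniform-in-$k$ covariance estimate, which hinges on combining the decoupling bound of Proposition~\ref{p:106.3} (stated only for $k' < l_n - C$) with the spatial Markov property to reach down to generation $l_n$, and on using Theorem~\ref{t:1.3} to absorb the explicit $|v|, |v'|$ growth appearing in the exponent on the right of that proposition. Every other step—truncation, conditional variance, Chebyshev, symmetry—is routine.
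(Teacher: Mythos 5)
Your proposal is correct and follows essentially the same route as the paper: truncate via uniform integrability, exploit conditional independence of the subtrees given the field at the common ancestor, bound each pairwise covariance through the decoupling estimate of Proposition~\ref{p:106.3} combined with the almost-Gaussian tails of Theorem~\ref{t:1.3}, and sum against the $2^{-k}$ pair-counting factor. The paper folds your two steps (conditional variance given $\sigma(h_{\bbL_{l_n}})$, then variance of $T_n$) into a single variance computation via the total covariance formula, and indexes pairs by the distance of the common ancestor from generation $l_n$ rather than from the root, but these are only cosmetic differences.
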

\begin{proof}
Thanks to uniform integrability, the mean under $\rmP_n^+$ of $|\xi_n(x_n)|\1_{\{|\xi_n(x_n)| > u_0\}}$ tends to $0$ as $u_0 \to \infty$ uniformly in $n$ and $x_n \in \bbL_{l_n}$. The same applies to the sum in~\eqref{e:5.5} with the terms therein replaced by $|\xi_n(x_n))|\1_{\{|\xi_n(x_n)| > u_0\}}$. An application of Markov's inequality then shows that we may assume, w.l.o.g., that $\xi_n(x_n)$-s are uniformly bounded, so that $\rmP_n^+$ almost-surely, $|\xi_n(x_n)| \leq M$ for all $n$, $x_n \in \bbL_{l_n}$ and some fixed $M$.

Now, by symmetry the variance of the sum in~\eqref{e:5.5} under $\rmP_n^+$ is equal to 
\begin{equation}
\label{e:5.6}
|\bbL_{l_n}|^{-2} \sum_{x,y \in \bbL_{l_n}} \Cov_n^+ \big(\xi_n(x), \xi_n(y)\big)
= |\bbL_{l_n}|^{-1} \sum_{k=0}^{l_n} 2^k \Cov_n^+ \big(\xi_n(x_n), \xi_n(x_{n,k})\big) \,,
\end{equation}
where $x_n$ is as before and $x_{n,k}$ is such that $|x_{n,k} \wedge x_n| = l_n-k$. 
Since $\hat{h}(x_n)$ and $\hat{h}(x_{n,k})$ are independent conditionally on 
$\hat{h}(x_{n,k} \wedge x_n)$ under $\rmP_n^+$, so are $\xi_n(x_n)$ and $\xi_n(x_{n,k})$. Conditioning then on $\hat{h}(x_{n,k} \wedge x_n)$ and using the total covariance formula and symmetry, the second covariance in~\eqref{e:5.6} is equal to
\begin{equation}
\label{e:5.7}
	\rmVar_n^+ \Big(\rmE_n^+ \Big(\xi_n (x_n)\,\Big|\, \hat{h}(x_{n,k} \wedge x_n)\Big)\Big)\,.
\end{equation}

As in~\eqref{e:5.75}, we may use Proposition~\ref{p:106.3} and the assumed bound on $\xi_n(x)$, to get
\begin{equation}
\Big| \rmE_n^+ \Big(\xi_n(x_n)\,\Big|\, \hat{h}(x_{n,k} \wedge x_n)\Big)\, - \,
	\rmE_n^+ \Big(\xi_n(x_n)\,\Big|\, \hat{h}(x_{n,k} \wedge x_n) = 0\Big)\Big| \leq 
	CM \rme^{-(ck - \hat{h}(x_{n,k} \wedge x_n))^+} \,,
\end{equation}
which bounds the variance in~\eqref{e:5.7} from above by 
\begin{equation}
C^2M^2\rmE_n^+  \rme^{-2(ck - \hat{h}(x_{n,k} \wedge x_n))^+} \leq C' \rme^{-2ck} \,,
\end{equation}
where the last inequality follows from the almost Gaussian tails of $\hat{h}(x_{n,k} \wedge x_n)$, as given by Theorem~\ref{t:1.3}. 

Altogether~\eqref{e:5.6} is at most $C\rme^{-cl_n}$, which in light of Chebyshev's inequality implies~\eqref{e:5.5}.
\end{proof}
Combining Proposition~\ref{p:4.5b} and Lemma~\ref{l:5.5} we get,
\begin{corollary}
\label{c:5.6}
For $n \geq 1$, $x \in \bbL_{l_n}$, let $\xi_n(x)$ be as in Proposition~\ref{p:4.5b} and denote
\begin{equation}
	F_n(u) := \rmE_n^+ \big(\xi_n(x) \,\big|\, h(x) - m_{n'} = u \big) \,.
\end{equation}
Suppose that the conditions of Proposition~\ref{p:4.5b} hold as well as those of 
Lemma~\ref{l:5.5} with some $C > 0$ and some 
$F_\infty: \bbR \to \bbR$. Then,
\begin{equation}
\label{e:5.5a}
\bigg| \frac{1}{|\bbL_{l_n}|} \sum_{x \in \bbL_{l_n}} \xi_n(x)
- \theta_{[n]_2} \bigg|
\,	{\underset{n \to \infty}\longrightarrow} \, 0 \,,
\end{equation}
in $\rmP_n^+$ probability and in $\bbL^1$, where for log-dyadic $\delta \in [0,1)$,
\begin{equation}
\label{e:5.92}
	\theta_\delta := \rmE^{+,\delta} F_\infty \big(h(0)\big) \,,
\end{equation}
which is bounded uniformly in $\delta$. Moreover, if $\xi_n(x)$ is measurable w.r.t. $\sigma(h(x))$, then uniform integrability of $(\xi_n(x))_{n \geq 1}$ under $\rmP_n^+$ holds automatically, once the other conditions in the corollary are satisfied.
Lastly, if $\big(F^{(\iota)}_n)_{n \in [1,\infty]} :\: \iota \in \cI \big)$ is a collection of functions satisfying the same conditions as in Lemma~\ref{l:5.5} and $\xi^{(\iota)}_n(x) := F^{(\iota)}_n(h(x)-m_{n'})$, then~\eqref{e:5.5a} and~\eqref{e:5.92} with $\xi_n(x)$, $F_\infty$, $\theta_\delta$ replaced by $\xi_n^{(\iota)}(x)$, $F_\infty^{(\iota)}$, $\theta_\delta^{(\iota)}$ hold uniformly in $\iota$.
\end{corollary}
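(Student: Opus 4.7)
The plan is to assemble Proposition~\ref{p:4.5b} and Lemma~\ref{l:5.5} with a short identification step in between. First, Proposition~\ref{p:4.5b} applies directly to $\xi_n(x)$ under the stated hypotheses and yields
\begin{equation*}
\bigg|\frac{1}{|\bbL_{l_n}|}\sum_{x \in \bbL_{l_n}} \xi_n(x) - \rmE_n^+ \xi_n(x)\bigg| \underset{n\to\infty}{\longrightarrow} 0
\end{equation*}
both in $\rmP_n^+$-probability and in $\bbL^1$, reducing the corollary to computing the asymptotics of the deterministic sequence $\rmE_n^+ \xi_n(x)$ for an arbitrary $x_{l_n} \in \bbL_{l_n}$.

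Next, by the tower property and the defining identity $\rmE_n^+(\xi_n(x)\,|\,h(x)-m_{n'}=u) = F_n(u)$, we have $\rmE_n^+ \xi_n(x_{l_n}) = \rmE_n^+ F_n(h(x_{l_n})-m_{n'})$. Since $|x_{l_n}|=l_n$, the indicator in~\eqref{e:1.10a} vanishes and $\mu_n(x_{l_n}) = m_{n'}$, so $h(x_{l_n}) - m_{n'} = \hat{h}(x_{l_n})$. Lemma~\ref{l:5.5}, applied to $(F_n)$ whose hypotheses are precisely those assumed, then gives $\rmE_n^+ F_n(\hat{h}(x_{l_n})) \to \rmE^{+,[n]_2} F_\infty(h(0)) = \theta_{[n]_2}$, together with the uniform boundedness of $\theta_\delta$ in $\delta$. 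Combined with the first step this is~\eqref{e:5.5a}.

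For the automatic uniform integrability claim, observe that if $\xi_n(x)$ is $\sigma(h(x))$-measurable, then the constancy of the conditional law $\rmP_n^+(\xi_n(x) \in \cdot\,|\,h(x) = u)$ in $x$ forces $\xi_n(x) = F_n(h(x) - m_{n'})$ almost surely. The exponential bound $|F_n(u)|\leq Ce^{C|u|}$ then yields $|\xi_n(x_{l_n})| \leq C\rme^{C|\hat{h}(x_{l_n})|}$, and the almost-Gaussian upper tail of $\hat{h}(x_{l_n})$ under $\rmP_n^+$ from Theorem~\ref{t:1.3} gives $\sup_n \rmE_n^+ |\xi_n(x_{l_n})|^2 < \infty$, which is more than enough for the required uniform integrability.

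Finally, for a family $\{F_n^{(\iota)}\}_{\iota\in\cI}$ satisfying the hypotheses with a common constant $C$, the tail estimate just described gives uniform integrability uniform in $\iota$, while the uniform-in-$F$ version of Lemma~\ref{l:5.5} (explicitly stated there) and of the variance bound inside the proof of Proposition~\ref{p:4.5b} (driven by a uniform truncation parameter $u_0$) yield the uniform conclusion. I do not expect any genuine obstacle: the substantive content is already in Proposition~\ref{p:4.5b} and Lemma~\ref{l:5.5}, and the corollary is essentially their combination, the only mild point being the identification $\mu_n(x_{l_n}) = m_{n'}$ and the verification that the $\sigma(h(x))$-measurable case supplies its own uniform integrability through the exponential growth of $F_n$.
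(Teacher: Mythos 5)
Your proposal is correct and follows essentially the same route as the paper's proof: the tower-property identification $\rmE_n^+\xi_n(x)=\rmE_n^+F_n(\hat h(x))$ followed by the combination of Proposition~\ref{p:4.5b} with Lemma~\ref{l:5.5}, with the $\sigma(h(x))$-measurable case and the uniformity in $\iota$ handled exactly as in the paper. The only addition is that you explicitly verify $\mu_n(x_{l_n})=m_{n'}$, a detail the paper leaves implicit.
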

\begin{proof}[Proof of Corollary~\ref{c:5.6}]
By the Tower Property,
\begin{equation}
	\rmE_n^+ \xi_n(x) = \rmE_n^+ F_n(h(x) - m_{n'}) \,,
\end{equation}
where $F_n$ is as in the statement of the corollary. The desired statement then follows immediately from Proposition~\ref{p:4.5b}	 and Lemma~\ref{l:5.5}. If $\xi_n(x)$ is measurable w.r.t. $\sigma(h(x))$, then $\xi_n(x) = F_n(h(x) - m_{n'})$. Consequently, uniform integrability in $n$ follows from the exponential bound on $F_n$ together with the almost Gaussian tails of $h(x)-m_{n'}$ under $\rmP_n^+$. This uniform integrability is also w.r.t. $\iota$ once the conditions in Lemma~\ref{l:5.5} hold. Then the limit in Proposition~\ref{p:4.5b} holds uniformly in $\iota$ which combined with Lemma~\ref{l:5.5} show that~\eqref{e:5.5a} and~\eqref{e:5.92} hold uniformly in $\iota$ as well.
\end{proof}

\subsection{Estimates under typical conditioning on the minimum}
Having the WLLN framework setup, we next provide the needed estimates on the statistics which will be averaged. These come in the form of statements about the BRW conditioned on the right tail of its minimum at a typical value, namely under the conditioning on $\Omega_n(u)$ for a fixed $u \in \bbR$. We shall need both bounds and asymptotics. The two main tools in the proofs here are FKG type results, of the form appearing in Subsection~\ref{ss:2.1}, and the following key lemma from~\cite{Work1}.
\begin{lemma}[Lemma~\ref{1@l:103.10} in~\cite{Work1}]
	\label{l:103.10}
	Let $n \geq 1$, $x_n \in \bbL_n$ and $\xi_n$ measurable w.r.t. $h$ on $\bbT_n$. For each $k \in [0,n]$ set $x_k := [x]_k$ and
	\begin{equation}
		\xi_k := \rmE_n \big(\xi_n\,\big|\, h\big(\{x_k\} \cup \{\bbT_n \setminus \bbT(x_k)\} \big) \,.
	\end{equation}
	Then, for all $m \geq 0$, $u \geq 0$,
	\begin{equation}
		\label{e:203.52}
		\Big|\rmE_n^{\uparrow u} \xi_n - \rmE_n^{\uparrow u} \xi_m \Big| 
		\leq C\big(\rme^{C u^2} + 1\big) \sum_{k=m}^n 
		k \rme^{-c_0 k} \rmE_n \Big((h(x_k)^++1) \rme^{-c_0 h(x_k)} |\xi_k|\Big)\,.
	\end{equation}
\end{lemma}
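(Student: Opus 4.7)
The natural approach is via telescoping and the underlying martingale structure that is broken by the conditioning. Set $\FF_k := \sigma\big(h(y) : y \in \{x_k\} \cup (\bbT_n \setminus \bbT(x_k))\big)$. Since $\bbT(x_k) \subset \bbT(x_{k-1})$ and $x_{k-1} \notin \bbT(x_k)$, we have $\FF_{k-1} \subset \FF_k$, so the tower property yields $\rmE_n(\xi_k \mid \FF_{k-1}) = \xi_{k-1}$; i.e., $(\xi_k)_k$ is a $\rmP_n$-martingale, which property is destroyed under $\rmP_n^{\uparrow u}$. The plan is to write $\rmE_n^{\uparrow u} \xi_n - \rmE_n^{\uparrow u} \xi_m = \sum_{k=m+1}^n \rmE_n^{\uparrow u}(\xi_k - \xi_{k-1})$ and bound each increment by the single $k$-term appearing in~\eqref{e:203.52}.

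To process a single increment I would use $\mathrm d \rmP_n^{\uparrow u}/\mathrm d \rmP_n = p_n(u)^{-1}\1_{\Omega_n(u)}$ and decompose $\Omega_n(u)$ by the spatial Markov property around $\bbT(x_k)$. Introduce the $\FF_k$-measurable Doob tilt $g_k := \1_{E_k}\,\tilde p_k(h(x_k))$, where $E_k$ is the event that all leaves outside $\bbT(x_k)$ exceed $-m_n+u$ and $\tilde p_k(v) := p_{n-k}\big(u - (m_n - m_{n-k}) - v\big)$. The martingale identity then transforms the increment into
\[
\rmE_n^{\uparrow u}(\xi_k - \xi_{k-1}) = p_n(u)^{-1}\,\rmE_n\big[\xi_k\big(g_k - \rmE_n(g_k \mid \FF_{k-1})\big)\big].
\]
The branching recursion $\tilde p_{k-1}(v) = \rmE[\tilde p_k(v+N_1)\tilde p_k(v+N_2)]$ with $N_1,N_2$ independent standard Gaussians, combined with the conditional independence under $\rmP_n$ given $\FF_{k-1}$ of $h(x_k)$ and the sibling subtree rooted at $\tilde x_k$, yields $\rmE_n(g_k \mid \FF_{k-1}) = \1_{E_{k-1}}\,\tilde p_{k-1}(h(x_{k-1}))$ and so reduces the discrepancy to $\1_{E_{k-1}}\big[A_k\,\tilde p_k(h(x_k)) - \tilde p_{k-1}(h(x_{k-1}))\big]$, where $A_k := \1_{\{\text{sibling leaves}\,\geq\, -m_n+u\}}$.

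The crux of the argument is bounding this discrepancy quantitatively. I would split it as $(A_k - 1)\tilde p_k(h(x_k)) + \big(\tilde p_k(h(x_k)) - \tilde p_{k-1}(h(x_{k-1}))\big)$. The first term, controlled by the sharp left-tail estimate $q_j(v) \leq C(v+1)\rme^{-c_0 v}$ of Lemma~\ref{l:2.10} applied to the sibling subtree of depth $n-k$, produces, after the unavoidable Gaussian integration over $h(\tilde x_k)$, a contribution of order $(h(x_k)^+ + 1)\rme^{-c_0 h(x_k)}$ times a $k$-dependent exponential decay. The second term is handled via the log-derivative bound $\tfrac{\mathrm d}{\mathrm d v}\log p_j(v) = O(v^+)$ of Lemma~\ref{l:2.9a} applied at the Gaussian distance between $h(x_k)$ and $h(x_{k-1})$, together with the one-step comparison between $p_{n-k+1}$ and $p_{n-k}$. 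FKG from Subsection~\ref{ss:2.1} allows replacing $\xi_k$ by $|\xi_k|$ once signs are tracked, and the prefactor $\rme^{Cu^2}$ emerges from $p_n(u)^{-1} \leq C\rme^{Cu^2}$ for $u \geq 0$ supplied by Lemma~\ref{l:4.2a}. The main technical obstacle is reconciling all these polynomial-in-$h$ and exponential-in-$k$ factors so that they assemble into exactly the shape $k\,\rme^{-c_0 k}(h(x_k)^+ + 1)\rme^{-c_0 h(x_k)}\,|\xi_k|$, uniformly in $u\geq 0$ and without losing sharpness by invoking overly crude bounds on the branching recursion.
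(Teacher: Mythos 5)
This lemma is not proved in the present manuscript at all; it is quoted from the companion paper~\cite{Work1}, so I can only judge your argument on its own terms. Your skeleton is sound and is surely the intended one: the family $\FF_k:=\sigma\big(h_{\{x_k\}\cup(\bbT_n\setminus\bbT(x_k))}\big)$ is increasing in $k$, both $(\xi_k)_k$ and the Doob projection $g_k:=\rmE_n(\1_{\Omega_n(u)}\,|\,\FF_k)=\1_{E_k}\tilde p_k(h(x_k))$ are $\rmP_n$-martingales, your identity $\rmE_n^{\uparrow u}(\xi_k-\xi_{k-1})=p_n(u)^{-1}\rmE_n[\xi_k(g_k-g_{k-1})]$ is correct, the branching recursion for $\tilde p_k$ is correct, and Lemma~\ref{l:2.10} and Lemma~\ref{l:4.2a} are the right inputs for the left tail and for absorbing $p_n(u)^{-1}$ into $C(\rme^{Cu^2}+1)$.

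The gap is in the step you describe as ``after the unavoidable Gaussian integration over $h(\tilde x_k)$''. Both $\1_{E_{k-1}}$ and $1-A_k$ are themselves $\FF_k$-measurable, and $\xi_k$ is an arbitrary $\FF_k$-measurable variable which may depend on the sibling subtree and on the earlier off-spine bushes; you therefore cannot integrate the sibling subtree out ``underneath'' $\xi_k$. A bound of the form $\rmE_n\big[|\xi_k|\,\1_{E_{k-1}}(1-A_k)\tilde p_k(h(x_k))\big]\le Ck\rme^{-c_0k}\rme^{Cu^2}p_n(u)\,\rmE_n\big[(h(x_k)^++1)\rme^{-c_0 h(x_k)}|\xi_k|\big]$ that is valid for \emph{every} nonnegative $\FF_k$-measurable $\xi_k$ is equivalent to the corresponding pointwise inequality between the two $\FF_k$-measurable weights, and that inequality fails on the positive-probability event $E_{k-1}\cap\{A_k=0\}\cap\{|h(x_k)|\le1\}$, where the left weight is of order one while the right weight is $O(k\rme^{-c_0k})$. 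In fact, taking $\xi_n=\1_{E_{k-1}}(1-A_k)\1_{\{|h(x_k)|\le1\}}$ and $m=k-1$, a direct computation gives a left-hand side of~\eqref{e:203.52} of order $\sqrt{k}\,4^{-k}$ against a right-hand side of order $k^{3/2}\rme^{-c_0k}4^{-k}$, so the argument cannot close in the stated generality and the statement itself appears to need a measurability restriction. The decoupling does work when $\xi_k$ is conditionally independent of the off-spine field given the spine $(h(x_0),\dots,h(x_k))$: one first conditions on the spine, replaces $1-A_k$ by $\rmE_n(1-A_k\,|\,h(x_{k-1}))\le C\big((c_0k+h(x_{k-1})-u)^++1\big)\rme^{-c_0(c_0k+h(x_{k-1})-u)^+}$ and $\1_{E_{k-1}}$ by $1$, and only then reinstates $\xi_k$. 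This is exactly the situation in the two applications in Section~4 (in Proposition~\ref{l:2.8b} the $\xi_k$ are functions of $h(x_k)$; in Proposition~\ref{p:2.7a} the variable $|\xi_k|$ is first dominated by a $\sigma(h(x_k))$-measurable majorant before the lemma is invoked). Your write-up must make this reduction explicit, or acknowledge that $|\xi_k|$ on the right-hand side has to be a spine-measurable majorant. A minor further point: both pieces of your split naturally produce the weight at $h(x_{k-1})$ rather than $h(x_k)$, and transferring it costs a Gaussian moment over the last increment, which again uses spine-measurability.
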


We start with the following simple computation.
\begin{proposition}
	\label{l:2.9}
There exist $C,c \in (0,\infty)$ such that for all $0 \leq k \leq n$, $x \in \bbT_k$, 
$u \in \bbR$ and $v \geq 0$,
\begin{equation}
\label{e:2.8a}
c \frac{1}{v/\sqrt{k} + 1}
\rme^{-\frac{v^2}{2k}}
\leq \rmP^{\uparrow u}_n \big(h(x) > v \big) \leq 
C\rme^{-\frac{(v-u^+)^2}{2k}} \,.
\end{equation}	
Moreover, for all $v \in \bbR$, 
\begin{equation}
\label{e:2.8b}
c  \frac{1}{(v+u^+)/\sqrt{k}+1}
\rme^{-\frac{(v+u^+)^2}{2k}}
\leq \rmP^{\uparrow u}_n \big(h(x) < -v\big) \leq 
C\rme^{-\frac{v^2}{2k}} \,.
\end{equation}	
\end{proposition}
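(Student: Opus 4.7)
The plan is to reduce all four bounds to standard Gaussian tail estimates, exploiting the $h$-transform representation of the marginal of $\rmP^{\uparrow u}_n$ on $\bbT_k$ coming from~\eqref{e:2.32d}--\eqref{e:2.40d}: this marginal has density $\varphi_{n,k,u}(h) = \prod_{y \in \bbL_k} p_{n-k}(u - m_n + m_{n-k} - h(y))$ relative to the Gaussian law $\rmP_k$. Two structural facts will be used throughout: $\varphi_{n,k,u}$ is non-decreasing in $h$ (because $p_{n-k}$ is non-increasing), and $\rmP_k$ has the FKG property, being Gaussian with non-negative covariances.

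Three of the four bounds follow directly from FKG arguments, combined where needed with Lemma~\ref{l:2.4} and the shift identity~\eqref{e:101.20}. For the lower bound in~\eqref{e:2.8a}, both $\1_{\{h(x) > v\}}$ and $\varphi_{n,k,u}$ are non-decreasing, so FKG on $\rmP_k$ gives $\rmP^{\uparrow u}_n(h(x) > v) \geq \rmP_k(h(x) > v)$, and the standard Gaussian lower-tail estimate for $h(x) \sim \cN(0, k)$ yields the claim for $v \geq 0$. Symmetrically, for the upper bound in~\eqref{e:2.8b}, the opposite monotonicity of $\1_{\{h(x) < -v\}}$ gives $\rmP^{\uparrow u}_n(h(x) < -v) \leq \rmP_k(h(x) < -v) \leq C\rme^{-v^2/(2k)}$ (the case $v \leq 0$ being trivial). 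For the upper bound in~\eqref{e:2.8a}, the case $u \leq 0$ uses the trivial estimate $\rmP^{\uparrow u}_n(h(x) > v) \leq \rmP_n(h(x) > v)/p_n(u)$ together with $p_n(u) \geq p_n(0) \geq c$ (by tightness of the centered minimum); the case $u > 0$ reduces to $u=0$ via Lemma~\ref{l:2.4} applied to the non-decreasing event $\{h(x) > v\}$, which combined with~\eqref{e:101.20} gives $\rmP^{\uparrow u}_n(h(x) > v) \leq \rmP^{\uparrow u}_{n, u^+}(h(x) > v) = \rmP^{\uparrow 0}_n(h(x) > v - u^+)$.

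The main obstacle is the lower bound in~\eqref{e:2.8b}, where FKG argues in the wrong direction. First I would reduce to the case $u = 0$: for $u \geq 0$, Lemma~\ref{l:2.4} applied to the non-increasing event $\{h(x) < -v\}$ combined with~\eqref{e:101.20} gives $\rmP^{\uparrow u}_n(h(x) < -v) \geq \rmP^{\uparrow u}_{n, u}(h(x) < -v) = \rmP^{\uparrow 0}_n(h(x) < -(v + u^+))$; and for $u \leq 0$, the inclusion $\Omega_n(0) \subseteq \Omega_n(u)$ together with $p_n(u) \leq 1$ gives $\rmP^{\uparrow u}_n(h(x) < -v) \geq p_n(0)\,\rmP^{\uparrow 0}_n(h(x) < -v)$, noting $u^+ = 0$. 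In either case it suffices to prove
\[
\rmP^{\uparrow 0}_n\bigl(h(x) < -w\bigr) \geq c\,(1 + w/\sqrt{k})^{-1}\,\rme^{-w^2/(2k)}
\]
for $w \geq 0$. Since $p_n(0) \geq c$, this amounts to lower bounding $\rmP_n(h(x) < -w, \Omega_n(0))$ by the same order. I would integrate over $h(x) = -w'$ for $w' \in [w, w+1]$: the Gaussian marginal density contributes $\phi_k(-w') \asymp (1 + w/\sqrt{k})^{-1}\rme^{-w^2/(2k)}$, so it remains to bound $\rmP_n(\Omega_n(0) \mid h(x) = -w')$ from below by a universal constant. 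By the spatial Markov property, this factors into a descendants-of-$x$ contribution $p_{n-k}(w' - (m_n - m_{n-k}))$, bounded below by Lemma~\ref{l:4.2a} whenever $w' \lesssim c_0 k$, and a non-descendants contribution controlled via FKG applied to the Gaussian law of the path from the root to $x$ (a Brownian-bridge type law given $h(x) = -w'$) together with the uniform lower bound from Lemma~\ref{l:4.2a} applied to each sibling subtree. For $w$ substantially larger than $c_0 k$ the conditioning begins to exclude the event outright, so the bound is expected only under an implicit restriction on $w$, analogous to the caveat on the lower bound in~\eqref{e:101.9b} stated in Proposition~\ref{p:1.11}.
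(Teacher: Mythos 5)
Your handling of three of the four bounds coincides with the paper's: the lower bound in \eqref{e:2.8a} and the upper bound in \eqref{e:2.8b} come from the FKG comparison of $\rmP_n^{\uparrow u}$ with the unconditioned Gaussian law plus standard tail asymptotics, and the upper bound in \eqref{e:2.8a} comes from Lemma~\ref{l:2.4} together with the shift identity \eqref{e:101.20} to move from $\Omega_n(u)$ to $\Omega_n(-u^-)$ at the price of replacing $v$ by $v-u^+$, after which the conditioning can be dropped since $p_n(-u^-)$ is bounded below. This is exactly the content of \eqref{e:2.40a}--\eqref{e:2.40b} in the paper, so no comment there.

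Where you genuinely diverge is the lower bound in \eqref{e:2.8b}, and you are right to single it out as the main obstacle: after the reduction \eqref{e:2.40b}, one is left with a \emph{lower} bound on the probability of the intersection of $\{h(x)<-w\}$ with $\Omega_n(-u^-)$, two negatively correlated events, so FKG and the ``drop the conditioning'' trick no longer help. The paper's own proof is essentially silent at this point (it only asserts the conclusion for the upper bound in \eqref{e:2.8a}), whereas you supply an actual argument: disintegrate over $h(x)=-w'$, $w'\in[w,w+1]$, use the spatial Markov property to factor $\rmP_n(\Omega_n(0)\mid h(x)=-w')$ into the descendant contribution $p_{n-k}(w'-(m_n-m_{n-k}))$ and the sibling-subtree contributions along the root-to-$x$ bridge, and bound each below via Lemma~\ref{l:4.2a}. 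This is the right strategy. Two caveats. First, your observation that this only works for $w'\leq c_0 k+O(1)$ (and that the stated bound is therefore false for $v+u^+$ much larger than $c_0 k$, exactly as in the caveats attached to \eqref{e:101.9b} and to Theorem~\ref{t:1.3}) is correct; the proposition as printed omits this restriction, and your argument cannot, and should not, produce it without one. Second, the sibling-subtree step needs slightly more care than ``a uniform lower bound from Lemma~\ref{l:4.2a} applied to each sibling'': you need the \emph{product} over the $k$ siblings to be bounded below by a constant, which requires each factor to be $1-\epsilon_j$ with $\sum_j\epsilon_j$ bounded; this follows from the left-tail bound $q_m(s)\leq C(s+1)\rme^{-c_0 s}$ of Lemma~\ref{l:2.10} provided the bridge at step $j$ sits above $-(c_0-\epsilon)j$ with a definite margin, i.e.\ provided $w'\leq(c_0-\epsilon)k$ or one adds a barrier/entropic-repulsion argument near the endpoint. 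With that margin made explicit your proof closes, and it is more complete than the one in the paper.
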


\begin{proof}
The lower bound for the upper tail and the upper bound for lower tail follow by FKG of the law $\rmP^{\uparrow u}_n$. Indeed, conditioning on $\Omega_n(u)$ increases the probability of the upper tail and decreases the probability of the lower tail. Then the bounds follow from standard Gaussian tail asymptotics:
\begin{equation}
	\rmP \big(Z > u) = \frac{1+o(1)}{u\sqrt{2\pi}} \rme^{-u^2/2} \,,
\end{equation}
as $u \to \infty$, with $Z \sim \cN(0,1)$, as $h(x) \sim \cN(0,k)$.

 For the opposite bounds, by Lemma~\ref{lemma:FKG} and Lemma~\ref{l:2.4}, 
\begin{equation}
\label{e:2.40a}
	\rmP_n \big(h(x) > v\,\big| \Omega_n(u) \big) \leq 
	\rmP_n \big(h(x) > v-u^+\,\big| \Omega_n(-u^-) \big) 
\end{equation}
as well as
\begin{equation}
\label{e:2.40b}
\rmP_n \big(h(x) < -v\,\big| \Omega_n(u) \big) \geq 
\rmP_n \big(h(x) < -v-u^+\,\big| \Omega_n(-u^-) \big) \,.
\end{equation}
Since $\rmP_n(\Omega_n(-u^-)) > c$ for all $n \geq 0$ and $u$, thanks to the tightness of the centered minimum, we may consider the intersection of the two events instead of the conditioning in the right hand sides of both~\eqref{e:2.40a} and~\eqref{e:2.40b}. Then the upper bound in~\eqref{e:2.8a} follows immediately. 
\end{proof}

Next, we have the following bound on the mean under the conditioning. This was derived in~\cite{Work1} using the tools described above.
\begin{proposition}[Proposition~\ref{1@l:2.8a} in~\cite{Work1}]
	\label{l:2.8a}
	There exists $C \in (0,\infty)$ such that for all $u \in \bbR$, $n \geq 1$, $x \in \bbT_n$,
	\begin{equation}
		\label{e:2.37a}
		0 \leq \rmE^{\uparrow u}_n \big(h(x)\big) < u^+ + C\,.
	\end{equation}
\end{proposition}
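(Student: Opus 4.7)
The lower bound $\rmE^{\uparrow u}_n(h(x)) \geq 0$ is immediate from FKG: the BRW $h$ has nonnegative covariances $|y \wedge y'|$, so $\rmP_n$ is an FKG measure, and since both $h(x)$ and $\1_{\Omega_n(u)}$ are non-decreasing in $h$, one has $\rmE_n[h(x)\,\1_{\Omega_n(u)}] \geq \rmE_n[h(x)] \cdot p_n(u) = 0$; dividing by $p_n(u) > 0$ yields the claim.

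For the upper bound, the plan is to derive the exact identity
\[
\rmE^{\uparrow u}_n\big(h(x)\big) = -\big(1 - 2^{-|x|}\big)\,\tfrac{\rmd}{\rmd u}\log p_n(u),
\]
via multivariate Gaussian integration by parts, and then invoke Lemma~\ref{l:4.2a}. To this end, set $k := |x|$ and condition on $h_{\bbT_k}$. By the spatial Markov property of the BRW, the $2^k$ subtrees rooted at the vertices of $\bbL_k$ are conditionally independent, so
\[
\rmP_n\big(\Omega_n(u)\,\big|\,h_{\bbT_k}\big) = \Phi(h_{\bbL_k}) := \prod_{y \in \bbL_k} p_{n-k}\big(t - h(y)\big),\qquad t := u + m_{n-k} - m_n.
\]
Since $\rmP^{\uparrow u}_n$ is invariant under the tree automorphisms fixing the root (which act transitively on $\bbL_k$), the mean $\rmE^{\uparrow u}_n(h(x))$ is the same for every $x \in \bbL_k$ and coincides with $\rmE^{\uparrow u}_n(\bar h_k)$, where $\bar h_k := 2^{-k}\sum_{y \in \bbL_k} h(y)$. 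The vector $(h(y))_{y \in \bbL_k}$ is centered Gaussian under $\rmP_n$ with covariance $|y \wedge y'|$, and a direct tree-counting argument gives $\sum_{y' \in \bbL_k}|y \wedge y'| = 2^k - 1$ for every $y \in \bbL_k$, hence $\rmCov_n(\bar h_k, h(y)) = 1 - 2^{-k}$. Applying the multivariate Stein identity followed by the chain rule $\sum_{y'}\partial_{h(y')}\Phi = -\partial_t\Phi$,
\[
\rmE_n[\bar h_k\,\Phi] = \big(1 - 2^{-k}\big)\sum_{y'}\rmE_n[\partial_{h(y')}\Phi] = -\big(1 - 2^{-k}\big)\,\rmE_n[\partial_t\Phi] = -\big(1 - 2^{-k}\big)\,\tfrac{\rmd}{\rmd u} p_n(u),
\]
and dividing by $p_n(u)$ yields the identity.

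To conclude, Lemma~\ref{l:4.2a} gives $-\tfrac{\rmd}{\rmd u}\log p_n(u) \leq u^+/(1 - 2^{-n}) + C$, whence
\[
\rmE^{\uparrow u}_n\big(h(x)\big) \leq \frac{(1 - 2^{-|x|})\,u^+}{1 - 2^{-n}} + C \leq u^+ + C,
\]
the last step using $(1 - 2^{-|x|})/(1 - 2^{-n}) \leq 1$ whenever $|x| \leq n$. The main obstacle is a regularity subtlety: the Stein identity as stated requires $\Phi$ to be $C^1$, which follows from the continuous differentiability of $p_{n-k}$ provided $n - k \geq 1$ (Lemma~\ref{l:2.9a}). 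When $k = n$ one has $p_0(u) = \1_{\{u \leq 0\}}$, and the identity must then be recovered either by a standard mollification argument combined with the continuity of $u \mapsto \rmE^{\uparrow u}_n(h(x))$, or equivalently by carrying out the integration by parts one level earlier at $\bbL_{n-1}$, where $p_1$ is smooth.
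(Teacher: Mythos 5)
Your argument is correct, and it is genuinely different from the route the paper points to. The paper gives no proof of this proposition at all: it is imported verbatim from the first work (\cite{Work1}), with the remark that it was derived there ``using the tools described above'', i.e.\ FKG/monotonicity statements together with the decoupling estimate reproduced here as Lemma~\ref{l:103.10}, which controls $\rmE_n^{\uparrow u}\xi_n$ by telescoping along the ancestral line of $x$. Your proof instead exploits the Gaussian structure directly: conditioning on $\bbT_{|x|}$ to write $\rmP_n(\Omega_n(u)\,|\,h_{\bbT_{|x|}})$ as the product $\varphi_{n,|x|,u}$ (exactly the paper's~\eqref{e:2.40d}), symmetrizing over $\bbL_{|x|}$, and applying the Stein identity with the constant covariance $\rmCov_n(\bar h_k,h(y))=1-2^{-k}$ (consistent with Lemma~\ref{l:2.4a}) to obtain the exact formula $\rmE_n^{\uparrow u}(h(x))=-(1-2^{-|x|})\tfrac{\rmd}{\rmd u}\log p_n(u)$; the bound then drops out of Lemma~\ref{l:4.2a}, and the sign of $\tfrac{\rmd}{\rmd u}\log p_n$ (Lemma~\ref{l:2.9a}) even gives a second proof of the lower bound besides your FKG argument. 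I checked the identity in the case $n=|x|=1$ and it is consistent. This buys more than the statement asks for --- an identity rather than a two-sided bound --- at the price of being tied to Gaussian increments, whereas the FKG-plus-decoupling route of \cite{Work1} is robust to general step distributions. The two regularity points you flag (interchanging $\tfrac{\rmd}{\rmd u}$ with $\rmE_n$, and the non-smoothness of $p_0$ when $|x|=n$) are real but routine; for the latter, mollifying the indicator and using continuity of $u\mapsto\rmE_n^{\uparrow u}(h(x))$ is the cleaner of your two suggested fixes, since performing the integration by parts at $\bbL_{n-1}$ computes the mean at depth $n-1$ rather than at depth $n$ and would still require a separate (one-step, conditional) argument for the last increment.
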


Next, we treat the exponential of the field.
\begin{proposition}
\label{l:2.8b}
Let $\alpha \in [0,c_0]$. There exists $C <\infty$ such that for all $u \in \bbR$, $n \geq 1$ and $x \in \bbL_n$,
\begin{equation}
\label{e:2.45}
	1 \leq \rmE_n^{\uparrow u} \rme^{\alpha h(x) - \frac{n}{2} \alpha^2}
	\leq C\rme^{\alpha u^+} \,.
\end{equation}
Moreover, there exists a positive continuous function $A_\alpha: \bbR \to [1,\infty)$, such that
\begin{equation}
\label{e:2.46}
\rmE_n^{\uparrow u} \rme^{\alpha h(x_n) - \frac{n}{2} \alpha^2} \underset{n \to \infty}\longrightarrow A_\alpha (u), 
\end{equation}	
uniformly in $u$ on compact subsets, where $x_n \in \bbL_n$. Lastly, for all $u \in \bbR$, 
\begin{equation}
\label{e:203.65}
A_\alpha(u) = \lim_{|x| \to \infty} \rmE^{\uparrow u} \rme^{\alpha h(x)-\frac{\alpha^2}{2}|x|} \,.
\end{equation}
\end{proposition}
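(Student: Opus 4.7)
The plan is to treat the bounds~\eqref{e:2.45} first and then use Lemma~\ref{l:103.10} to extract the limit $A_\alpha$ and the identity~\eqref{e:203.65}.

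For the lower bound $\rmE_n^{\uparrow u}\rme^{\alpha h(x) - n\alpha^2/2} \geq 1$, I would invoke FKG (Lemma~\ref{lemma:FKG}) applied to the non-decreasing functional $\rme^{\alpha h(x) - n\alpha^2/2}$, whose unconditional expectation equals $1$ since $h(x) \sim \cN(0,n)$ for $x \in \bbL_n$. For the upper bound, the key input is the FKG-plus-shift tail estimate already used in the proof of Proposition~\ref{l:2.9}, namely
\begin{equation*}
	\rmP_n^{\uparrow u}\big(h(x) > v\big) \leq C\, \rmP_n\big(h(x) > v - u^+\big),
\end{equation*}
with $C$ uniform in $n$ thanks to the lower bound $p_n(0) \geq c > 0$ coming from tightness of the centered minimum (Lemma~\ref{l:2.10}). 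Writing $\rmE_n^{\uparrow u}\rme^{\alpha h(x)} = \int \alpha \rme^{\alpha v}\rmP_n^{\uparrow u}(h(x) > v)\,\rmd v$ and substituting $w = v - u^+$ then gives $\rmE_n^{\uparrow u}\rme^{\alpha h(x)} \leq C\rme^{\alpha u^+}\rmE_n\rme^{\alpha h(x)} = C\rme^{\alpha u^+ + n\alpha^2/2}$, which is~\eqref{e:2.45}.

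For the convergence~\eqref{e:2.46}, I would apply Lemma~\ref{l:103.10} to $\xi_n := \rme^{\alpha h(x_n) - n\alpha^2/2}$. The spatial Markov property of the BRW yields $\xi_k = \rme^{\alpha h(x_k) - k\alpha^2/2}$, and a direct Gaussian tilt computation using $h(x_k) \sim \cN(0,k)$ together with the monotonicity of $\mu \mapsto \rmE[\cN(\mu,k)^+]$ for $\mu \leq 0$ gives
\begin{equation*}
	\rmE_n\big[(h(x_k)^+ + 1)\,\rme^{-c_0 h(x_k)}\,\xi_k\big] \leq C(1+\sqrt{k})\,\rme^{kc_0(c_0/2 - \alpha)}.
\end{equation*}
The summand in~\eqref{e:203.52} is therefore bounded by $Ck^{3/2}\rme^{kc_0(c_0/2 - 1 - \alpha)}$, and since $c_0 = \sqrt{2\log 2} < 2$ the rate $c_0(c_0/2 - 1 - \alpha)$ is strictly negative uniformly in $\alpha \in [0,c_0]$, so the series is geometrically summable. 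For $u \geq 0$ on a compact set, $|\rmE_n^{\uparrow u}\xi_n - \rmE_n^{\uparrow u}\xi_m| \leq C(\rme^{Cu^2}+1)\,m^{3/2}\rme^{-cm}$ vanishes as $m \to \infty$, uniformly in $n \geq m$. Combining this with Lemma~\ref{l:5.7} applied to $F = \xi_m$ (admissible since $\xi_m$ depends only on $h_{\bbT_m}$ with the required exponential growth), which gives $\rmE_n^{\uparrow u}\xi_m \to \rmE_\infty^{\uparrow u}\xi_m$ uniformly in $u$ on compacts, a Cauchy-sequence argument defines $A_\alpha(u) := \lim_n \rmE_n^{\uparrow u}\xi_n = \lim_m \rmE_\infty^{\uparrow u}\xi_m$. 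Continuity of $A_\alpha$ in $u$ follows from the continuity of each $\rmE_\infty^{\uparrow u}\xi_m = \rmE_m^{\varphi_{\infty,m,u}}\xi_m$ (using continuity of $p_\infty$ from Lemma~\ref{l:2.9a} and dominated convergence) combined with the uniform convergence. Negative values of $u$ are handled via stochastic monotonicity of $u \mapsto \rmP_n^{\uparrow u}$, reducing to the estimate at $u=0$. Finally,~\eqref{e:203.65} follows because $\rmP_\infty^{\uparrow u}$ is invariant under tree automorphisms, so $\rmE^{\uparrow u}\rme^{\alpha h(x) - \alpha^2|x|/2}$ depends on $x$ only through $|x|$.

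The main obstacle is the sharpness of the upper bound in~\eqref{e:2.45}: a direct application of Lemma~\ref{l:103.10} produces only $C(\rme^{Cu^2}+1)$, which is too weak for the $\bbL^1$-control of the critical exponential martingale at $\alpha = c_0$ in Theorem~\ref{t:1.8} and for Theorem~\ref{t:1.5}, so the FKG tail route is essential. A minor secondary point is uniformity of the geometric rate in $\alpha \in [0,c_0]$, which reduces to the elementary inequality $c_0/2 - 1 - \alpha < 0$, valid since $c_0 < 2$ and $\alpha \geq 0$.
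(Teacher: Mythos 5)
Your proof is correct and follows essentially the same route as the paper's: FKG plus the translation argument of Lemma~\ref{l:2.4} for the two-sided bound~\eqref{e:2.45}, and Lemma~\ref{l:103.10} combined with Lemma~\ref{l:5.7} and Proposition~\ref{p:1.45} for the Cauchy-sequence argument yielding $A_\alpha$, its continuity, and~\eqref{e:203.65}. Your Gaussian-tilt computation giving the summand rate $k c_0(c_0/2-1-\alpha)<0$ is the correct one and matches the summability conclusion the paper draws.
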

\begin{proof}
Fix $\alpha \in [0,c_0]$. For~\eqref{e:2.45}, the lower bound follows since the conditional mean is larger than the unconditional one due to FKG. For the opposite inequality, when $u \leq 0$, the left hand side can be bounded by the unconditional mean times a constant, as $\Omega_n(0)$ is uniformly bounded from below, thanks to the tightness of the minimum. When $u \geq 0$, we can use Lemma~\ref{l:2.4} to add $u$ to $h(x)$ in the exponent and condition on $\Omega_n(0)$ instead, which only makes the resulting quantity larger. This gives the upper bound in~\eqref{e:2.45}.

For asymptotics, we proceed similarly to the previous proofs. For each $n \geq 0,$ we let $x_n \in \bbL_n$ and set $x_k := [x_n]_k$ for $0 \leq k \leq n$ and $\xi_n : =\rme^{\alpha h(x_n) - \frac{n}{2} \alpha^2}$. Then $\xi_k=\rme^{\alpha h(x_k) - \frac{k}{2} \alpha^2}$, and the mean on the right hand side of~\eqref{e:203.52} in Lemma~\ref{l:103.10}
is at most $C(k+1) \rme^{-k (c_0-\alpha)^2/2}$. Since $c_0 > (c_0-\alpha)^2/2$ for all $\alpha$ as above, the lemma implies
\begin{equation}
\label{e:103.60}
\lim_{m \to \infty}\limsup_{n \to \infty}
 \big|\rmE^{\uparrow u}_n \rme^{\alpha h(x_n) - \frac{n}{2} \alpha^2} 
 - \rmE_n^{\uparrow u} \rme^{\alpha h(x_m) - \frac{m}{2} \alpha^2} \big| = 0 \,,
\end{equation}
uniformly in $u$ on compacts. By Lemma~\ref{l:5.7} we can replace the second mean by its $n \to \infty$ limit. Thanks also to Proposition~\ref{p:1.45}, we thus get
\begin{equation}
\lim_{m \to \infty} \limsup_{n \to \infty} \bigg|
\rmE_n^{\uparrow u} \rme^{\alpha h(x_n) - \frac{n}{2} \alpha^2}
\,-\,
\rmE^{\uparrow u} \rme^{\alpha h(x_m)-\frac{m}{2}\alpha^2}\bigg| = 0 \,.
\end{equation}
This shows that, uniformly in $u$ on compacts, both sequences of expectations above tend to a finite limit $A_\alpha(u)$ when $n$ and $m$, respectively, tend to $\infty$. Continuity in $u$ in the limit then follows from continuity in same variable of the pre-limits, which is a consequence of, e.g., Proposition~\ref{p:1.45}, Lemma~\ref{l:4.2a} and the Dominated Convergence Theorem.
\end{proof}

For the right tail under the conditioning, we have,
\begin{proposition}
\label{p:2.7a}
There exist $C,c \in (0, \infty)$ such that for all $n \geq 1$,  $u,v \in \bbR$, 
\begin{equation}
\label{e:2.61}
c v\rme^{-c_0 v} \leq 
	\rmP^{\uparrow u}_n \big(\max_{x \in \bbL_n} h(x) - m_n > v \big)
		\leq C\big((v-u^+)^++1\big)  \rme^{-c_0 (v-u^+)^+} \,.
\end{equation}	
Moreover, with $A_\alpha$ as in Proposition~\ref{l:2.8b} and $C_0$ as in Lemma~\ref{l:2.10},
\begin{equation}
\label{e:2.62}
	\rmP_n^{\uparrow u} \big(\max_{x \in \bbL_n} h(x) - m_n > v \big)
		= C_0 A_{c_0}(u) v \rme^{-c_0 v} (1+o(1)) \,,
\end{equation}	
where $o(1) \to 0$ as $n \to \infty$, uniformly in both $v \in (\log \log n,\, (\log \log n)\log n )$ and $u$ in any compact interval. 
\end{proposition}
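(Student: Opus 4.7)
The plan is to establish~\eqref{e:2.61} by FKG-type comparisons, and then to prove the sharp asymptotic~\eqref{e:2.62} by a subtree decomposition at generation $m$ followed by Bonferroni's first-order expansion, identifying the prefactor via Proposition~\ref{l:2.8b}.

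For~\eqref{e:2.61}, the lower bound follows directly from the FKG inequality applied to $\rmP_n$: both $\{M_n > m_n+v\}$ and $\Omega_n(u)$ are increasing functionals of $h$, hence positively correlated, giving
\begin{equation}
	\rmP_n^{\uparrow u}(M_n - m_n > v) \geq \rmP_n(M_n - m_n > v) \geq c v \rme^{-c_0 v}\,,
\end{equation}
where the last inequality is the max analog of Lemma~\ref{l:2.10} (obtained via the law-preserving symmetry $h \mapsto -h$). For the upper bound, when $u \leq 0$, Lemma~\ref{l:4.2a} gives $p_n(u) \geq p_n(0) \geq c$, so $\rmP_n^{\uparrow u}(M_n - m_n > v) \leq c^{-1}\rmP_n(M_n - m_n > v) \leq C(v+1)\rme^{-c_0 v}$. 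When $u > 0$, I invoke the finite-$n$ translation identity $\rmP_n^{\uparrow u}(M_n > t) = \rmP_{n,-u}^{\uparrow 0}(M_n > t - u)$ (the finite-$n$ analog of~\eqref{e:107.28}) to rewrite the probability as $\rmP_{n,-u}^{\uparrow 0}(M_n - m_n > v-u)$; Lemma~\ref{l:2.4} applied to the FKG-increasing functional $\1_{\{M_n > t\}}$ then gives, for $-u \leq 0$, the bound $\rmP_{n,-u}^{\uparrow 0}(M_n - m_n > v-u) \leq \rmP_n^{\uparrow 0}(M_n - m_n > v-u)$, to which the $u=0$ case applies.

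For~\eqref{e:2.62}, I mimic the template of Proposition~\ref{l:2.8b} by decomposing at the $m$-th generation. By the spatial Markov property of $\rmP_n^{\uparrow u}$ (item (4) of Proposition~\ref{p:7.9}), given $h_{\bbT_m}$ the subtrees rooted at $y \in \bbL_m$ are conditionally independent with laws $\rmP_{n-m,h(y)}^{\uparrow u - m_n + m_{n-m}}$. Writing $\{M_n > m_n + v\}$ as the union over $y \in \bbL_m$ of $\{h(y) + M^{(y)} > m_n + v\}$, Bonferroni's first-order expansion yields
\begin{equation}
	\rmP_n^{\uparrow u}(M_n - m_n > v) = \sum_{y \in \bbL_m}\rmE_n^{\uparrow u}\bigl[\tilde p_{n-m}^{\uparrow u_y}(v_y)\bigr] + R_{n,m,v}(u)\,,
\end{equation}
with $u_y := u - h(y) - m_n + m_{n-m}$, $v_y := v + m_n - m_{n-m} - h(y)$, and $\tilde p_j^{\uparrow w}(s) := \rmP_j^{\uparrow w}(M_j - m_j > s)$. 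The remainder $R_{n,m,v}(u)$ is $o(v\rme^{-c_0 v})$ uniformly in the stated range, since the second moment of the counting variable $\sum_{y \in \bbL_m}\1_{\{h(y) + M^{(y)} > m_n+v\}}$ is controlled by the upper bound~\eqref{e:2.61} applied to each subtree together with the conditional independence. Next, substituting the max analog of the third statement of Lemma~\ref{l:2.10}, $\tilde p_{n-m}(s) = C_0 s\rme^{-c_0 s}(1+o(1))$, and noting that $\tilde p_{n-m}^{\uparrow u_y}(v_y)/\tilde p_{n-m}(v_y) \to 1$ as $u_y \to -\infty$ (which holds for $m$ large since $h(y)$ is $O(\sqrt m)$ under $\rmP^{\uparrow u}$, so $u_y \approx -c_0 m$), the ``$\uparrow u_y$'' is dropped. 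By tree symmetry of $\rmP_n^{\uparrow u}$ and the miraculous identity $2^m \rme^{-c_0^2 m/2} = 1$ from $c_0^2 = 2\log 2$, the sum collapses to
\begin{equation}
	\sum_{y \in \bbL_m}\rmE_n^{\uparrow u}\bigl[\tilde p_{n-m}(v_y)\bigr] = C_0 v \rme^{-c_0 v}\,\rmE_n^{\uparrow u}\bigl[\rme^{c_0 h(x_m)-c_0^2 m/2}\bigr](1+o(1))\,.
\end{equation}
Passing $n \to \infty$ via Lemma~\ref{l:5.7} and then $m \to \infty$, Proposition~\ref{l:2.8b} (specifically~\eqref{e:203.65}) identifies the remaining expectation as $A_{c_0}(u)$, yielding~\eqref{e:2.62}.

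The main obstacle is propagating all estimates uniformly in $u$ on compacts and in $v \in (\log\log n, (\log\log n)\log n)$: the lower endpoint of the $v$-range is exactly where Bonferroni's first-order expansion becomes sharp (so that $R_{n,m,v}(u)$ is truly of smaller order than $v\rme^{-c_0 v}$), while the upper endpoint is where the Gaussian tail asymptotics underlying $\tilde p_j(s) \sim C_0 s\rme^{-c_0 s}$ remain in their effective regime. An additional technical subtlety is establishing the sharp max analog of the final statement of Lemma~\ref{l:2.10} with the required uniformity, which may be carried out by the same methods as Lemma~\ref{l:2.10} or extracted from the BRW-maximum literature.
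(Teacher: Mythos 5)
Your two-sided bound~\eqref{e:2.61} is proved exactly as in the paper: FKG for the lower bound, dropping the conditioning for $u\le 0$, and Lemma~\ref{l:2.4} via the translation identity for $u>0$; nothing to add there. For the asymptotic~\eqref{e:2.62} your overall strategy (localize the effect of $\Omega_n(u)$ to the first $m$ generations, compute a first moment, identify the constant as $C_0\lim_m\rmE^{\uparrow u}\rme^{c_0h(x_m)-c_0^2m/2}=C_0A_{c_0}(u)$) coincides with the paper's, but the implementation differs in two ways. First, you decompose $\{\max>m_n+v\}$ as a union over the $2^m$ subtrees at generation $m$ and control the Bonferroni remainder by a second moment, whereas the paper partitions by the arg-max leaf and bounds the discrepancy between ``$x_n$ maximizes its depth-$(n-m)$ subtree'' and ``$x_n$ maximizes $\bbL_n$'' (their~\eqref{e:2.66a}); these are interchangeable and both work in the range $v\ge\log\log n$ where $v\rme^{-c_0v}\to0$. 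Second, and more substantively, to remove the conditioning from the deep subtrees the paper invokes the quantitative transfer estimate of Lemma~\ref{l:103.10}, while you argue that $\tilde p^{\uparrow u_y}_{n-m}(v_y)/\tilde p_{n-m}(v_y)\to1$ because $u_y\approx -c_0m\to-\infty$.

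That last step is the one genuine gap. The approximation $u_y\approx-c_0m$ only holds on the typical event for $h(y)$: since $u_y=u-h(y)-m_n+m_{n-m}$, on the event $\{h(y)\le -a\}$ with $a$ large one has $u_y\approx u+a-c_0m$, which is large and \emph{positive}, so the conditioning $\Omega_{n-m}(u_y)$ is a rare event and the ratio is nowhere near $1$. The naive correction $\tilde p^{\uparrow u_y}(v_y)\le\tilde p(v_y)/p_{n-m}(u_y)$ does not close this, because by Lemma~\ref{l:4.2a} the factor $1/p_{n-m}(u_y)$ grows like $\rme^{(u_y^+)^2/2}\approx\rme^{a^2/2}$, which beats the Gaussian tail $\rme^{-a^2/(2m)}$ of $h(y)$ for $m\ge2$. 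The fix is to use, on this atypical region, the already-established uniform upper bound~\eqref{e:2.61} for the subtree, which gives $\tilde p^{\uparrow u_y}_{n-m}(v_y)\le C\big((v_y-u_y^+)^++1\big)\rme^{-c_0(v_y-u_y^+)^+}$ with $v_y-u_y^+\approx v+2c_0m-u$ bounded below uniformly in $h(y)\le-a$; combined with the Gaussian decay of $\rmP_n^{\uparrow u}(h(y)\le-a)$ this makes the atypical contribution $o(2^{-m}v\rme^{-c_0v})$ and lets dominated convergence go through. You flag uniformity as the main obstacle but do not carry out this split; once it is added (and the FKG sandwich $\rmP(B)\le\rmP(B\,|\,A)\le\rmP(B)/\rmP(A)$ is spelled out on the typical region), your argument is a valid alternative to the paper's use of Lemma~\ref{l:103.10}.
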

\begin{proof}
Starting with~\eqref{e:2.61}, the lower bound follows by lower bounding the conditional probability with the unconditional one via FKG and using Lemma~\ref{l:2.10}. For the upper bound in~\eqref{e:2.61}, if $u \leq 0$, the event we condition on has a probability which is bounded away from $0$ uniformly for all $n \geq 1$. The statement follows therefore immediately from the uniform positivity of $\Omega_n(u)$ under $\rmP_n$. If $u > 0$, we can use Lemma~\ref{l:2.4} to replace $v$ by $v-u$ and $u$ by $0$, thereby only increasing the probability. The result then follows from the case $u \leq 0$.

Turning to asymptotics, it is sufficient to show that 
\begin{equation}
\label{e:2.36}
\rmP_n^{\uparrow u} \big(h(x_n) = \max_{\bbL_n} h > m_n + v\big)= A(u) 2^{-n} v \rme^{-c_0 v} (1+o(1)) \,,
\end{equation}	
with $o(1) \to 0$ uniformly in $u,v$ and with $x_n \in \bbL_n$ and $A$ as in the statement of the lemma. Indeed, then summing over all $x_n \in \bbL_n$ gives the desired statement.
To this end, for $0 \leq l,k \leq n$ and $v$ in the range in the statement of the proposition, we let $x_k := [x_n]_k$ and 
\begin{equation}
\xi^{l,v}_n := 2^{n} v^{-1} \rme^{c_0 v} \1_{A_n^{l,v}}
\ ; \quad
A_n^{l,v} := \Big\{h(x_n) = \max \big\{h(z) :\: z \in \bbL_{n-l}(x_{l}) \big\} > m_n + v \Big\}\,.
\end{equation}
We then observe that with $\xi_k^{l,v}$ defined as $\xi_k$ in Lemma~\ref{l:103.10} with $\xi_n^{l,v}$ in place of $\xi_n$, whenever $k \geq l$, 
\begin{equation}
\begin{split}
2^{-n} v & \rme^{-c_0 v} \xi^{l,v}_k \leq \rmP_n \big(h(x_n) = \max_{\bbL_{n-k}(x_{k})} h > m_n + v \,\big|\, h(x_k) \big) \\
& \leq 2^{-(n-k)} \big(q_{n-k}(m_n - m_{n-k} - h(x_k) + v)\big) 
\leq C 2^{-n} v\rme^{-c_0 v} k 2^{-k} (h(x_k)^- +1) \rme^{c_0 h(x_k) } \,.
\end{split}
\end{equation}
Above we used Lemma~\ref{l:2.10}.
Then then mean on the right hand side of~\eqref{e:203.52} (with $\xi_k^{l,v}$ in place of $\xi_k$) is at most
\begin{equation}
C k 2^{-k} \rmE_n (h(x_k)^2+1) \leq C' \,.
\end{equation}
It follows that the sum on the right hand side of~\eqref{e:203.52} with $\xi_k^{m,v}$ in place of $\xi_k$ tends to $0$ as $m \to \infty$, uniformly in $n \geq m$,  $u$ on compacts and $v$ as in the range in the proposition. This gives
\begin{equation}
\label{e:103.74}
 \Big| 	 2^{n} v^{-1} \rme^{c_0 v} \rmP_n^{\uparrow u} \big(A_n^{m,v})
- \rmE_n^{\uparrow u} \Big(2^m v^{-1} \rme^{c_0 v}
	 q_{n-m}\big(m_n - m_{n-m} - h(x_m) + v\big)\Big)\Big| 
	 \longrightarrow 0\,,
\end{equation}
in the above limits. 

At the same time, for fixed $m$, $s$ by Lemma~\ref{l:2.10} and $v$ as above,
\begin{equation}
q_{n-m}(m_n-m_{n-m}-s+v) = C_0 v \rme^{-c_0 (c_0 m + v - s)} (1+o(1)) = 
C_0 2^{-2m} v \rme^{-c_0 (v - s)} (1+o(1)) \,,
\end{equation}
with $o(1) \to 0$ as $n \to \infty$, uniformly in $v$. The same lemma gives for fixed $m$, all $n$ large enough, all $s \in \bbR$ and $v$ as before,
\begin{equation}
2^m v^{-1} \rme^{c_0 v} q_{n-m}(m_n-m_{n-m}-s+v) \leq C(s^- + 1) \rme^{c_0 s^+},
\end{equation}
with $C$ depending on $m$. 
Invoking Lemma~\ref{l:5.7}, Proposition~\ref{l:2.9} and the Dominated Convergence Theorem, the
mean in~\eqref{e:103.74} tends to
\begin{equation}
\label{e:103.77}
	C_0 2^{-m} \rmE^{\uparrow u} \rme^{-c_0 h(x_m)} \,,
\end{equation}
as $n \to \infty$, for fixed $m$ and uniformly in $u$ and $v$ as above.

On the other hand,
\begin{equation}
\label{e:2.66a}
\begin{split}
\rmP_n & \big(A_n^{m,v} \big) - \rmP_n
\Big(h(x_n) = \max_{\bbL_n} h > m_n + v\Big) \\
& \leq \sum_{j=0}^{m-1}
\rmP_n \Big(\max_{\bbL_{n-j}(x_{j}) \setminus \bbL_{n-j-1}(x_{j+1})} h > m_n + v 
\,,\,\,
h(x_n) = \max_{\bbL_{n-j-1}(x_{j+1})} h > m_n + v \Big) \\
& \leq  \sum_{j=0}^m
2^{-(n-j)} \rmE_n \Big(1-\hat{p}_{n-j}(-m_n+m_{n-j}+h(x_j)-v) \Big)^2
\leq C m^2 2^{-n+m} v^2 \rme^{-2c_0 v} 
\,,
\end{split}
\end{equation}	
where we conditioned on $h(x_j)$ to get the second inequality and 
used that the last mean is at most
\begin{equation}
	\rmE_n \big((v+c_0j-h(x_j))^+ + 1)^2 \rme^{-2c_0 (v+c_0 j-h(x_j))^+}
	\leq C\rmE_n \big((v^2+j^2) \rme^{-2c_0 (v+c_0 j-h(x_j))}
	\leq C' v^2(j+1)^2\rme^{-2c_0v} \,.
\end{equation}
In view of the lower bound on $v$, the difference in the probabilities in~\eqref{e:2.66a} is $o(2^{-n} v\rme^{-c_0 v})$, as $n \to \infty$, also for fixed $m$ and uniformly in $v$ as required. The same applies with $\rmP_n$ replaced by $\rmP_n^{\uparrow u}$, now also uniformly in $u$ on compacts, as $\rmP_n(\Omega_n(u))$ is uniformly bounded away from $0$ for such $u$ thanks to the tightness of the centered minimum.

Combining this with~\eqref{e:103.74} and \eqref{e:103.77} and in view of~\eqref{e:2.36} we obtain
\begin{equation}
\lim_{m \to \infty} \limsup_{n \to \infty}
 \Big| v^{-1} \rme^{c_0 v} \rmP_n^{\uparrow u} \Big(\max_{\bbL_n} > m_n + v\Big)
- C_0 2^{-m} \rmE^{\uparrow u} \rme^{-c_0 h(x_m)} \Big|
	 \longrightarrow 0 \,,
\end{equation}
uniformly in $u$ and $v$ as desired.
This readily implies~\eqref{e:2.62} with
\begin{equation}
C_0 \lim_{|x| \to \infty} 2^{-|x|} \rmE^{\uparrow u} \rme^{c_0 h(x)} =
	C_0 \lim_{|x| \to \infty} \rmE^{\uparrow u} \rme^{c_0 h(x)-\frac{c_0^2}{2}|x|} = C_0 A_{c_0}(u)\,,
\end{equation}
where $A_{c_0}$ is as in Proposition~\ref{l:2.8b}.
\end{proof}

\subsection{Proof of main theorems}
We are now ready for
\begin{proof}[Proof of Theorem~\ref{t:1.9}]
We wish to use Proposition~\ref{p:4.5b} with 
\begin{equation}
	\xi_n(x) := \frac{1}{|\bbL_{n'}|} \sum_{y \in \bbL_{n'}(x)} h(y) - m_{n'} \,,
\end{equation}
for $n \geq 1$, $x \in \bbL_{l_n}$.
Clearly, the conditional law of $\xi_n(x)$ given $h(x) = u$ does not depend on $x$ for all $u \in \bbR$. To show uniform integrability of $(\xi_n(x))_{x \in \bbL_{l_n}, n \geq 1},$ we observe that conditional on $\hat{h}(x) = u$ the law of $\xi_n(x)-u$ is the same as that of $\frac{1}{|\bbL_{n'}|} \sum_{y \in \bbL_{n'}} h(y)$ under $\rmP_{n'}(\cdot\,|\, \Omega_{n'}(-u))$. Then, for $u \leq 0$, by Lemma~\ref{l:2.4},
\begin{equation}
\begin{split}
	\rmE^+_n \big((\xi_n(x)^+)^2 \,\big|\, \hat{h}(x) = u \big) & \leq
	2(u^+)^2 + 2 \rmE_{n'} \bigg(\Big(\frac{1}{|\bbL_{n'}|} \sum_{y \in \bbL_{n'}} h(y)\Big)^2 \,\bigg|\, \Omega_{n'}(-u) \bigg) \\
	& \leq 4u^2 + 4 \rmE_{n'} \bigg(\Big(\frac{1}{|\bbL_{n'}|} \sum_{y \in \bbL_{n'}} h(y)\Big)^2 \,\bigg|\, \Omega_{n'}(0) \bigg) \leq 
	4u^2 + C \,,
\end{split}
\end{equation}
where the last inequality follows from Lemma~\ref{l:2.4a} and tightness of the centered minimum of $h$.
On the other hand, by FKG of $\rmP_{n'}$,
\begin{equation}
\begin{split}
	\rmE^+_n \big((\xi_n(x)^-)^2 \,\big|\, \hat{h}(x) = u \big) & \leq
	2(u^-)^2 + 2 \rmE_{n'} \bigg(\Big(\Big(\frac{1}{|\bbL_{n'}|} \sum_{y \in \bbL_{n'}} h(y)\Big)^-\Big)^2 \,\bigg|\, \Omega_{n'}(-u) \bigg) \\
	& \leq 2u^2 + 2 \rmE_{n'} \bigg(\Big(\frac{1}{|\bbL_{n'}|} \sum_{y \in \bbL_{n'}} h(y)\Big)^2 \bigg) \leq 2u^2 + C \,.
\end{split}
\end{equation}
Finally, whenever $u \geq 0$, we use the tightness of the centered minimum (see e.g. \cite{AidekonBRW13}) to write
\begin{equation}
\begin{split}
	\rmE^+_n \big(\xi_n(x)^2 \,\big|\, \hat{h}(x) = u \big) & \leq
	C u^2 + C \rmE_{n'} \bigg(\Big(\frac{1}{|\bbL_{n'}|} \sum_{y \in \bbL_{n'}} h(y)\Big)^2 \bigg) \leq C'u^2 + C \,.
\end{split}
\end{equation}

In all cases, we get the bound $\rmE^+_n \big(\xi_n(x)^2 \,\big|\, \hat{h}(x) = u \big) \leq C(u^2+1)$ for sufficiently large $C > 0$, so that by the uniform tail bounds for the law of $\hat{h}(x)$ under $\rmP_n^+$ as stated in Theorem~\ref{t:1.3}, we get $\rmE^+_n(\xi_n(x)^2) < C$ for all $n$ and $x \in \bbL_{l_n}$.

We may thus invoke Proposition~\ref{p:4.5b} to get
\begin{equation}
\begin{split}
\bigg|\frac{1}{|\bbL_n|} \sum_{y \in \bbL_n} h(y) -
\rmE_n^+ h(y) \bigg|
& = 
\bigg|\frac{1}{|\bbL_n|} \sum_{y \in \bbL_n} h(y) - m_{n'} - 
\big(\rmE_n^+ h(y) - m_{n'} \big) \bigg| \\
& =
\bigg|\frac{1}{|\bbL_{l_n}|} \sum_{x \in \bbL_{l_n}} \xi_n(x)
- \rmE_n^+ \xi_n(x)) \bigg| \underset{n \to \infty} \longrightarrow 0\,,
\end{split}
\end{equation}
in $\rmP_n^+$ probability and in $\bbL^1$.

The second assertion follows immediately by combining the above with the last statement in Corollary~\ref{c:1.17}.
\end{proof}

Next, we give
\begin{proof}[Proof of Theorem~\ref{t:1.8}]
The proof is similar to that of Theorem~\ref{t:1.9}. For $n \geq 1$, $x \in \bbL_{l_n}$, let 
\begin{equation}
\xi_n(x) :=  \rme^{-\alpha m_{n'} - \frac{n'}{2} \alpha^2} \frac{1}{|\bbL_{n'}|} \sum_{y \in \bbL_{n'}(x)} \rme^{\alpha h(y)} =  \rme^{\alpha \hat{h}(x)} \frac{1}{|\bbL_{n'}|}
\sum_{y \in \bbL_{n'}(x)} \rme^{\alpha(h(y) - \hat{h}(x)) - \frac{n'}{2} \alpha^2}  \,. 
\end{equation}
Then, conditional on $\hat{h}(x) = u \in \bbR$, the law of $\rme^{-\alpha u} \xi_n(x)$ is that of
\begin{equation}
\label{e:5.110}
Z_{n',\alpha} := \frac{1}{|\bbL_{n'}|} \sum_{y \in \bbL_{n'}} \rme^{\alpha h(y)-\frac{n'}{2} \alpha^2}  \,,
\end{equation}
under $\rmP_{n'}(-|\Omega_{n'}(-u))$. Then for all $M > 0$, by Lemma~\ref{l:2.4}
and tightness of the centered minimum of $h$, 
\begin{equation}
\begin{split}
	\rmE_n^+ \Big(\xi_n(x) 1_{\{\xi_n(x) > M\}} \,\Big|\, \hat{h}(x) = u \Big) &
\leq
	\rmE_n^+ \Big(\xi_n(x) 1_{\{\xi_n(x) > M\}} \,\Big|\, \hat{h}(x) = u^+ \Big) \\
& \leq
	\rme^{\alpha u^+} \rmE_{n'} \big(Z_{n'\, \alpha} 1_{\{Z_{n'\, \alpha} > M \rme^{-\alpha u^+}\}} \,\big|\, \Omega_{n'}(-u^+) \big)  \\
&	\leq 
	C \rme^{\alpha u^+} \rmE_{n'} \big(Z_{n'\, \alpha} 1_{\{Z_{n'\, \alpha} > M \rme^{-\alpha u^+}\}} \big) \,.
\end{split}
\end{equation}
Now, since $\alpha <\sqrt{2\log 2}$ and using Proposition~\ref{p:2.6}, we may find $\epsilon = \epsilon(\alpha)> 0$ such that $\sup_{n' \geq 1} \rmE_{n'} |Z_{n'\, \alpha}|^{1+\epsilon} < A < \infty$. Then,
\begin{equation}
	\rmE_{n'} \big(Z_{n'\, \alpha} 1_{\{Z_{n'\, \alpha} > M \rme^{-\alpha u^+}\}} \big) 
	\leq \big(\rme^{\alpha \epsilon u^+} M^{-\epsilon}\big) \, \rmE_{n'} |Z_{n'\, \alpha}|^{1+\epsilon} \leq 
	A \rme^{\alpha \epsilon u^+} M^{-\epsilon} \,.
\end{equation}
All together 
\begin{equation}
	\rmE_n^+ \Big(\xi_n(x) 1_{\{\xi_n(x) > M\}} \,\Big|\, \hat{h}(x) = u \Big)
	\leq C \rme^{2\alpha u^+} M^{-\epsilon} \,.
\end{equation}
Thanks to the super-exponentially decaying tail of the law of $\hat{h}(x)$ under $\rmP_n^+$, as shown by Theorem~\ref{t:1.3}, we thus have $\rmE_n^+ \big(\xi_n(x) 1_{\{\xi_n(x) > M\}}\big) \leq c M^{-\epsilon}$, which tends to $0$ as $M \to \infty$, uniformly in $n$ and $x \in \bbL_{l_n}$.

At the same time, in view of~\eqref{e:5.110},
\begin{equation}
	F_n(u) := \rmE_n^+\big(\xi_n(x)\,\big|\, \hat{h}(x) = u\big) = \rme^{\alpha u}
	\rmE_{n'} \Big( \rme^{\alpha h(y)-\frac{n'}{2} \alpha^2} \,\big|\, \Omega_{n'}(-u) \Big) \,,
\end{equation}
is bounded by $C\rme^{\alpha u^+}$ for all $u$ by Proposition~\ref{l:2.8b}. The same proposition also shows that $F_n(u)$ converges to the continuous function $F_\infty(u) := \rme^{\alpha u} A_\alpha(-u)$ as $n \to \infty,$ uniformly in $u$ on compacts .
We may thus apply Corollary~\ref{c:5.6} to conclude that 
\begin{equation}
\bigg| \frac{1}{|\bbL_n|} \sum_{x \in \bbL_n} \exp \Big(\alpha h(x) -\alpha m_{n'}-\frac{n'}{2} \alpha^2\Big) - A_\alpha^{[n]_2} \bigg| \\
= \bigg| \frac{1}{|\bbL_{n'}|} \xi_n(x) - \rmE_n^+ \xi_n(x) \bigg|	 \underset{n \to \infty}\longrightarrow 0 \,,
\end{equation}
in $\rmP_n^+$ probability, 
where for log-dyadic $\delta \in [0,1)$,
\begin{equation}
\label{e:207.100}
A_\alpha^{\delta} := \rmE^{+,\delta} \rme^{\alpha h(0)} A_\alpha(-h(0))  \,.
\end{equation}

Recalling the definition of $A_\alpha$ from Proposition~\ref{l:2.8b}, the last expectation is further equal to
\begin{equation}
\rmE^{+,\delta}
\Big(
\lim_{|x| \to \infty} \rmE^{\uparrow}_{h(0)} \rme^{\alpha h(x)-\frac{\alpha^2}{2}|x|} 
\Big)
= \lim_{|x| \to \infty} \rmE^{+, \delta}  \rme^{\alpha h(x)-\frac{\alpha^2}{2}|x|} \,,
\end{equation}
where we used~\eqref{e:107.28} from Proposition~\ref{p:1.45} and~\eqref{e:107.31} from Theorem~\eqref{t:1.7}. Exchangeability of the order of limit and integration follows from the Dominated Convergence Theorem, which we may apply thanks to the almost Gaussian tails of $h(0)$ as given by Proposition~\ref{p:1.11}. This, together with the positivity and exponential upper bound on $A_\alpha$, which is implied by the first part of Proposition~\ref{l:2.8b}, shows that $\theta_\delta^{\alpha}$ is bounded away from $0$ and $\infty$, uniformly in $\alpha$ and $\delta.$
\end{proof}	

Next we have, 
\begin{proof}[Proof of Theorem~\ref{t:1.5}]
Let $n \geq 1$, fix $u \in \bbR$ and for all $v \in \bbR$, let
\begin{equation}
	p_n(u,v) := \rmP_n \Big(\max_{x \in \bbL_n} h(x) > m_n + u \,\Big|\, \Omega_n(v)\Big) \,.
\end{equation}
Set also for some arbitrary $c>0$,
\begin{equation}
\label{e:5.100}
u_n := c_0^{-1} ((\log 2) l_n + \log l_n) + u 
	= c_0^{-1} (\log_2 n + \log \log_2 n) + u + o(1) \,,
\end{equation}
\begin{equation}
w_n := (\log |\bbL_{l_n}|)^{1/2+c} = (\log n)^{1/2+c} \,.
 \end{equation}
Finally, for $x \in \bbL_{l_n}$, set
\begin{equation}
\xi_n(x) := F_n\big(\hat{h}(x)\big)
\quad; \qquad 
F_n(w) := -2^{l_n}\log \big(1-p_{n'}\big(-w + u_n,\, -w \big) \big) 
	1_{\{w \leq |w_n|\}} \,.
\end{equation}
Then by conditioning on the values of $h$ in generation $l_n$ we have 
\begin{equation}
\label{e:5.2} 
\bigg|\rmP_n^+ \Big(\max_{x \in \bbL_n} h(x) - 2m_{n'} \leq u_n  \Big) 
	- \rmE_n^+ \bigg[ \exp \Big(-2^{-l_n}\sum_{x \in \bbL_{l_n}} \xi_n(x) 
	 \Big) \bigg] \bigg|
	 \leq \rmP_n^+ \Big(\exists x \in \bbL_{l_n} :\: \big|\hat{h}(x)\big| > w_n \Big)\,.
\end{equation}
Now, thanks to the almost Gaussian tail of $\hat{h}(x)$ for $x \in \bbL_{l_n}$ under $\rmP_n^+$, as stated in Theorem~\ref{t:1.3}, in combination with the Union Bound, the right hand side of~\eqref{e:5.2} tends to $0$ as $n \to \infty$. At the same time, by the first part of Proposition~\ref{p:2.7a}, for $w < w_n$, 
\begin{equation}
\label{e:5.117}
	p_{n'}\big(-w + u_n,\, -w \big)	
		\leq C (u_n - w^++1) \rme^{-c_0 (u_n - w^+)}
		\leq C 2^{-l_n} \rme^{-c_0 (u-w^+)} \,,
\end{equation}
where we also used that
\begin{equation}
	u_n \rme^{-c_0 u_n} = c_0^{-1} (\log 2) 2^{-l_n} \rme^{-c_0 u} (1+o(1)),
\end{equation}
as $n \to \infty$, uniformly in $u$ on compacts. Since the right hand side~\eqref{e:5.117} is vanishing as $n\to \infty$ and using that $\log (1-s) = -s + o(1)$ as $s \to 0$, we get 
\begin{equation}
\label{e:5.101}
0 \leq F_n(w) \leq C \rme^{-c_0(u-w^+)} \,.
\end{equation}
The second part of Proposition~\ref{p:2.7a} and the same approximation of the log also shows that
\begin{equation}
F_n(w) \underset{n \to \infty}\longrightarrow F_\infty(w) := C_0c_0^{-1}(\log 2) A_{c_0}(-w) \rme^{-c_0 (u-w)}\,,
\end{equation}
uniformly in $w$ on compacts, with the limit being continuous in $w$ thanks to the continuity of $A_{c_0}$.

We may therefore invoke Corollary~\ref{c:5.6} and, thanks to the positivity of $F_n$, use the Bounded Convergence Theorem to equate the mean in~\eqref{e:5.2} with
\begin{equation}
	\exp \big(-\theta_{[n]_2} \rme^{-c_0 u}\big) + o(1) \,,
\end{equation}
where $o(1) \to 0$ as $n \to \infty$ and, 
\begin{equation}
	\theta_\delta := C_0 c_0^{-1} (\log 2)\, \rmE^{+,\delta} \big(A_{c_0}(-h(0)) \rme^{c_0 h(0)}\big) = C_0 c_0^{-1} (\log 2) A_{c_0}^{\delta} \,,
\end{equation}
with $A_{c_0}^{\delta}$ as in~\eqref{e:207.100}.

Collecting all terms, we obtain
\begin{equation}
\rmP_n^+ \big(\max_{x \in \bbL_n} h(x) - 2m_{n'} \leq u_n \big) 	= 
	\exp \big(-\theta_{[n]_2} \rme^{-c_0 u}\big) + o(1) \,,
\end{equation}
with $o(1) \to 0$ as $n \to \infty$. Plugging in $u = c_0^{-1} \log \theta_{[n]_2}+ u',$ for any $u' \in \bbR$, we recover the desired statement, with $c_1 := \log c_0 - \log C_0 + \log \log 2$.
\end{proof}

\begin{proof}[Proof of Theorem~\ref{thm:minimum}]
Let $n \geq 1$, $u \geq 0$ and set $u_n := 2^{-l_n} u$. For $s \in \bbR, t \geq 0$, set
\begin{equation}
	q_n(t,s):= \rmP_n\Big(\Omega_n(t+s)\,\big|\, \Omega(s)\Big)=\frac{p_n(t+s)}{p_n(s)} \,.
\end{equation}
Then,
\begin{equation}
	\rmP_n^+\Big(\min_{x\in \bbL_n}h(x)\leq u_n \Big)=1-\rmP_n^+\Big(\min_{x\in \bbL_n}h(x)> u_n \Big)
	=1-\rmE_n^+\bigg(\prod_{x\in \bbL_{l_n}} q_{n^\prime}(u_n,-\hat{h}(x)) \bigg) \,.
\end{equation}
Setting for $x\in \bbL_{l_n}$,
\begin{equation}
\xi_n(x):= F_n(\hat{h}(x))	
\quad, \qquad
F^{(u)}_n(v) := -2^{l_n} \log \frac{p_{n^\prime}(u_n-v)}{p_{n^\prime}(-v)} \,,
\end{equation}
the last mean can be rewritten as
\begin{equation}
\label{e:5.122}
	\rmE_n^+ \exp\Big( -2^{-l_n} \sum_{x\in \bbL_{l_n}} \xi_n(x) \Big) \,.
\end{equation}
Now, by the Mean Value Theorem and Lemma~\ref{l:4.2a},
\begin{equation}
F^{(u)}_n(v) \leq 2^{l_n} u_n \Big|\sup_{\theta \in [0,1]} \frac{\rmd}{\rmd s} \log p_{n^\prime}(s)\big|_{s=-v+ \theta u_n} \Big| \leq Cu(|v|+1) \,.
\end{equation}
Also by the Mean Value Theorem and, this time, Lemma~\ref{l:2.9a},
\begin{equation}
	F^{(u)}_n(v) \underset{n \to \infty} \longrightarrow
		F^{(u)}_\infty(v) := -u \frac{\rmd}{\rmd s} \log p_{\infty}(s)\big|_{s=-v} \,,
\end{equation}
uniformly in $v$ on compacts, with the limit being continuous in $v$.

Invoking Corollary~\ref{c:5.6} and, thanks to the positivity of $\xi_n(x)$, using the Bounded Convergence Theorem, the mean in~\eqref{e:5.122} is equal to $\rme^{-u\theta'_{[n]_2}} + o(1)$, where $o(1) \to 0$ as $n \to \infty$ uniformly in $u$ on compacts, and
\begin{equation}
\theta'_\delta := -\rmE^{+,\delta} \bigg(\frac{\rmd}{\rmd s} \log p_{\infty}(s)\big|_{s=-h(0)}\bigg) \,.
\end{equation}
The last quantity is bounded away from $0$ and $\infty,$ uniformly in $\delta\in [0,1)$, thanks 
to Lemma~\ref{l:5.5}, Lemma~\ref{l:4.2a} and Lemma~\ref{l:2.9a}.
We thus obtain
\begin{equation}\label{eq:5.85}
	\rmP_n^+\Big(2^{l_n} \min_{x\in \bbL_n}h(x)\leq u \Big)
		= 1-\rme^{-u\theta'_{[n]_2}} + o(1) \,,
\end{equation}
with $o(1) \to 0$, as $n\to \infty$, uniformly in $u$ on compacts.
Plugging in $u = (\theta'_{[n]_2})^{-1} u'$ for any $u' \geq 0$ and setting $\kappa_\delta := \theta'_\delta 2^{-\delta},$ we recover~\eqref{e:1.28}. Since for $u < 0,$ the probability in~\eqref{e:1.28} is trivially zero, the second statement follows immediately.
\end{proof}

\section*{Acknowledgments}
This research was supported through the programme "Research in Pairs"  by the Mathematisches Forschungsinstitut Oberwolfach in 2020 and also partly funded by the Deutsche Forschungsgemeinschaft (DFG, German Research Foundation) under Germany's Excellence Strategy - GZ 2047/1, Projekt-ID 390685813 and GZ 2151 - Project-ID 390873048,
through the Collaborative Research Center 1060 \emph{The Mathematics 
of Emergent Effects}. 
The research of M.F. was also partly supported by a Minerva Fellowship of the Minerva Stiftung Gesellschaft fuer die Forschung mbH. The research of L.H. was supported by the Deutsche Forschungsgemeinschaft (DFG, German Research Foundation)  through Project-ID 233630050 - TRR 146, Project-ID 443891315 within SPP 2265, and Project-ID 446173099.
The research of O.L. was supported by the ISF grant no.~2870/21, and by the BSF award 2018330.
The authors would like to thank Amir Dembo for useful discussions.

\bibliographystyle{abbrv}
\bibliography{bbm-AL-ref.bib}
\end{document}